\documentclass[11pt]{amsart}

\usepackage[margin=1in]{geometry}
\usepackage{amsmath,amssymb,amsthm}
\usepackage{graphicx}
\usepackage{mathtools}
\usepackage{enumitem}
\usepackage{hyperref}
\usepackage{mathrsfs}
\usepackage{xcolor}
\usepackage{comment}
\hypersetup{colorlinks=true,linkcolor=blue,citecolor=blue,urlcolor=blue}

\theoremstyle{plain}
\newtheorem{theorem}{Theorem}[section]
\newtheorem{lemma}[theorem]{Lemma}
\newtheorem{proposition}[theorem]{Proposition}
\newtheorem{corollary}[theorem]{Corollary}

\theoremstyle{definition}
\newtheorem{definition}[theorem]{Definition}

\theoremstyle{remark}
\newtheorem{remark}[theorem]{Remark}

\numberwithin{equation}{section}

\makeatletter
\newcommand{\multilabel}[1]{\ltx@label{#1}}
\makeatother

\newcommand{\R}{\mathbb R}
\newcommand{\C}{\mathbb C}
\newcommand{\N}{\mathbb N}

\newcommand\warning[1]{{}} 
\newcommand\bwarning[1]{{}} 
\newcommand\gwarning[1]{{}} 

\title[Chernoff Approximation by Centered Differences]{(Super-)Fast Convergence Rates for Chernoff Approximations via Centered Differences and Richardson Extrapolation}

\author[]{\NoCaseChange{Comlan Edmond Koudjinan\\ University of Toronto\\Koudjinanedmond@gmail.com}}

\date{\today}

\begin{document}

\begin{abstract}
We use centered differences to construct Chernoff families and
derive even asymptotic expansions for their iterated products.
Richardson extrapolation is then used to cancel polynomial part and thereby raise the convergence order.

As illustrations, we apply the construction to uniformly parabolic, 
Schr\"odinger-type second-order operators and to a third-order dispersive
operator, obtaining, at extrapolation depth \(m\), the rates
$
O(n^{-2m})
$
on the Sobolev scales \(H^{2m+\delta}\), \(H^{4m}\), and \(H^{5m}\),
respectively, with arbitrary \(\delta>0\). In the analytic category,
quantitative all-order estimates allow the extrapolation depth to grow
with the base resolution and yield the super-fast rates
$
e^{-cn^2}
$
for the two second-order models and
$
e^{-cn}
$
for the third-order model.

{
The constructions are explicit in terms of the coefficients of the
underlying differential operators and do not use the target semigroup
in their definition. This provides a constructive positive
answer 
to the challenge of
super-fast Chernoff approximation formulated by Remizov in
 \cite[Problem~30]{OpenProblems}.
}

\warning{
We prove an abstract Richardson extrapolation principle for Chernoff
products. If $\mathcal S(t/N)^N$ admits an asymptotic expansion on a
prescribed power scale and the base family satisfies the usual Chernoff
exponential power bound, an explicit multiproduct cancels successive
terms and raises the convergence order. We introduce centered-difference Chernoff families and derive the
required product expansion from the local even expansion of the
approximating generators by stability and Duhamel iteration. As
illustrations, we apply the construction to uniformly parabolic and
Schr\"odinger-type second-order operators and to a third-order dispersive
operator, obtaining convergence rates
$O(n^{-2m})$ on the Sobolev scales $H^{2m+\delta}$, $H^{4m}$, and
$H^{5m}$, respectively. In the analytic category, quantitative all-order estimates allow the depth to grow with the base resolution
and yield $e^{-cn^2}$ for the two second-order models and $e^{-cn}$ for
the third-order model. The resulting Richardson--Chernoff formulas give
accelerated representations of the evolution semigroups; their Laplace
transforms give polynomial-rate convergent representations of the
corresponding resolvents under the same pointwise product expansion used
in the resolvent estimate.}
\end{abstract}

\keywords{Dynamical systems; Linear PDES; Chernoff approximations; Richardson extrapolation; $C_0$-semigroup; centered
differences; analytic semigroups; resolvent approximation; super-fast convergence rates}
\maketitle

\section{Introduction}\label{sec:introduction}
\enlargethispage{6pt}

The Chernoff product formula \cite{Chernoff} provides a general mechanism
for approximating $C_0$-semigroups. 
It is particularly useful since the closed form of the semigroup  is not known in general: one replaces the unbounded generator by a family
of explicitly tractable bounded  operators and recovers the evolution
from its iterated products.

For variable-coefficient differential equations there is, however, a
persistent tension between generality, explicit dependence on the
coefficients, accuracy, and computational complexity. A practically
useful approximation should be written explicitely in term of the coefficients of the PDE. Recent work in this direction within the Chernoff framework includes contributions by Galkin, Remizov, and
Vedenin. In particular, Remizov and Vedenin construct explicit
approximations for parabolic equations from proper Riemann integrals over
a bounded interval and prove accelerated convergence under appropriate
regularity assumptions
\cite{RemizovVedenin,VedeninRemizov26}.

The classical Chernoff
theorem does not provide the rate of convergence of the approximation to the semigroup.
Galkin and Remizov \cite{GR25} showed that, under the general Chernoff
framework alone, there is no universal convergence rate. They construct
natural examples in which the Chernoff products converge arbitrarily
fast and others in which they converge arbitrarily slowly. 

Thus high regularity of the datum alone does not imply a fast Chernoff
convergence rate. Quantitative convergence requires additional structure in the
approximating family.

The examples of \cite{GR25} show, on the other hand, that there is no
abstract obstruction to arbitrarily fast Chernoff convergence. The
substantially harder question is whether such convergence can be
obtained from a meaningful and explicit construction for differential
operators arising in applications. This question was formulated by
Remizov as the problem of constructing \emph{super-fast} Chernoff
approximations in the open-problem collection \cite{OpenProblems}.
Problem~30 asks for a meaningful Chernoff approximation converging to a
$C_0$-semigroup at least exponentially, or for a proof that such a
construction is impossible in a meaningful setting. A central
requirement is that the approximating family must be given explicitly in
terms of the coefficients of the differential operator, rather than in
terms of the unknown semigroup itself, thereby avoiding any
\emph{circulus vitiosus}.

The practical motivation \cite{OpenProblems} is that the
gain in approximation accuracy might ultimately beat  the computational complexity growth. 

The present paper provides a constructive positive answer to this
super-fast approximation challenge. 
 Starting from a single
centered-difference Chernoff function, we combine its iterated products at
different refinement levels by Richardson extrapolation. At every fixed depth
this yields arbitrarily high algebraic orders of convergence. In the
analytic setting, the extrapolation depth grows with the
base resolution, and the resulting errors decay as 
$
 e^{-cn^2}
$
for the two second-order models and as 
$
 e^{-cn}
$
for the third-order model. The construction depends explicitly on the
coefficients of the differential operator and does not use the target
semigroup in defining the approximating function.

The techniques developed here have a historical precedent. In his
1911 paper on the arithmetical solution of differential equations,
Richardson used centered differences
and extrapolated across several mesh sizes to
improve accuracy \cite{Richardson1911}. The present paper may be viewed
as an operator-semigroup quantitative extension of this
principle.

To showcase the method, we treat the following three evolution
equations:
\begin{align*}
 \partial_tu(t,x)
   &=a(x)\partial_x^2u(t,x)+b(x)\partial_xu(t,x)+c(x)u(t,x),\\
 \partial_tu(t,x)
   &=i a(x)\partial_x^2u(t,x)+b(x)\partial_xu(t,x)+c(x)u(t,x),\\
 \partial_tu(t,x)
   &=a(x)\partial_x^3u(t,x)+b(x)\partial_xu(t,x)+c(x)u(t,x),
\end{align*}
all with $u(0,\cdot)=f$. The resulting formulas therefore approximate
the evolution solutions themselves. These three equations are
representative test cases rather than the scope of the techniques developped. The
tools developed below 
apply more broadly to linear
differential operators whenever the required dissipative or
symmetric--skew structure, stability, and regularity estimates are
satisfied.

\subsection{The rate problem}

The classical Chernoff theorem is qualitative: it guarantees convergence
of the iterated products but does not provide a convergence
rate. As the examples of Galkin and Remizov \cite{GR25} shows,
the absence of an universal rate is intrinsic.

A first quantitative approach, developed by Galkin and Remizov
\cite{GR22,GR25}, derives global convergence rates from sufficiently
accurate Taylor expansion matching of the Chernoff function with the semigroup. In a
general form, under the appropriate stability assumptions and a Taylor matching estimate on a suitable invariant regularity space:

\begin{equation}
\label{GRTayMatch}
\left\|
S(h)f-\sum_{k=0}^{m}\frac{h^kA^kf}{k!}
\right\|
\le
h^{m+1}\sum_{j=0}^{m+p}K_j(h)\|A^jf\|,
\end{equation}
they prove
$$
 \|S(t/n)^nf-e^{tA}f\|
 \le
 \frac{M_1M_2t^{m+1}e^{\omega t}}{n^m}
 \sum_{j=0}^{m+p}C_j(t/n)\|A^jf\|,
$$
whence the rate of convergence
\[
 \|S(t/n)^nf-e^{tA}f\|
 =
 O\!\left(n^{-(m-1+\alpha)}\right).
\]

Recent constructions of Remizov and Vedenin
\cite{VedeninRemizov26,RemizovVedenin} applied this local-matching
strategy  to variable-coefficient parabolic operators. For
\[
 A=a(x)\partial_x^2+b(x)\partial_x+c(x),
 \qquad a(x)\ge a_0>0,
\]
their Chernoff function is given explicitly in term of the coefficients
$a,b,c$ of $A$  by
\[
 (G(t)f)(x)
 =
 \int_{-1}^{1}
 \frac12\bigl(1+c(x)t\bigr)
 f\bigl(x+(6a(x)t)^{1/2}y+b(x)t\bigr)\,dy.
\]
Their constructions give accelerated convergence, including
an $O(n^{-2})$ rate under suitable regularity assumptions.

A parallel rate theory has recently been developed in the nonlinear
setting: Blessing, Jiang, Kupper, and Liang
\cite{BlessingJiangKupperLiang} obtain
\[
\|S(t)f-I(t/n)^nf\|\le Cn^{-\gamma}
\]
for Chernoff-type approximations of \emph{convex monotone} semigroups
on bounded Lipschitz data, with applications to Hamilton--Jacobi--Bellman
equations, stochastic control, Nisio semigroups, and limit theorems under
convex expectations. Their setting is fundamentally different:
monotonicity and convexity are structural ingredients there, whereas
Richardson coefficients generally alternate in sign and therefore need
not preserve either property; the present mechanism instead exploits
detailed asymptotic information about the iterated products.


The present paper exploits the linear structure at the level of the iterated products,
starting from a single centered-difference family whose symmetry in the
mesh parameter produces the finite even expansion of the approximating
generator:
\begin{equation}
\label{eqoExpanAh}
 A_h
 =
 A+\sum_{r=1}^{m-1}h^{2r}R_r+h^{2m}Q_m(h).
\end{equation}
Stability and Duhamel iteration are then 
used to get the product expansion
$$
 \mathcal S_2(t/N)^Nf
 =
 e^{tA}f
 +
 \sum_{r=1}^{m-1}
 \frac{E_{2r}(t)f}{N^{2r}}
 +
 R^{\mathrm{prod}}_{m,N}(t)f,
$$
with the pointwise remainder estimate
\begin{equation}
\label{IntrproRem0}
 \|R^{\mathrm{prod}}_{m,N}(t)f\|
 \le C_m e^{\Omega_m t}
 \frac{t^{2m+1}}{N^{2m}}\|f\|_{\mathcal D_m}
 \qquad (t/N\le h_0).
\end{equation}
Richardson extrapolation combines the same base family at several step
counts and cancels \warning{successively} the terms
$
 N^{-2},\,N^{-4},\,\ldots,\,N^{-2m+2},
$
thereby producing the convergence rate
$
 O(N^{-2m}).
$
The mechanism may therefore be summarized as
\[
 \boxed{\begin{aligned}
 &\mbox{even local generator expansion}\\
 &\qquad\qquad \mbox{ + stability}
 \end{aligned}}
 \Longrightarrow
 \boxed{\text{even product expansion}}
 \Longrightarrow
 \boxed{\text{Richardson cancellation}}.
\]



One difference from the Galkin--Remizov theory that is worth
emphasizing is that our argument does not require the 
Chernoff function to satisfy a Taylor-matching estimate whose remainder
is controlled explicitly by powers of the generator as in \eqref{GRTayMatch}.  
%
In this respect, the extrapolation mechanism developed here
allows greater flexibility.

\warning{
At the algebraic level, this cancellation principle is closely related
to Richardson's original extrapolation method \cite{Richardson1911},
to Romberg extrapolation \cite{Romberg1955}, and to symmetric high-order
composition methods \cite{Yoshida}. 
}

\subsection{Main results}

The results of the paper can be summarized in the following two
theorems. For, let $m\ge1$, $a,b,c$ be real-valued functions with $a(x)\ge a_0>0.$ and introduce the following:

\subsubsection{Notations}
\label{subsec:Notations}
\begin{itemize}
\item[$\bullet$] \emph{In the Schr\"odinger case:}

\[
(\mathcal J_2f)(y)
=
a(\chi_2(y))^{-1/4}f(\chi_2(y)),\qquad
\chi_2=\eta_2^{-1} ,\qquad \eta_2(x)=\int_0^x a(s)^{-1/2}\,ds.
\]

Then
\[
\mathcal J_2A_0\mathcal J_2^{-1}
=
iD_y^2+\frac12(vD_y+D_yv)+w,
\]
with
\[
v=
\left(\frac b{\sqrt a}\right)\circ\chi_2,
\qquad 
w=
\left[
c-\frac12b'
+\frac12b\frac{a'}a
+i\left(
\frac14a''
-\frac{3}{16}\frac{(a')^2}{a}
\right)
\right]\circ\chi_2.
\]

\item[$\bullet$] \emph{In the cubic case:}
\[
\alpha
=
-\frac23a''
+\frac59\frac{(a')^2}{a},
\qquad
\beta
=
-\frac13a'''
+\frac89\frac{a'a''}{a}
-\frac59\frac{(a')^3}{a^2},
\]
\[
d=b-\alpha,
\qquad
V=(c-\beta)-\frac12d'
+\frac12d\frac{a'}a,
\]
\[
\eta_3(x)=\int_0^x a(s)^{-1/3}\,ds,
\qquad
\chi_3=\eta_3^{-1},
\]
and
\[
(\mathcal J_3f)(y)
=
a(\chi_3(y))^{-1/3}f(\chi_3(y)).
\]
Then
\[
\mathcal J_3A_0\mathcal J_3^{-1}
=
D_y^3+\frac12(vD_y+D_yv)+w,
\]
where
\[
v=(d\,a^{-1/3})\circ\chi_3,
\qquad
w=V\circ\chi_3.
\]

\end{itemize} 
Let $p=2$ in the two second-order cases and $p=3$ in the
third-order case.

\noindent
For the parabolic model set $\mathcal J=I$; in the other two cases
set $\mathcal J=\mathcal J_p$, $p=2,3$.

\noindent
Let $\mathcal M_{2m,n}(t)$ denotes the
Richardson multiproduct for the normalized generator $\widetilde A_0\coloneqq \mathcal J A_0\mathcal J^{-1}$ (see \eqref{eq:even-richardson-multiproduct}), and define the
original-coordinate approximation by
\begin{equation}
\mathcal M_{2m,n}^{\rm orig}(t)
:=
\mathcal J^{-1}
\mathcal M_{2m,n}(t)
\mathcal J.
\label{eq:original-richardson}
\end{equation}

\subsubsection{Results} \ \\
\emph{\textbf{Sobolev class.}} 
The first concerns rate of convergence in Sobolev
class.
\begin{theorem}[Unified fixed-depth Sobolev rates]
\label{thm:unified-sobolev-rates}
Let $A_0$ be one of the following three model
generators.

\smallskip

\noindent
\emph{(P) Parabolic second order:}
\[
A_0=aD^2+bD+c,
\]
with 
\[
a\in W^{2m+4,\infty}(\R),\qquad
b\in W^{2m+3,\infty}(\R),\qquad
c\in W^{2m+2,\infty}(\R).
\]

\smallskip

\noindent
\emph{(S) Schr\"odinger-type second order:}
\[
A_0=iaD^2+bD+c,
\]
with
\[
a\in W^{4m+2,\infty}(\R),\qquad
b\in W^{4m+1,\infty}(\R),\qquad
c\in W^{4m,\infty}(\R).
\]

\smallskip

\noindent
\emph{(D) Third-order dispersive:}
\[
A_0=aD^3+bD+c,
\]
with 
\[
a\in W^{5m+3,\infty}(\R),\qquad
b\in W^{5m+1,\infty}(\R),\qquad
c\in W^{5m,\infty}(\R).
\]

Let $p\in\{2,3\}$ denote the differential order and let
$\mathcal M_{2m,n}(t)$ denote the depth-$m$ Richardson multiproduct
associated with the corresponding centered-difference Chernoff function.

Then the following hold.

\emph{(a). Parabolic model (P).}
For  every $\delta>0$, setting
\[
\eta_\delta:=\min\{\delta/2,1\},
\]
there exist constants $h_0, C_{m,\delta}, \Omega_{m,\delta}\ge0$ such that
\[
\|\mathcal M_{2m,n}(t)f-e^{tA_0}f\|_{L^2}
\le
C_{m,\delta}e^{\Omega_{m,\delta}t}
\frac{t^{2m+\eta_\delta}}{n^{2m}}
\|f\|_{H^{2m+\delta}},
\qquad
0<t\le h_0n.
\]

\emph{(b). Group cases (S) and (D).}
For the Schr\"odinger-type and third-order dispersive models, let
$\mathcal J$ denote the corresponding normalizing conjugation. Then
there exist constants $h_0, C_m,\Omega_m\ge0$ such that
\[
\left\|
\mathcal J^{-1}\mathcal M_{2m,n}(t)\mathcal J f
-
e^{tA_0}f
\right\|_{L^2}
\le
C_me^{\Omega_mt}
\frac{t^{2m+1}}{n^{2m}}
\|f\|_{H^{(p+2)m}},
\qquad
0\le t\le h_0n.
\]

Thus, at every fixed depth $m$, all three models converge at rate
\[
O(n^{-2m}),
\]
on the respective datum scales
\[
H^{2m+\delta},\qquad H^{4m},\qquad H^{5m}.
\]
\end{theorem}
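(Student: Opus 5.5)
The proof has three layers: an abstract Richardson principle, the even product expansion for each model, and the regularity bookkeeping that produces the stated datum scales. The abstract principle is the following. Suppose a family $\mathcal S(h)$ satisfies the Chernoff power bound $\|\mathcal S(h)^N\|\le M e^{\omega Nh}$. Suppose also that its iterated products admit the even expansion
\[
\mathcal S(t/N)^Nf=e^{tA}f+\sum_{r=1}^{m-1}\frac{E_{2r}(t)f}{N^{2r}}+R^{\mathrm{prod}}_{m,N}(t)f,
\]
with the remainder bound \eqref{IntrproRem0}. Take step counts $N_j=k_jn$ with fixed distinct integers $k_1,\dots,k_m$. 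Then the multiproduct $\mathcal M_{2m,n}(t)=\sum_j c_j\,\mathcal S(t/N_j)^{N_j}$ has weights $c_j$ solving the Vandermonde system $\sum_j c_j=1$ and $\sum_j c_jk_j^{-2r}=0$ for $1\le r\le m-1$. With these weights, $e^{tA}$ is reproduced exactly, the terms $E_{2r}$ cancel, and only $\sum_j c_jR^{\mathrm{prod}}_{m,N_j}$ remains, which is $O(t^{2m+1}n^{-2m})\|f\|_{\mathcal D_m}$. The $c_j$ depend only on $m$, so the constants stay uniform.

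The even expansion of the generator comes from centered differences. I would take $A_h=a\,\delta_h^2+b\,\delta_h^{(1)}+c$ with symmetric difference quotients. For the cubic case, $D^3$ is replaced by the symmetric third difference. In cases (S) and (D) I would work in the normalized coordinates $\widetilde A_0=\mathcal J A_0\mathcal J^{-1}$. There the principal part is $iD^2$ or $D^3$, which is skew, and the lower part $\frac12(vD+Dv)+w$ is skew up to a bounded term. Discretizing this symmetric form with centered operators gives a discrete generator that is skew-adjoint up to a bounded perturbation. The resulting bound $\|e^{sA_h}\|\le e^{\omega s}$ is uniform in $h$. For the Chernoff function I would take $\mathcal S(h)=e^{hA_h}$, or a symmetric rational or Taylor surrogate of it. Its stability then follows from the dissipativity of $A_h$: in case (P) from the Gårding-type estimate for $a\delta_h^2$ with $a\ge a_0$, and in cases (S) and (D) from the skewness just described. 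Taylor expansion of the symmetric stencils has only even powers of $h$, which gives \eqref{eqoExpanAh}. In that expansion $R_r$ has order $p+2r$, and $Q_m(h)$ is bounded from $H^{p+2m}$ to $L^2$, with coefficient norms consuming the stated $W^{k,\infty}$ regularity.

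Next comes Duhamel iteration. Write $e^{tA_h}-e^{tA}=\int_0^t e^{(t-s)A_h}(A_h-A)e^{sA}\,ds$ and insert the expansion. Each resulting term is of the form $e^{\cdot A_h}R_r e^{\cdot A}$. In those terms I would re-expand $e^{\cdot A_h}$ around $e^{\cdot A}$, repeating until order $h^{2m}$. This defines $E_{2r}(t)$ as a finite sum of iterated integrals and leaves a remainder bounded by $t^{2m+1}h^{2m}$ times a norm of $f$ in some $H^{s}$. The bound uses uniform stability together with the invariance of $H^s$ under $e^{tA}$, with exponential growth in $t$. Here $s$ comes from composing $R_{r_1}\cdots R_{r_k}$; the worst case is $m$ copies of $R_1$, each of order $p+2$, giving $s=(p+2)m$, that is $H^{4m}$ and $H^{5m}$. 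With $h=t/N$ this yields \eqref{IntrproRem0}, and the abstract principle gives part (b). Conjugating back by the bounded, $H^s$-isomorphic $\mathcal J$ costs only constants.

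Part (a) is the delicate step and the main obstacle I expect. The naive count demands $H^{4m}$, but the statement only allows $H^{2m+\delta}$ with the factor $t^{2m+\eta_\delta}$. My plan is to use parabolic smoothing, $\|e^{sA}\|_{H^{\sigma}\to H^{\sigma+\theta}}\lesssim s^{-\theta/2}$, and similarly uniformly in $h$ for $e^{sA_h}$. This would be proved by energy estimates for the difference operators. With it, derivatives in the Duhamel integrals can be shifted onto the semigroup factors, where the singular time weights remain integrable. Balancing the exponents so that every singular weight $s^{-\theta/2}$ has $\theta<2$ should leave exactly $2m+\delta$ derivatives on $f$. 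It should also turn the time power into $t^{2m+\eta_\delta}$, with $\eta_\delta=\min\{\delta/2,1\}$ arising from the interpolation. Carrying out this bookkeeping uniformly in $h$, including the discrete smoothing estimates with variable $a$, is where I expect most of the work to lie.
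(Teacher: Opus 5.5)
Your abstract Richardson step, the uniform $L^2$ stability of $e^{sA_h}$, and the Duhamel complete-word count for (S) and (D) are the paper's argument (Theorem~\ref{thm:abstract}, Proposition~\ref{prop:UniStabCentDiff}, Lemma~\ref{lem:group-fixed-depth}). Your non-divergence stencil for (P) is also stable, via a discrete Leibniz rule, although the paper's form $\delta_ha\delta_h+\tfrac12(\varphi\delta_h+\delta_h\varphi)+p_0$ makes this immediate.

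That part needs three small repairs.
\begin{enumerate}
\item After factoring out $h^{2m}$, the remainder is only $O(t\,e^{\Omega t})$. The length-one terminal word $\int_0^te^{(t-s)A_h}Q_m(h)e^{sA}f\,ds$ is no better than $O(t)$. So the bound is $t\,h^{2m}$, not $t^{2m+1}h^{2m}$, and it is $t\,h^{2m}$ with $h=t/N$ that gives $t^{2m+1}N^{-2m}$.
\item At a Duhamel stage that has already accumulated weight $q$, you must expand $A_h-A$ only to the remaining depth $m-q$. The terminal factor is then $Q_{m-q}(h)$ and the total loss is $2m+p\ell\le(p+2)m$. If you expand to full depth each time, you produce terms of weight $>m$; these cannot simply be discarded, and they cost more than $(p+2)m$ derivatives.
\item Keep $\mathcal S(h)=e^{hA_h}$. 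A Taylor surrogate is not power bounded, since $h\|A_h\|\sim h^{1-p}$. Any surrogate also loses the identity $\mathcal S(t/N)^N=e^{tA_{t/N}}$ on which the Duhamel expansion rests.
\end{enumerate}

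\textbf{The genuine gap is in (a).} You plan to prove the $h$-uniform smoothing estimate $\|e^{sA_h}\|_{H^\sigma\to H^{\sigma+\theta}}\lesssim s^{-\theta/2}$ by discrete energy estimates, but it is false. $A_h$ is built from the translations $T_{\pm h}$, so in the constant-coefficient case its symbol is $2\pi/h$-periodic. For $\delta_h a\delta_h$ with $a\equiv1$ the symbol is $-\sin^2(h\xi)/h^2$, which vanishes at $\xi=\pi/h$; for the three-point Laplacian it vanishes at $\xi=2\pi/h$. Data concentrated near such frequencies are essentially undamped, so that operator norm is $\gtrsim h^{-\theta}$. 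The energy identity only controls $\|\delta_hu\|^2$, which degenerates exactly there.

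\textbf{You do not need this estimate.} In the Duhamel expansion iterated through the leftmost factor, $e^{(t-s_\ell)A_h}$ appears once, to the left of the last (terminal) defect. There it acts on $L^2$, and only uniform $L^2$ stability is used. All smoothing should come from the exact analytic-semigroup factors $e^{(s_j-s_{j-1})A}$ immediately to the right of each defect. This is Lemma~\ref{lem:parabolic-fixed-depth}, and it goes as follows.
\begin{itemize}
\item Set $X_\alpha=D((\omega-A)^\alpha)\simeq H^{2\alpha}$. Each defect $R_r$ or $Q_r(h)$ then maps $X_{\alpha+r+1}\to X_\alpha$.
\item Let the interval $\tau_j=s_j-s_{j-1}$ preceding the $j$th defect supply smoothing $1-\varepsilon/\ell$, at cost $\tau_j^{-(1-\varepsilon/\ell)}$.
\item A terminal word of total weight $m$ then needs only $f\in X_{m+\varepsilon}\simeq H^{2m+2\varepsilon}$.
\item The Dirichlet simplex integral of $\prod_j\tau_j^{-(1-\varepsilon/\ell)}$ equals $\mathcal B\,t^{\varepsilon}$. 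Multiplying by $h^{2m}=(t/n)^{2m}$ gives $t^{2m+\varepsilon}n^{-2m}$.
\item Taking $\varepsilon=\min\{\delta/2,1\}$ yields $\eta_\delta$.
\end{itemize}
So $\eta_\delta$ comes from this time integral, not from interpolation. The cap at $1$ reflects that the length-one terminal word gives at most one power of $t$. Interpolation is needed only to identify the fractional domains with $H^{2\alpha}$, using sectoriality and elliptic regularity. With this allocation your balancing heuristic goes through. Without it, the step you single out as the main work cannot be carried out.
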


Theorem~\ref{thm:unified-sobolev-rates} collects the fixed-depth
consequences proved separately for the three models. The parabolic
estimate follows from analytic-semigroup smoothing and scale allocation,
whereas the two nonsmoothing estimates follow from the common
complete-word Sobolev-group argument. 

\medskip
\emph{\textbf{Analytic class.}} 
The second main result concerns analytic data and allows the
extrapolation depth to grow with the base resolution. 
%
For ${\rho}>0$ write\footnote{For $z\in \C$, $\Re z$ (resp. $\Im z$) denotes its real (resp. imaginary) part.}
\[
\mathscr{S}_{\rho}:=\{z\in\C:\ |\Im z|<{\rho}\},
\]
and denote by $\mathcal{H}_{\rho}$ the Banach algebra of bounded
holomorphic functions on $\mathscr S_{\rho}$, equipped with
\[
\|g\|_{\mathcal{H}_{\rho}}
:=
\sup_{z\in \mathscr S_{\rho}}|g(z)|;
\]
by
\[
\mathcal A_\rho
:=
\left\{
f:\ e^{\rho|D|}f\in L^2(\R)
\right\},
\qquad
\|f\|_{\rho}
:=
\|e^{\rho|D|} f\|_{L^2}.
\]

\warning{
\begin{theorem}[Unified super-fast analytic Richardson rates]
\label{thm:unified-analytic-rate}

Let $A_0$ be one of
\[
aD^2+bD+c,
\qquad
iaD^2+bD+c,
\qquad
aD^3+bD+c,
\]
and set $p=2$ in the two second-order cases and $p=3$ in the
third-order case.

Assume that there exists $\rho>0$ such that
\[
a,b,c\in \mathcal{H}_{\rho},
\]
that $a$ and $b$ are real-valued on $\R$, and that
\[
a(x)\ge a_0>0,
\qquad x\in\R.
\]
\bwarning{
No separate upper bound on $a$ is required, since
$a\in H^\infty(S_{R_0})$ already implies
\[
\sup_{x\in\R}a(x)
\le
\|a\|_{\infty,R_0}<\infty.
\]
The coefficient $c$ may be complex-valued.
}

\gwarning{
  There exists $\rho_*>0$ such that all
normalized coefficients are bounded and holomorphic on $\mathscr S_{\rho_*}$,
and the normalized transport coefficient $v$ is real on $\R$.

Define the pulled-back analytic spaces
\[
\mathcal X_\rho^{(0)}
:=
\mathcal J^{-1}\mathcal A_\rho,
\qquad
\|f\|_{\rho,0}
:=
\|\mathcal Jf\|_\rho,
\]
and the corresponding original-coordinate Richardson approximation
\[
\mathcal M_{2m,n}^{(0)}(t)
:=
\mathcal J^{-1}
\mathcal M_{2m,n}(t)
\mathcal J.
\]

Fix $T>0$ and
\[
0<\rho_*<\rho_1e^{-\kappa T}
<
\rho_1<\rho_0<R_*,
\]
where $\kappa=0$ in the parabolic case and $\kappa\ge0$ in the two
nonsmoothing cases.  Then there exist constants
$C_T,K_*,\Gamma>0$ such that
\[
\left\|
\mathcal M_{2m,n}^{(0)}(t)f-e^{tA_0}f
\right\|_{\rho_*,0}
\le
C_Te^{\Gamma t}t
\left(
\frac{K_*t^2m^{p-1}}{n^2}
\right)^m
\|f\|_{\rho_0,0},
\qquad
0\le t\le T,
\]
for all sufficiently large $n$.

Consequently, if
\[
m(n)
=
\left\lfloor
\eta n^{2/(p-1)}
\right\rfloor
\]
with $\eta>0$ sufficiently small, then there are
$C_T,c_T>0$ such that
\[
\left\|
\mathcal M_{2m(n),n}^{(0)}(t)f-e^{tA_0}f
\right\|_{\rho_*,0}
\le
C_Te^{\Gamma t}t
e^{-c_Tn^{2/(p-1)}}
\|f\|_{\rho_0,0},
\qquad
0\le t\le T.
\]
}
For every $T>0$, there exists $C_T,c_T,K_*,\Gamma, N_T>0$ and $0<\rho_*<\rho$ such that for any natural $n>N_T$ and any $f\in \mathcal{J}^{-1}\mathcal{A}_{\rho_*}$:
$$
\left\|
\mathcal{J}^{-1}\mathcal M_{2m(n),n}(t)\mathcal{J} f-e^{tA_0} f
\right\|_{\rho_*}
\le
C_Te^{\Gamma t}t
e^{-c_Tn^{2/(p-1)}}
\|\mathcal{J} f\|_{\rho_*},
\qquad
0\le t\le T.
$$

Hence the two second-order models have rate
\[
e^{-c_T n^2},
\]
whereas the third-order model has rate
\[
e^{-c_T n}.
\]
\end{theorem}
}

{
\begin{theorem}[Unified super-fast analytic Richardson rates]
\label{thm:unified-analytic-rate}
Let $A_0$ be one of the three model generators
\[
aD^2+bD+c,
\qquad
iaD^2+bD+c,
\qquad
aD^3+bD+c,
\]
and let
\[
p=
\begin{cases}
2,&\text{for the two second-order models},\\
3,&\text{for the third-order model}.
\end{cases}
\]
Assume that, for some $\varrho_0>0$,
\[
a,b,c\in\mathcal H_{\varrho_0},
\]
 $a$ and $b$ are real-valued on $\R$, and 
\[
a(x)\ge a_0>0,
\qquad x\in\R.
\]
Set
%
%
\begin{equation}
a_\infty:=\|a\|_{\mathcal H_{\varrho_0}},
\quad
r_a:=\frac{a_0\varrho_0}{4a_\infty},\quad 
%
M_p
=
\left(\frac2{a_0}\right)^{1/p},\quad 
m_p
=
a_\infty^{-1/p}
\cos\left(
\frac1p
\arccos\frac{a_0}{2a_\infty}
\right),
\label{eq:Mp-explicit} \multilabel{eq:mp-explicit}
\end{equation}
and the Liouville distortion ratio
\begin{equation}
\gamma_p
:=
\frac{m_p}{M_p}
=
\left(
\frac{a_0}{2a_\infty}
\right)^{1/p}
\cos\left(
\frac1p
\arccos\frac{a_0}{2a_\infty}
\right)>0.
\label{eq:liouville-distortion} \multilabel{eq:main-gamma-p}
\end{equation}
Fix $T>0$ and an initial analytic radius
\[
0<\rho_{\rm in}<r_a.
\]
In the parabolic case let $\kappa=0$ and $\gamma_p=1$, and  in the two nonsmoothing cases,
let $\kappa>0$ be a strip-loss constant.
for the normalized operator $\widetilde A_0$.

\noindent
Choose the final radius as follows:
\begin{equation}
0<\rho_{\rm out}
<
\gamma_p e^{-\kappa T}\rho_{\rm in}.
\label{eq:original-compatible-radii}
\end{equation}
Then there exist constants
\[
C_T,c_T,\Gamma>0,
\qquad
N_T\in\N,
\]
and $\eta_T>0$ such that, with
\begin{equation}
m(n)
=
\left\lfloor
\eta_Tn^{2/(p-1)}
\right\rfloor,
\label{eq:main-growing-depth}
\end{equation}
one has
\begin{equation}
\left\|
\mathcal M_{2m(n),n}^{\rm orig}(t)f
-
e^{tA_0}f
\right\|_{\rho_{\rm out}}
\le
C_Te^{\Gamma t}\,
t\,
e^{-c_Tn^{2/(p-1)}}
\|f\|_{\rho_{\rm in}}\,,
\label{eq:main-original-analytic-rate}
\end{equation}
for every $n\ge N_T$, every $0\le t\le T$, and every
$f\in\mathcal A_{\rho_{\rm in}}$.

\noindent
Consequently, the parabolic and Schr\"odinger-type second-order
models converge with the rate
\[
O(e^{-c_Tn^2}),
\]
while the third-order dispersive model converges with the rate
\[
O(e^{-c_Tn}).
\]
\end{theorem}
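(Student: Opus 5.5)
The plan is to reduce the statement to a quantitative, all-order version of the fixed-depth mechanism behind Theorem~\ref{thm:unified-sobolev-rates}, posed in analytic norms, and then optimize over the depth. First we move to normalized coordinates. Let $\widetilde A_0=\mathcal J A_0\mathcal J^{-1}$. We check that $\mathcal J$ maps $\mathcal A_{\rho_{\rm in}}$ into $\mathcal A_{\rho'}$ for every $\rho'<\gamma_p\rho_{\rm in}$, and that $\mathcal J^{-1}$ maps $\mathcal A_{\rho''}$ back into $\mathcal A_{\rho_{\rm out}}$ with bounded norms. For this we use that $\eta_p(x)=\int_0^x a^{-1/p}$ extends holomorphically to the strip $\mathscr S_{r_a}$. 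The lower and upper bounds $m_p,M_p$ on $\Re(a^{-1/p})$ and $|a^{-1/p}|$ are obtained from $\Re a\ge a_0/2$ on $\mathscr S_{r_a}$, which itself follows from $|a-a(\Re z)|\le \|a'\|\,|\Im z|$ together with a Cauchy estimate $\|a'\|_{\mathcal H}\le 2a_\infty/\varrho_0$. Hence $\eta_p$ maps strips of width $r$ into strips of width $M_p r$, and $\chi_p$ maps strips of width $r$ into strips of width $r/m_p$. Composing these gives the loss factor $\gamma_p$ in \eqref{eq:original-compatible-radii}. In the parabolic case $\mathcal J=I$, so no loss occurs. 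It therefore suffices to prove
\[
\|\mathcal M_{2m,n}(t)g-e^{t\widetilde A_0}g\|_{\rho_2}\le C_Te^{\Gamma t}t\Bigl(\tfrac{K t^2 m^{p-1}}{n^2}\Bigr)^m\|g\|_{\rho_1}
\]
for any $\rho_2<e^{-\kappa T}\rho_1$, with constants uniform in $m$.

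Second, we make every step of the chain ``even generator expansion $\Rightarrow$ even product expansion $\Rightarrow$ Richardson cancellation'' quantitative in $m$, using a scale of analytic norms $\|\cdot\|_\rho$ with $\rho$ decreasing along a ladder.

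For the generator expansion \eqref{eqoExpanAh}, the centered difference symbol satisfies
\[
\frac{\sin(h\xi)}{h}=\sum_r (-1)^r\frac{h^{2r}\xi^{2r+1}}{(2r+1)!},
\]
and similarly for the second and third differences. Combined with Cauchy estimates $\|D^kg\|_{\rho-\sigma}\le k!\,\sigma^{-k}\|g\|_\rho$, this gives
\[
\|R_r g\|_{\rho-\sigma}\le C\,K^{r}\sigma^{-(2r+p)}\,(2r+p)!/(2r)!\,\|g\|_\rho ,
\]
and the analogous bound for $Q_m(h)$.

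For stability, we need $\|e^{s\widetilde A_0}\|_{\rho\to\rho e^{-\kappa s}}\le Ce^{\Gamma s}$ and a uniform power bound for the Chernoff function $\mathcal S(h)^k$ in these norms. In the parabolic case we use dissipativity plus analytic smoothing. In the nonsmoothing cases the symmetric--skew structure $\tfrac12(vD+Dv)$ with $v$ real gives a Gronwall estimate with the shrinking strip $\rho e^{-\kappa s}$.

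Next comes Duhamel iteration. The coefficient $E_{2r}(t)$ is a sum over compositions (words) of iterated Duhamel integrals involving $R_{r_j}$ and $A^k$ correction terms. We split the available radius loss $\rho_1-\rho_2$ evenly among the $O(m)$ factors, so each factor has $\sigma\sim 1/m$. The factorials $k!$ against $\sigma^{-k}\sim m^k$ then produce the per-order factor $m^{p-1}$ together with a geometric constant $K$. The time-ordered simplex volumes supply $t^{2r}/r!$-type gains, which absorb the combinatorial count of words.

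Finally, the Richardson multiproduct \eqref{eq:even-richardson-multiproduct} uses nodes $n_j=jn$ (or similar) with weights $w_j$. We must show that $\sum_j|w_j|$ times the remainder contributions is at most $C^m$ times $(Kt^2m^{p-1}/n^2)^m$. This is the step I expect to be the main obstacle. The explicit Lagrange weights for $x=N^{-2}$ at nodes $(jn)^{-2}$ grow roughly like $e^{O(m)}$, and this must be kept from overwhelming the gain. A $C^m$ growth can be absorbed into $K$, and the remainder for node $jn$ is of size $(jn)^{-2m}$, which is better than $n^{-2m}$. So I would first try to verify directly that
\[
\sum_j|w_j|\,j^{-2m}\le C^m .
\]

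For the conclusion, we take $m=\lfloor\eta_T n^{2/(p-1)}\rfloor$. Then
\[
K t^2 m^{p-1}/n^2\le KT^2\eta_T^{p-1}\le e^{-1}
\]
once $\eta_T$ is small, and the bound becomes $e^{-m}\le e^{-c_Tn^{2/(p-1)}}$. The requirement $t/n\le h_0$ holds for $n\ge N_T$. Conjugating back by $\mathcal J$ as in the first step yields \eqref{eq:main-original-analytic-rate}, and the rates $e^{-c_Tn^2}$ for $p=2$ and $e^{-c_Tn}$ for $p=3$.
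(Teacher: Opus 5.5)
Your overall architecture (conjugate by $\mathcal J$, propagate on shrinking strips, Duhamel-expand, Richardson-cancel, take $m\sim\eta n^{2/(p-1)}$) matches the paper, and your intermediate target $(Kt^2m^{p-1}/n^2)^m$ is exactly the paper's. The gap is in where you put the powers of $m$.

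You assert that the Duhamel iteration alone produces a per-order factor $m^{p-1}$, so that the Richardson weights only need to satisfy $\sum_j|c_j^{(m)}|j^{-2m}\le C^m$. This is false: the depth-$m$ product-expansion constants genuinely grow like $\Gamma((p+1)m+1)$, i.e.\ like $m^{p+1}$ per order.

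\emph{A constant-coefficient example.} Take $A=iD^2$. The symbol of $A_h=i\delta_h^2$ is $-i\xi^2+\tfrac{i}{3}h^2\xi^4+\cdots$, so the $h^{2m}$ coefficient of $e^{tA_h}$ has leading term $\frac{(it/3)^m}{m!}\xi^{4m}e^{-it\xi^2}$. Its norm from $\mathcal A_\rho$ to $\mathcal A_{\rho-\Delta}$ is
\[
\frac{(t/3)^m}{m!}\Bigl(\frac{4m}{e\Delta}\Bigr)^{4m}\approx C^m m^{3m}.
\]
For $\delta_h^3$ the same computation gives $m^{4m}$.

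\emph{Your own splitting gives the same count.} With $\ell\le m$ defects each allotted $\sigma\sim\Delta/m$, the cost is up to $m^{2m+p\ell}\le m^{(p+2)m}$. The time simplex supplies only $t^\ell/\ell!\approx m^{-m}$. The powers $t^{2r}$ come from $h^{2r}=(t/N)^{2r}$, not from the simplex.

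\emph{The missing idea.} The weighted sum of Richardson coefficients is not merely at most $C^m$; it is factorially small. From $c_j^{(m)}=(-1)^{m-j}2j^{2m}/((m-j)!(m+j)!)$ one has exactly
\[
\sum_{j=1}^m\frac{|c_j^{(m)}|}{j^{2m}}
=\frac{2}{(2m)!}\sum_{j=1}^m\binom{2m}{m+j}
=\frac{4^m-\binom{2m}{m}}{(2m)!}.
\]
The paper combines this factor $1/\Gamma(2m+1)\approx m^{-2m}$ with the product-expansion bound $C_TK_T^m\Gamma((p+1)m+1)$. That bound comes from a fixed strip reserve $\Delta_T$ split proportionally to the differential orders $2r_j+p$, with the terminal factor given a fixed half of the reserve. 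Stirling then turns the ratio $\Gamma((p+1)m+1)/\Gamma(2m+1)$ into $m^{(p-1)m}$. In the paper, therefore, the weights are the engine of the gain, not an obstacle to be contained.

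If you keep only the $C^m$ bound, the honest estimate is $(Kt^2m^{p+1}/n^2)^m$. The admissible depth is then $m\sim n^{2/(p+1)}$, and you obtain only $e^{-cn^{2/3}}$ and $e^{-cn^{1/2}}$ instead of $e^{-cn^2}$ and $e^{-cn}$.

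A minor bookkeeping point in your first step: the conjugation needs $\rho'<m_p\rho_{\rm in}$ for $\mathcal J$ and $\rho''>M_p\rho_{\rm out}$ for $\mathcal J^{-1}$. The ratio $\gamma_p=m_p/M_p$ appears only after composing the two.
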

}

\bwarning{
\begin{remark}
For $p\in\{2,3\}$, choose $r_p>0$ such that
\[
q_p:=a^{-1/p}\in H^\infty(S_{r_p}),
\qquad
m_p:=\inf_{S_{r_p}}\Re q_p>0,
\qquad
M_p:=\sup_{S_{r_p}}|q_p|<\infty.
\]
Then
\[
m_p|\Im z|
\le
|\Im\eta_p(z)|
\le
M_p|\Im z|,
\qquad z\in S_{r_p}.
\]
Consequently,
\[
\chi_p(S_{\rho_y})\subset S_{\rho_y/m_p},
\qquad
0<\rho_y<m_pr_p,
\]
and
\[
\eta_p(S_{\rho_x})\subset S_{M_p\rho_x},
\qquad
0<\rho_x<r_p.
\]

Hence, whenever
\[
0<\rho_y<m_p\rho_x,
\]
the normalizing map satisfies
\[
\mathcal J_p:\mathcal A_{\rho_x}\longrightarrow\mathcal A_{\rho_y},
\]
whereas, whenever
\[
0<M_p\rho_x<\rho_y,
\]
one has
\[
\mathcal J_p^{-1}:\mathcal A_{\rho_y}\longrightarrow\mathcal A_{\rho_x}.
\]
Both maps are bounded between the indicated analytic spaces.

Therefore, for every $T>0$ and every initial radius
$0<\rho_0^x<r_p$, any final radius satisfying
\[
0<\rho_*^x
<
\frac{m_p}{M_p}e^{-\kappa T}\rho_0^x
\]
is admissible.  More precisely, one may choose radii
\[
0<\rho_0^y<m_p\rho_0^x,
\qquad
0<\rho_1^y<\rho_0^y,
\]
and
\[
M_p\rho_*^x
<
\rho_*^y
<
\rho_1^ye^{-\kappa T}.
\]
\end{remark}
}

\bwarning{
\begin{theorem}[Unified super-fast analytic Richardson rate]
\label{thm:unified-analytic-rate}
Let $A_0$ be one of the three model generators above and let
$p\in\{2,3\}$ denote its principal differential order.

Assume that the coefficients are analytic in the following sense.

\emph{(P) Parabolic case.}
With
\[
\phi=b-a',
\qquad
p_0=c+\frac12(a''-b'),
\]
assume that $a$ is real,
\[
a(x)\ge a_0>0,
\qquad
a,\phi,p_0\in\mathcal W_R
\]
for some $R>0$. Choose $0<\rho_0<R$ sufficiently small that
\[
\rho_0 M_1(a;\rho_0)\le\frac{a_0}{2}.
\]

\emph{(S)--(D) Nonsmoothing cases.}
Assume that $a,b,c$ are such that the normalizing conjugation described
below transforms $A_0$ into
\[
A
=
c_pD^p+\frac12(vD+Dv)+w,
\qquad
c_2=i,\quad c_3=1,
\]
where $v$ is real and
\[
v,w\in\mathcal W_R
\]
for some $R>0$. Fix $0<\rho_0<R$.

Then, for every $T>0$ and every admissible smaller strip
$0<\rho_*<\rho_0$, the depth-$m$ product expansion satisfies
\[
C^{\rm an}_{m,T}
\le
C_TK_T^m\Gamma((p+1)m+1).
\]
Moreover, after Richardson cancellation,
\[
\|\mathcal M_{2m,n}(t)f-e^{tA}f\|_{\rho_*}
\le
C_Te^{\Gamma t}t
\left(
\frac{K_*t^2m^{p-1}}{n^2}
\right)^m
\|f\|_{\rho_0},
\qquad
0\le t\le T.
\]

Consequently, for $\eta>0$ sufficiently small and
\[
m(n)=\left\lfloor\eta n^{2/(p-1)}\right\rfloor,
\]
there exist constants $C_T,c_T>0$ such that, for all sufficiently large
$n$,
\[
\|\mathcal M_{2m(n),n}(t)f-e^{tA}f\|_{\rho_*}
\le
C_Te^{\Gamma t}t\,
e^{-c_Tn^{2/(p-1)}}\|f\|_{\rho_0},
\qquad
0\le t\le T.
\]
After conjugating back when necessary, the same rate holds for the
original generator $A_0$.

Hence
\[
p=2
\quad\Longrightarrow\quad
e^{-cn^2},
\]
for the parabolic and Schr\"odinger-type models, whereas
\[
p=3
\quad\Longrightarrow\quad
e^{-cn},
\]
for the third-order dispersive model.
\end{theorem}

Theorem~\ref{thm:unified-analytic-rate} combines the uniform analytic
propagation estimates, the all-order product expansion, the exact
Richardson-weight estimate, and the growing-depth optimization proved
later. The common exponent
\[
\frac{2}{p-1}
\]
results from balancing the $n^{-2m}$ Richardson gain against the
factorial growth of the all-order constants. Thus the Gaussian-type and
exponential rates are two instances of the same formula.
}

\subsection*{Organization}

Section~\ref{sec:preliminaries} recalls the basic semigroup and Chernoff
framework. Section~\ref{sec:abstract-richardson} develops the abstract
Richardson--Chernoff extrapolation theory on general power scales,
including the even-power specialization, stability estimates, and
resolvent approximation.

Section~\ref{sec:centered-differences} constructs the centered difference
Chernoff approximation, proves their stability and finite even generator
expansions, and derives the corresponding product expansions by Duhamel
iteration. It then treats the three model classes; their fixed-depth
consequences together prove
Theorem~\ref{thm:unified-sobolev-rates}.

Section~\ref{sec:analytic-regime} develops the analytic-strip framework,
uniform propagation estimates, all-order defect bounds, and
Richardson-weight estimates. These results lead to the growing-depth
optimization and prove
Theorem~\ref{thm:unified-analytic-rate}.


\section{Preliminaries}\label{sec:preliminaries}
Let us recall the basic semigroup and Chernoff framework; see \cite{Butko,EngelNagel,Remizov25} for more details.
Let $X$ be a complex Banach space. 

\begin{definition}[$C_0$-semigroup]\label{def:c0-semigroup}
A family
$(T(t))_{t\ge0}\subset\mathcal L(X)$ is a semigroup if
\[
 T(0)=I,\qquad T(t+s)=T(t)T(s).
\]
The family is called strongly continuous if $\lim_{t\to0^+}T(t)f=f\quad(f\in X)$.\\
A strongly continuous semigroup is called \emph{$C_0$-semigroup} or \emph{evolution semigroup} or \emph{evolution operator} or \emph{dynamical system}.\\
Its generator is
\[
 Af=\lim_{t\to 0^+}\frac{T(t)f-f}{t},\quad \mbox{with domain}\quad
 D(A)=\left\{f\in X: \lim_{t\to 0^+}\frac{T(t)f-f}{t} \text{ exists}\right\}.
\]
\end{definition}

\begin{definition}[Closed operator]\label{def:closed-operator}
%
%
The operator $(A,D(A))$ is said to be \emph{closed} if its graph

\[
\mathcal G(A)
:=
\{(x,Ax):x\in D(A)\}
\subset X\times X
\]

is a closed subspace of \(X\times X\).

Equivalently, \(A\) is closed if, whenever a sequence \((x_n)_{n\geq1}\subset D(A)\) satisfies

\[
x_n\longrightarrow x
\quad\text{in }X,
\qquad
Ax_n\longrightarrow y
\quad\text{in }X,
\]

for some \(x,y\in X\), then

\[
x\in D(A)
\qquad\text{and}\qquad
Ax=y.
\]

When \(A\) is closed, its domain \(D(A)\), equipped with the graph norm

\[
\|x\|_{D(A)}
:=
\|x\|_X+\|Ax\|_X,
\]

is itself a Banach space.
\end{definition}

{
\begin{remark}\label{rem:model-closures}
For each model we initially regard the differential expression on
\(C_c^\infty(\R)\subset L^2(\R)\). Under the coefficient assumptions
imposed below, these minimal operators are closable and their closures
are the natural Sobolev realizations
\[
\begin{aligned}
A\coloneqq aD^2+bD+c, \qquad D(\overline{A})&=H^2(\R),\\
A\coloneqq iaD^2+bD+c, \qquad D(\overline{A})&=H^2(\R),\\
A\coloneqq aD^3+bD+c, \qquad D(\overline{A})&=H^3(\R).
\end{aligned}
\]
We shall abuse notation and denote the closure of the operator $A$ again by $A$.

\end{remark}
}

The associated abstract Cauchy problem is
\begin{equation}
 \begin{cases}
  u'(t)=Au(t),&t>0,\\
  u(0)=f,
 \end{cases}
 \qquad u(t)=T(t)f=e^{tA}f.
 \tag{CP}\label{eq:cauchy-problem}
\end{equation}
We have the following (see \cite[Theorem~II.6.7]{EngelNagel}):

\begin{theorem}[Classical well-posedness and $C_0$-semigroup generation]
\label{thm:acp-semigroup}
Let $(A,D(A))$ be a closed, densely defined linear operator on a Banach
space $X$, and assume that its resolvent set is nonempty: 
$
\rho(A)\neq\varnothing.
$
Then the following assertions are equivalent:
%
\begin{enumerate}
\item
For every initial datum $f\in D(A)$, the abstract Cauchy problem \textup{(CP)}
admits a unique classical solution, that is,
\[
u\in C^1([0,\infty),X),
\qquad
u(t)\in D(A)\quad\text{for every }t\ge0.
\]

\item
$(A,D(A))$ is the generator of a strongly continuous semigroup
$(T(t))_{t\ge0}$ on $X$.
\end{enumerate}
Whenever these equivalent conditions hold, the unique classical solution of the evolution associated with
\textup{(CP)} is represented by the semigroup orbit
\[
u(t)=T(t)f=e^{tA}f.
\]

\end{theorem}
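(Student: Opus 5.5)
The statement just above is the classical equivalence of \cite[Theorem~II.6.7]{EngelNagel}. I would prove the two implications separately, writing $T(t)=e^{tA}$ throughout.

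\emph{From (2) to (1).} Suppose $A$ generates a $C_0$-semigroup $T$. Take $f\in D(A)$ and put $u(t)=T(t)f$. The standard semigroup calculus gives $T(t)D(A)\subset D(A)$ and $\frac{d}{dt}T(t)f=AT(t)f=T(t)Af$. The map $t\mapsto T(t)Af$ is continuous, so $u\in C^1([0,\infty),X)$ and $u$ solves \eqref{eq:cauchy-problem}.

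For uniqueness, let $v$ be any classical solution with $v(0)=f$ and fix $t>0$. Set $w(s)=T(t-s)v(s)$ for $0\le s\le t$. The product rule holds here because $T$ is locally uniformly bounded and $v$ is $C^1$ with values in $D(A)$. It gives
\[
w'(s)=-T(t-s)Av(s)+T(t-s)v'(s)=0 .
\]
Hence $v(t)=w(t)=w(0)=T(t)f$.

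\emph{From (1) to (2).} This is the substantive direction. For $f\in D(A)$ let $u(\cdot,f)$ be the unique classical solution, and define $T(t)f:=u(t,f)$. Uniqueness makes $T(t)$ linear on $D(A)$. It also gives the semigroup law, since $s\mapsto u(t+s,f)$ is the solution starting at $u(t,f)$.

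The key step is to extend each $T(t)$ to a bounded operator on $X$ with norms locally bounded in $t$. The plan is as follows.
\begin{enumerate}
\item Equip $D(A)$ with the graph norm, which makes it a Banach space because $A$ is closed.
\item Consider the linear map $\Phi\colon D(A)\to C([0,t_0],D(A))$, $f\mapsto u(\cdot,f)$. Show $\Phi$ has closed graph, using closedness of $A$ and the integrated identity $u(t)=f+A\int_0^t u(s)\,ds$. The closed graph theorem then gives $\sup_{s\le t_0}\|u(s,f)\|_{D(A)}\le C\|f\|_{D(A)}$.
\item Remove the graph norm on the right. Pick $\lambda\in\rho(A)$; the resolvent $R(\lambda,A)$ is an isomorphism from $X$ onto $D(A)$. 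For $f\in D(A^2)$, uniqueness shows $R(\lambda,A)$ commutes with the solution map: if $u$ solves the problem with datum $g$, then $R(\lambda,A)u$ solves it with datum $R(\lambda,A)g$. Combining this with the bound from step 2 yields $\|T(s)f\|\le C'\|f\|$ for $s\le t_0$.
\item Since $D(A^2)$ is dense in $X$, extend $T(s)$ by continuity to all of $X$.
\end{enumerate}

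Strong continuity on $X$ then follows from continuity on the dense set $D(A)$ together with the uniform bound. Finally one identifies the generator $B$ of $T$ with $A$. For $f\in D(A)$ the right derivative at $0$ is $Af$, so $A\subset B$. Since $\lambda-B$ is injective for large $\lambda$ and $\lambda-A$ is surjective for $\lambda\in\rho(A)$, the inclusion forces $B=A$. The representation $u(t)=T(t)f=e^{tA}f$ is then exactly the uniqueness statement.

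\emph{Main obstacle.} The hard part is the resolvent/closed-graph step, which turns existence and uniqueness of classical solutions into a locally uniform bound in $X$. This is precisely where the hypotheses $\rho(A)\neq\varnothing$ and closedness of $A$ are used. It is also where the density of $D(A^2)$ and the commutation of $R(\lambda,A)$ with the solution map must be checked carefully.
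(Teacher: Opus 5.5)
The paper gives no proof of its own here; it simply quotes \cite[Theorem~II.6.7]{EngelNagel}. Your outline follows essentially the route of that reference: the closed graph theorem on $X_1=(D(A),\|\cdot\|_A)$, commutation of $R(\lambda,A)$ with the solution map to pass from $X_1$-bounds to $X$-bounds, extension by density, and identification of the generator. The direction (2)$\Rightarrow$(1) and your Steps~1--4 are sound. The final identification $B=A$, however, does not go through as written.

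\textbf{The gap.} You use three facts: $A\subset B$, $\lambda-B$ is injective for large $\lambda$, and $\lambda-A$ is surjective for $\lambda\in\rho(A)$. The inclusion argument runs: for $x\in D(B)$, write $(\lambda-B)x=(\lambda-A)z$ with $z\in D(A)$, so $(\lambda-B)(x-z)=0$. This needs a \emph{single} $\lambda$ at which $\lambda-A$ is surjective and $\lambda-B$ is injective. The hypothesis only provides some $\lambda_0\in\rho(A)$, with no control on its real part. A priori $\lambda_0$ could even lie in the point spectrum of $B$, so the step does not close.

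\textbf{Two standard repairs.}
\begin{enumerate}
\item[(a)] $D(A)\subset D(B)$ is dense and invariant under $T(t)$, because $T(t)f=u(t,f)\in D(A)$. Hence $D(A)$ is a core for $B$ (see \cite[Ch.~II]{EngelNagel}). Since $B|_{D(A)}=A$ is already closed, $B=A$. This is how the cited reference concludes.
\item[(b)] Alternatively, prove directly that $\lambda_0-B$ is injective. Your Step~3 gives $T(t)R(\lambda_0,A)=R(\lambda_0,A)T(t)$ on $D(A)$, hence on $X$. If $Bx=\lambda_0x$, then $T(t)x=e^{\lambda_0t}x$. So $y:=R(\lambda_0,A)x\in D(A)$ satisfies $T(t)y=e^{\lambda_0t}y$, whence $Ay=By=\lambda_0y$. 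Therefore $y=0$ and $x=(\lambda_0-A)y=0$.
\end{enumerate}

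\textbf{Two smaller points.}
\begin{enumerate}
\item[(i)] In Step~2 the limit $v$ is a classical solution only on $[0,t_0]$. To identify it with $u(\cdot,f)$ you need uniqueness on finite intervals. This follows by concatenating $v$ with $u(\cdot-t_0,v(t_0))$, which gives a $C^1$ solution on $[0,\infty)$.
\item[(ii)] In Step~3 the estimate is cleanest for all $f\in D(A)$:
\[
\|T(s)f\|\le\|\lambda-A\|_{X_1\to X}\,\|T(s)R(\lambda,A)f\|_{X_1}\le C\,\|\lambda-A\|_{X_1\to X}\,\|R(\lambda,A)\|_{X\to X_1}\,\|f\|.
\]
This makes the detour through the density of $D(A^2)$ unnecessary.
\end{enumerate}
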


Structurally, classical semigroup theory organizes the
semigroup, Chernoff approximations, its infinitesimal generator, and its resolvent into 
the following diagram: differentiation at the origin of the evolution yields
the generator, the Laplace transform of the evolution yields the
resolvent, and approximation/inversion formulas recover the evolution
from infinitesimal or resolvent data.  

\begin{figure}[htbp]
    \centering
    \includegraphics[scale=.4 ]{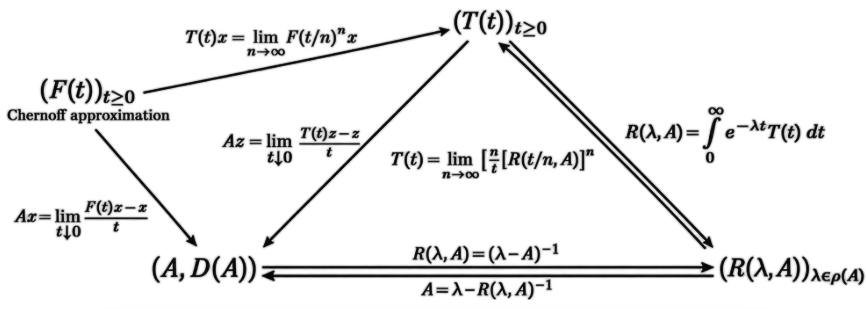}
    \caption{Diagram summarizing $C_0$-semigroups philosophy} 
    \label{fig:my_image}
\end{figure}
 The present paper refines the
Chernoff-approximation side of this classical picture quantitatively:
starting from a family tangent to the generator, it develops
Richardson cancellation for the iterated products and then transfers
fixed-depth information to the resolvent through the same Laplace
transform mechanism.

\begin{definition}[Chernoff tangency]\label{def:chernoff-tangency}
A strongly continuous family
$S:[0,\infty)\to\mathcal L(X)$ with $S(0)=I$ is
\emph{Chernoff tangent} to $A$ on a core $\mathcal D\subset D(A)$ if
\[
 \lim_{h\to 0^+}\frac{S(h)f-f}{h}=Af,\qquad f\in\mathcal D.
\]
\end{definition}
We use the standard (see \cite{Butko}): 

\begin{theorem}[Chernoff theorem]
\label{thm:cher-thm}
Let $S:[0,\infty)\to\mathcal L(X)$ be a family of bounded operator \emph{Chernoff tangent} to 
a closed linear (bounded or unbounded) operator $(A,D(A))$ which generate a $C_0$-semigroup $e^{t A}$, and 
\[
 \|S(h)^k\|_{\mathcal L(X)}\le Me^{\omega kh},\qquad
 h\ge0,\quad k\in\mathbb N,
\]
for some $M\ge 0$ and $\omega\in \R$. Then
\[
 e^{t A}f=\lim_{n\to \infty}S(t/n)^nf
\]
for every $f\in X$, uniformly for $t$ in compact subsets of
$[0,\infty)$.

Such a function $S$ is then called a Chernoff function for the operator $A$, and $S$ is called Chernoff-equivalent to $C_0$-semigroup $e^{t A}$.
\end{theorem}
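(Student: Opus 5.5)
We would prove this by the classical route. First, interpolate between the iterated products and the exponentials of the bounded operators $A_h:=(S(h)-I)/h$. Then show that these exponentials converge to $e^{tA}$ by a Trotter--Kato argument.

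\textbf{Reduction to $\omega=0$.} Replacing $S(h)$ by $e^{-\omega h}S(h)$ replaces $A$ by $A-\omega$. Tangency on $\mathcal D$ is preserved, since $(e^{-\omega h}S(h)f-f)/h\to Af-\omega f$. Moreover $S(t/n)^n$ is multiplied by $e^{-\omega t}$, which is harmless uniformly on compact $t$-sets. So we may assume $\|S(h)^k\|\le M$ for all $h\ge0$ and $k\in\N$.

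\textbf{Stability of the bounded exponentials.} The operators $A_h$ are bounded, and
\[
e^{tA_h}=e^{-t/h}\sum_{k\ge0}\frac{(t/h)^k}{k!}S(h)^k ,
\]
so $\|e^{tA_h}\|\le M$ for all $t\ge0$ and $h>0$.

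\textbf{Chernoff's lemma.} Let $T$ satisfy $\|T^k\|\le M$ for all $k$. Write
\[
e^{n(T-I)}f-T^nf=e^{-n}\sum_k\frac{n^k}{k!}\,(T^k-T^n)f .
\]
Bound each term using
\[
\|(T^k-T^n)f\|\le |k-n|\,M\|(T-I)f\| .
\]
The Poisson mean absolute deviation is at most $\sqrt n$ by Cauchy--Schwarz, since the Poisson variance equals $n$. This gives
\[
\|T^nf-e^{n(T-I)}f\|\le M\sqrt n\,\|(T-I)f\|.
\]
Apply this with $T=S(t/n)$, noting that $n(T-I)=tA_{t/n}$:
\[
\|S(t/n)^nf-e^{tA_{t/n}}f\|\le \frac{Mt}{\sqrt n}\,\|A_{t/n}f\|,\qquad f\in\mathcal D .
\]
By tangency, $\|A_hf\|$ is bounded for small $h$. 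Hence the right-hand side tends to $0$ uniformly for $t$ in compacts, for every $f\in\mathcal D$.

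\textbf{Convergence $e^{tA_h}f\to e^{tA}f$ (main obstacle).} This is the step we expect to be hardest, because $\mathcal D$ is only a core and need not be invariant under $e^{sA}$. A naive Duhamel formula
\[
e^{tA}f-e^{tA_h}f=\int_0^t e^{(t-s)A_h}(A-A_h)e^{sA}f\,ds
\]
therefore cannot be applied directly. We would instead pass through resolvents, i.e.\ the first Trotter--Kato theorem.
\begin{enumerate}
\item For $\Re\lambda>0$ and $g=(\lambda-A)u$ with $u\in\mathcal D$, use the identity
\[
(\lambda-A_h)^{-1}g-(\lambda-A)^{-1}g=(\lambda-A_h)^{-1}(A_h-A)u\to0 .
\]
Here the resolvents $(\lambda-A_h)^{-1}$ are uniformly bounded by $M/\Re\lambda$, via the Laplace transform of the stability bound above.
\item Since $\mathcal D$ is a core, $(\lambda-A)\mathcal D$ is dense in $X$. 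Uniform boundedness then upgrades this to strong resolvent convergence on all of $X$.
\item The Trotter--Kato theorem, with uniform bound $M$, yields $e^{tA_h}f\to e^{tA}f$ uniformly on compact $t$-sets. We apply it along $h=t/n$, using joint uniformity in $t\in[0,t_0]$ and $h\to0$, which Trotter--Kato provides.
\end{enumerate}

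\textbf{Conclusion.} Combining the last two steps gives $S(t/n)^nf\to e^{tA}f$ for $f\in\mathcal D$, uniformly on compacts. Since $\|S(t/n)^n\|\le M$ and $\|e^{tA}\|\le M'$ uniformly, an $\varepsilon/3$ density argument extends the convergence to all $f\in X$.
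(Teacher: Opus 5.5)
The paper does not prove this result; it quotes it as standard from \cite{Butko}. Your argument is the classical proof (Chernoff's $\sqrt n$-lemma for power-bounded $T$, then Trotter--Kato for the bounded generators $(S(h)-I)/h$, as in Engel--Nagel, Ch.~III, \S5), and it is correct. One point to tidy: in the resolvent step, take $\lambda$ real and larger than the growth bound of $e^{tA}$ as well, because after the rescaling you do not yet know $\|e^{tA}\|\le M$, so $\operatorname{Re}\lambda>0$ alone does not guarantee $\lambda\in\rho(A)$.
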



\section{Abstract Richardson--Chernoff extrapolation}\label{sec:abstract-richardson}

We start with a purely algebraic mechanism, independent of any
differential-operator structure: if the products of a Chernoff function admit an asymptotic expansion in a prescribed strictly increasing sequence of powers of the step parameter, then a finite
linear combination of such products at different  step reduces significantly the error by canceling the polynomial part. 

\subsection{Chernoff products and asymptotic power scales}\label{subsec:power-scales}

Let
\[
\boldsymbol\nu=(\nu_1,\nu_2,\ldots),\qquad
0<\nu_1<\nu_2<\cdots ,
\]
and let $\mathcal S$ be a Chernoff function for the generator $A$ of
$T(t)=e^{tA}$. We assume the power bound
\begin{equation}
 \|\mathcal S(h)^k\|\le M e^{\omega kh},
 \qquad h\ge0,\quad k\in\mathbb N,
\label{eq:chernoff-power-bound}
\end{equation}
which implies
\begin{equation}
 \|T(t)\|\le M e^{\omega t},\qquad t\ge0 .
\label{eq:semigroup-growth-bound}
\end{equation}

At a fixed depth $m$, let $\mathcal D_m$ be a dense Banach subspace
of $X$, equipped with a norm
$\|\cdot\|_{\mathcal D_m}$. 
%
Suppose there are operator-valued functions
$E_{\nu_1}(t),\ldots,E_{\nu_{m-1}}(t)$, constants
$h_0,C_m>0$ and $\Omega_m\ge0$, and remainders
$R^{\mathrm{prod}}_{m,N}(t)$ such that, whenever
$N\ge1$ and $0\le t\le h_0N$,
\begin{equation}
 \mathcal S(t/N)^Nf
 =
 T(t)f+
 \sum_{r=1}^{m-1}\frac{E_{\nu_r}(t)f}{N^{\nu_r}}
 +R^{\mathrm{prod}}_{m,N}(t)f,
 \qquad
 \|R^{\mathrm{prod}}_{m,N}(t)f\|_X
 \le
 C_m e^{\Omega_m t}
 \frac{t^{\nu_m+1}}{N^{\nu_m}}
 \|f\|_{\mathcal D_m}.
\label{eq:product-expansion}
\end{equation}


\subsection{Richardson extrapolation on a prescribed power scale}\label{subsec:general-richardson}

Fix distinct positive refinement factors $j_1,\dots,j_m$, 
and let
$c_1,\dots,c_m$ so that
\begin{equation}
 \sum_{\ell=1}^m c_\ell=1,\qquad
 \sum_{\ell=1}^m c_\ell j_\ell^{-\nu_r}=0,
 \quad r=1,\dots,m-1 .
\label{eq:general-richardson-system}
\end{equation}
Equivalently, $(c_\ell)$ solves the generalized Vandermonde system
\[
\begin{pmatrix}
1&\cdots&1\\
j_1^{-\nu_1}&\cdots&j_m^{-\nu_1}\\
\vdots&&\vdots\\
j_1^{-\nu_{m-1}}&\cdots&j_m^{-\nu_{m-1}}
\end{pmatrix}
\begin{pmatrix}c_1\\ \vdots\\ c_m\end{pmatrix}
=
\begin{pmatrix}1\\0\\\vdots\\0\end{pmatrix}.
\]
For strictly ordered positive nodes $x_\ell=j_\ell^{-1}$ and strictly
increasing exponents $0,\nu_1,\ldots,\nu_{m-1}$, this matrix is
nonsingular.

\begin{theorem}[Richardson--Chernoff extrapolation on a prescribed power scale]
\label{thm:general-richardson}
Assume that $\mathcal S$ satisfies the Chernoff conditions above and
that its products admit the depth-$m$ expansion \eqref{eq:product-expansion} on the
power scale $\boldsymbol\nu$.  Let $c_\ell$ satisfy \eqref{eq:general-richardson-system} and
define
\[
 \mathcal M_{\boldsymbol\nu,n}(t)
 :=
 \sum_{\ell=1}^m c_\ell
 \mathcal S\!\left(\frac{t}{j_\ell n}\right)^{j_\ell n}.
\]
Let $j_*:=\min_{1\le\ell\le m}j_\ell$. Then, for every
$f\in\mathcal D_m$ and every $0\le t\le h_0j_*n$,
\[
 \|\mathcal M_{\boldsymbol\nu,n}(t)f-T(t)f\|_X
 \le
 C'_m e^{\Omega_m t}
 \frac{t^{\nu_m+1}}{n^{\nu_m}}
 \|f\|_{\mathcal D_m},
\]
where $C'_m=C_m\sum_{\ell=1}^m|c_\ell|j_\ell^{-\nu_m}$.
\end{theorem}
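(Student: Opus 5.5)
The plan is to apply the product expansion \eqref{eq:product-expansion} to each term of the multiproduct separately, with $N=j_\ell n$. Then sum the results against the weights $c_\ell$ and use the moment conditions \eqref{eq:general-richardson-system} to cancel everything except the remainders.

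\emph{Step 1: admissibility of each substitution.} For each $\ell$ we need $N=j_\ell n\ge1$ and $0\le t\le h_0 j_\ell n$. Since $j_\ell\ge j_*$ and $t\le h_0 j_* n$, the second condition holds. We assume, as is implicit in the construction, that the refinement factors are such that $j_\ell n$ is a positive integer. Then \eqref{eq:product-expansion} gives
\[
\mathcal S\!\left(\tfrac{t}{j_\ell n}\right)^{j_\ell n}f
=
T(t)f+\sum_{r=1}^{m-1}\frac{E_{\nu_r}(t)f}{j_\ell^{\nu_r}n^{\nu_r}}
+R^{\mathrm{prod}}_{m,j_\ell n}(t)f .
\]

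\emph{Step 2: cancellation.} Multiply by $c_\ell$ and sum over $\ell$. The coefficient of $T(t)f$ is $\sum_\ell c_\ell=1$. For each $r\le m-1$, the coefficient of $n^{-\nu_r}E_{\nu_r}(t)f$ is $\sum_\ell c_\ell j_\ell^{-\nu_r}=0$. Both facts come from \eqref{eq:general-richardson-system}, and the solution exists by the nonsingularity of the generalized Vandermonde matrix noted before the theorem. It is essential here that the operators $E_{\nu_r}(t)$ do not depend on $N$, so that they factor out of the sum. We are left with
\[
\mathcal M_{\boldsymbol\nu,n}(t)f-T(t)f=\sum_{\ell=1}^m c_\ell R^{\mathrm{prod}}_{m,j_\ell n}(t)f .
\]

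\emph{Step 3: remainder bound.} The triangle inequality and the remainder estimate in \eqref{eq:product-expansion} with $N=j_\ell n$ give
\[
\|\mathcal M_{\boldsymbol\nu,n}(t)f-T(t)f\|_X\le\sum_{\ell}|c_\ell|\,C_m e^{\Omega_m t}\frac{t^{\nu_m+1}}{j_\ell^{\nu_m}n^{\nu_m}}\|f\|_{\mathcal D_m},
\]
which is exactly the claim with $C'_m=C_m\sum_\ell|c_\ell|j_\ell^{-\nu_m}$.

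There is no real obstacle here. The only points requiring care are the uniformity of the range $t\le h_0 j_* n$ across all refinement levels and the integrality of $j_\ell n$; the rest is linear bookkeeping. The power bound \eqref{eq:chernoff-power-bound} is not actually needed beyond its role in guaranteeing that the expansion \eqref{eq:product-expansion} makes sense.
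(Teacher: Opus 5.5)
Your proof is correct and follows the paper's argument exactly: substitute $N=j_\ell n$ into \eqref{eq:product-expansion}, use the moment conditions \eqref{eq:general-richardson-system} to cancel the semigroup and intermediate terms, and bound the surviving remainders by the triangle inequality. Your explicit checks that $t\le h_0 j_* n\le h_0 j_\ell n$ and that $j_\ell n$ must be a positive integer are points the paper's proof leaves implicit.
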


\begin{proof}
Apply \eqref{eq:product-expansion} with $N=j_\ell n$, multiply by $c_\ell$, and sum over
$\ell$.  The semigroup terms combine to $T(t)f$ because
$\sum_\ell c_\ell=1$, and the coefficient of
$n^{-\nu_r}E_{\nu_r}(t)f$ is
$\sum_\ell c_\ell j_\ell^{-\nu_r}=0$.  Hence
\[
 \mathcal M_{\boldsymbol\nu,n}(t)f-T(t)f
 =
 \sum_{\ell=1}^m c_\ell R^{\mathrm{prod}}_{m,j_\ell n}(t)f,
\]
and the pointwise bound in \eqref{eq:product-expansion} gives the asserted estimate.
\end{proof}


\subsection{The even-power specialization and explicit weights}\label{subsec:even-richardson}

\begin{theorem}[Richardson--Chernoff extrapolation]
\label{thm:abstract}
Assume that $\mathcal S_2$ satisfies the Chernoff conditions above and
that its products admit the even expansion \eqref{eq:product-expansion} through depth
$m$, with $\nu_r=2r$.  For $j=1,\dots,m$, set
\begin{equation}
c_j^{(m)} = (-1)^{m-j}\,\frac{2j^{2m}}{(m-j)!\,(m+j)!},
\label{eq:even-richardson-weights}
\end{equation}
and define the multiproduct
\begin{equation}
\mathcal M_{2m,n}(t) := \sum_{j=1}^mc_j^{(m)}\,\mathcal S_2\!\left(\frac t{jn}\right)^{jn}.
\label{eq:even-richardson-multiproduct}
\end{equation}
Then for every $f\in\mathcal D_m$ and every $0\le t\le h_0n$,
\begin{equation}
\big\|\mathcal M_{2m,n}(t)f-e^{tA}f\big\|_X
\le C'_m e^{\Omega_m t}
\frac{t^{2m+1}}{n^{2m}}
\|f\|_{\mathcal D_m}.
\label{eq:even-richardson-rate}
\end{equation}
\end{theorem}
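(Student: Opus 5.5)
Theorem~\ref{thm:abstract} is the special case of Theorem~\ref{thm:general-richardson} with $\nu_r=2r$ and $j_\ell=\ell$, $\ell=1,\dots,m$. So the plan has two parts. First, verify that the explicit weights \eqref{eq:even-richardson-weights} solve the Vandermonde system \eqref{eq:general-richardson-system} in this setting. Second, invoke Theorem~\ref{thm:general-richardson} and check the admissible range of $t$.

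\textbf{The weights.} In the variable $x_j=j^{-2}$, the conditions \eqref{eq:general-richardson-system} say that the functional $\Lambda(P)=\sum_j c_j P(j^{-2})$ satisfies $\Lambda(1)=1$ and $\Lambda(x^r)=0$ for $r=1,\dots,m-1$. Since the space of polynomials of degree at most $m-1$ has dimension $m$ and the nodes $j^{-2}$ are distinct, $\Lambda$ must be evaluation at $x=0$. The weights are therefore the Lagrange interpolation weights at $0$:
\[
c_j=\prod_{k\ne j}\frac{0-k^{-2}}{j^{-2}-k^{-2}}=\prod_{k\ne j}\frac{j^2}{j^2-k^2}.
\]
To compute this, write $\prod_{k\ne j,\,1\le k\le m}(j^2-k^2)=\prod_{k\ne j}(j-k)(j+k)$. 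The factor $\prod_{k\ne j}(j-k)$ equals $(j-1)!\,(-1)^{m-j}(m-j)!$. The factor $\prod_{k\ne j}(j+k)$ equals $\frac{(m+j)!}{j!\cdot 2j}$. Multiplying, the denominator is $(-1)^{m-j}\frac{(m-j)!(m+j)!}{2j^2}$, while the numerator is $j^{2(m-1)}$. Hence
\[
c_j=(-1)^{m-j}\frac{2j^{2m}}{(m-j)!(m+j)!},
\]
which is exactly \eqref{eq:even-richardson-weights}. As a sanity check, for $m=1$ this gives $c_1=1$, and for $m=2$ it gives $c_1=-1/3$ and $c_2=4/3$, which is the classical Richardson combination.

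\textbf{The estimate.} Apply Theorem~\ref{thm:general-richardson} with $j_*=1$. This gives the bound for $0\le t\le h_0 n$, with constant
\[
C'_m=C_m\sum_j |c_j^{(m)}|\,j^{-2m}=C_m\sum_j\frac{2}{(m-j)!(m+j)!}.
\]
The only point to confirm is the step-size range. For each $j$ we have $N=jn$, and $t\le h_0 n\le h_0 jn$, so the hypothesis $t/N\le h_0$ of \eqref{eq:product-expansion} holds for every summand. The semigroup terms then combine to $T(t)f=e^{tA}f$, the terms in $N^{-2r}$ for $r\le m-1$ cancel, and the remainders are bounded as in Theorem~\ref{thm:general-richardson}.

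I expect no real obstacle here. The only delicate step is the sign and factorial bookkeeping in the Lagrange product, which the small-$m$ checks guard against.
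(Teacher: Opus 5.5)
Your proposal is correct and follows essentially the paper's route. The paper also identifies $c_j^{(m)}$ as the Lagrange weights $\prod_{\ell\ne j}j^2/(j^2-\ell^2)$ at $x=0$ for the nodes $x_j=j^{-2}$, and then sums the expansions \eqref{eq:product-expansion} at $N=jn$ (in effect re-running the proof of Theorem~\ref{thm:general-richardson} with $j_*=1$) to get $C'_m=C_m\sum_j|c_j^{(m)}|j^{-2m}$. Your sign and factorial bookkeeping and the simplification $|c_j^{(m)}|j^{-2m}=2/((m-j)!\,(m+j)!)$ are both right.
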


\begin{proof}
Write $x_j=1/j^2$. By Lagrange interpolation formula, the solution of the system 
\begin{equation}
\sum_{j=1}^mc_j^{(m)}=1,\qquad
\sum_{j=1}^m\frac{c_j^{(m)}}{j^{2r}}=0,
\quad r=1,\dots,m-1.
\label{eq:even-richardson-moment-system}
\end{equation}
is given by the product formula
$c_j^{(m)}=\prod_{\ell\ne j}j^2/(j^2-\ell^2)$ which simplifies to \eqref{eq:even-richardson-weights}.

Applying \eqref{eq:product-expansion} with $N=jn$ and multiplying by $c_j^{(m)}$,
\[
c_j^{(m)}\mathcal S_2(t/(jn))^{jn}f = c_j^{(m)}e^{tA}f + \sum_{r=1}^{m-1}\frac{c_j^{(m)}E_{2r}(t)f}{j^{2r}n^{2r}} + c_j^{(m)}R^{\mathrm{prod}}_{m,jn}(t)f.
\]
Summing over $j=1,\dots,m$ and using \eqref{eq:even-richardson-moment-system}, the $e^{tA}f$ terms combine
to exactly $e^{tA}f$, and each of the $m-1$ intermediate sums
$\sum_jc_j^{(m)}j^{-2r}E_{2r}(t)f$ vanishes identically yielding:
\[
\mathcal M_{2m,n}(t)f - e^{tA}f = \sum_{j=1}^mc_j^{(m)}R^{\mathrm{prod}}_{m,jn}(t)f.
\]
Applying \eqref{eq:product-expansion}  at each $N=jn$ and $0\le t\le h_0n$,
\[
\|R^{\mathrm{prod}}_{m,jn}(t)f\|_X
\le C_m e^{\Omega_m t}
\frac{t^{2m+1}}{j^{2m}n^{2m}}
\|f\|_{\mathcal D_m}.
\]
Therefore
\[
\big\|\mathcal M_{2m,n}(t)f-e^{tA}f\big\|_X
\le C_m e^{\Omega_m t}
\frac{t^{2m+1}}{n^{2m}}
\Big(\sum_{j=1}^m\frac{|c_j^{(m)}|}{j^{2m}}\Big)
\|f\|_{\mathcal D_m},
\]
which is \eqref{eq:even-richardson-rate} with $C'_m=C_m\sum_j|c_j^{(m)}|j^{-2m}$.
\end{proof}

\begin{remark}[Recursive form]\label{rem:recursive-richardson}
For an integer refinement factor $\rho\ge2$, the same cancellations may
be generated by
\[
 \mathscr R_{2k+2}^{(n)}(t)
 =\frac{\rho^{2k}\mathscr R_{2k}^{(\rho n)}(t)
        -\mathscr R_{2k}^{(n)}(t)}
 {\rho^{2k}-1},\qquad \mathscr R_2^{(n)}(t)=\mathcal S_2(t/n)^n.
\]
Then \eqref{eq:product-expansion} yields, pointwise in $t$,
\[
 \|\mathscr R_{2m}^{(n)}(t)f-e^{tA}f\|_X
 \le C_k e^{\Omega_m t}
 \frac{t^{2m+1}}{n^{2m}}\|f\|_{\mathcal D_m}.
\]
\end{remark}

\subsection{Stability of the Richardson extrapolation}\label{subsec:richardson-stability}

The Chernoff power bound \eqref{eq:chernoff-power-bound} upgrades immediately to every constituent of the
Richardson extrapolation. With $h=t/(jn)$ and $k=jn$, so that $kh=t$,
\begin{equation}
\big\|\mathcal S_2(t/(jn))^{jn}\big\| \le Me^{\omega t},
\label{eq:constituent-stability}
\end{equation}
whence
\begin{equation}
\|\mathcal M_{2m,n}(t)\| \le \Big(\sum_{j=1}^m|c_j^{(m)}|\Big)Me^{\omega t} =: C_me^{\omega t},\qquad t\ge0, \quad n\ge1.
\label{eq:multiproduct-stability}
\end{equation}
Combined with \eqref{eq:semigroup-growth-bound}, yields
\begin{equation}
\|\mathcal M_{2m,n}(t)-e^{tA}\| \le (C_m+M)e^{\omega t}, \qquad t\ge0,\ n\ge1.
\label{eq:multiproduct-global-error}
\end{equation}
\subsection{Resolvent approximation}\label{subsec:resolvent-approximation}

The semigroup--resolvent is obtained via the Laplace transform of the semigroup. 
Indeed, the power bound \eqref{eq:chernoff-power-bound}, together with
\eqref{eq:semigroup-growth-bound}, yields
\begin{equation}
 (\lambda I-A)^{-1}f=\lim_{N\to \infty}\int_0^\infty e^{-\lambda t}\mathcal S_2(t/N)^Nf\,dt,
 \qquad \operatorname{Re}\lambda>\omega.
\label{eq:chernoff-resolvent-formula}
\end{equation}
In fact, the pointwise estimate \eqref{eq:even-richardson-rate} yields:

\begin{proposition}[Richardson resolvent rate]
\label{prop:resolvent-rate}
Assume the hypotheses of Theorem~\ref{thm:abstract}, 
and let
$\widehat\omega_m:=\max\{\omega,\Omega_m\}$. Then, for every
$f\in\mathcal D_m$ and every $\lambda$ with
$\operatorname{Re}\lambda>\widehat\omega_m$,
\begin{equation}
\left\|\int_0^\infty e^{-\lambda t}\mathcal M_{2m,n}(t)f\,dt
-(\lambda I-A)^{-1}f\right\|_X
\le \frac{C_{\lambda,m}}{n^{2m}}\|f\|_{\mathcal D_m}.
\label{eq:richardson-resolvent-rate}
\end{equation}
\end{proposition}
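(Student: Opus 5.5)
The plan is to integrate the pointwise Richardson estimate \eqref{eq:even-richardson-rate} of Theorem~\ref{thm:abstract} against $e^{-\lambda t}$. The one complication is that this estimate is only available on the window $0\le t\le h_0n$. For $\operatorname{Re}\lambda>\max\{\omega,\Omega_m\}$, the Laplace integrals $\int_0^\infty e^{-\lambda t}\mathcal M_{2m,n}(t)f\,dt$ and $\int_0^\infty e^{-\lambda t}e^{tA}f\,dt=(\lambda I-A)^{-1}f$ converge absolutely. This follows from the stability bound \eqref{eq:multiproduct-stability}, the growth bound \eqref{eq:semigroup-growth-bound}, and the Hille--Yosida Laplace representation of the resolvent. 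Strong measurability of $t\mapsto\mathcal S_2(t/(jn))^{jn}f$ follows from the strong continuity of $\mathcal S_2$.

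I will split the error integral at $t=h_0n$. On the near part $[0,h_0n]$, I apply \eqref{eq:even-richardson-rate}:
\[
\int_0^{h_0n}e^{-\operatorname{Re}\lambda\,t}\|\mathcal M_{2m,n}(t)f-e^{tA}f\|\,dt
\le \frac{C'_m\|f\|_{\mathcal D_m}}{n^{2m}}\int_0^\infty t^{2m+1}e^{-(\operatorname{Re}\lambda-\Omega_m)t}\,dt
=\frac{C'_m(2m+1)!}{(\operatorname{Re}\lambda-\Omega_m)^{2m+2}}\,\frac{\|f\|_{\mathcal D_m}}{n^{2m}}.
\]
On the tail $[h_0n,\infty)$, I use only the crude global bound \eqref{eq:multiproduct-global-error}, together with $\|f\|_X\lesssim\|f\|_{\mathcal D_m}$, which holds because $\mathcal D_m$ is a Banach subspace continuously embedded in $X$. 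This gives
\[
(C_m+M)\|f\|_X\int_{h_0n}^\infty e^{-(\operatorname{Re}\lambda-\omega)t}\,dt
=\frac{(C_m+M)}{\operatorname{Re}\lambda-\omega}e^{-(\operatorname{Re}\lambda-\omega)h_0n}\|f\|_X .
\]
Since exponential decay in $n$ beats $n^{-2m}$, we have $e^{-(\operatorname{Re}\lambda-\omega)h_0n}\le C_{\lambda,m}n^{-2m}$. Summing the two pieces yields \eqref{eq:richardson-resolvent-rate}.

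The main obstacle is the restricted range $t\le h_0n$ of the pointwise expansion, which is what the splitting handles. The only delicate points are that the continuous embedding $\mathcal D_m\hookrightarrow X$ is needed to absorb the tail into $\|f\|_{\mathcal D_m}$, and that the strict inequality $\operatorname{Re}\lambda>\widehat\omega_m$ is needed for both integrals to converge. If that embedding is not implicit in the setup, I would state it explicitly as part of the standing assumption that $\mathcal D_m$ is a Banach subspace of $X$.
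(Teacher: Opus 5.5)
Your proposal is correct and follows the paper's proof essentially step for step. You split the Laplace error integral at $t=h_0n$, integrate the pointwise Richardson bound \eqref{eq:even-richardson-rate} against $e^{-(\operatorname{Re}\lambda-\Omega_m)t}$ on $[0,h_0n]$, and on the tail use the global bound \eqref{eq:multiproduct-global-error}, whose factor $e^{-(\operatorname{Re}\lambda-\omega)h_0n}$ is absorbed into $n^{-2m}$. Your explicit remarks on the continuous embedding $\mathcal D_m\hookrightarrow X$, which the paper uses tacitly to pass from $\|f\|_X$ to $\|f\|_{\mathcal D_m}$, and on the absolute convergence of both Laplace integrals only make the paper's argument more complete.
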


\begin{proof}
Split the error integral at $h_0n$. On $[0,h_0n]$, \eqref{eq:even-richardson-rate} gives
\[
\begin{aligned}
&\int_0^{h_0n}e^{-\operatorname{Re}\lambda t}
 \|\mathcal M_{2m,n}(t)f-e^{tA}f\|_X\,dt\\
&\qquad\le
\frac{C'_m}{n^{2m}}
\int_0^\infty e^{-(\operatorname{Re}\lambda-\Omega_m)t}
 t^{2m+1}\,dt\,\|f\|_{\mathcal D_m},
\end{aligned}
\]
and the integral is finite because
$\operatorname{Re}\lambda>\Omega_m$.
On $[h_0n,\infty)$, the global stability bound \eqref{eq:multiproduct-global-error} gives
\[
 \int_{h_0n}^\infty e^{-\operatorname{Re}\lambda t}
 \|\mathcal M_{2m,n}(t)f-e^{tA}f\|_X\,dt
 \le C_{m,\lambda}e^{-(\operatorname{Re}\lambda-\omega)h_0n}\|f\|_X,
\]
together with the previous proves \eqref{eq:richardson-resolvent-rate}.
\end{proof}

\section{A generic construction via centered differences}\label{sec:centered-differences}

We construct the basic Chernoff function using Richardson  centered
differences idea. We perform first  the self/skew-adjoint decomposition of
the differential generator and then insert the centered differences approximation.
The stability of the resulting approximation is
obtained from an energy estimate. \warning{in the uniformly parabolic
second-order case the higher-order  expansion under almost sharp regularity requirement is obtained via
analytic-semigroup smoothing.}

\subsection{The symmetric/skew decomposition, at any order}\label{subsec:symmetric-skew}

Let $A$ be a real scalar differential operator on $\R$ 
 and write
\begin{equation}
 A=S+K,\qquad
 S:=\tfrac12(A+A^*),\qquad
 K:=\tfrac12(A-A^*).
\label{eq:symmetric-skew-decomposition}
\end{equation}
Every formally self-adjoint
scalar differential operator can be written
\begin{equation}
 S=\sum_{q=1}^{Q}(-1)^{q-1}D^q p_qD^q+p_0,
\label{eq:selfadjoint-normal-form}
\end{equation}
with real coefficients $p_q,p_0$. If the top order is $2Q$ and
$s_{2Q}$ is the coefficient of $D^{2Q}$ in $S$, then subtracting $(-1)^{Q-1}D^Qp_QD^Q$ with
$p_Q\coloneqq (-1)^{Q-1}s_{2Q}$ leaves a formally
self-adjoint operator of order at most $2Q-2$, and induction completes
the decomposition.

Analogously, every formally skew-adjoint real scalar differential operator
can be written as
\begin{equation}
 K=\frac12\sum_{\substack{1\le \ell\le J\\ \ell\ {\rm odd}}}
 \bigl(\varphi_\ell D^\ell+D^\ell\varphi_\ell\bigr).
\label{eq:skewadjoint-normal-form}
\end{equation}

For the centered-difference construction below we assume that these
coefficient functions are bounded multipliers on $L^2$ (with the
additional regularity specified in the applications). The stability
assumption is the sign condition
\[
 p_q\ge0,\qquad p_0\le\omega.
\]
Then
\begin{equation}
 \big\langle(-1)^{q-1}D^qp_qD^qf,f\big\rangle
 =-\|\sqrt{p_q}\,D^qf\|_2^2\le0.
\label{eq:dissipative-principal-form}
\end{equation}
This decompositions are algebraic; the aforementioned sign condition is
the substantive dissipativity hypothesis.

\subsection{Centered differences and the finite expansion of the generator}\label{subsec:centered-expansion}

Let $\delta_h:=(T_h-T_{-h})/(2h)$, $(T_hf)(x)=f(x+h)$. On the Fourier
side $\delta_h\leftrightarrow i\sin(h\xi)/h$, so:
\begin{enumerate}[label=(\roman*)]
\item $\delta_h$ is skew-adjoint for every $h\ne0$ (since $T_h^*=T_{-h}$);
\item $\delta_h$ is a \emph{bounded} operator on $L^2(\R)$ for every
fixed $h\ne0$, with $\|\delta_h\|=1/|h|$;
\item $\delta_h$ has, on Sobolev data, a finite even-power asymptotic
expansion with a uniform remainder. More precisely, for every integer
$M\ge1$ and every $f\in H^{2M+1}(\R)$,
\begin{equation}
\delta_h f
=
\sum_{j=0}^{M-1}\frac{h^{2j}}{(2j+1)!}D^{2j+1}f
+h^{2M}\mathcal R_{M,h}f,
\qquad
\|\mathcal R_{M,h}f\|_{L^2}\le C_M\|f\|_{H^{2M+1}},
\label{eq:centered-difference-expansion}
\end{equation}
uniformly for $0<|h|\le h_0$. This follows from Taylor's theorem for
the Fourier multiplier $i\sin(h\xi)/h$. Observe that \eqref{eq:centered-difference-expansion} is an
\emph{asymptotic expansion on a Sobolev scale}, not a norm-convergent
operator power series on $L^2$, since the operators $D^{2j+1}$ are
unbounded. The main algebraic facts used below is
\[
\delta_{-h}=\delta_h,\qquad \delta_h^*=-\delta_h .
\]
Thus any skew- or self-adjointness computation depending only on
$D^*=-D$ and real coefficients (such as \eqref{eq:dissipative-principal-form}, at any order $q$)
transfers verbatim to $\delta_h$ in place of $D$.
\end{enumerate}

Define $A_h$ by replacing every occurence of $D$ in $S$ and $K$ with
$\delta_h$:
\begin{equation}
A_h := \sum_i(-1)^{q_i-1}\delta_h^{q_i}p_{q_i}\delta_h^{q_i}+p_0 + K(\delta_h).
\label{eq:centered-generator}
\end{equation}
For $h\ne0$, the operator $A_h$ is bounded and whence $e^{hA_h}$ is defined by the
norm-convergent exponential series.

\begin{proposition}[Uniform stability of the centered-difference generator]
\label{prop:UniStabCentDiff}
For every $h\ne0$ and every $f\in L^2(\R)$,
\begin{equation}
\operatorname{Re}\langle A_hf,f\rangle \le \omega\|f\|^2,
\label{eq:centered-generator-stability}
\end{equation}
 and consequently $\|e^{tA_h}\|\le e^{\omega t}$ for
all $t\ge0$. 
\end{proposition}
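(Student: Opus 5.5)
The plan is to compute $\operatorname{Re}\langle A_hf,f\rangle$ term by term from the definition \eqref{eq:centered-generator}, using only $\delta_h^*=-\delta_h$ and the realness of the coefficients. Afterwards we pass from the numerical-range bound to the exponential bound by a standard bounded-operator argument.

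\textbf{Principal part.} For each symmetric term, $(\delta_h^q)^*=(-1)^q\delta_h^q$. Hence
\[
\langle(-1)^{q-1}\delta_h^qp_q\delta_h^qf,f\rangle=(-1)^{q-1}(-1)^q\langle p_q\delta_h^qf,\delta_h^qf\rangle=-\|\sqrt{p_q}\,\delta_h^qf\|^2\le0,
\]
exactly as in \eqref{eq:dissipative-principal-form}. Here $\delta_h^qf\in L^2$ because $\delta_h$ is bounded, and $p_q\ge0$ is a bounded multiplier.

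\textbf{Zeroth-order and skew parts.} The zeroth-order term gives $\operatorname{Re}\langle p_0f,f\rangle\le\omega\|f\|^2$, since $p_0$ is real and $p_0\le\omega$. For the skew part, each summand $\tfrac12(\varphi_\ell\delta_h^\ell+\delta_h^\ell\varphi_\ell)$ with $\ell$ odd and $\varphi_\ell$ real has adjoint $\tfrac12(\delta_h^\ell)^*\varphi_\ell+\tfrac12\varphi_\ell(\delta_h^\ell)^*=-\tfrac12(\delta_h^\ell\varphi_\ell+\varphi_\ell\delta_h^\ell)$. So $K(\delta_h)$ is a bounded skew-adjoint operator and $\operatorname{Re}\langle K(\delta_h)f,f\rangle=0$. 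Summing the three contributions gives \eqref{eq:centered-generator-stability}.

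\textbf{Exponential bound.} Since $A_h$ is bounded, $u(t)=e^{tA_h}f$ is $C^1$ with values in $L^2$. We compute
\[
\frac{d}{dt}\|u\|^2=2\operatorname{Re}\langle A_hu,u\rangle\le2\omega\|u\|^2.
\]
Gronwall then gives $\|u(t)\|\le e^{\omega t}\|f\|$. (Equivalently, $A_h-\omega$ is dissipative and bounded, so Lumer--Phillips applies.)

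There is no real obstacle here. The only point needing care is the adjoint bookkeeping: verifying that the sign identities used for $D$ transfer verbatim to $\delta_h$. This is guaranteed by $\delta_h^*=-\delta_h$ and by the boundedness of $\delta_h$, which removes all domain issues.
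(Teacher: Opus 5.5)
Your proposal is correct and follows the paper's proof essentially step for step. It uses the term-by-term computation with $(\delta_h^q)^*=(-1)^q\delta_h^q$ for the principal part, skew-adjointness of the odd-order symmetrized terms of $K(\delta_h)$, and the hypothesis $p_0\le\omega$; it then passes to $\|e^{tA_h}\|\le e^{\omega t}$ via the energy identity $\frac{d}{dt}\|u\|^2=2\operatorname{Re}\langle A_hu,u\rangle$ and Gronwall.
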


\begin{proof}
Since $\delta_h^*=-\delta_h$, $(\delta_h^{q})^*=(-1)^q\delta_h^{q}$ for
every integer $q$. Hence
\[
\begin{aligned}
\langle(-1)^{q-1}\delta_h^{q}p_{q}\delta_h^{q}f,f\rangle
&=(-1)^{q-1}(-1)^q
 \langle p_q\delta_h^qf,\delta_h^qf\rangle\\
&=-\|\sqrt{p_q}\,\delta_h^qf\|^2\le0 .
\end{aligned}
\]
Every term of
$K(\delta_h)$ has the form
$M=\varphi\delta_h^j+\delta_h^j\varphi$ with $j$ odd and is
then skew-adjoint. Thus
\[
 \overline{\langle Mf,f\rangle}
 =\langle M^*f,f\rangle=-\langle Mf,f\rangle,
\]
so $\langle Mf,f\rangle$ is purely imaginary. Moreover, $\langle p_0f,f\rangle\le\omega\|f\|^2$
by hypothesis. Summing the three contributions gives \eqref{eq:centered-generator-stability}. Therefore, 
$\frac{d}{dt}\|e^{tA_h}f\|^2=2\operatorname{Re}\langle
A_he^{tA_h}f,e^{tA_h}f\rangle\le2\omega\|e^{tA_h}f\|^2$, and 
Gr\"onwall's inequality yields $\|e^{tA_h}f\|^2\le e^{2\omega t}\|f\|^2$.
\end{proof}

\subsection{Chernoff tangency and finite even-power structure}\label{subsec:chernoff-tangency}
\begin{proposition}\label{prop:chernoff-tangency-even-structure}
Set
\[
\mathcal S_2(h):=e^{hA_h},\qquad h>0,
\qquad
\mathcal S_2(0):=I.
\]
Assume that there exists a common smooth core
\[
\mathcal C\subset D(A^2),
\]
such that $
A\mathcal C\subset\mathcal C
$
and
$
\lim_{h\to 0}\|A_hg- Ag\|=0
$
for every \(g\in\mathcal C\). 
Then:
\begin{enumerate}[label=(\roman*)]
\item for every \(f\in\mathcal C\),
\[
\lim_{h\to 0}
\frac{\mathcal S_2(h)f-f}{h}
=
Af;
\]

\item for every integer $N\ge1$ and every $t\ge0$,
\[
\mathcal S_2(t/N)^N=e^{tA_{t/N}}; 
\]

\item for every finite \(M\),
\[
A_h
=
A+\sum_{r=1}^{M-1}h^{2r}R_r+h^{2M}Q_M(h)
\]
on a sufficiently regular Sobolev space, with a remainder uniformly
bounded between the corresponding scale spaces. In particular,
$
A_{-h}=A_h
$;

\item \(\mathcal S_2\) satisfies the Chernoff power bound
\eqref{eq:chernoff-power-bound}: for every \(h\ge 0\) and
\(k\in\N\),
\[
\|\mathcal S_2(h)^k\|
\le
e^{\omega kh},
\]
with the same \(\omega\) as in Proposition~\ref{prop:UniStabCentDiff}.
\end{enumerate}
\end{proposition}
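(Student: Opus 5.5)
Items (ii) and (iv) are immediate from the definitions; (i) and (iii) carry the content.

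For (ii), note that $\mathcal S_2(t/N)=e^{(t/N)A_{t/N}}$ is the exponential of a bounded operator. Powers of the exponential of a fixed bounded operator add exponents, so
\[
\bigl(e^{(t/N)A_{t/N}}\bigr)^N=e^{tA_{t/N}}.
\]
For $t=0$ both sides equal $I$.

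For (iv), Proposition~\ref{prop:UniStabCentDiff} gives $\|e^{sA_h}\|\le e^{\omega s}$ for every $s\ge0$. Then $\|\mathcal S_2(h)^k\|=\|e^{khA_h}\|\le e^{\omega kh}$, and the case $h=0$ is trivial.

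For (iii), I substitute the expansion \eqref{eq:centered-difference-expansion} of $\delta_h$ into every occurrence of $\delta_h$ in \eqref{eq:centered-generator}. Expanding the finite products $\delta_h^{q}p_q\delta_h^{q}$ and $\varphi_\ell\delta_h^\ell+\delta_h^\ell\varphi_\ell$ collects terms by powers of $h^2$. The $h^0$ term reproduces $S+K=A$, because the leading term of $\delta_h$ is $D$. The coefficients $R_r$ are explicit differential operators of order $\mathrm{ord}(A)+2r$ built from the $p_q$, $\varphi_\ell$ and their derivatives.

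The remainder $Q_M(h)$ collects every term containing at least one factor $h^{2M}\mathcal R_{M,h}$, together with the cross terms of total power at least $2M$. It is bounded from $H^{s}$ to $L^2$ for $s$ large enough, uniformly in $0<|h|\le h_0$. This uses the uniform bound on $\mathcal R_{M,h}$ and the fact that $\delta_h$ is bounded $H^{\sigma+1}\to H^\sigma$ uniformly in $h$, since $|\sin(h\xi)/h|\le|\xi|$. It also uses that multiplication by the $W^{k,\infty}$ coefficients preserves $H^\sigma$ for $\sigma\le k$.

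Evenness is exact and needs no expansion. The identity $\delta_{-h}=\delta_h$ holds because
\[
(T_{-h}-T_h)/(-2h)=(T_h-T_{-h})/(2h).
\]
Since $A_h$ is a fixed polynomial expression in $\delta_h$, it follows that $A_{-h}=A_h$, so only even powers appear.

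For (i), I write the difference quotient as
\[
\frac{e^{hA_h}f-f}{h}-Af=(A_hf-Af)+\frac1h\int_0^h\bigl(e^{sA_h}-I\bigr)A_hf\,ds.
\]
The first term tends to $0$ by hypothesis. In the second I write $A_hf=Af+(A_hf-Af)$. The part involving $A_hf-Af$ is bounded by $(e^{|\omega|h}+1)\|A_hf-Af\|\to0$, using the uniform bound $\|e^{sA_h}\|\le e^{\omega s}$ from Proposition~\ref{prop:UniStabCentDiff}.

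What remains is $\frac1h\int_0^h(e^{sA_h}-I)g\,ds$ with $g=Af\in\mathcal C$. Again by Duhamel,
\[
(e^{sA_h}-I)g=\int_0^s e^{\sigma A_h}A_hg\,d\sigma,
\]
so its norm is at most $s\,e^{|\omega|s}\sup_{|h|\le h_0}\|A_hg\|$. The supremum is finite because $A_hg\to Ag$ (this uses $A\mathcal C\subset\mathcal C$), and the averaged term is $O(h)$.

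The main obstacle is the bookkeeping in (iii): stating exactly which Sobolev scales the remainder $Q_M(h)$ maps between, uniformly in $h$, given only finite coefficient regularity. I would handle it by tracking the total derivative count in each product term, accounting for derivatives falling on the coefficients. This yields $Q_M(h)\colon H^{\mathrm{ord}(A)+2M}\to L^2$ uniformly, provided the coefficients lie in $W^{2M+\mathrm{ord}(A),\infty}$.
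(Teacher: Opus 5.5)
Your proposal is correct and follows the paper's proof essentially step for step. Items (ii) and (iv) come from adding exponents of a bounded generator and invoking Proposition~\ref{prop:UniStabCentDiff}; item (iii) comes from substituting \eqref{eq:centered-difference-expansion} into \eqref{eq:centered-generator} and using $\delta_{-h}=\delta_h$; and item (i) is an equivalent rearrangement of the paper's integral identity, with the orbit bound $\|e^{sA_h}g-g\|\le s\,e^{|\omega|s}\|A_hg\|$ applied to $g=Af\in\mathcal C$. The only caveat concerns your closing claim that $Q_M(h)\colon H^{\mathrm{ord}(A)+2M}\to L^2$ uniformly: it does not follow from collecting all cross terms of total power at least $2M$, since a cross term of order $h^{2e}$ with $e>M$ loses $2e+\mathrm{ord}(A)$ derivatives, so that sharp index needs each later factor truncated only at the remaining depth, or the Taylor-in-$z=h^2$ argument of Lemma~\ref{lem:defect-order}, though the statement itself only asks for some sufficiently regular space.
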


\begin{proof}
\emph{(i)} 
Fix \(f\in\mathcal C\).  Then, since \(A_h\) is bounded for each fixed
\(h\ne0\), the fundamental theorem of calculus gives\footnote{Observe that we can not use a norm Taylor expansion of
\(e^{hA_h}\), since  $\lim_{h\to 0}\|A_h\|=\infty$.}
\begin{equation}
\begin{aligned}
\frac{e^{hA_h}f-f}{h}-Af
&=
\int_0^1
e^{\theta hA_h}(A_hf-Af)\,d\theta+
\int_0^1
\bigl(e^{\theta hA_h}Af-Af\bigr)\,d\theta.
\end{aligned}
\label{eq:tangency-fundamental-identity}
\end{equation}

Letting $\omega_+:=\max\{\omega,0\}$, by Proposition~\ref{prop:UniStabCentDiff},
\[
\|e^{\theta hA_h}\|
\le
e^{\omega_+h},
\qquad 0\le\theta\le1,
\]
so that
\begin{equation}
\begin{aligned}
\lim_{h\to 0}\left\|
\int_0^1
e^{\theta hA_h}(A_hf-Af)\,d\theta
\right\|
&\le
\lim_{h\to 0} e^{\omega_+h}\|A_hf-Af\|=0.
\end{aligned}
\label{eq:tangency-first-term}
\end{equation}

We now prove the strong convergence
\[
e^{\theta hA_h}g\longrightarrow g
\]
uniformly for \(0\le\theta\le1\), for every \(g\in\mathcal C\).
Indeed,
\[
e^{\theta hA_h}g-g
=
\int_0^{\theta h}e^{sA_h}A_hg\,ds,
\]
and hence
\begin{equation}
\sup_{0\le\theta\le1}
\|e^{\theta hA_h}g-g\|
\le
h e^{\omega_+h}\|A_hg\|.
\label{eq:core-orbit-bound}
\end{equation}

Since $\lim_{h\to 0} \|A_hf-Af\|=0$, there exists \(h_0>0\) such that
\[
\|A_hg-Ag\|\le1,
\qquad 0<h\le h_0,
\]
so that
\begin{equation}
\sup_{0<h\le h_0}\|A_hg\|
\le
1+\|Ag\|<\infty.
\label{eq:core-generator-bound}
\end{equation}
Combining \eqref{eq:core-orbit-bound} and \eqref{eq:core-generator-bound},
\begin{equation}
\lim_{h\to 0}\sup_{0\le\theta\le1}
\|e^{\theta hA_h}g-g\|
\le \lim_{h\to 0}
h e^{\omega_+h}(1+\|Ag\|)=0.
\label{eq:core-strong-convergence}
\end{equation}

Now since \(f\in\mathcal C\) and \(A\mathcal C\subset\mathcal C\), then 
$
Af\in\mathcal C. 
$
Therefore applying \eqref{eq:core-strong-convergence} with \(g=Af\), yields
\[
\lim_{h\to 0}\sup_{0\le\theta\le1}
\|e^{\theta hA_h}Af-Af\|=0,
\]
so that
\[
\lim_{h\to 0}\left\|
\int_0^1
\bigl(e^{\theta hA_h}Af-Af\bigr)\,d\theta
\right\|
=0.
\]
Together with \eqref{eq:tangency-first-term}, this proves
\[
\boxed{
\lim_{h\to 0}
\frac{\mathcal S_2(h)f-f}{h}
=
Af.
}
\]

\smallskip
\noindent
\emph{(ii)} 
For \(h=t/N\),
\[
\mathcal S_2(t/N)^N
=
\bigl(e^{(t/N)A_{t/N}}\bigr)^N
=
e^{tA_{t/N}}.
\]

\smallskip
\noindent
\emph{(iii)} 
By \eqref{eq:centered-difference-expansion}, on every sufficiently regular Sobolev space,
\[
\delta_h
=
D+\sum_{r=1}^{M-1}
\frac{h^{2r}}{(2r+1)!}D^{2r+1}
+h^{2M}\mathcal R_{M,h},
\]
where \(\mathcal R_{M,h}\) is uniformly bounded between the
corresponding Sobolev spaces.

Substituting this expansion into  \eqref{eq:centered-generator} yields
\[
A_h
=
A+\sum_{r=1}^{M-1}h^{2r}R_r
+h^{2M}Q_M(h).
\]

Moreover,
$
\delta_{-h}=\delta_h
$
yields
$
A_{-h}=A_h.
$

\smallskip
\noindent
\emph{(iv)} 
For every \(h>0\) and \(k\in\N\),
\[
\mathcal S_2(h)^k
=
(e^{hA_h})^k
=
e^{khA_h}.
\]
Proposition~\ref{prop:UniStabCentDiff} therefore gives
\[
\|\mathcal S_2(h)^k\|
=
\|e^{khA_h}\|
\le
e^{\omega kh}.
\]
For \(h=0\), the conclusion follows from
\(\mathcal S_2(0)=I\).
\end{proof}
\begin{lemma}[Differential order of the centered-difference defects]
\label{lem:defect-order}
Put $z=h^2$ and
\[
 \Delta(z):=\delta_{\sqrt z},\qquad z\ge0.
\]
Let $W_h$ be a finite word formed from $k$ occurrences of $\delta_h$,
separated by multiplication by sufficiently regular coefficient
functions:
\[
 W_h=g_0\delta_h g_1\delta_h\cdots\delta_h g_k,
\]
where each $g_j$ acts by multiplication.  Assume that multiplication by
the $g_j$ is bounded on all Sobolev spaces occurring below.  Then, for
every integer $r\ge0$,
\begin{equation}
 \partial_z^r W_{\sqrt z}:H^s\longrightarrow
 H^{s-(2r+k)}
 \label{eq:defect-word-derivative}
\end{equation}
uniformly for $0\le z\le h_0^2$ and every $s\ge 2r+k$.

Consequently, 
\[
 A_h=A+\sum_{r=1}^{m-1}h^{2r}R_r+h^{2m}Q_m(h)
\]
with
\begin{equation}
 R_r:H^s\longrightarrow H^{s-(2r+p)}
 \quad (s\ge 2r+p),
 \qquad
 Q_m(h):H^s\longrightarrow H^{s-(2m+p)}
 \quad (s\ge 2m+p),
 \label{eq:defect-order}
\end{equation}
uniformly for $0<|h|\le h_0$. 

\end{lemma}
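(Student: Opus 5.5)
The plan is to work on the Fourier side for the single operator $\Delta(z)=\delta_{\sqrt z}$ and then propagate through words by the Leibniz rule. Since $\delta_h\leftrightarrow i\sin(h\xi)/h$, we have
\[
\Delta(z)\leftrightarrow i\xi\,\phi(z\xi^2),\qquad \phi(u):=\frac{\sin\sqrt u}{\sqrt u}=\sum_{j\ge0}\frac{(-u)^j}{(2j+1)!},
\]
and $\phi$ is entire in $u$. Therefore
\[
\partial_z^r\Delta(z)\leftrightarrow i\xi\,\xi^{2r}\phi^{(r)}(z\xi^2).
\]
The key elementary fact to establish first is that every derivative $\phi^{(r)}$ is bounded on $[0,\infty)$. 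I would prove this from the integral representation $\phi(u)=\int_0^1\cos(s\sqrt u)\,ds$, or from $\phi(u)=\tfrac12\int_{-1}^1 e^{is\sqrt u}\,ds$. Writing $\cos(s\sqrt u)=\sum_j(-1)^j s^{2j}u^j/(2j)!$ and differentiating, one gets $\phi^{(r)}(u)=\int_0^1 s^{2r}\,c_r(s^2u)\,ds$. Here $c_r(v)=\frac{d^r}{dv^r}\cos\sqrt v$, which is of the form $(-1)^r\sqrt v^{\,-r}J$-type Bessel combinations. An easier route: $\frac{d^r}{du^r}\phi(u)$ is, up to constants, $\int_{-1}^1 s^{2r}\,\psi_r(s\sqrt u)\,ds$ with $\psi_r(y)=(y^{-1}d/dy)^r e^{iy}$ expressed through spherical Bessel functions $j_r(y)/y^r$, which are bounded on $\R$ (indeed $\phi^{(r)}(u)=(-1/2)^r j_r(\sqrt u)/\sqrt u^{\,r}\cdot$const, and $j_r(y)/y^r$ is bounded near $0$ and decays at infinity). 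This yields
\[
|\partial_z^r\widehat{\Delta(z)}(\xi)|\le C_r|\xi|^{2r+1},
\]
uniformly in $z\ge0$, so $\partial_z^r\Delta(z):H^s\to H^{s-(2r+1)}$ uniformly. This is the $k=1$ case.

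For a general word $W_{\sqrt z}=g_0\Delta(z)g_1\cdots\Delta(z)g_k$, I would apply the Leibniz rule. This gives
\[
\partial_z^rW=\sum_{r_1+\dots+r_k=r}\binom{r}{r_1,\dots,r_k}\,g_0\,\partial_z^{r_1}\Delta\, g_1\cdots\partial_z^{r_k}\Delta\, g_k .
\]
Each factor $\partial_z^{r_i}\Delta$ loses $2r_i+1$ derivatives, and each multiplication by $g_j$ is bounded on the intermediate spaces by hypothesis. Composing, the total loss is $\sum(2r_i+1)=2r+k$, which gives \eqref{eq:defect-word-derivative}. The $z$-derivatives are to be taken in the strong (or operator-norm between Sobolev spaces) sense. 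Differentiability at $z=0$ itself is justified because the symbol bounds are uniform and the symbols are smooth in $z$, so dominated convergence applies on the Fourier side.

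For the consequence, I would note that $A_h$ in \eqref{eq:centered-generator} is a finite sum of words with at most $p$ occurrences of $\delta_h$ (here $p$ is the order of $A$, as for the three models). Each word is a smooth function of $z=h^2$ with values in $\mathcal L(H^s,H^{s-(2r+k)})$ for its $r$-th derivative. I would then Taylor-expand in $z$ to order $m-1$ with integral remainder:
\[
W(z)=\sum_{r<m}\frac{z^r}{r!}\partial_z^rW(0)+\frac{z^m}{(m-1)!}\int_0^1(1-\theta)^{m-1}\partial_z^mW(\theta z)\,d\theta .
\]
Since $\Delta(0)=D$, the $r=0$ terms reassemble to $A$. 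Collecting the coefficients of $z^r=h^{2r}$ defines $R_r$, and the integral term defines $Q_m(h)$. Both losses are bounded by $2r+k\le 2r+p$, respectively $2m+p$, which gives \eqref{eq:defect-order}. Uniformity in $0<|h|\le h_0$ is inherited from the uniform bounds on $\partial_z^m W$.

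The main obstacle is the uniform boundedness of $\phi^{(r)}$ on $[0,\infty)$ when $r\ge1$. The naive power series only controls it for bounded $u$, and the large-$u$ behaviour requires the oscillatory integral or spherical Bessel representation. That representation is what I would verify carefully first.
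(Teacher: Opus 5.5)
Your proposal is correct and follows the paper's proof step for step: the symbol $i\xi\,\phi(z\xi^2)$, the chain rule giving $i\xi^{2r+1}\phi^{(r)}(z\xi^2)$, uniform boundedness of $\phi^{(r)}$ on $[0,\infty)$, the Leibniz rule over the word with total loss $\sum_i(2r_i+1)=2r+k$, and Taylor's formula in $z$ with integral remainder. The only variation is how you bound $\phi^{(r)}$: you use the identity $\phi^{(r)}(u)=(-\tfrac12)^r j_r(\sqrt u)/(\sqrt u)^{r}$, whereas the paper uses analyticity near $0$ and, for $y\ge1$, the substitution $d/dy=(2\sqrt y)^{-1}d/d\sqrt y$ to write $\phi^{(r)}$ as a combination of $y^{-a/2}\sin\sqrt y$ and $y^{-a/2}\cos\sqrt y$ with $a>0$. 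One small correction: the $z$-derivatives must be understood in the strong sense, since operator-norm differentiability fails at the stated Sobolev indices.
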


\begin{proof}
The Fourier multiplier of $\Delta(z)$ is
\[
 i\frac{\sin(\sqrt z\,\xi)}{\sqrt z}
 =i\xi\,\phi(z\xi^2),
 \qquad
 \phi(y):=\frac{\sin\sqrt y}{\sqrt y},\quad \phi(0):=1.
\]
Hence, by the chain rule,
\begin{equation}
 \partial_z^q\!\left(i\xi\,\phi(z\xi^2)\right)
 =i\xi^{2q+1}\phi^{(q)}(z\xi^2).
 \label{eq:centered-symbol-derivative}
\end{equation}
For every fixed $q$, the function $\phi^{(q)}$ is bounded on
$[0,\infty)$.  Analyticity of $\phi$ at the origin gives boundedness of
all derivatives on a neighborhood of $0$.  For $y\ge1$, set
$r=\sqrt y$; since $d/dy=(2r)^{-1}d/dr$, each $y$-derivative introduces
an additional factor $r^{-1}$ before differentiating in $r$.  Repeated
differentiation of $\sin r/r$ therefore expresses $\phi^{(q)}(y)$ as
a finite linear combination of terms $r^{-a}\sin r$ and
$r^{-a}\cos r$ with $a>0$.  Hence $\phi^{(q)}$ is bounded on
$[1,\infty)$.
Therefore
\[
 \|\partial_z^q\Delta(z)f\|_{H^{s-(2q+1)}}
 \le C_q\|f\|_{H^s},
 \qquad 0\le z\le h_0^2.
\]
Consequently, the $q$th $z$-derivative of one centered-difference factor
costs at most $2q+1$ derivatives.

By Leibniz'
rule, $d^r W_h/dh^r$ is a finite sum indexed by
$q_1+\cdots+q_k=r$, and a typical summand contains
\[
 \partial_z^{q_1}\Delta(z),\ldots,
 \partial_z^{q_k}\Delta(z)
\]
and such summand's total Sobolev loss is at most
\[
 \sum_{j=1}^k(2q_j+1)=2r+k,
\]
which proves \eqref{eq:defect-word-derivative}.

Now suppose every word in $A_h$ contains at most $p$ centered
differences.  Taylor's formula in the variable $z=h^2$ gives
\[
 A_{\sqrt z}
 =A+\sum_{r=1}^{m-1}\frac{z^r}{r!}
   \left.\partial_z^rA_{\sqrt z}\right|_{z=0}
 +z^m Q_m(\sqrt z),
\]
where
\[
 Q_m(\sqrt z)
 =\frac1{(m-1)!}\int_0^1(1-\theta)^{m-1}
 \partial_z^mA_{\sqrt{\theta z}}\,d\theta.
\]
Applying \eqref{eq:defect-word-derivative} to each word yields \eqref{eq:defect-order}.  
\end{proof}

The higher defect operators are unbounded on $L^2$ in general;
the fixed-depth estimates below therefore use either analytic-semigroup
smoothing or Sobolev-group bounds.

\subsection{A common Sobolev-group estimate for the nonsmoothing applications}
\label{subsec:common-group-bound}

\begin{proposition}[Sobolev group bounds from coefficient regularity]
\label{prop:sobolev-group-bound}
Let $p\in\{2,3\}$ and set\footnote{So that $c_pD^p$ is skew-adjoint on $L^2(\mathbb R)$.}
\[
 c_p=\begin{cases} i,&p=2,\\ 1,&p=3.\end{cases}
\]
 Let $k\in\N_{ 0}$,
$v\in W^{k+1,\infty}(\mathbb R,\R)$ and 
$w\in W^{k,\infty}(\mathbb R,\C)$.  On $H^p(\mathbb R)$
set
\[
 A=c_pD^p+vD+\tfrac12v'+w.
\]
Then $A$ generates a $C_0$-group on $L^2(\mathbb R)$.  Moreover, for
every integer $0\le j\le k$ there exists $\Omega_j\ge0$, depending
only on $j$, $\|v\|_{W^{j+1,\infty}}$, and
$\|w\|_{W^{j,\infty}}$, such that
\begin{equation}
 \|e^{tA}\|_{H^j\to H^j}\le e^{\Omega_j|t|},
 \qquad t\in\mathbb R.
\label{eq:sobolev-group-bound}
\end{equation}
\end{proposition}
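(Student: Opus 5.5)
The plan is to split $A=A_{\rm sk}+B$, where $A_{\rm sk}:=c_pD^p+vD+\tfrac12v'$ is formally skew-adjoint and $B$ is the bounded multiplier by $w$. First check skew-adjointness: $c_pD^p$ is skew-adjoint on $H^p$, and since $v$ is real,
\[
vD+\tfrac12v'=\tfrac12(vD+Dv)
\]
is formally skew-symmetric. It is a first-order perturbation of $c_pD^p$ with $p\ge2$, hence relatively bounded with relative bound $0$: by interpolation, $\|Df\|\le\varepsilon\|D^pf\|+C_\varepsilon\|f\|$. A Kato--Rellich type argument for skew-adjoint operators (apply Kato--Rellich to $iA_{\rm sk}$) then shows that $A_{\rm sk}$ is skew-adjoint on $H^p$. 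Stone's theorem gives a unitary group, and the bounded perturbation $w$ yields a $C_0$-group with $\|e^{tA}\|_{L^2\to L^2}\le e^{\|w\|_\infty|t|}$. This settles the case $j=0$ with $\Omega_0=\|w\|_{L^\infty}$.

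For $1\le j\le k$, the approach is an energy estimate for the quantity
\[
E_j(u):=\sum_{l\le j}\|D^lu\|^2 .
\]
Take $f$ in a smooth dense class, for instance $H^{j+p}$, which should be invariant under the group; this follows from the same estimate applied at higher order with coefficient regularity, or from working first with mollified coefficients and passing to the limit. Set $u(t)=e^{tA}f$ and compute
\[
\frac{d}{dt}\|D^lu\|^2=2\operatorname{Re}\langle D^lAu,D^lu\rangle ,
\]
then write $D^lA=AD^l+[D^l,A]$. The term $\operatorname{Re}\langle AD^lu,D^lu\rangle\le\|w\|_\infty\|D^lu\|^2$ by the $j=0$ structure. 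The commutator $[D^l,c_pD^p]$ vanishes, which is the key point: the dispersive top-order part commutes with derivatives because its coefficient is constant. This is why the paper first normalizes via $\mathcal J_p$.

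The remaining commutator $[D^l,vD+\tfrac12v'+w]$ is the main obstacle, because it contains the term $l\,v'D^{l}$, which has the same order as the energy. The plan is to handle it by symmetrization. Write
\[
[D^l,\tfrac12(vD+Dv)]=\tfrac{l}{2}(v'D^l+D^lv')+\text{(order}\le l-1),
\]
where the lower-order part has coefficients $v^{(i)}$ with $i\le l+1$. Then
\[
\operatorname{Re}\langle \tfrac l2(v'D^l+D^l v')u,D^lu\rangle
\]
reduces, after moving one derivative, to $l\langle v'D^lu,D^lu\rangle$ plus terms bounded by $\|v''\|_\infty\|D^{l-1}u\|\,\|D^lu\|$. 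Each piece is controlled by $\|v\|_{W^{l+1,\infty}}E_l(u)$. The lower-order terms, together with $[D^l,w]=\sum_{i\ge1}\binom li w^{(i)}D^{l-i}$, are bounded by $\|w\|_{W^{l,\infty}}E_l^{1/2}\|D^lu\|$. Summing over $l\le j$ gives
\[
\frac{d}{dt}E_j(u)\le 2\Omega_jE_j(u),
\]
with $\Omega_j$ depending only on $\|v\|_{W^{j+1,\infty}}$ and $\|w\|_{W^{j,\infty}}$.

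Gr\"onwall's inequality then gives $\|e^{tA}f\|_{H^j}\le e^{\Omega_j t}\|f\|_{H^j}$ for $t\ge0$, using the norm $E_j^{1/2}$, which is equivalent to the $H^j$ norm. For $t<0$ the same argument applies to $-A$, which has the same structure with $c_p,v,w$ replaced by their negatives. Density extends the bound to all of $H^j$. Rigorously justifying the time derivative for data that are not smooth enough (for example, when $k$ is small relative to $p$) is the delicate step. I would first try mollifying $v$ and $w$, obtaining uniform bounds for the regularized groups, and passing to the strong limit via the Trotter--Kato theorem.
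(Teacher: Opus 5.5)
Your proposal is correct and follows essentially the paper's route: Kato--Rellich for $iA_{\rm sk}$, then Stone's theorem and bounded perturbation by $w$ for generation, then the energy $E_j=\sum_{l\le j}\|D^lu\|^2$ using $[D^l,c_pD^p]=0$, commutator bounds, Gr\"onwall, and the same argument applied to $-A$. Two minor remarks: the term $l\,v'D^l$ needs no symmetrization, because $|\langle v'D^lu,D^lu\rangle|\le\|v'\|_\infty\|D^lu\|^2$ is already bounded by the energy (this is how the paper handles it), and your first justification of the smooth-data computation (invariance of $H^{j+p}$ via the same estimate at order $j+p$) would require roughly $v\in W^{j+p+1,\infty}$ and $w\in W^{j+p,\infty}$, more regularity than is assumed, whereas your mollified-coefficient plus Trotter--Kato route works and is more careful than the paper's one-line appeal to density.
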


\begin{proof}
Write $A=A_p+B+w$, where $A_p=c_pD^p$ and
$B=vD+\tfrac12v'$.  The operator $A_p$ is skew-adjoint.  On
$H^p$, $B$ is skew-symmetric and is $A_p$-bounded with relative bound
zero: for every $\varepsilon>0$,
\[
 \|Bf\|_2
 \le \|v\|_\infty\|Df\|_2+\tfrac12\|v'\|_\infty\|f\|_2
 \le \varepsilon\|A_pf\|_2+C_{\varepsilon,v}\|f\|_2.
\]
%
Equivalently, $iA_p$ is self-adjoint and $iB$ is symmetric with
relative bound zero. Kato--Rellich therefore implies that
\[
G:=A_p+B
\]
is skew-adjoint on $L^2(\mathbb R)$ with domain $H^p(\mathbb R)$.
Hence, by Stone's theorem, $G$ generates a unitary $C_0$-group.

Let $W=M_w$ denote multiplication by $w$. Since
$w\in L^\infty(\mathbb C)$, one has
\[
W\in\mathcal L(L^2(\mathbb C)).
\]
The bounded perturbation theorem for $C_0$-semigroup generators,
applied first to $(G,W)$ and then to $(-G,-W)$, shows that both
\[
G+W
\qquad\text{and}\qquad
-(G+W)
\]
generate $C_0$-semigroups. Consequently,
\[
A=G+W=A_p+B+w,
\qquad D(A)=H^p(\mathbb R),
\]
generates a $C_0$-group on $L^2(\mathbb R)$.

For the Sobolev estimate, first take a smooth solution
$u(t)=e^{tA}u_0$ and set
\[
 E_j(t):=\sum_{r=0}^j\|D^ru(t)\|_2^2.
\]
The principal part commutes with $D^r$ and contributes zero to the
real part. Since $B$ is formally skew-symmetric on smooth vectors,
$\langle Bg,g\rangle=-\overline{\langle Bg,g\rangle}$ is purely
imaginary, and therefore $\operatorname{Re}\langle Bg,g\rangle=0$.
Moreover,
\[
 D^r(Bu)=B(D^ru)+[D^r,B]u,
\]
and Leibniz' rule gives
\[
 [D^r,B]u
 =\sum_{\ell=1}^r\binom r\ell v^{(\ell)}D^{r-\ell+1}u
 +\frac12\sum_{\ell=1}^r\binom r\ell
 v^{(\ell+1)}D^{r-\ell}u.
\]
Similarly,
\[
 D^r(wu)=wD^ru+[D^r,w]u,
 \qquad
 [D^r,w]u=\sum_{\ell=1}^r\binom r\ell
 w^{(\ell)}D^{r-\ell}u.
\]
Consequently
\[
 \bigl|\operatorname{Re}\langle [D^r,B]u,D^ru\rangle\bigr|
 +\bigl|\operatorname{Re}\langle [D^r,w]u,D^ru\rangle\bigr|
 \le C_r\bigl(\|v\|_{W^{r+1,\infty}}+
 \|w\|_{W^{r,\infty}}\bigr)E_j(t),
\]
while
$\operatorname{Re}\langle wD^ru,D^ru\rangle
\le\|w\|_\infty E_j(t)$.  Summing over $0\le r\le j$ yields
\[
 E_j'(t)\le 2\Omega_j E_j(t).
\]
Gr\"onwall gives the desired estimate for $t\ge0$; applying the same
argument to $-A$ yields the estimate for $t\le0$.  A standard density argument
extends the estimate from smooth data to $H^j$.
\end{proof}

{
\subsection{A half-derivative commutator estimate}
\begin{lemma}[A half-derivative commutator estimate]
\label{lem:half-derivative-commutator}
Let $\beta\in W^{1,\infty}(\R)$ be real-valued. Then
\begin{equation}
\bigl\|
[\,|D|^{1/2},\beta\,]u
\bigr\|_{L^2}
\le
C
\bigl(
\|\beta\|_\infty\|\beta'\|_\infty
\bigr)^{1/2}
\|u\|_{L^2}.
\label{eq:half-commutator}
\end{equation}
Consequently,
\begin{equation}
\||D|^{1/2}(\beta u)\|_{L^2}
\le
\|\beta\|_\infty\||D|^{1/2}u\|_{L^2}
+
C
\bigl(
\|\beta\|_\infty\|\beta'\|_\infty
\bigr)^{1/2}
\|u\|_{L^2}.
\label{eq:half-product}
\end{equation}
\end{lemma}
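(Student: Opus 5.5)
We prove \eqref{eq:half-commutator} on the Fourier side, using the kernel representation of the commutator. The claimed bound is $(\|\beta\|_\infty\|\beta'\|_\infty)^{1/2}$, which is dimensionally consistent with one half-derivative. We obtain it by splitting frequencies at a threshold $\lambda>0$ and optimizing over $\lambda$. The product estimate \eqref{eq:half-product} then follows immediately from
\[
|D|^{1/2}(\beta u)=\beta|D|^{1/2}u+[\,|D|^{1/2},\beta\,]u
\]
and $\|\beta g\|_{L^2}\le\|\beta\|_\infty\|g\|_{L^2}$.

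For the commutator we use a smooth Littlewood--Paley cutoff. Fix $\psi\in C_c^\infty$ with $\psi=1$ near $0$, and write
\[
|\xi|^{1/2}=|\xi|^{1/2}\psi(\xi/\lambda)+|\xi|^{1/2}(1-\psi(\xi/\lambda))=:m_{\le}(\xi)+m_{>}(\xi).
\]

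\emph{High-frequency piece.} The naive bound is
\[
\|[m_>(D),\beta]u\|\le\|m_>(D)(\beta u)\|+\|\beta\,m_>(D)u\|,
\]
but this is not bounded by $\|u\|$, because $m_>$ grows like $|\xi|^{1/2}$. So instead we write
\[
m_>(\xi)=|\xi|^{-1/2}\,\xi\cdot \operatorname{sgn}\xi\,(1-\psi(\xi/\lambda)),
\]
that is, $m_>(D)=\sigma_\lambda(D)\,D$ with $\sigma_\lambda(\xi)=|\xi|^{-1/2}\operatorname{sgn}(\xi)(1-\psi)$, whose symbol is bounded by $C\lambda^{-1/2}$. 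This gives
\[
[m_>(D),\beta]=\sigma_\lambda(D)[D,\beta]+[\sigma_\lambda(D),\beta]D .
\]
The first term is bounded by $C\lambda^{-1/2}\|\beta'\|_\infty$. The second term still carries a $D$. To handle it, we use $[\sigma_\lambda(D),\beta]D=[\sigma_\lambda(D)D,\beta]-\sigma_\lambda(D)[D,\beta]$, which is circular. We therefore fall back on the standard commutator estimate: for a symbol $\sigma$ homogeneous of degree $s\in(0,1)$ away from $0$, the kernel $K(x-y)(\beta(x)-\beta(y))$ is controlled by a Schur test.

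\emph{Schur-test approach.} This is the route we adopt. For $0<s<1$, $|D|^{s}$ has the singular-integral form
\[
|D|^s u(x)=c_s\,\mathrm{p.v.}\!\int\frac{u(x)-u(y)}{|x-y|^{1+s}}\,dy,
\]
so that
\[
[\,|D|^{1/2},\beta\,]u(x)=-c\int\frac{(\beta(x)-\beta(y))\,u(y)}{|x-y|^{3/2}}\,dy
\]
(the terms containing $u(x)$ cancel). The kernel satisfies
\[
|k(x,y)|\le c\,\min\{\|\beta'\|_\infty|x-y|,\,2\|\beta\|_\infty\}\,|x-y|^{-3/2}.
\]
Its $x$- and $y$-integrals are bounded by
\[
\int_{|z|<L}\|\beta'\|_\infty|z|^{-1/2}\,dz+\int_{|z|>L}\|\beta\|_\infty|z|^{-3/2}\,dz\lesssim \|\beta'\|_\infty L^{1/2}+\|\beta\|_\infty L^{-1/2}.
\]
Choosing $L=\|\beta\|_\infty/\|\beta'\|_\infty$ gives $\lesssim(\|\beta\|_\infty\|\beta'\|_\infty)^{1/2}$, and the Schur test yields \eqref{eq:half-commutator}. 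The cases $\beta'=0$ or $\beta=0$ are trivial.

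\emph{Main obstacle.} The delicate point is justifying the kernel formula. We verify it first for $u\in\mathcal S$ and $\beta$ smooth and bounded. In that setting $\beta u\in H^1$, the principal value converges, and the cancellation of the $u(x)$ terms is legitimate because the difference quotient $(\beta(x)-\beta(y))|x-y|^{-3/2}$ is absolutely integrable near the diagonal. We then extend by density of $\mathcal S$ in $L^2$ and by mollifying $\beta$: mollification preserves $\|\beta\|_\infty$ and $\|\beta'\|_\infty$, and the resulting commutators converge weakly to the original one.
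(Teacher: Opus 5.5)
Your Schur-test argument is correct. It is a close relative of the paper's proof, carried out in the opposite order rather than by a new idea.

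The paper starts from the heat-subordination formula $|D|^{1/2}=c\int_0^\infty t^{-5/4}(I-e^{tD^2})\,dt$, so that $[\,|D|^{1/2},\beta\,]=-c\int_0^\infty t^{-5/4}[e^{tD^2},\beta]\,dt$. It bounds each $[e^{tD^2},\beta]$ in operator norm by $C\min\{\|\beta\|_\infty,\sqrt t\,\|\beta'\|_\infty\}$ via the heat kernel and Young's inequality, and then splits the $t$-integral at $t_0=(\|\beta\|_\infty/\|\beta'\|_\infty)^2$.

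You integrate in $t$ first. Up to a constant, your kernel $|x-y|^{-3/2}$ is exactly $\int_0^\infty t^{-5/4}G_t(x-y)\,dt$. Your pointwise domination $\min\{2\|\beta\|_\infty,\|\beta'\|_\infty|x-y|\}$ is the same one the paper feeds into Young's inequality, and your spatial cutoff $L=\|\beta\|_\infty/\|\beta'\|_\infty$ is precisely $\sqrt{t_0}$.

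What each ordering buys:
\begin{itemize}
\item The paper's ordering needs no singular-integral representation. Each $[e^{tD^2},\beta]$ is an honest integral operator with integrable kernel, and the $t$-integral converges absolutely in $\mathcal L(L^2)$, so $\beta$ never has to be mollified.
\item Your ordering gives a single explicit kernel and constant. The price is the density and mollification step, which you handle correctly: mollification preserves both $\|\beta\|_\infty$ and $\|\beta'\|_\infty$, $\beta_\epsilon u\in\mathcal S$, and the two integrals defining $|D|^{1/2}(\beta_\epsilon u)$ and $\beta_\epsilon|D|^{1/2}u$ each converge absolutely because the order is $1/2<1$. Splitting them is therefore legitimate.
\end{itemize}

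Two cosmetic points. The sign in front of your commutator kernel should be $+c$, which is immaterial for the norm bound. The Littlewood--Paley paragraph, which you yourself recognize as circular, should be deleted so that the proof rests only on the Schur-test route.
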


\begin{proof}
Set
\[
\Delta:=-D^2\ge0.
\]
Since
\[
|D|^{1/2}=\Delta^{1/4},
\]
the Laplace-transform representation of fractional powers gives \cite[Theorem~1]{Kwasnicki}:
\[
\Delta^{1/4}
=
c_{1/4}
\int_0^\infty
t^{-5/4}
\bigl(I-e^{-t\Delta}\bigr)\,dt
=
c_{1/4}
\int_0^\infty
t^{-5/4}
\bigl(I-e^{tD^2}\bigr)\,dt ,
\]
where $c_{1/4}=-1/{\Gamma(-1/4)}$. 
Hence
\[
[\,|D|^{1/2},\beta\,]
=
-c_{1/4}
\int_0^\infty
t^{-5/4}
[\,e^{tD^2},\beta\,]\,dt .
\]

Let
\[
G_t(x)
=
\frac1{\sqrt{4\pi t}}
e^{-x^2/(4t)}
\]
be the heat kernel. Then
\[
[\,e^{tD^2},\beta\,]u(x)
=
\int_\R
G_t(x-y)
\bigl(\beta(y)-\beta(x)\bigr)
u(y)\,dy.
\]
Young's inequality gives the two bounds
\[
\|[\,e^{tD^2},\beta\,]\|_{{L^2}\to {L^2}}
\le
2\|\beta\|_\infty
\]
and
\[
\|[\,e^{tD^2},\beta\,]\|_{{L^2}\to {L^2}}
\le
\|\beta'\|_\infty
\int_\R |z|G_t(z)\,dz
\le
C\sqrt t\,\|\beta'\|_\infty .
\]
Therefore
\[
\|[\,e^{tD^2},\beta\,]\|_{{L^2}\to{L^2}}
\le
C\min
\left\{
\|\beta\|_\infty,
\sqrt t\,\|\beta'\|_\infty
\right\}.
\]

Put
\[
\nu_0:=\|\beta\|_\infty,
\qquad
\nu_1:=\|\beta'\|_\infty.
\]
When $\nu_0,\nu_1>0$, split the  integral at
\[
t_0=(\nu_0/\nu_1)^2.
\]
Then
\begin{align*}
\|[\,|D|^{1/2},\beta\,]\|_{{L^2}\to{L^2}}
&\le
C \nu_1\int_0^{t_0}t^{-3/4}\,dt
+
C \nu_0\int_{t_0}^\infty t^{-5/4}\,dt
\\
&\le
C(\nu_0\nu_1)^{1/2}.
\end{align*}
The cases $\nu_0=0$ or $\nu_1=0$ are immediate.  This proves
\eqref{eq:half-commutator}; \eqref{eq:half-product} follows by writing
\[
|D|^{1/2}(\beta u)
=
\beta|D|^{1/2}u
+
[\,|D|^{1/2},\beta\,]u .
\]
\end{proof}
}

\subsection{Uniform centered transport form}
\begin{lemma}[Uniform centered transport form]
\label{lem:centered-transport-form}
Let $Q_h$ be skew-adjoint with symbol $iq_h(\xi)$ satisfying
\[
|q_h(\xi)|\le|\xi|,
\]
and put
\[
B_h(\beta)
=
\frac12(\beta Q_h+Q_h\beta).
\]
For real $\beta\in W^{1,\infty}$,
\[
\left|
\Re\langle iB_h(\beta)u,u\rangle
\right|
\le
C\|\beta\|_\infty
\||D|^{1/2}u\|_{L^2}^2
+
C
\bigl(
\|\beta\|_\infty\|\beta'\|_\infty
\bigr)^{1/2}
\||D|^{1/2}u\|_{L^2}\|u\|_{L^2},
\]
with $C$ independent of $h$.
\end{lemma}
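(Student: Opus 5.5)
The plan is to avoid using the skew-adjointness of $B_h(\beta)$ beyond bookkeeping. Instead I would split one derivative of $Q_h$ symmetrically as two half-derivatives, $Q_h=|D|^{1/2}P_h|D|^{1/2}$. The $h$-dependence then sits entirely in a contraction $P_h$, and Lemma~\ref{lem:half-derivative-commutator} handles the multiplier $\beta$. I would first prove the estimate for $u\in H^1(\R)$, or even $u\in\mathcal S(\R)$. For such $u$ all pairings below are finite, and both sides are continuous in the $H^{1/2}$ topology, so the inequality extends by density to all $u\in H^{1/2}$. (For $u\notin H^{1/2}$ the right-hand side is $+\infty$ and there is nothing to prove.)

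\emph{Step 1: factorization.} Define $P_h$ as the Fourier multiplier with symbol $i\,q_h(\xi)/|\xi|$ for $\xi\neq0$, and $0$ at $\xi=0$, which is a null set. The hypothesis $|q_h(\xi)|\le|\xi|$ gives $\|P_h\|_{L^2\to L^2}\le1$ uniformly in $h$. Since Fourier multipliers commute, $Q_h=|D|^{1/2}P_h|D|^{1/2}$ on $H^1$. Because $|D|^{1/2}$ is self-adjoint, for $g,f\in H^{1/2}$ we get
\[
\langle Q_h g,f\rangle=\langle P_h|D|^{1/2}g,\,|D|^{1/2}f\rangle .
\]

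\emph{Step 2: expand the quadratic form.} Since $\beta$ is real, multiplication by $\beta$ is self-adjoint, and
\[
\langle B_h(\beta)u,u\rangle=\tfrac12\langle Q_hu,\beta u\rangle+\tfrac12\langle Q_h(\beta u),u\rangle .
\]
Note that $\beta u\in H^{1/2}$ by \eqref{eq:half-product}, so both pairings make sense. Applying Step 1 to each term gives
\[
\langle Q_hu,\beta u\rangle=\langle P_h|D|^{1/2}u,\,|D|^{1/2}(\beta u)\rangle,
\qquad
\langle Q_h(\beta u),u\rangle=\langle P_h|D|^{1/2}(\beta u),\,|D|^{1/2}u\rangle .
\]

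\emph{Step 3: bound each term.} By Cauchy--Schwarz and $\|P_h\|\le1$, each term is at most $\||D|^{1/2}u\|\,\||D|^{1/2}(\beta u)\|$. Estimate \eqref{eq:half-product} then bounds this by
\[
\|\beta\|_\infty\||D|^{1/2}u\|^2+C(\|\beta\|_\infty\|\beta'\|_\infty)^{1/2}\||D|^{1/2}u\|\,\|u\| .
\]
Since $|\Re\langle iB_hu,u\rangle|\le|\langle B_hu,u\rangle|$, adding the two halves yields the claim. The constant is the one from Lemma~\ref{lem:half-derivative-commutator}, which does not depend on $h$.

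The only delicate points are the uniformity of the factorization in $h$ and the domain issue. The first is exactly where the symbol bound $|q_h|\le|\xi|$ enters; for $Q_h=\delta_h$ it reads $|\sin(h\xi)/h|\le|\xi|$. The second is settled by the density argument above. I expect no genuine obstacle beyond ensuring that every pairing is taken between $H^{1/2}$ functions.
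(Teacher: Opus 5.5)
Your proof is correct and follows essentially the paper's route. The paper sets $R_h=-iQ_h$, bounds the form by $\||R_h|^{1/2}u\|\,\||R_h|^{1/2}(\beta u)\|$, uses $|R_h|\le|D|$, and then applies \eqref{eq:half-product}; your factorization $Q_h=|D|^{1/2}P_h|D|^{1/2}$ with $\|P_h\|\le1$, followed by Cauchy--Schwarz and Lemma~\ref{lem:half-derivative-commutator}, is the same argument with the comparison $|R_h|\le|D|$ built into the contraction $P_h$, and you also spell out the density/domain justification that the paper leaves implicit.
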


\begin{proof}
Set
\[
R_h:=-iQ_h.
\]
Then $R_h$ is self-adjoint and
\[
|R_h|\le |D|
\]
as Fourier multipliers.  Hence
\[
\left|
\Re\langle iB_h(\beta)u,u\rangle
\right|
\le
\||R_h|^{1/2}u\|_{L^2}\,
\||R_h|^{1/2}(\beta u)\|_{L^2}
\]
and therefore
\[
\le
\||D|^{1/2}u\|_{L^2}\,
\||D|^{1/2}(\beta u)\|_{L^2},
\]
whence applying Lemma~\ref{lem:half-derivative-commutator} concludes the proof.
\end{proof}

\subsection{Three model classes and the fixed-depth consequences}
\label{subsec:model-summary}

We first prove the two finite-depth estimates used in the three model
classes.

\begin{lemma}[Complete-word estimate on a Sobolev group scale]
\label{lem:group-fixed-depth}
Let $p\in\{2,3\}$ 
 and suppose that
\[
 A=c_pD^p+vD+\tfrac12v'+w
\]
generates a $C_0$-group satisfying
\[
 \|e^{tA}\|_{H^j\to H^j}\le M_j e^{\Omega_j|t|},
 \qquad 0\le j\le (p+2)m.
\]
Assume moreover that the centered-difference propagators are uniformly
stable on $L^2$,
\[
 \|e^{tA_h}\|_{L^2\to L^2}\le M_0e^{\omega_0t},
 \qquad t\ge0,\quad 0<h\le h_0,
\]
and that, as provided by Lemma~\ref{lem:defect-order}, for every
$1\le k\le m$ one has the finite expansion
\[
 A_h=A+\sum_{r=1}^{k-1}h^{2r}R_r+h^{2k}Q_k(h),
\]
with
\[
 R_r:H^s\to H^{s-(2r+p)}
 \quad (s\ge 2r+p),
\]
and, uniformly for $0<h\le h_0$,
\[
 Q_k(h):H^s\to H^{s-(2k+p)}
 \quad (s\ge 2k+p).
\]
Then
\[
 e^{tA_h}f
 =e^{tA}f+\sum_{r=1}^{m-1}h^{2r}\mathcal E_r(t)f
 +h^{2m}\mathcal R_{m,h}(t)f,
\]
and there exist constants $C_m,\Omega_m\ge0$ such that, for
$0<h\le h_0$ and $t\ge0$,
\[
 \|\mathcal R_{m,h}(t)f\|_{L^2}
 \le C_m e^{\Omega_m t}t
 \|f\|_{H^{(p+2)m}}.
\]
Consequently, with $E_{2r}(t):=t^{2r}\mathcal E_r(t)$ and $h=t/N$,
\[
 \|R^{\mathrm{prod}}_{m,N}(t)f\|_{L^2}
 \le C_m e^{\Omega_m t}
 \frac{t^{2m+1}}{N^{2m}}
 \|f\|_{H^{(p+2)m}},
 \qquad 0\le t\le h_0N.
\]
Thus the centered-difference products satisfy the pointwise expansion
\eqref{eq:product-expansion} through depth $m$ on $H^{(p+2)m}$.
\end{lemma}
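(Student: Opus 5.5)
Our approach is Duhamel iteration in the parameter $h$, carried out with a complete-word bookkeeping so that every unbounded defect acts only on smooth data that the exact group $e^{sA}$ keeps smooth. The basic identity, valid on $H^{(p+2)m}$ because $A_h$ is bounded for fixed $h\neq0$, is
\[
e^{tA_h}f-e^{tA}f=\int_0^t e^{(t-s)A_h}(A_h-A)e^{sA}f\,ds .
\]
We substitute $A_h-A=\sum_{r=1}^{m-1}h^{2r}R_r+h^{2m}Q_m(h)$ and then repeat the step: in each term containing $h^{2r}R_r$ we again replace the outer factor $e^{(t-s)A_h}$ by $e^{(t-s)A}$ plus its Duhamel correction. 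After finitely many iterations (at most $m$) every term has one of two forms. \emph{Complete words} are iterated integrals
\[
\int_{\Delta_k(t)} e^{(t-s_k)A}R_{r_k}e^{(s_k-s_{k-1})A}\cdots R_{r_1}e^{s_1A}f\,ds,\qquad \textstyle\sum r_i\le m-1,
\]
carrying the factor $h^{2\sum r_i}$; grouping them by the total power $h^{2r}$ defines $\mathcal E_r(t)$. \emph{Terminal words} carry total power at least $h^{2m}$; in these the outermost propagator is $e^{(t-s)A_h}$, or the last defect is some $Q_k(h)$, or the accumulated power exceeds $2m$, and we collect all of them into $h^{2m}\mathcal R_{m,h}(t)$. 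For a terminal word whose total power exceeds $2m$ we use $h\le h_0$ to bound the excess power of $h$ by a constant. To close off a chain we use the finite expansion of $A_h$ truncated at the level $k=m-\sum r_i$, so that the remainder $Q_k(h)$ supplies exactly the missing power $h^{2k}$.

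For the estimate, each defect $R_r$ or $Q_k(h)$ costs $2r+p$ (respectively $2k+p$) derivatives by Lemma~\ref{lem:defect-order}. Between defects we apply the Sobolev group bounds $\|e^{sA}\|_{H^j\to H^j}\le M_je^{\Omega_j s}$ at the appropriate intermediate levels, and the outermost $e^{(t-s)A_h}$ is controlled by the uniform $L^2$ stability $M_0e^{\omega_0 t}$. A chain with indices $r_1,\dots,r_k$ and $\sum r_i=m'\le m$ loses $2m'+kp$ derivatives. The worst case is $k=m'=m$, where each defect has order one in $h^2$, giving a loss of $2m+pm=(p+2)m$; this is exactly where the index $H^{(p+2)m}$ comes from. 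The iterated time integral over the simplex $\Delta_k(t)$ has volume $t^k/k!$. This is bounded by $Ct\,e^{t}$, which gives the factor $t$ in the bound once the exponentials are absorbed into $e^{\Omega_m t}$, with $\Omega_m$ at least $\max_j\Omega_j+\omega_0+1$.

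To obtain the product statement we set $h=t/N$ and use Proposition~\ref{prop:chernoff-tangency-even-structure}(ii), namely $\mathcal S_2(t/N)^N=e^{tA_{t/N}}$. This gives $h^{2r}\mathcal E_r(t)=N^{-2r}t^{2r}\mathcal E_r(t)=N^{-2r}E_{2r}(t)$. The remainder becomes $(t/N)^{2m}\cdot C_me^{\Omega_mt}t\|f\|_{H^{(p+2)m}}$, which is the claimed bound, and the restriction $t\le h_0N$ is precisely $h\le h_0$.

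The main obstacle is the bookkeeping that guarantees $\mathcal E_r$ does not depend on $h$ and that the remainder contains no hidden negative powers of $h$. Two points need care. First, every $h$-dependent operator must be placed in a terminal word whose prefactor is already $h^{2m}$. Second, the regularity must always be consumed from the right, i.e.\ applied to $e^{sA}f$ and never pushed through the non-smoothing $e^{(t-s)A_h}$, which is only $L^2$-stable. We would verify the derivative count by induction on the chain length, and we would need $s\ge 2r+p$ at every stage, which holds because the total loss never exceeds $(p+2)m$.
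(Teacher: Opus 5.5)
Your proposal is correct and follows essentially the same route as the paper. The shared steps are: Duhamel iteration through the leftmost $e^{(t-s)A_h}$; expansion of $A_h-A$ at the remaining depth $m-\sum r_i$, so that every terminal branch has the form $e^{(t-s)A_h}Q_{m-q}(h)$ followed by exact-group factors and lower defects; the loss count $2m+p\ell\le(p+2)m$; use of $L^2$ stability only for the outermost factor; and the bound $\sum_\ell t^\ell/\ell!\le te^t$.

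One cleanup is needed. With the remaining-depth truncation, no word ever reaches an accumulated power above $h^{2m}$, so drop that case entirely. It must not occur anyway: if you substituted the full depth-$m$ expansion at every stage, then for $m=3$ the index sequence $1,1,2$ would cost $8+3p>(p+2)m$ derivatives. Likewise, a branch terminates only by selecting $Q_{m-q}(h)$, never merely because its outer propagator is $e^{(t-s)A_h}$.
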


\begin{proof}
Put $\varepsilon=h^2$, $U_h(t):=e^{tA_h}$, and $T(t):=e^{tA}$.
On smooth data, the Duhamel identity
\begin{equation}
 U_h(t)
 =T(t)+\int_0^t U_h(t-s)(A_h-A)T(s)\,ds
 \label{eq:duhamel-identity-group}
\end{equation}
holds; the estimates below extend to the Sobolev spaces by density.  We iterate \eqref{eq:duhamel-identity-group} finitely many times and make the expansion in $\varepsilon$ at $0$.

Suppose that a branch has already been formed by lower defects
$R_{r_1},\ldots,R_{r_j}$ with accumulated index
\[
 q:=r_1+\cdots+r_j<m.
\]
At the next Duhamel step we use the finite generator expansion at the
\emph{remaining depth} $m-q$:
\begin{equation}
 A_h-A
 =\sum_{r=1}^{m-q-1}\varepsilon^rR_r
  +\varepsilon^{m-q}Q_{m-q}(h).
 \label{eq:remaining-depth-expansion}
\end{equation}
Thus the branch either chooses another $R_r$, in which case its
accumulated index remains strictly below $m$, or it chooses the
terminal factor $Q_{m-q}(h)$ and stops.  Since each selected defect
index is at least one, this procedure ends after at most $m$
Duhamel steps.  In particular, the original generator remainder
$\varepsilon^mQ_m(h)$ occurs only in the length-one terminal branch;
if lower defects have already been selected, the terminal factor is the
lower-depth remainder $Q_{m-q}(h)$ from \eqref{eq:remaining-depth-expansion}.

Collecting all branches of total index $r<m$ gives the coefficient
$\mathcal E_r(t)$.  It is a finite sum of complete words
\[
 \int_{0<s_1<\cdots<s_\ell<t}
 T(t-s_\ell)R_{r_\ell}
 T(s_\ell-s_{\ell-1})\cdots
 R_{r_1}T(s_1)\,ds_1\cdots ds_\ell,
 \qquad r_1+\cdots+r_\ell=r.
\]
If $d_j:=2r_j+p$, then, reading the word from right to left and using
the group bound at each intermediate Sobolev index,
\[
 \left\|\int_{0<s_1<\cdots<s_\ell<t}\text{word}\,ds_1\cdots ds_\ell\,f\right\|_{L^2}
 \le C_{r,\ell}e^{\Omega_r t}\frac{t^\ell}{\ell!}
 \|f\|_{H^{d_1+\cdots+d_\ell}},
\]
while
\[
 d_1+\cdots+d_\ell
 =2r+p\ell\le(p+2)r,
\]
because $\ell\le r$.  Hence every coefficient of index $r<m$ is
controlled on $H^{(p+2)m}$.

It remains to estimate the terminal branches.  A terminal word of
length $\ell$ has lower indices
$r_1,\ldots,r_{\ell-1}$ with
\[
 q:=r_1+\cdots+r_{\ell-1}<m
\]
and one terminal factor $Q_{m-q}(h)$.  Up to a rearrangement of the lower defects, it has the form
\begin{align*}
 \int_{0<s_1<\cdots<s_\ell<t}
 &U_h(t-s_\ell)Q_{m-q}(h)
 T(s_\ell-s_{\ell-1})R_{r_{\ell-1}}\cdots\\
 &\hspace{35mm}\cdots
 T(s_2-s_1)R_{r_1}T(s_1)f\,
 ds_1\cdots ds_\ell .
\end{align*}
The total Sobolev loss in such a word is
\[
 \sum_{j=1}^{\ell-1}(2r_j+p)
 +(2(m-q)+p)
 =2m+p\ell
 \le(p+2)m,
\]
since $\ell\le m$.  The leftmost factor $U_h(t-s_\ell)$ is used only
on $L^2$, where it is uniformly stable.  The remaining semigroup
factors are controlled by the Sobolev group bounds for $T$.  Therefore
all terminal words of length $\ell$ satisfy
\[
 \|\text{terminal word}\|_{L^2}
 \le C_m e^{\omega_m t}\frac{t^\ell}{\ell!}
 \|f\|_{H^{(p+2)m}}.
\]
Every terminal branch is a coefficient of the common factor
$\varepsilon^m=h^{2m}$.  After this factor is removed in the definition
of $\mathcal R_{m,h}(t)$ and the finitely many branches are summed,
\[
 \|\mathcal R_{m,h}(t)f\|_{L^2}
 \le C_m e^{\omega_m t}
 \sum_{\ell=1}^m\frac{t^\ell}{\ell!}
 \|f\|_{H^{(p+2)m}}.
\]
Using
\[
 \sum_{\ell=1}^m\frac{t^\ell}{\ell!}
 \le e^t-1\le te^t,
\]
we obtain
\[
 \|\mathcal R_{m,h}(t)f\|_{L^2}
 \le C_m e^{(\omega_m+1)t}t
 \|f\|_{H^{(p+2)m}}.
\]
Letting $\Omega_m:=\omega_m+1$ proves the stated bound.  
\end{proof}

The analytic-semigroup and fractional-domain framework used in the next
lemma is standard; see Engel--Nagel \cite[Ch.~II, \S\S4--5]{EngelNagel}
for analytic semigroups, interpolation/extrapolation spaces, and
fractional powers.  The scale-allocation estimate below is the
problem-specific quantitative ingredient.

\begin{lemma}[Parabolic scale allocation at fixed depth]
\label{lem:parabolic-fixed-depth}
Let $A$ generate an analytic semigroup and 
%
choose $\omega$ sufficiently large so that
\[
B_\omega:=\omega I-A
\]
is an invertible sectorial operator.  For $\alpha\ge0$ define
\[
X_\alpha:=D(B_\omega^\alpha),
\qquad
\|u\|_{X_\alpha}:=\|B_\omega^\alpha u\|_X.
\]
Suppose
that, through depth $m$,
\[
 R_r:X_{\alpha+r+1}\longrightarrow X_\alpha,
\]
and 
\[
 \|e^{tA}\|_{X_\alpha\to X_{\alpha+\sigma}}
 \le C_{\alpha,\sigma}e^{\Omega t}t^{-\sigma},
 \qquad t>0,\qquad 0\le\sigma\le1.
\]
Assume moreover that the centered-difference expansion through depth
$m$ is uniform for $0<h\le h_0$. Then for every
$0<\varepsilon\le1$ there are constants
$C_{m,\varepsilon},\Omega_{m,\varepsilon}\ge0$ such that, whenever
$0<t\le h_0n$,
\[
 \|\mathcal M_{2m,n}(t)f-e^{tA}f\|_{X_0}
 \le C_{m,\varepsilon}e^{\Omega_{m,\varepsilon}t}
 \frac{t^{2m+\varepsilon}}{n^{2m}}
 \|f\|_{X_{m+\varepsilon}}.
\]
In particular, at $\varepsilon=1$,
\[
 \|\mathcal M_{2m,n}(t)f-e^{tA}f\|_{X_0}
 \le C_m e^{\Omega_m t}
 \frac{t^{2m+1}}{n^{2m}}
 \|f\|_{X_{m+1}}.
\]
\end{lemma}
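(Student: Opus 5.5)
Since Theorem~\ref{thm:abstract} already converts a pointwise product expansion into the multiproduct rate, I will establish the expansion \eqref{eq:product-expansion} with $\mathcal D_m=X_{m+\varepsilon}$ and remainder bound $C e^{\Omega t}t^{2m+\varepsilon}N^{-2m}\|f\|_{X_{m+\varepsilon}}$. The exponent $t^{2m+1}$ in the abstract theorem is replaced by $t^{2m+\varepsilon}$; the cancellation argument in its proof does not depend on that exponent. As in Lemma~\ref{lem:group-fixed-depth}, I put $\epsilon_h=h^2$, $U_h(t)=e^{tA_h}$, $T(t)=e^{tA}$, and iterate the Duhamel identity $U_h(t)=T(t)+\int_0^tU_h(t-s)(A_h-A)T(s)\,ds$. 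At accumulated index $q$ I use the remaining-depth expansion \eqref{eq:remaining-depth-expansion}. The non-terminal branches give the coefficients $\mathcal E_r(t)$, and I set $E_{2r}(t)=t^{2r}\mathcal E_r(t)$. Each terminal branch carries the factor $h^{2m}$, a leftmost factor $U_h(t-s_\ell)$, and then $Q_{m-q}(h)$. By Proposition~\ref{prop:UniStabCentDiff}, $U_h$ is bounded uniformly on $X_0$, so the whole task is to bound terminal words, and also the coefficient words, in $X_0$ by $t^{\varepsilon}$ times $\|f\|_{X_{m+\varepsilon}}$.

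The words are handled by scale allocation, reading each one from right to left. Take a word $T(t-s_\ell)R_{r_\ell}T(s_\ell-s_{\ell-1})\cdots R_{r_1}T(s_1)f$ with $\sum r_j=q\le m$. A defect $R_r$ maps $X_{\alpha+r+1}\to X_\alpha$. I assume the remainders $Q_k(h)$ satisfy the same mapping property uniformly in $h$; this is what ``uniform expansion through depth $m$'' supplies. A word of length $\ell$ then needs $\sum_j(r_j+1)=m+\ell$ units of regularity, but the datum supplies only $m+\varepsilon$. The $\ell$ missing units, plus the $\varepsilon$ needed to produce $t^\varepsilon$, have to come from smoothing of the intermediate semigroup factors. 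I will let the factor $T(s_j-s_{j-1})$ sitting just to the right of $R_{r_j}$ gain $\sigma_j=1-\theta_j$ units, at the cost of $(s_j-s_{j-1})^{-\sigma_j}$. I choose the $\sigma_j\in[0,1)$ with $\sum_j\sigma_j=\ell-\varepsilon$, for instance $\sigma_j=1-\varepsilon/\ell$. Then the required regularity is $m+\ell-\sum\sigma_j=m+\varepsilon$. The simplex integral $\int_{0<s_1<\cdots<s_\ell<t}\prod_j(s_j-s_{j-1})^{-\sigma_j}\,ds$ is a Dirichlet integral and equals $t^{\ell-\sum\sigma_j}\prod_j\Gamma(1-\sigma_j)/\Gamma(\ell+1-\sum\sigma_j)=C\,t^{\varepsilon}$, and it is finite because every $\sigma_j<1$. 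The leftmost operator is $U_h$ on $X_0$ for terminal words and $T$ on $X_0$ for coefficient words. In both cases the final defect must land in $X_0$, and this is guaranteed by the allocation.

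Putting these together, I apply the bound at $h=t/N$ to get $\|R^{\rm prod}_{m,N}(t)f\|\le C_{m,\varepsilon}e^{\Omega t}h^{2m}t^{\varepsilon}\|f\|_{X_{m+\varepsilon}}$, which is $C_{m,\varepsilon}e^{\Omega t}t^{2m+\varepsilon}N^{-2m}\|f\|_{X_{m+\varepsilon}}$. The coefficient words $\mathcal E_r$ are well defined on $X_{m+\varepsilon}$ by the same estimate with $q=r<m$, so they need less regularity than is available. Theorem~\ref{thm:abstract}, run with exponent $2m+\varepsilon$, then gives the claim. Taking $\varepsilon=1$ in the bound gives the stated special case.

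I expect the main obstacle to be the first smoothing step, where the exponent must stay strictly below $1$ so that the singularity $(s_1)^{-\sigma_1}$ is integrable. The allocation above already handles this, because every $\sigma_j<1$. Two further points need checking. The first is that density and the Duhamel identity are legitimate on $X_{m+\varepsilon}$; I would verify them on a smooth core first. The second is that the constants $C_{\alpha,\sigma}$ are uniform over the finitely many intermediate indices used, which follows from the finite depth. The factors $e^{\Omega t}$ collect into $e^{\Omega_{m,\varepsilon}t}$, and the contribution from $U_h$ is $e^{\omega t}$.
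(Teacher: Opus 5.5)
Your proposal is correct and follows essentially the same route as the paper. Both arguments use the Duhamel iteration with the remaining-depth expansion, give smoothing $1-\varepsilon/\ell$ to the semigroup interval immediately to the right of each defect so that the datum only needs $X_{m+\varepsilon}$, obtain $t^{\varepsilon}$ from the Dirichlet simplex integral, and finish with the factor $h^{2m}$ and Richardson cancellation. Your explicit Gamma-function evaluation of the simplex integral and your remark that the coefficient words are well defined on $X_{m+\varepsilon}$ are minor refinements of points the paper leaves implicit.
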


\gwarning{
\begin{remark}
This definition is independent, up to equivalence of norms, of the
particular admissible shift $\omega$.  Indeed, if $\omega_1,\omega_2$
are two such shifts and $B_j=\omega_jI-A$, then
\[
B_2
=
\bigl(I+(\omega_2-\omega_1)B_1^{-1}\bigr)B_1.
\]
The bounded operator
\[
C:=I+(\omega_2-\omega_1)B_1^{-1}
\]
is invertible and commutes with $B_1$.  Hence the sectorial functional
calculus gives
\[
B_2^\alpha=C^\alpha B_1^\alpha,
\]
with $C^{\pm\alpha}\in\mathcal L(X)$.  Consequently
\[
D(B_1^\alpha)=D(B_2^\alpha)
\]
and
\[
\|B_1^\alpha u\|_X\asymp\|B_2^\alpha u\|_X. 
\]
\end{remark}
}

\begin{proof}
Consider a complete word with
$r_1+\cdots+r_\ell=r\le m$.  The defects consume the total scale amount
\[
 \sum_{j=1}^{\ell}(r_j+1)=r+\ell.
\]
Set $\theta_j=\varepsilon/\ell$ and let the semigroup interval
immediately preceding the $j$th defect supply smoothing
$1-\theta_j$.  The total smoothing supplied in this way is
$\ell-\varepsilon$, so the remaining regularity required from the datum
is exactly $r+\varepsilon$, which is bounded by $m+\varepsilon$.

Writing $\tau_j=s_j-s_{j-1}$, $s_0=0$, the norm of the resulting
time-ordered integrand is bounded by
\[
 C_{m,\varepsilon}e^{\Omega t}
 \prod_{j=1}^{\ell}\tau_j^{-(1-\theta_j)}
 \|f\|_{X_{m+\varepsilon}}.
\]
Every exponent $1-\theta_j$ is strictly smaller than one. By the Dirichlet scaling formula, for every $\ell\ge1$ and every
$a_1,\dots,a_\ell\in[0,1)$,
\begin{equation}
 \int_{0<s_1<\cdots<s_\ell<t}
 \prod_{j=1}^\ell(s_j-s_{j-1})^{-a_j}\,ds_1\cdots ds_\ell
 =
 \mathcal B(a_1,\dots,a_\ell)\,t^{\,\ell-\sum_{j=1}^\ell a_j},
\label{eq:dirichlet-simplex}
\end{equation}
where the constant
\[
 \mathcal B(a_1,\dots,a_\ell)
 :=
 \int_{0<u_1<\cdots<u_\ell<1}
 \prod_{j=1}^\ell(u_j-u_{j-1})^{-a_j}\,du_1\cdots du_\ell
\]
is finite whenever $a_1,\dots,a_\ell\in[0,1)$; here $s_0:=0$ and, in the definition of $\mathcal B$, $u_0:=0$.
Applying \eqref{eq:dirichlet-simplex} with $a_j=1-\varepsilon/\ell$ for
$j=1,\dots,\ell$, so that $\sum_{j=1}^\ell a_j=\ell-\varepsilon$,
the corresponding time-simplex integral is bounded by
\[
 C_{m,\varepsilon}e^{\Omega_{m,\varepsilon}t}\,t^{\varepsilon}.
\]
Multiplication by $h^{2m}=(t/n)^{2m}$ gives the displayed
$t^{2m+\varepsilon}n^{-2m}$ remainder. The same scale allocation
applies to the terminal Duhamel branches of total Richardson index
$m$. Summing the finitely many words proves the claim. For
$\varepsilon=1$, take $\theta_j=1/\ell$; then the simplex contributes
one full factor $t$.
\end{proof}

\subsubsection{Parabolic second order  model.}\label{subsubsec:parabolic-model}
For
\[
 A=aD^2+bD+c,\qquad a\ge a_0>0,
\]
write
\[
 A=DaD+p_0+\tfrac12(\varphi D+D\varphi),\qquad
 \varphi=b-a',\qquad p_0=c+\tfrac12(a''-b'),
\]
and set
\[
 A_h=\delta_ha\delta_h+p_0+\tfrac12(\varphi\delta_h+\delta_h\varphi),
 \qquad \mathcal S_2(h)=e^{hA_h}.
\]

\ \\
Nex, we show that {uniform ellipticity yields an analytic semigroup.} 
\begin{proposition}[Explicit sectoriality in the parabolic case]
\label{prop:parabolic-sectoriality}
Assume
$
a,\varphi\in W^{1,\infty}(\R), p_0\in L^{\infty}(\R)
$
are real valued 
and set
\begin{equation}
\omega_0
:=
\operatorname*{ess\,sup}_{x\in\R}p_0(x).
\label{eq:parabolic-growth-bound}
\end{equation}
Then
\begin{equation}
\Re\langle Au,u\rangle
\le
\omega_0\|u\|_2^2,
\qquad u\in H^2(\R).
\label{eq:parabolic-quasi-dissipative}
\end{equation}
Moreover, for every
$
\omega>\omega_0,
$
the shifted operator
$
B_\omega:=\omega I-A
$
is an invertible sectorial operator.  More precisely, if
$
\delta_\omega:=\omega-\omega_0>0,
$
then its numerical range is contained in the sector
\[
\overline{\mathscr{S}_{\theta_\omega}}
:=
\{z\in\C:\ |\arg z|\le\theta_\omega\},
\]
where 
\begin{equation}
\tan\theta_\omega
=
\frac{\|\varphi\|_\infty}
{2\sqrt{a_0\delta_\omega}},
\qquad
0\le\theta_\omega<\frac{\pi}{2}.
\label{eq:explicit-sectorial-angle}
\end{equation}
In particular,
\[
0\in\rho(B_\omega),
\qquad
\|B_\omega^{-1}\|_{L^2\to L^2}
\le
\frac1{\omega-\omega_0}.
\]

\noindent
Consequently $A$ generates an analytic semigroup on $L^2(\R)$.
If
\begin{equation}
\omega-\omega_0
>
\frac{\|\varphi\|_\infty^2}{4a_0},
\label{eq:pi-four-sector-shift}
\end{equation}
then 
$
\theta_\omega<\frac{\pi}{4}.
$
\end{proposition}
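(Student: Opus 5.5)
The plan is to work directly with the divergence form
\[
A=DaD+p_0+\tfrac12(\varphi D+D\varphi)
\]
on $H^2(\R)$. I would first compute the numerical range of $A$, then shift by $\omega$, and finally obtain invertibility from an a priori lower bound combined with a surjectivity argument.

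\emph{Step 1: the real part.} Take $u\in H^2(\R)$ and integrate by parts. The second-order part gives $\langle DaDu,u\rangle=-\int a|u'|^2\,dx$, which is real and bounded above by $-a_0\|u'\|_2^2$. The transport part $\tfrac12(\varphi D+D\varphi)$ is formally skew-symmetric because $\varphi$ is real and $D^*=-D$. This is the same computation as in Proposition~\ref{prop:UniStabCentDiff}, with $D$ in place of $\delta_h$. Hence its quadratic form is purely imaginary. Since $\langle p_0u,u\rangle\le\omega_0\|u\|_2^2$, this gives \eqref{eq:parabolic-quasi-dissipative} in the sharper form
\[
\Re\langle Au,u\rangle\le -a_0\|u'\|_2^2+\omega_0\|u\|_2^2 .
\]

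\emph{Step 2: the imaginary part and the sector.} Only the transport term contributes to $\Im\langle Au,u\rangle$. Writing
\[
\tfrac12\bigl(\langle\varphi u',u\rangle-\langle\varphi u,u'\rangle\bigr)=i\,\Im\langle\varphi u',u\rangle,
\]
I get $|\Im\langle Au,u\rangle|\le\|\varphi\|_\infty\|u'\|_2\|u\|_2$. For $B_\omega=\omega I-A$ this yields $\Re\langle B_\omega u,u\rangle\ge a_0\|u'\|_2^2+\delta_\omega\|u\|_2^2$. By AM--GM the right-hand side is at least $2\sqrt{a_0\delta_\omega}\,\|u'\|_2\|u\|_2$. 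Dividing the imaginary bound by this real bound gives $|\Im|/\Re\le\|\varphi\|_\infty/(2\sqrt{a_0\delta_\omega})$, which is exactly \eqref{eq:explicit-sectorial-angle}. The angle $\theta_\omega$ is strictly less than $\pi/2$ because the right-hand side is finite. The $\pi/4$ claim is simply the equivalence $\tan\theta_\omega<1\iff\|\varphi\|_\infty^2<4a_0\delta_\omega$, which is \eqref{eq:pi-four-sector-shift}.

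\emph{Step 3: invertibility and analyticity.} From $\Re\langle B_\omega u,u\rangle\ge\delta_\omega\|u\|_2^2$ and Cauchy--Schwarz, $\|B_\omega u\|_2\ge\delta_\omega\|u\|_2$. Since $A$ is closed on $H^2$ (Remark~\ref{rem:model-closures}), $B_\omega$ is injective with closed range. It then suffices to show that the range is dense, or directly that $B_\omega$ is surjective.

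I expect this surjectivity step to be the main technical point. I would handle it with Lax--Milgram on $H^1(\R)$ applied to the sesquilinear form
\[
\mathfrak b(u,w)=\int a\,u'\overline{w'}+\int(\omega-p_0)u\overline w-\tfrac12\int(\varphi u'\overline w-\varphi u\overline{w'}),
\]
whose coercivity $\Re\,\mathfrak b(u,u)\ge a_0\|u'\|^2+\delta_\omega\|u\|^2$ is Step 1. This produces a weak solution $u\in H^1$ of $B_\omega u=g$ for any $g\in L^2$. Elliptic regularity then upgrades $u$ to $H^2$: the identity $(au')'=\text{(an }L^2\text{ function)}$ together with $a\in W^{1,\infty}$ and $a\ge a_0$ gives $u''\in L^2$. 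Alternatively, I could observe that $A^*$ has the same structure with $\varphi\mapsto-\varphi$, so the same injectivity estimate applies to $B_\omega^*$, which gives dense range.

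Once surjectivity is established, $0\in\rho(B_\omega)$ and $\|B_\omega^{-1}\|\le 1/\delta_\omega$. Thus $B_\omega$ is m-sectorial with vertex $0$ and half-angle $\theta_\omega<\pi/2$. By the standard characterization (Kato; Engel--Nagel Ch.~II~\S4), $-B_\omega=A-\omega$ generates a bounded analytic semigroup. Hence $A$ generates an analytic semigroup with $\|e^{tA}\|\le e^{\omega_0t}$.
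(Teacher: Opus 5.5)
Your proposal is correct and follows essentially the same route as the paper: the same quadratic-form computation, the same AM--GM bound $2\sqrt{a_0\delta_\omega}\,\|u'\|_2\|u\|_2\le a_0\|u'\|_2^2+\delta_\omega\|u\|_2^2$ giving the sector, and Lax--Milgram for the same sesquilinear form on $H^1$. The one difference is that you explicitly upgrade the weak $H^1$ solution to $H^2$ through the identity for $(au')'$, which identifies the form operator with $\omega I-A$ on $H^2$; the paper leaves this step implicit when it invokes Kato's first representation theorem.
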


\begin{proof}
Since $D^*=-D$ on $L^2(\R)$ and $a$ is real-valued,
\[
\langle DaDu,u\rangle
=
-\langle aDu,Du\rangle
=
-\int_\R a(x)|Du(x)|^2\,dx.
\]
Moreover, because $\varphi$ is real-valued,
$
B:=\frac12(\varphi D+D\varphi)
$
is skew-symmetric on $H^1(\R)$ and hence
\[
\Re\langle B u,u\rangle=0.
\]
Therefore
\begin{align*}
\Re\langle Au,u\rangle
&=
-\int_\R a(x)|Du(x)|^2\,dx
+
\int_\R p_0(x)|u(x)|^2\,dx
\\
&\le
\omega_0\|u\|_2^2,
\end{align*}
which proves \eqref{eq:parabolic-quasi-dissipative}.

Next, we prove sectoriality.  Fix
$
\omega>\omega_0
$
and set
$
\delta_\omega:=\omega-\omega_0>0.
$ 
Consider on $H^1(\R)$ the sesquilinear form associated to $
B_\omega=\omega I-A.
$:
\begin{align}
\mathfrak b_\omega[u,v]
&:=
\int_\R a\,Du\,\overline{Dv}\,dx
+
\int_\R(\omega-p_0)u\overline v\,dx
\nonumber\\
&\quad
-\frac12\int_\R\varphi\,Du\,\overline v\,dx
+
\frac12\int_\R\varphi\,u\,\overline{Dv}\,dx.
\label{eq:parabolic-sectorial-form}
\end{align}
$\mathfrak b_\omega$ is continuous on $H^1(\R)$ since
$a,\varphi,p_0\in L^\infty(\R)$.
For $u\in H^1(\R)$, the last two terms in
\eqref{eq:parabolic-sectorial-form} sum up to a purely imaginary, and hence
\begin{align}
\Re\mathfrak b_\omega[u,u]
&=
\int_\R a|Du|^2\,dx
+
\int_\R(\omega-p_0)|u|^2\,dx
\nonumber\\
&\ge
a_0\|Du\|_{L^2}^2
+
\delta_\omega\|u\|_{L^2}^2,
\label{eq:parabolic-form-coercive}
\end{align}
i.e. $\mathfrak b_\omega$ is coercive on $H^1(\R)$.
Moreover,
\begin{align*}
\left|
\Im\mathfrak b_\omega[u,u]
\right|
&=
\left|
\Im
\int_\R\varphi\,Du\,\overline u\,dx
\right|
\\
&\le
\|\varphi\|_\infty
\|Du\|_{L^2}\|u\|_{L^2}.
\end{align*}
Using
\[
2\sqrt{a_0\delta_\omega}
\|Du\|_{L^2}\|u\|_{L^2}
\le
a_0\|Du\|_2^2
+
\delta_\omega\|u\|_{L^2}^2,
\]
we obtain
\begin{align}
\left|
\Im\mathfrak b_\omega[u,u]
\right|
&\le
\frac{\|\varphi\|_\infty}
{2\sqrt{a_0\delta_\omega}}
\left(
a_0\|Du\|_{L^2}^2
+
\delta_\omega\|u\|_{L^2}^2
\right)
\nonumber\\
&\le
\frac{\|\varphi\|_\infty}
{2\sqrt{a_0\delta_\omega}}
\Re\mathfrak b_\omega[u,u].
\label{eq:parabolic-form-sector}
\end{align}
Thus $\mathfrak b_\omega$ is a densely defined closed sectorial form
with semi-angle $\theta_\omega$ satisfying
\[
\tan\theta_\omega
\le
\frac{\|\varphi\|_\infty}
{2\sqrt{a_0(\omega-\omega_0)}}.
\]
{
By the first representation theorem for closed sectorial forms
(see, e.g., \cite[Chapter~VI, \S2]{Kato}), 
 $B_\omega$, which is the operator associated to the closed sectorial form $\mathfrak b_\omega$, is a $m$-sectorial operator. 
Since its semi-angle is strictly smaller than $\pi/2$,
$-B_\omega$ generates a bounded holomorphic semigroup.
}
%
%
Furthermore, \eqref{eq:parabolic-form-coercive} implies
\[
\Re\mathfrak b_\omega[u,u]
\ge
\delta_\omega\|u\|_2^2.
\]
Then, Lax--Milgram theorem 
implies
$
0\in\rho(B_\omega)
$
and
\[
\|B_\omega^{-1}\|_{L^2\to L^2}
\le
\delta_\omega^{-1}
=
\frac1{\omega-\omega_0}.
\]
Finally, $B_\omega$ sectorial with angle strictly smaller than
$\pi/2$ implies that
\[
A=\omega I-B_\omega
\]
generates an analytic $C_0$-semigroup on $L^2(\R)$.

If
\[
\omega-\omega_0
>
\frac{\|\varphi\|_\infty^2}{4a_0},
\]
then
\[
\frac{\|\varphi\|_\infty}
{2\sqrt{a_0(\omega-\omega_0)}}<1,
\]
and therefore $\theta_\omega<\pi/4$.
\end{proof}

By Proposition~\ref{prop:parabolic-sectoriality}, given
\[
\omega>\omega_0
:=
\operatorname*{ess\,sup}_{x\in\R}
\left[
c(x)+\frac12\bigl(a''(x)-b'(x)\bigr)
\right],
\]
the operator
\[
B_\omega:=\omega I-A
\]
is invertible and sectorial. 
Define
\[
X_\alpha
:=
D(B_\omega^\alpha)
=
D((\omega-A)^\alpha),
\qquad
\|u\|_{X_\alpha}
:=
\|B_\omega^\alpha u\|_{L^2}.
\]
The resulting fractional-domain scale is independent, up to
equivalence of norms, of the particular admissible choice of
$\omega>\omega_0$.

Since $-B_\omega$ generates a bounded analytic semigroup, the standard
fractional-power estimates yield 
\[
\|e^{tA}\|_{X_\alpha\to X_{\alpha+\sigma}}
\le
C_{\alpha,\sigma}e^{\omega t}t^{-\sigma},
\qquad
t>0.
\]
Every word in $A_h$ contains at most two centered-difference factors.
Hence Lemma~\ref{lem:defect-order}, with $p=2$, gives
\[
 R_r:H^s\to H^{s-(2r+2)},
 \qquad
 Q_m(h):H^s\to H^{s-(2m+2)}.
\]
The assumption $a,\varphi,p_0\in W^{2m+2,\infty}$ used below
ensures that all coefficient multipliers and the multiplier commutators
created by these words through depth $m$ are bounded on the corresponding
Sobolev levels.  By standard elliptic regularity
$X_\beta\simeq H^{2\beta}$; 
consequently
\[
 R_r:X_{\alpha+r+1}\longrightarrow X_\alpha,
\]
which is indeed the defect mapping hypothesis of
Lemma~\ref{lem:parabolic-fixed-depth} and 
$X_{m+\varepsilon}\simeq H^{2m+2\varepsilon}$.

\warning{
See Kato~\cite[Chap.~VI, \S2, Thm.~2.1]{Kato},
Ouhabaz~\cite[Thm.~1.52]{Ouhabaz}, and
Engel--Nagel~\cite[Thm.~II.4.6]{EngelNagel}.
\begin{proposition}[Analyticity and Sobolev regularization]\label{prop:parabolic-analyticity}
Let
\[
A u
=
a(x)u''+b(x)u'+c(x)u,
\qquad
D(A)=H^2(\mathbb R),
\]
acting on \(L^2(\mathbb R)\). Assume that
\[
a(x)\ge a_0>0,
\]
and that the coefficients have sufficiently many bounded derivatives
for the finite Sobolev range considered below. In particular,
\[
a\in W^{1,\infty}(\mathbb R),
\qquad
b,c\in L^\infty(\mathbb R).
\]
Then there exists \(\omega\in\mathbb R\) sufficiently large such that
\[
L:=\omega-A
\]
is invertible and sectorial of angle strictly smaller than \(\pi/2\).
Consequently \(A\) generates an analytic \(C_0\)-semigroup on
\(L^2(\mathbb R)\) satisfying
\[
\|e^{tA}\|_{\mathcal L(L^2)}
\le
M e^{\omega t},
\qquad t\ge0.
\]

Moreover, if
\[
X_\beta:=D(L^\beta),
\qquad
\|u\|_{X_\beta}:=\|L^\beta u\|_{L^2},
\]
then, throughout every bounded range of \(\beta\) allowed by the
regularity of the coefficients,
\[
X_\beta\simeq H^{2\beta}(\mathbb R),
\]
with equivalence of norms. In particular, for \(\alpha,\sigma\ge0\)
in this range,
\[
\|e^{tA}\|_{X_\alpha\to X_{\alpha+\sigma}}
\le
C_{\alpha,\sigma}e^{\omega t}t^{-\sigma},
\qquad t>0,
\]
or equivalently,
\[
\|e^{tA}\|_{H^{2\alpha}\to H^{2\alpha+2\sigma}}
\le
C_{\alpha,\sigma}e^{\omega t}t^{-\sigma}.
\]
\end{proposition}

\begin{proof}
We divide the proof into three steps.

\medskip
\noindent
\textbf{Step 1: sectoriality and analytic-semigroup generation.}

For \(\omega\in\mathbb R\), define on
\[
V:=H^1(\mathbb R)
\]
the sesquilinear form
\begin{equation}
\mathfrak a_\omega(u,v)
:=
\int_{\mathbb R}a\,u'\overline{v'}\,dx
+
\int_{\mathbb R}(a'-b)u'\overline v\,dx
+
\int_{\mathbb R}(\omega-c)u\overline v\,dx.
\label{eq:parabolic-sectorial-form}
\end{equation}
For \(u\in H^2(\mathbb R)\), integration by parts gives
\[
\begin{aligned}
\mathfrak a_\omega(u,v)
&=
-\int_{\mathbb R}(a u')'\overline v\,dx
+
\int_{\mathbb R}(a'-b)u'\overline v\,dx
+
\int_{\mathbb R}(\omega-c)u\overline v\,dx
\\
&=
\int_{\mathbb R}
\bigl(-a u''-b u'+(\omega-c)u\bigr)\overline v\,dx
\\
&=
\langle (\omega-A)u,v\rangle_{L^2}.
\end{aligned}
\]
Thus \(\mathfrak a_\omega\) is the form naturally associated with
\(L=\omega-A\).

By uniform ellipticity,
\[
a(x)\ge a_0>0.
\]
Hence
\[
\begin{aligned}
\operatorname{Re}\mathfrak a_\omega(u,u)
&\ge
a_0\|u'\|_2^2
-
\|a'-b\|_\infty
\|u'\|_2\|u\|_2
\\
&\qquad
+
\bigl(\omega-\|\operatorname{Re}c\|_\infty\bigr)
\|u\|_2^2.
\end{aligned}
\]
Young's inequality yields
\[
\|a'-b\|_\infty\|u'\|_2\|u\|_2
\le
\frac{a_0}{2}\|u'\|_2^2
+
\frac{\|a'-b\|_\infty^2}{2a_0}\|u\|_2^2.
\]
Therefore
\begin{equation}
\operatorname{Re}\mathfrak a_\omega(u,u)
\ge
\frac{a_0}{2}\|u'\|_2^2
+
\left(
\omega-\|\operatorname{Re}c\|_\infty
-\frac{\|a'-b\|_\infty^2}{2a_0}
\right)
\|u\|_2^2.
\label{eq:parabolic-coercivity-estimate}
\end{equation}
Choosing
\[
\omega>
\|\operatorname{Re}c\|_\infty
+
\frac{\|a'-b\|_\infty^2}{2a_0},
\]
we obtain, for some \(\kappa>0\),
\begin{equation}
\operatorname{Re}\mathfrak a_\omega(u,u)
\ge
\kappa\|u\|_{H^1}^2.
\label{eq:parabolic-coercive-form}
\end{equation}

On the other hand,
\begin{equation}
|\mathfrak a_\omega(u,v)|
\le
C_\omega
\|u\|_{H^1}\|v\|_{H^1}.
\label{eq:parabolic-form-boundedness}
\end{equation}
Thus \(\mathfrak a_\omega\) is densely defined, closed and sectorial
on \(L^2(\mathbb R)\). By the first representation theorem for
sectorial forms, the associated operator is \(m\)-sectorial.

We next identify that operator. If \(u\) belongs to its operator
domain, then \(u\in H^1(\mathbb R)\) and
\[
Lu
=
-a u''-b u'+(\omega-c)u
\in L^2(\mathbb R)
\]
in the distributional sense. Hence
\[
a u''
=
-Lu-bu'+(\omega-c)u.
\]
The right-hand side belongs to \(L^2(\mathbb R)\), and
\(a^{-1}\in L^\infty(\mathbb R)\) by uniform ellipticity. Therefore
\[
u''\in L^2(\mathbb R),
\]
and consequently
\[
u\in H^2(\mathbb R).
\]
Conversely, every \(u\in H^2(\mathbb R)\) belongs to the form
operator domain by the integration-by-parts identity above. Thus
\begin{equation}
D(L)=H^2(\mathbb R),
\qquad
L=\omega-A.
\label{eq:parabolic-domain-identification}
\end{equation}

Since \(L\) is \(m\)-sectorial of angle strictly smaller than
\(\pi/2\), \(-L\) generates a bounded analytic \(C_0\)-semigroup.
Because
\[
A=\omega-L,
\]
we have
\[
e^{tA}=e^{\omega t}e^{-tL},
\]
and therefore
\begin{equation}
\|e^{tA}\|
\le
M e^{\omega t}.
\label{eq:parabolic-semigroup-growth}
\end{equation}
This is the precise sense in which uniform ellipticity, together with
the bounded lower-order coefficients, yields analytic-semigroup
generation: uniform ellipticity provides coercivity of the principal
part, sectorial-form theory yields the sectorial realization, and
sectoriality yields analyticity.

\medskip
\noindent
\textbf{Step 2: analytic regularization.}

Since \(-L\) generates a bounded analytic semigroup, the sectorial
functional calculus gives, for every \(\sigma\ge0\),
\begin{equation}
\|L^\sigma e^{-tL}\|_{\mathcal L(L^2)}
\le
C_\sigma t^{-\sigma},
\qquad t>0.
\label{eq:sectorial-smoothing}
\end{equation}
For instance, this follows from the Dunford representation
\[
L^\sigma e^{-tL}
=
\frac{1}{2\pi i}
\int_\Gamma
z^\sigma e^{-tz}(z-L)^{-1}\,dz,
\]
where \(\Gamma\) is a sectorial contour. Along \(\Gamma\),
\[
\|(z-L)^{-1}\|
\le
\frac{C}{|z|}
\]
and
\[
|e^{-tz}|
\le
e^{-c t|z|}.
\]
Hence
\[
\begin{aligned}
\|L^\sigma e^{-tL}\|
&\le
C\int_0^\infty
r^{\sigma-1}e^{-ctr}\,dr
\\
&=
C t^{-\sigma}
\int_0^\infty
s^{\sigma-1}e^{-cs}\,ds
\\
&\le
C_\sigma t^{-\sigma}.
\end{aligned}
\]

Define
\[
X_\alpha:=D(L^\alpha),
\qquad
\|u\|_{X_\alpha}:=\|L^\alpha u\|_2.
\]
Since \(0\in\rho(L)\), this is equivalent to the usual graph norm on
\(D(L^\alpha)\).

If \(u\in X_\alpha\), then the functional calculus gives the
commutation relation
\[
L^\alpha e^{-tL}u
=
e^{-tL}L^\alpha u.
\]
Consequently,
\[
\begin{aligned}
\|e^{tA}u\|_{X_{\alpha+\sigma}}
&=
\|L^{\alpha+\sigma}e^{tA}u\|_2
\\
&=
e^{\omega t}
\|L^{\alpha+\sigma}e^{-tL}u\|_2
\\
&=
e^{\omega t}
\|L^\sigma e^{-tL}L^\alpha u\|_2
\\
&\le
C_{\alpha,\sigma}
e^{\omega t}
t^{-\sigma}
\|u\|_{X_\alpha}.
\end{aligned}
\]
Thus
\begin{equation}
\boxed{
\|e^{tA}\|_{X_\alpha\to X_{\alpha+\sigma}}
\le
C_{\alpha,\sigma}
e^{\omega t}t^{-\sigma}.
}
\label{eq:fractional-scale-smoothing}
\end{equation}

\medskip
\noindent
\textbf{Step 3: identification of the fractional-domain scale.}

We first identify the integer powers of \(L\).
Uniform ellipticity gives the standard elliptic regularity estimate
\begin{equation}
\|u\|_{H^{s+2}}
\le
C_s
\bigl(
\|Lu\|_{H^s}+\|u\|_2
\bigr)
\label{eq:elliptic-regularity}
\end{equation}
for every integer \(s\ge0\) in the finite range permitted by the
regularity of the coefficients.

For completeness, when \(s=0\), the equation
\[
Lu=-a u''-b u'+(\omega-c)u
\]
gives
\[
a u''
=
-Lu-bu'+(\omega-c)u.
\]
Using the coercive estimate from Step~1 to control \(\|u'\|_2\),
together with \(a^{-1},b,c\in L^\infty\), yields
\[
\|u\|_{H^2}
\le
C\bigl(
\|Lu\|_2+\|u\|_2
\bigr).
\]
For higher integer \(s\), differentiating the equation \(Lu=f\)
gives
\[
D^s(au'')
=
aD^{s+2}u
+
\sum_{j=1}^{s}
\binom{s}{j}
D^ja\,D^{s-j+2}u.
\]
The first term contains the only derivative of \(u\) of order
\(s+2\); all other terms contain derivatives of order at most
\(s+1\). Since \(a\ge a_0>0\), one can solve for the highest
derivative and close the estimate inductively, obtaining
\eqref{eq:elliptic-regularity}.

Because \(0\in\rho(L)\),
\[
\|u\|_2
\le
\|L^{-1}\|\,\|Lu\|_2,
\]
so \eqref{eq:elliptic-regularity} improves to
\begin{equation}
\|u\|_{H^{s+2}}
\le
C_s\|Lu\|_{H^s}.
\label{eq:elliptic-resolvent-estimate}
\end{equation}
Conversely, the differential expression itself gives
\begin{equation}
\|Lu\|_{H^s}
\le
C_s\|u\|_{H^{s+2}}.
\label{eq:elliptic-expression-estimate}
\end{equation}
Hence
\[
L:H^{s+2}(\mathbb R)\longrightarrow H^s(\mathbb R)
\]
has the expected elliptic order two, with equivalence of the
corresponding graph and Sobolev norms.

We now prove inductively that
\begin{equation}
D(L^k)=H^{2k}(\mathbb R).
\label{eq:integer-power-domain}
\end{equation}
The case \(k=1\) follows from \eqref{eq:parabolic-domain-identification}. Suppose
\[
D(L^k)=H^{2k}(\mathbb R).
\]
Then
\[
u\in D(L^{k+1})
\]
if and only if
\[
u\in D(L)
\qquad\text{and}\qquad
Lu\in D(L^k).
\]
By the induction hypothesis,
\[
Lu\in H^{2k}(\mathbb R).
\]
Applying \eqref{eq:elliptic-resolvent-estimate} with \(s=2k\) gives
\[
u\in H^{2k+2}(\mathbb R).
\]
Thus
\[
D(L^{k+1})\subset H^{2k+2}(\mathbb R).
\]
Conversely, if
\[
u\in H^{2k+2}(\mathbb R),
\]
then \eqref{eq:elliptic-expression-estimate} implies
\[
Lu\in H^{2k}(\mathbb R)=D(L^k),
\]
and therefore
\[
u\in D(L^{k+1}).
\]
Hence
\[
D(L^{k+1})=H^{2k+2}(\mathbb R).
\]
Moreover,
\begin{equation}
\|L^k u\|_2
\simeq
\|u\|_{H^{2k}}.
\label{eq:integer-power-graph-norm}
\end{equation}

It remains to pass from integer powers to fractional powers.
For the present uniformly elliptic realization, the standard
complex-power theorem for elliptic operators gives
\begin{equation}
D(L^{k+\theta})
=
[D(L^k),D(L^{k+1})]_\theta,
\qquad
0<\theta<1,
\label{eq:fractional-domain-interpolation}
\end{equation}
in every bounded range in which the coefficient regularity permits
the preceding elliptic estimates. Equivalently, one may use the
bounded-imaginary-power theorem for uniformly elliptic operators.
See, for example, the classical complex-power theory of Seeley
together with the interpolation theory of sectorial operators.

Using \eqref{eq:integer-power-domain},
\[
D(L^{k+\theta})
=
[H^{2k}(\mathbb R),H^{2k+2}(\mathbb R)]_\theta.
\]
The Bessel-potential Sobolev scale satisfies
\[
[H^{s_0}(\mathbb R),H^{s_1}(\mathbb R)]_\theta
=
H^{(1-\theta)s_0+\theta s_1}(\mathbb R),
\]
and therefore
\[
\begin{aligned}
D(L^{k+\theta})
&=
H^{(1-\theta)2k+\theta(2k+2)}(\mathbb R)
\\
&=
H^{2k+2\theta}(\mathbb R)
\\
&=
H^{2(k+\theta)}(\mathbb R).
\end{aligned}
\]
Thus, writing \(\beta=k+\theta\),
\begin{equation}
\boxed{
X_\beta
=
D(L^\beta)
\simeq
H^{2\beta}(\mathbb R).
}
\label{eq:fractional-domain-sobolev}
\end{equation}

Combining \eqref{eq:fractional-scale-smoothing} with \eqref{eq:fractional-domain-sobolev} yields the Sobolev
regularizing estimate
\begin{equation}
\boxed{
\|e^{tA}\|_{H^{2\alpha}\to H^{2\alpha+2\sigma}}
\le
C_{\alpha,\sigma}
e^{\omega t}
t^{-\sigma},
\qquad t>0.
}
\label{eq:sobolev-smoothing}
\end{equation}
This proves the result.
\end{proof}
}

\begin{corollary}[Parabolic fixed-depth consequence]
\label{cor:parabolic-richardson}
Assume $a\ge a_0>0$ and
\[
 a\in W^{2m+4,\infty}(\R),\qquad b\in W^{2m+3,\infty}(\R),\qquad c\in W^{2m+2,\infty}(\R).
\]
Then 
$A$ generates an analytic semigroup. For every $\delta>0$, set
$\eta_\delta:=\min\{\delta/2,1\}$. There are constants
$C_{m,\delta},\Omega_{m,\delta}\ge0$ such that, for $0<t\le h_0n$,
\[
 \|\mathcal M_{2m,n}(t)f-e^{tA}f\|_{L^2}
 \le C_{m,\delta}e^{\Omega_{m,\delta}t}
 \frac{t^{2m+\eta_\delta}}{n^{2m}}
 \|f\|_{H^{2m+\delta}}.
\]
In particular, for $f\in H^{2m+2}$,
\[
 \|\mathcal M_{2m,n}(t)f-e^{tA}f\|_{L^2}
 \le C_m e^{\Omega_mt}
 \frac{t^{2m+1}}{n^{2m}}
 \|f\|_{H^{2m+2}}.
\]
\end{corollary}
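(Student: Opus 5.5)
The proof reduces the corollary to Lemma~\ref{lem:parabolic-fixed-depth}, applied with $X=L^2$, $B_\omega=\omega I-A$ and $\omega>\omega_0$ as in Proposition~\ref{prop:parabolic-sectoriality}.

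\textbf{Step 1: the abstract hypotheses.} Write $A=DaD+p_0+\tfrac12(\varphi D+D\varphi)$ with $\varphi=b-a'$ and $p_0=c+\tfrac12(a''-b')$. The assumptions $a\in W^{2m+4,\infty}$, $b\in W^{2m+3,\infty}$, $c\in W^{2m+2,\infty}$ give $a,\varphi,p_0\in W^{2m+2,\infty}$. Proposition~\ref{prop:parabolic-sectoriality} then shows that $A$ generates an analytic semigroup and that $B_\omega$ is invertible and sectorial. The standard fractional-power bound gives the smoothing estimate $\|e^{tA}\|_{X_\alpha\to X_{\alpha+\sigma}}\le C e^{\omega t}t^{-\sigma}$.

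\textbf{Step 2: defect mapping and stability.} Lemma~\ref{lem:defect-order} with $p=2$, combined with the identification $X_\beta\simeq H^{2\beta}$ on the range $0\le\beta\le m+1$ allowed by the coefficient regularity, gives $R_r:X_{\alpha+r+1}\to X_\alpha$ and the uniform bound $Q_m(h):X_{\alpha+m+1}\to X_\alpha$. Uniform $L^2$ stability of $e^{tA_h}$ is Proposition~\ref{prop:UniStabCentDiff}, with $\omega=\sup p_0$.

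\textbf{Step 3: apply the lemma.} Lemma~\ref{lem:parabolic-fixed-depth} now yields, for $0<\varepsilon\le 1$,
\[
\|\mathcal M_{2m,n}(t)f-e^{tA}f\|_{L^2}\le C_{m,\varepsilon}e^{\Omega_{m,\varepsilon}t}\frac{t^{2m+\varepsilon}}{n^{2m}}\|f\|_{X_{m+\varepsilon}},
\]
where $\|f\|_{X_{m+\varepsilon}}\simeq\|f\|_{H^{2m+2\varepsilon}}$. Set $\varepsilon=\eta_\delta=\min\{\delta/2,1\}$.
\begin{itemize}
\item If $\delta\le 2$, then $2m+2\varepsilon=2m+\delta$, which is exactly the claimed estimate.
\item If $\delta>2$, then $\varepsilon=1$ and $\|f\|_{H^{2m+2}}\le\|f\|_{H^{2m+\delta}}$, so the claim again holds with exponent $t^{2m+1}$.
\end{itemize}
The case $f\in H^{2m+2}$ is the choice $\varepsilon=1$.

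\textbf{Main obstacle.} The hard part is the terminal Duhamel branches. There the leftmost propagator is $e^{(t-s)A_h}$, not $e^{(t-s)A}$, and $e^{(t-s)A_h}$ has no uniform smoothing in the $X_\alpha$ scale. The scale allocation of Lemma~\ref{lem:parabolic-fixed-depth} must therefore assign all smoothing to the exact-semigroup intervals to the right of the terminal defect $Q_{m-q}(h)$, and use $e^{(t-s)A_h}$ only on $L^2$. The allocation still works: every defect, including the terminal one, is immediately preceded on its right by a $T$-interval, and each such interval supplies smoothing $1-\varepsilon/\ell<1$.

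A second point needs care: the identification $X_\beta\simeq H^{2\beta}$ for non-integer $\beta$ up to $m+1$ must hold at the given finite coefficient regularity. I would get it by interpolating between the integer levels $D(L^k)=H^{2k}$, which follow from elliptic regularity, using bounded imaginary powers of $L$. The derivative count $W^{2m+2,\infty}$ on $a,\varphi,p_0$ is exactly what controls the highest level $H^{2m+2}$.
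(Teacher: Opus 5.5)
\paragraph{Overall.} Your argument follows the paper's own proof. You read the corollary off Lemma~\ref{lem:parabolic-fixed-depth} after checking its hypotheses with Proposition~\ref{prop:parabolic-sectoriality}, Lemma~\ref{lem:defect-order} ($p=2$) and the elliptic identification $X_\beta\simeq H^{2\beta}$. You then set $\varepsilon=\eta_\delta$, using the trivial embedding when $\delta>2$. Your handling of the terminal branches also matches how the lemma works: all smoothing comes from the exact $e^{sA}$ intervals to the right of each defect, and $e^{(t-s)A_h}$ is used only on $L^2$.

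\paragraph{The gap: the range of the scale identification.} You establish $X_\beta\simeq H^{2\beta}$, and hence $R_r:X_{\alpha+r+1}\to X_\alpha$, only for $\beta\le m+1$. You also say the coefficient count controls the highest level $H^{2m+2}$. Under your own allocation this is not enough.
\begin{itemize}
\item For $m\ge2$, a terminal word of length $\ell\ge2$ applies its first defect to $e^{s_1A}f\in X_{m+1+\varepsilon(1-1/\ell)}$, which is strictly above $X_{m+1}$.
\item No reallocation avoids this. The smoothing exponents $\sigma_j<1$ must sum to at least $\ell-\varepsilon$, so $\sigma_1>1-\varepsilon$. The first defect's input level $m+\varepsilon+\sigma_1$ therefore exceeds $m+1$.
\end{itemize}
So your Step~2 must cover all $\beta<m+2$, i.e.\ $H^s$ with $s<2m+4$. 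The paper leaves this range implicit.

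\paragraph{The fix.} The hypotheses already allow the larger range.
\begin{itemize}
\item $D(L^{m+2})=H^{2m+4}$ only needs the elliptic estimates at order $2m+2$, i.e.\ $a,b,c\in W^{2m+2,\infty}$ with $a\ge a_0$.
\item The fractional levels follow by the same interpolation you propose.
\item In the defect words, the coefficient multipliers only need to be bounded on $H^{s}$ with $s<2m+3$. There $a\in W^{2m+4,\infty}$ and $\varphi=b-a'\in W^{2m+3,\infty}$ are bounded.
\end{itemize}
With Step~2 run on $0\le\beta\le m+2$, your proof goes through. The derivative count in the hypotheses is matched to $H^{2m+4}$, not to $H^{2m+2}$ as you state.
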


\bwarning{ 
\subsubsection{Schr\"odinger-type second order}
\label{subsubsec:schrodinger-model}

For
\[
A_0=ia(x)D^2+b(x)D+c(x),
\qquad a(x)\ge a_0>0,
\]
define
\[
\eta_2(x)=\int_0^x a(s)^{-1/2}\,ds,
\qquad
\chi_2=\eta_2^{-1},
\]
and
\[
(\mathcal J_2f)(y)
=
a(\chi_2(y))^{-1/4}f(\chi_2(y)).
\]
This is a boundedly invertible normalizing similarity on the Sobolev
spaces used below; it is not asserted to be unitary on the unweighted
$L^2$ space.

A direct computation gives
\[
\widetilde A
:=
\mathcal J_2A_0\mathcal J_2^{-1}
=
iD_y^2+\frac12(vD_y+D_yv)+w,
\]
where
\begin{equation}
v
=
\left(\frac{b}{\sqrt a}\right)\circ\chi_2
\label{eq:schrodinger-v-new}
\end{equation}
and
\begin{equation}
w
=
\left[
c-\frac12b'
+\frac12b\frac{a'}a
+i\left(
\frac14a''
-\frac{3}{16}\frac{(a')^2}{a}
\right)
\right]\circ\chi_2.
\label{eq:schrodinger-w-new}
\end{equation}
Thus, whenever $a$ and $b$ are real-valued on $\R$, the transport
coefficient $v$ is real-valued.  No reality assumption on $w$ is
required.
}
{
\subsubsection{Schr\"odinger-type second order.}\label{subsubsec:schrodinger-model}
\ \\
For
\[
A_0=ia(x)D^2+b(x)D+c(x),
\qquad a(x)\ge a_0>0,
\]
introduce the Liouville coordinate

\warning{
\[
\eta(x)=\int_0^x a(s)^{-1/2}\,ds,
\qquad
\chi=\eta^{-1},
\]
and the boundedly invertible normalizing similarity
\[
(\mathcal Jf)(y)
=
a(\chi(y))^{1/4}f(\chi(y)).
\]
Then
\[
\widetilde A:=\mathcal JA_0\mathcal J^{-1}
=
iD_y^2+\frac12(vD_y+D_yv)+w,
\]
where
\[
v
=
\left(
\frac{b-ia'}{\sqrt a}
\right)\circ\chi
\]
and
\[
w
=
\left(
c-\frac12b'
+i\left(
\frac14a''
+\frac1{16}\frac{(a')^2}{a}
\right)
\right)\circ\chi.
\]
}

\[
\eta(x):=\int_0^x a(s)^{-1/2}\,ds,
\qquad
\chi:=\eta^{-1},
\]
and the bounded invertible normalizing similarity
\[
(\mathcal J f)(y)
=
a(\chi(y))^{-1/4}f(\chi(y)).
\]
Then
\[
\widetilde A
:=
\mathcal J A_0\mathcal J^{-1}
=
iD_y^2+\frac12\bigl(vD_y+D_yv\bigr)+w,
\]
where
\[
{
v
=
\left(\frac{b}{\sqrt a}\right)\circ\chi,
}
\qquad
{
w
=
\left[
c-\frac12 b'
+\frac12\,b\,\frac{a'}{a}
+i\left(
\frac14 a''
-\frac{3}{16}\frac{(a')^2}{a}
\right)
\right]\circ \chi.
}
\]
For Richardson depth \(m\), it is therefore sufficient to assume
\[
{
a\in W^{4m+2,\infty}(\mathbb R, \R),\qquad
b\in W^{4m+1,\infty}(\mathbb R, \R),\qquad
c\in W^{4m,\infty}(\mathbb R, \R),
}
\]
together with \(a\ge a_0>0\). Indeed, then  
\[
v\in W^{4m+1,\infty}(\mathbb R,\R),
\qquad
w\in W^{4m,\infty}(\mathbb R,\C).
\]
Proposition~\ref{prop:sobolev-group-bound}, applied with \(k=4m\),
therefore supplies the Sobolev group bounds required by
Lemma~\ref{lem:group-fixed-depth}.

%
Since 
$\widetilde A_h= i\delta_h^2+\tfrac12(v\delta_h+\delta_hv)+w$ contains at most two centered-difference factors per
word, Lemma~\ref{lem:defect-order} gives
\[
 R_r:H^s\to H^{s-(2r+2)},
 \qquad
 Q_m(h):H^s\to H^{s-(2m+2)}.
\]
Thus the hypotheses of Lemma~\ref{lem:group-fixed-depth} hold with
$p=2$; equivalently, a complete word of total index $r$ and length
$\ell$ loses at most
\[
 2r+2\ell\le4r.
\]

\begin{corollary}[Schr\"odinger fixed-depth consequence]
\label{cor:schrodinger-richardson}
Under the above assumptions, for every\footnote{Observe that $\mathcal J H^{4m}(\R)=\mathcal J^{-1}H^{4m}(\R)=H^{4m}(\R)$.} $f\in \mathcal J^{-1}H^{4m}=H^{4m}$
and $0\le t\le h_0n$,
\[
 \|\mathcal J^{-1}\mathcal M_{2m,n}(t)\mathcal J f-e^{t A_0}f\|_{L^2}
 \le C_m e^{\Omega_m t}
 \frac{t^{2m+1}}{n^{2m}}
 \|f\|_{H^{4m}}.
\]
\end{corollary}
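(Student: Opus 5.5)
The plan is to reduce the statement to Lemma~\ref{lem:group-fixed-depth} and Theorem~\ref{thm:abstract} applied to the normalized generator $\widetilde A=\mathcal J A_0\mathcal J^{-1}$, and then to conjugate back.

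\emph{Step 1: hypotheses of Lemma~\ref{lem:group-fixed-depth} for $\widetilde A$.} Write
\[
\tfrac12(vD_y+D_yv)=vD_y+\tfrac12v',
\]
so that $\widetilde A=iD_y^2+vD_y+\tfrac12v'+w$ with $v\in W^{4m+1,\infty}(\R,\R)$ and $w\in W^{4m,\infty}(\R,\C)$.
\begin{itemize}
\item Proposition~\ref{prop:sobolev-group-bound} with $k=4m$ gives the group bounds $\|e^{t\widetilde A}\|_{H^j\to H^j}\le e^{\Omega_j|t|}$ for all $0\le j\le 4m=(p+2)m$.
\item For the centered-difference generator $\widetilde A_h=i\delta_h^2+\tfrac12(v\delta_h+\delta_hv)+w$, the term $i\delta_h^2$ is skew-adjoint. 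The transport part is skew-adjoint because $v$ is real. Hence $\Re\langle\widetilde A_hf,f\rangle\le\|\Re w\|_\infty\|f\|^2$, and the Gr\"onwall argument of Proposition~\ref{prop:UniStabCentDiff} gives $\|e^{t\widetilde A_h}\|\le e^{\omega_0 t}$ uniformly in $h$. Here the real part of $w$ plays the role of $p_0$; the imaginary part of $w$ is skew and harmless.
\item Lemma~\ref{lem:defect-order} with $p=2$ gives the defect mappings $R_r:H^s\to H^{s-(2r+2)}$ and $Q_k(h):H^s\to H^{s-(2k+2)}$. The coefficient multipliers and the commutators arising through depth $m$ are bounded on $H^s$ for $s\le 4m$ by the assumed regularity of $v$ and $w$.
\end{itemize}

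\emph{Step 2: product expansion and Richardson.} Lemma~\ref{lem:group-fixed-depth} now yields the pointwise product expansion \eqref{eq:product-expansion} with $\nu_r=2r$, $\mathcal D_m=H^{4m}$, and remainder $C_me^{\Omega_mt}t^{2m+1}N^{-2m}\|g\|_{H^{4m}}$ for $0\le t\le h_0N$. By item (ii) of Proposition~\ref{prop:chernoff-tangency-even-structure}, $\mathcal S_2(t/N)^N=e^{t\widetilde A_{t/N}}$, and the power bound holds. Theorem~\ref{thm:abstract} then gives
\[
\|\mathcal M_{2m,n}(t)g-e^{t\widetilde A}g\|_{L^2}\le C'_me^{\Omega_mt}\frac{t^{2m+1}}{n^{2m}}\|g\|_{H^{4m}},\qquad 0\le t\le h_0n .
\]

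\emph{Step 3: conjugate back.} Set $g=\mathcal Jf$ and use $e^{tA_0}=\mathcal J^{-1}e^{t\widetilde A}\mathcal J$, which holds because $\mathcal J$ intertwines the generators on cores and the group is unique. This gives
\[
\mathcal J^{-1}\mathcal M_{2m,n}(t)\mathcal Jf-e^{tA_0}f=\mathcal J^{-1}\bigl(\mathcal M_{2m,n}(t)g-e^{t\widetilde A}g\bigr).
\]
The corollary follows from two boundedness facts about $\mathcal J$. First, $\mathcal J^{-1}$ is bounded on $L^2$: it is composition with the bi-Lipschitz diffeomorphism $\eta$ times the weight $a^{1/4}$, and $a$ is bounded above and below with $a'$ bounded. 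Second, $\|\mathcal Jf\|_{H^{4m}}\le C\|f\|_{H^{4m}}$. This requires $4m$ derivatives of $\chi$ and of $a^{-1/4}\circ\chi$ to be bounded, which follows from $a\in W^{4m+2,\infty}$ since $\chi'=\sqrt{a\circ\chi}$.

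\emph{Main obstacle.} I expect the only real difficulty to be Step~1: verifying that the coefficient regularity suffices for the defect words to map $H^{4m}$ down to $L^2$ at every intermediate level. Equivalently, one must check that the multiplier norms of $v$ and $w$ on $H^s$ for $s\le4m$ are controlled, including the derivative count $w\in W^{4m,\infty}$ coming from $a''$ in the formula for $w$. I would handle this by counting Leibniz terms in each word. Everything else is a direct chaining of the results above.
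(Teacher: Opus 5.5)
Your proposal is correct and follows the paper's route. You normalize by $\mathcal J$, take the Sobolev group bounds from Proposition~\ref{prop:sobolev-group-bound} with $k=4m$ and the defect mappings from Lemma~\ref{lem:defect-order} with $p=2$, invoke Lemma~\ref{lem:group-fixed-depth} and Theorem~\ref{thm:abstract}, and conjugate back. Your explicit checks of the uniform $L^2$ stability of $e^{t\widetilde A_h}$ (with $\Re w$ in the role of $p_0$), of the $L^2$-boundedness of $\mathcal J^{-1}$ and of the $H^{4m}$-boundedness of $\mathcal J$ fill in steps the paper only asserts.
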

}

\subsubsection{Third-order dispersive model.}
\label{subsubsec:third-order-model}
\ \\
Consider
$A_0=a(x)D^3+b(x)D+c(x)$, $a\ge a_0>0$, and  set
\[
 q=a^{1/3},\qquad P=qD-q'.
\]
Then
\[
 P^3=aD^3+\alpha D+\beta,
\quad
 \alpha=-\frac23a''+\frac59\frac{(a')^2}{a},
\quad
 \beta=-\frac13a'''+\frac89\frac{a'a''}{a}
       -\frac59\frac{(a')^3}{a^2}.
\]
With $d=b-\alpha$,
\[
 V=(c-\beta)-\tfrac12d'+\tfrac12d\frac{a'}a,
 \qquad
 \eta(x)=\int_0^xa(s)^{-1/3}\,ds,\qquad \chi=\eta^{-1},
\]
and
\[
 (\mathcal Jf)(y)=a(\chi(y))^{-1/3}f(\chi(y)),
\]
one obtains
\[
 \widetilde A:=\mathcal JA_0\mathcal J^{-1}
 =D_y^3+vD_y+\tfrac12v'+w,
 \qquad
 v=(d\,a^{-1/3})\circ\chi,\qquad w=V\circ\chi,
\]
with centered approximation
\[
 \widetilde A_h=\delta_h^3+\tfrac12(v\delta_h+\delta_hv)+w.
\]
If
\[
 a\in W^{5m+3,\infty},\qquad
 b\in W^{5m+1,\infty},\qquad
 c\in W^{5m,\infty},
\]
then, because $\alpha$ uses at most two derivatives of $a$, $\beta$ at
most three, and $\chi'=a^{1/3}\circ\chi$ with $a\ge a_0>0$, repeated
product, chain, and Fa\`a di Bruno estimates give
\[
 v\in W^{5m+1,\infty},\qquad w\in W^{5m,\infty};
\]
the positive lower bound on $a$ ensures that composition with $\chi$
preserves the required $W^{k,\infty}$ bounds up to $k=5m+1$.
Proposition~\ref{prop:sobolev-group-bound} supplies the required group
bounds.  Since $\widetilde A_h$ contains at most three
centered-difference factors per word, Lemma~\ref{lem:defect-order} gives
\[
 R_r:H^s\to H^{s-(2r+3)},
 \qquad
 Q_m(h):H^s\to H^{s-(2m+3)}.
\]
Hence Lemma~\ref{lem:group-fixed-depth} applies with $p=3$; a complete
word of total index $r$ and length $\ell$ loses at most
\[
 2r+3\ell\le5r.
\]

\begin{corollary}[Third-order fixed-depth consequence]
\label{cor:dispersive-richardson}
Under the above assumptions, if\footnote{Observe that $\mathcal J H^{5m}(\R)=\mathcal J^{-1}H^{5m}(\R)=H^{5m}(\R)$.} $f\in \mathcal J^{-1}H^{5m}(\R)=H^{5m}(\R)$ and
$0\le t\le h_0n$, then
\[
 \|\mathcal J^{-1}\mathcal M_{2m,n}(t)\mathcal J f-e^{tA_0}f\|_{L^2}
 \le C_m e^{\Omega_m t}
 \frac{t^{2m+1}}{n^{2m}}
 \|f\|_{H^{5m}}.
\]
\end{corollary}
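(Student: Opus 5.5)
The plan is to reduce Corollary~\ref{cor:dispersive-richardson} to Lemma~\ref{lem:group-fixed-depth} and Theorem~\ref{thm:abstract}, applied to the normalized generator $\widetilde A$, and then to undo the conjugation by $\mathcal J$. We work in the $y$-coordinate. By the regularity bookkeeping above, $v\in W^{5m+1,\infty}(\R,\R)$ and $w\in W^{5m,\infty}$. Proposition~\ref{prop:sobolev-group-bound}, with $p=3$ and $k=5m$, then gives the Sobolev group bounds $\|e^{t\widetilde A}\|_{H^j\to H^j}\le e^{\Omega_j|t|}$ for $0\le j\le 5m=(p+2)m$. This is exactly the range required in Lemma~\ref{lem:group-fixed-depth}.

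Next we check uniform $L^2$ stability of $e^{t\widetilde A_h}$. The term $\delta_h^3$ is skew-adjoint, since $(\delta_h^3)^*=-\delta_h^3$. The term $\tfrac12(v\delta_h+\delta_hv)$ is skew-adjoint because $v$ is real. Only $w$ contributes to the real part, so
\[
\operatorname{Re}\langle \widetilde A_hf,f\rangle\le \|\operatorname{Re} w\|_\infty\|f\|^2 .
\]
The Gr\"onwall argument of Proposition~\ref{prop:UniStabCentDiff} then yields $\|e^{t\widetilde A_h}\|\le e^{\omega_0 t}$ with $\omega_0=\|\operatorname{Re}w\|_\infty$, uniformly in $h$. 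The defect structure comes from Lemma~\ref{lem:defect-order}: every word of $\widetilde A_h$ has at most three factors $\delta_h$, so $R_r:H^s\to H^{s-(2r+3)}$ and $Q_k(h):H^s\to H^{s-(2k+3)}$, uniformly in $h$. Here one needs the multipliers $v,w$, and the derivatives of $v$ produced in the words, to be bounded on $H^s$ for $s\le 5m$. This is where $v\in W^{5m+1,\infty}$ and $w\in W^{5m,\infty}$ are used.

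With these inputs, Lemma~\ref{lem:group-fixed-depth} gives the product expansion \eqref{eq:product-expansion} with $\nu_r=2r$ and $\mathcal D_m=H^{5m}$, valid for $0\le t\le h_0N$. Theorem~\ref{thm:abstract} then gives
\[
\|\mathcal M_{2m,n}(t)g-e^{t\widetilde A}g\|_{L^2}\le C_m e^{\Omega_m t}\frac{t^{2m+1}}{n^{2m}}\|g\|_{H^{5m}}
\]
for $0\le t\le h_0 n$.

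Finally we return to the original coordinates. Put $g=\mathcal Jf$ and use $e^{tA_0}=\mathcal J^{-1}e^{t\widetilde A}\mathcal J$. This identity follows from $\widetilde A=\mathcal JA_0\mathcal J^{-1}$, since $\mathcal J$ maps $H^3$ onto $H^3$ and intertwines the generators, hence the groups. It then remains to use $\|\mathcal J^{-1}\|_{L^2\to L^2}<\infty$ and $\|\mathcal Jf\|_{H^{5m}}\le C\|f\|_{H^{5m}}$. The second bound follows from the chain rule and Fa\`a di Bruno: $\chi'=a^{1/3}\circ\chi$ is bounded above and below, and the weight $a^{-1/3}\circ\chi$ lies in $W^{5m,\infty}$ because $a\in W^{5m+3,\infty}$ and $a\ge a_0$. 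The Jacobian $\eta'=a^{-1/3}$ makes the $L^2$ norms comparable.

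I expect the main obstacle to be the verification that the coefficient multipliers, and the commutators generated by moving $\delta_h$-derivatives through $v$ inside the Duhamel words, are bounded uniformly in $h$ on every intermediate Sobolev level up to $5m$. This must hold with exactly the stated $W^{k,\infty}$ regularity. I would handle it by tracking that each level $H^s$ in a word only requires $v\in W^{s,\infty}$ and $w\in W^{s,\infty}$ with $s\le 5m$.
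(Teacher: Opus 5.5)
Your proposal is correct and follows the paper's own route. You normalize by $\mathcal J$, take the Sobolev group bounds from Proposition~\ref{prop:sobolev-group-bound} with $k=5m$, and read off the defect orders from Lemma~\ref{lem:defect-order} (at most three $\delta_h$ factors per word). You then obtain the product expansion on $H^{5m}$ from Lemma~\ref{lem:group-fixed-depth} with $p=3$, apply Theorem~\ref{thm:abstract}, and conjugate back. You also correctly spell out two points the paper leaves implicit: the uniform $L^2$ stability of $e^{t\widetilde A_h}$ with possibly complex $w$, controlled by $\|\operatorname{Re} w\|_\infty$, and the boundedness of $\mathcal J^{-1}$ on $L^2$ and of $\mathcal J$ on $H^{5m}$.
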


\begin{remark}[Removal of the quadratic derivative]\label{rem:remove-quadratic-derivative}
The absence of a $D^2$ term is not restrictive.
Indeed, for
\[
 \widehat A=aD^3+\beta D^2+\gamma D+\delta,\qquad a(x)\ne0,
\]
choose a nonvanishing multiplier $\mu$ with
\[
 \frac{\mu'}{\mu}=-\frac{\beta}{3a}.
\]
Since $\mu^{-1}D\mu=D+\mu'/\mu$, the coefficient of $D^2$ in
$\mu^{-1}\widehat A\mu$ is $3a\,\mu'/\mu+\beta=0$. Hence
\[
 \mu^{-1}\widehat A\mu=aD^3+bD+c
\]
for suitable lower-order coefficients $b,c$. Globally, this conjugation
is an equivalence on the function spaces used below whenever $\mu$ and
$\mu^{-1}$ are bounded multipliers there; for example, this holds if
$\beta$ and $a$ are uniformly bounded away from both $0$ and $\infty$, and $\beta/a\in L^1(\R)$.
\end{remark}

\warning{
\begin{remark}[Nonlinear PDEs leading to the three model classes]
\label{rem:nonlinear-parents}
The three linear model classes above arise naturally as linearization 
(variational or tangent equations) of nonlinear PDEs.

\emph{Parabolic case.}
Consider the scalar quasilinear convection--diffusion--reaction equation
\[
 u_t=\partial_x\!\bigl(d(u)u_x\bigr)-\partial_xF(u)+R(u),
 \qquad d(u)>0.
\]
Let $\xi=x-st$ and let $U(\xi)$ be a travelling-wave profile.  Writing
\[
 u(t,x)=U(\xi)+\varepsilon v(t,\xi)+O(\varepsilon^2)
\]
and retaining the terms linear in $\varepsilon$ gives
\begin{align*}
 v_t={}&d(U)v_{\xi\xi}
 +\bigl(2d'(U)U'+s-F'(U)\bigr)v_\xi\\
 &+\bigl(d''(U)(U')^2+d'(U)U''-F''(U)U'+R'(U)\bigr)v.
\end{align*}
Thus the tangent equation has exactly the form
$a(\xi)D^2+b(\xi)D+c(\xi)$.  For viscous Burgers,
$d\equiv\nu$, $F(u)=u^2/2$, and $R\equiv0$, this reduces to
\[
 v_t=\nu v_{\xi\xi}+(s-U)v_\xi-U'v.
\]
For the stationary normalization (see e.g. Guba--Lorenz
\cite[Sec.~1, Eq.~(1)]{GubaLorenz11}) $\nu=1$, $s=0$,  the corresponding Burgers
linearization as $Lv=v''-(Uv)'$.  Likewise, Fisher--KPP,
$d\equiv D$, $F\equiv0$, $R(u)=ru(1-u)$, gives
\[
 v_t=Dv_{\xi\xi}+sv_\xi+r(1-2U)v.
\]

\emph{Schr\"odinger case.}
A direct nonlinear parent of the second model is the semilinear
Schr\"odinger-type equation
\[
 u_t=ia(x)u_{xx}+b(x)u_x+c(x)u+i\gamma(x)|u|^2u.
\]
Since the Fr\'echet derivative of $|u|^2u$ at $u=0$ vanishes, its
linearization at the zero state is exactly
\[
 v_t=\bigl(ia(x)D^2+b(x)D+c(x)\bigr)v.
\]
For a nonzero coherent state the scalar equation is replaced, in
general, by a coupled Hamiltonian linearization whose entries are
second-order Schr\"odinger operators.  For example, consider the cubic focusing NLS
\[
 i\psi_t+\psi_{xx}+|\psi|^2\psi=0.
\]
For a real standing wave $\psi=e^{i\omega t}Q(x)$, the perturbation
$\psi=e^{i\omega t}(Q+r+is)$ yields at first order
\[
 \partial_t\binom r s
 =\begin{pmatrix}0&L_-\\-L_+&0\end{pmatrix}\binom r s,
 \qquad
 L_-=-D^2+\omega-Q^2,\qquad
 L_+=-D^2+\omega-3Q^2.
\]
This standard NLS/GP linearization is written explicitly, for the
corresponding potential problem and general power nonlinearity, in
Gang--Weinstein \cite[Sec.~2.3, Eqs.~(2.2)--(2.4)]{GangWeinstein11}.
Thus the scalar model treated here is the exact tangent evolution at the
zero state, while the same second-order Schr\"odinger operators occur as
blocks of the linearization about nonzero coherent states.

\emph{Third-order dispersive case.}
Consider more generally
\[
 q_t=a(x)q_{xxx}+H(x,q,q_x),
\]
and let $U(x)$ be a stationary profile.  Substituting
$q=U+\varepsilon v+O(\varepsilon^2)$ gives directly
\[
 v_t=a(x)v_{xxx}
 +H_{q_x}(x,U,U')v_x
 +H_q(x,U,U')v,
\]
which is exactly of the form $a(x)D^3+b(x)D+c(x)$.  This is the
Fr\'echet-linearized (symmetry) equation used by G\"urses--Karasu
\cite[Eq.~(2)]{GursesKarasu95}; for the autonomous class
$q_t=q_{xxx}+F(q,q_x)$ they display precisely
$\sigma_t=\sigma_{xxx}+F_{q_x}\sigma_x+F_q\sigma$ in
\cite[Eq.~(11)]{GursesKarasu95}.  As the familiar gKdV specialization,
\[
 u_t+u_{xxx}+\partial_x f(u)=0
\]
linearized in the frame $\xi=x-st$ about a travelling profile $U$ gives
\[
 v_t=-v_{\xi\xi\xi}
 +\bigl(s-f'(U)\bigr)v_\xi
 -f''(U)U'v.
\]
For KdV, $f(u)=3u^2$, hence
$b=s-6U$ and $c=-6U'$.  The sign of the cubic principal part may of
course be reversed by the reflection $\xi\mapsto-\xi$.

These examples explain why the three classes are useful test cases for
the general machinery developed here: they are tangent equations for
nonlinear parabolic fronts, nonlinear Schr\"odinger dynamics, and
KdV-type dispersive waves.  The centered-difference and Richardson
arguments themselves are not tied to these nonlinear equations; they
apply whenever the relevant linearized generator can be put into one of
the structural forms treated above.
\end{remark}

Thus the three fixed-depth regimes reduce to
\[
 H^{2m+\delta}\quad\longrightarrow\quad H^{4m}
 \quad\longrightarrow\quad H^{5m},\qquad \delta>0.
\]
The three lemmas above contain the finite-depth argument needed for these
three statements; the analytic theory below is independent of those
Sobolev estimates and adds quantitative control in the extrapolation
depth.
}

\section{Analytic regime: quantitative all-order bounds and super-fast convergence}
\label{sec:analytic-regime}

Subsection~\ref{subsec:model-summary} derives the even product expansion
\eqref{eq:product-expansion} at every fixed finite depth under suitable 
Sobolev regularity assumptions.  For smooth data this yields formally 
arbitrary algebraic order  through Theorem~\ref{thm:abstract}.
The goal of this section is to make it rigorous under analytic
regularity.  
In particular we shall prove, for the three model
classes treated above: 
\begin{equation}
 C^{\rm an}_{m,T}\le C_TK_T^m\Gamma((p+1)m+1),
 \qquad p=2\text{ or }3,
\label{eq:analytic-bridge-preview}
\end{equation}
where $p$ is the principal differential order.  
Combined with the exact
Richardson weights, this allows the extrapolation depth to grow with
$n$ and yields Gaussian-type convergence $e^{-cn^2}$ for the two
second-order models and exponential convergence $e^{-cn}$ for the
third-order model.

\subsection{Analytic strip spaces and coefficient norms}
\label{subsec:analytic-spaces}

For ${\rho_*}>0$ write
\[
\mathscr{S}_R:=\{z\in\C:\ |\Im z|<{\rho_*}\},
\]
and denote by $\mathcal{H}_{\rho_*}$ the Banach algebra of bounded
holomorphic functions on $\mathscr S_{\rho_*}$, equipped with
\[
\|g\|_{\mathcal{H}_{\rho_*}}
:=
\sup_{z\in \mathscr S_{\rho_*}}|g(z)|.
\]

For $\rho>0$ and $s\ge0$, define\footnote{We use the standard Fourier-multiplier notation: 
$
\langle D\rangle:=(I-D^2)^{1/2},
$
 and 
$
|D|:=(-D^2)^{1/2}.
$ 
Thus, writing
$
\langle\xi\rangle:=(1+|\xi|^2)^{1/2},
$
one has
$
\widehat{\langle D\rangle^s f}(\xi)
=
\langle\xi\rangle^s\widehat f(\xi),
$
and 
$ 
\widehat{e^{\rho|D|}f}(\xi)
=
e^{\rho|\xi|}\widehat f(\xi).
$}

\[
\mathcal A_\rho^s
:=
\left\{
f:\ e^{\rho|D|}\langle D\rangle^s f\in L^2(\R)
\right\},
\qquad
\|f\|_{\rho,s}
:=
\|e^{\rho|D|}\langle D\rangle^s f\|_{L^2},
\]
and abbreviate
\[
\mathcal A_\rho:=\mathcal A_\rho^0,
\qquad
\|f\|_\rho:=\|f\|_{\rho,0}.
\]

If $0<\rho'<\rho$, then
\begin{equation}
\|D^qf\|_{\rho',s}
\le
\frac{q!}{(\rho-\rho')^q}\,
\|f\|_{\rho,s},
\qquad q\in\N_0.
\label{eq:analytic-cauchy-fourier}
\end{equation}

For later energy estimates it is convenient to introduce the equivalent
boundary norm
\[
\|f\|_{\partial,\rho}^2
:=
\|f(\cdot+i\rho)\|_{L^2}^2
+
\|f(\cdot-i\rho)\|_{L^2}^2.
\]
Indeed, by Plancherel,
\[
\|f\|_{\partial,\rho}^2
=
2\int_\R
\cosh(2\rho\xi)|\widehat f(\xi)|^2\,d\xi,
\]
and therefore
\begin{equation}
\|f\|_\rho^2
\le
\|f\|_{\partial,\rho}^2
\le
2\|f\|_\rho^2.
\label{eq:boundary-fourier-equivalence}
\end{equation}

\bwarning{The principal advantage of the strip formulation is that bounded
holomorphic coefficients act as multipliers without loss of analytic
radius.}

\begin{lemma}[Bounded analytic multipliers]
\label{lem:analytic-multiplier}
Let $g\in \mathcal{H}_{\rho_*}$ and $0<\rho<{\rho_*}$. Then
\begin{equation}
\|gf\|_\rho
\le
\sqrt2\,\|g\|_{\mathcal{H}_{\rho_*}}\|f\|_\rho.
\label{eq:analytic-multiplier}
\end{equation}
Moreover, if $0<\rho<\rho_*$, Cauchy's estimate gives
\begin{equation}
\|g^{(q)}\|_{\mathcal{H}_{\rho}}
\le
\frac{q!}{(\rho_*-\rho)^q}\,
\|g\|_{\mathcal{H}_{\rho_*}},
\qquad q\in\N_0.
\label{eq:coefficient-cauchy} 
\end{equation}
\end{lemma}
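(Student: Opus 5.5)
The plan is to prove \eqref{eq:analytic-multiplier} through the boundary norm $\|\cdot\|_{\partial,\rho}$ and the equivalence \eqref{eq:boundary-fourier-equivalence}, since pointwise multiplication is trivial on boundary traces. The bound \eqref{eq:coefficient-cauchy} is the classical Cauchy inequality on discs.

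\textbf{Step 1 (holomorphic extension of $f$).} For $f\in\mathcal A_\rho$ set
\[
F(z)=\frac1{2\pi}\int_\R e^{iz\xi}\widehat f(\xi)\,d\xi ,\qquad |\Im z|<\rho .
\]
This integral converges absolutely, because $e^{-y\xi}\widehat f\in L^1$ for $|y|<\rho$ by Cauchy--Schwarz against $e^{\rho|\xi|}\widehat f\in L^2$. Hence $F$ is holomorphic on the open strip and extends $f$.

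For $|y|\le\rho$, Plancherel gives
\[
\|F(\cdot+iy)\|_{L^2}^2=\int e^{-2y\xi}|\widehat f|^2\,d\xi ,
\]
up to the normalising constant implicit in \eqref{eq:boundary-fourier-equivalence}. The integrand is bounded by $2\cosh(2\rho\xi)|\widehat f|^2$, so the traces at $y=\pm\rho$ exist as $L^2$ limits. Moreover the line norms are uniformly bounded by $\|f\|_{\partial,\rho}$.

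\textbf{Step 2 (the product).} Since $\rho<\rho_*$, the function $g$ is holomorphic and bounded by $\|g\|_{\mathcal H_{\rho_*}}$ on a neighbourhood of the closed strip $\overline{\mathscr S_\rho}$. Therefore $G:=gF$ is holomorphic on $\mathscr S_\rho$, and on every line $\Im z=y$ with $|y|\le\rho$,
\[
\|G(\cdot+iy)\|_{L^2}\le\|g\|_{\mathcal H_{\rho_*}}\|F(\cdot+iy)\|_{L^2}.
\]

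We then need the converse Paley--Wiener statement for strips. A holomorphic function on $\mathscr S_\rho$ whose $L^2$ norms on horizontal lines are uniformly bounded satisfies $\widehat{G(\cdot+iy)}(\xi)=e^{-y\xi}\widehat{G|_\R}(\xi)$. To prove it, shift the contour in the Fourier integral. The vertical sides of the contour are handled by using that $\int_{-\rho'}^{\rho'}|G(x+iy)|^2\,dy$ is integrable in $x$, so along a suitable sequence $x_k\to\pm\infty$ the vertical contributions vanish.

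Letting $y\to\pm\rho$ with Fatou's lemma (monotone convergence on each half-line $\xi\gtrless0$) gives
\[
\int \cosh(2\rho\xi)\,\bigl|\widehat{gf}(\xi)\bigr|^2\,d\xi<\infty ,
\]
so $gf\in\mathcal A_\rho$, and
\[
\|gf\|_{\partial,\rho}\le\|g\|_{\mathcal H_{\rho_*}}\|f\|_{\partial,\rho}.
\]

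\textbf{Step 3 (conclusion).} Chaining with \eqref{eq:boundary-fourier-equivalence},
\[
\|gf\|_\rho^2\le\|gf\|_{\partial,\rho}^2\le\|g\|_{\mathcal H_{\rho_*}}^2\|f\|_{\partial,\rho}^2\le2\,\|g\|_{\mathcal H_{\rho_*}}^2\|f\|_\rho^2 ,
\]
which is \eqref{eq:analytic-multiplier}.

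For \eqref{eq:coefficient-cauchy}, fix $z\in\mathscr S_\rho$ and $0<r<\rho_*-\rho$. The closed disc $\overline{B(z,r)}$ lies in $\mathscr S_{\rho_*}$. The Cauchy integral formula on $\partial B(z,r)$ gives
\[
|g^{(q)}(z)|\le\frac{q!}{r^q}\,\|g\|_{\mathcal H_{\rho_*}} .
\]
Letting $r\uparrow\rho_*-\rho$ and taking the supremum over $z$ finishes the proof.

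\textbf{Main obstacle.} The only delicate point is Step 2: showing rigorously that $gf$ again lies in $\mathcal A_\rho$, i.e.\ the contour-shift / Paley--Wiener converse with control of the vertical sides. A simpler route is to prove the estimate first for $f$ with compactly supported $\widehat f$, where $F$ is entire and decays, and then conclude by density together with the a priori bound from Step 2.
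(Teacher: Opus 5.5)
Your proposal is correct and follows the paper's own argument. You bound the product on the two boundary lines $\Im z=\pm\rho$, pass through \eqref{eq:boundary-fourier-equivalence} to get the factor $\sqrt2$, and obtain \eqref{eq:coefficient-cauchy} from Cauchy's inequality on discs of radius up to $\rho_*-\rho$. Your Step~2 (the contour-shift identity $\widehat{G(\cdot+iy)}(\xi)=e^{-y\xi}\widehat{gf}(\xi)$, followed by Fatou as $y\to\pm\rho$) justifies what the paper takes for granted, namely that $gf\in\mathcal A_\rho$ and that its boundary traces are $g(\cdot\pm i\rho)f(\cdot\pm i\rho)$.
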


\begin{proof}
On the two boundary lines of $\mathscr S_\rho$,
\[
\|g(\cdot\pm i\rho)f(\cdot\pm i\rho)\|_{L^2}
\le
\|g\|_{\mathcal{H}_{\rho_*}}
\|f(\cdot\pm i\rho)\|_{L^2}.
\]
Hence
\[
\|gf\|_{\partial,\rho}
\le
\|g\|_{\mathcal{H}_{\rho_*}}\|f\|_{\partial,\rho},
\]
whence \eqref{eq:analytic-multiplier}  follows from
\eqref{eq:boundary-fourier-equivalence}.  The derivative bound is the
standard Cauchy estimate on nested strips.
\end{proof}

\noindent
Moreover, under the notations in Theorem~\ref{thm:unified-analytic-rate}: 

\begin{lemma}[Quantitative analytic Liouville coordinates]
\label{lem:analytic-liouville}
Let $\varrho_0>0$ and assume
\[
a\in \mathcal{H}_{\varrho_0}, 
\qquad
a(\bar z)=\overline{a(z)},
\]
and
\[
a(x)\ge a_0>0,
\qquad x\in\R.
\]
%
\warning{
Set
\[
a_\infty:=\|a\|_{\mathcal H_{\varrho_0}}
\]
and
\begin{equation}
r_a
:=
\frac{a_0\rho_0}{4a_\infty}.
\label{eq:quantitative-ra}
\end{equation}
}
Then
\begin{equation}
\Re a(z)\ge \frac{a_0}{2},
\qquad z\in \mathscr S_{r_a}.
\label{eq:quantitative-complex-ellipticity}
\end{equation}

For $p\in\{2,3\}$ let
\[
q_p(z):=a(z)^{-1/p},
\]
where the principal branch is taken on the right half-plane.
Then $q_p$ is bounded and holomorphic on $\mathscr S_{r_a}$ and
\begin{equation}
\Re q_p(z)\ge m_p,
\qquad
|q_p(z)|\le M_p,
\qquad z\in \mathscr S_{r_a}.
\label{eq:mp-Mp-bounds}
\end{equation}

\warning{
where
\begin{equation}
M_p
:=
\left(\frac{2}{a_0}\right)^{1/p}
\label{eq:Mp-explicit}
\end{equation}
and
\begin{equation}
m_p
:=
a_\infty^{-1/p}
\cos\left(
\frac1p
\arccos\frac{a_0}{2a_\infty}
\right).
\label{eq:mp-explicit}
\end{equation}
}
Define
\[
\eta_p(z)
:=
\int_0^z q_p(\zeta)\,d\zeta.
\]
Then $\eta_p$ is injective on $\mathscr S_{r_a}$ and
\begin{equation}
S_{m_pr_a}
\subset
\eta_p(S_{r_a}).
\label{eq:eta-covers-strip}
\end{equation}
Consequently
\[
\chi_p:=\eta_p^{-1}
\]
is holomorphic on $\mathscr S_{m_pr_a}$ and satisfies
\begin{equation}
\chi_p(\mathscr S_{\rho_y})
\subset
\mathscr S_{\rho_y/m_p},
\qquad
0<\rho_y<m_pr_a.
\label{eq:chi-strip-inclusion}
\end{equation}
Moreover,
\begin{equation}
\eta_p(\mathscr S_{\rho_x})
\subset
\mathscr S_{M_p\rho_x},
\qquad
0<\rho_x<r_a.
\label{eq:eta-strip-inclusion}
\end{equation}

\warning{
In particular, the Liouville distortion ratio satisfies
\begin{equation}
\gamma_p
:=
\frac{m_p}{M_p}
\ge
\left(
\frac{a_0}{2a_\infty}
\right)^{1/p}
\cos\left(
\frac1p
\arccos\frac{a_0}{2a_\infty}
\right)
>0.
\label{eq:liouville-distortion}
\end{equation}
}
\end{lemma}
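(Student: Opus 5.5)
The plan is to prove the claims in the order stated: complex ellipticity first, then the bounds on $q_p$, then injectivity of $\eta_p$, then the strip inclusions. Surjectivity onto $\mathscr S_{m_pr_a}$ will be handled by a winding-number argument.

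\emph{Step 1: complex ellipticity.} Since $a_0\le a_\infty$, we have $r_a\le \varrho_0/4$. For $|y|<\varrho_0/2$, the Cauchy estimate \eqref{eq:coefficient-cauchy} gives $|a'(x+iy)|\le a_\infty/(\varrho_0-|y|)\le 2a_\infty/\varrho_0$. Integrating vertically from $x$ to $x+iy$ with $|y|<r_a$ gives
\[
|a(x+iy)-a(x)|\le 2a_\infty r_a/\varrho_0=a_0/2 .
\]
Since $a(x)\ge a_0$ is real, this yields $\Re a\ge a_0/2$ on $\mathscr S_{r_a}$, which is \eqref{eq:quantitative-complex-ellipticity}.

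\emph{Step 2: bounds on $q_p$.} On $\mathscr S_{r_a}$ we have $a_0/2\le \Re a\le |a|\le a_\infty$. Hence $\cos\arg a=\Re a/|a|\ge a_0/(2a_\infty)$, so $|\arg a|\le\theta:=\arccos\frac{a_0}{2a_\infty}$. Because $a$ stays in the right half-plane, the principal branch makes $q_p=|a|^{-1/p}e^{-i\arg a/p}$ holomorphic. We then read off
\[
\Re q_p\ge a_\infty^{-1/p}\cos(\theta/p)=m_p,
\qquad
|q_p|\le (a_0/2)^{-1/p}=M_p,
\]
which is \eqref{eq:mp-Mp-bounds}.

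\emph{Step 3: injectivity.} The strip is convex, so for $z_1,z_2\in\mathscr S_{r_a}$
\[
\eta_p(z_2)-\eta_p(z_1)=(z_2-z_1)\int_0^1 q_p(z_1+\tau(z_2-z_1))\,d\tau .
\]
The integral has real part at least $m_p>0$, so the difference is nonzero whenever $z_1\ne z_2$. Thus $\eta_p$ is injective.

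\emph{Step 4: strip inclusions.} By the reflection symmetry $a(\bar z)=\overline{a(z)}$, $q_p$ and hence $\eta_p$ are real on $\R$. Integrating vertically,
\[
\Im\eta_p(x+iy)=\int_0^y\Re q_p(x+is)\,ds ,
\]
so $m_p|y|\le|\Im\eta_p(x+iy)|\le M_p|y|$. The upper bound gives \eqref{eq:eta-strip-inclusion}. The lower bound gives \eqref{eq:chi-strip-inclusion} once \eqref{eq:eta-covers-strip} is known: if $w=\eta_p(z)$ with $z\in\mathscr S_{r_a}$, then $|\Im z|\le|\Im w|/m_p$. Holomorphy of $\chi_p$ follows from injectivity and $\eta_p'=q_p\ne0$.

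\emph{Step 5: covering, the main obstacle.} Fix $w$ with $|\Im w|<m_pr_a$, and choose $y_0<r_a$ with $m_py_0>|\Im w|$. Consider the rectangle $\mathcal R_L=[-L,L]\times[-y_0,y_0]$.
\begin{itemize}
\item On the top edge, $\Im\eta_p\ge m_py_0>\Im w$; on the bottom edge, $\Im\eta_p\le -m_py_0<\Im w$.
\item On the vertical edges, $\partial_x\Re\eta_p=\Re q_p\ge m_p$ gives $\Re\eta_p(\pm L)\gtrless\pm m_pL$.
\item The vertical displacement $|\Re\eta_p(\pm L+is)-\Re\eta_p(\pm L)|$ is at most $M_py_0$.
\end{itemize}
Hence, for $L$ large, the right edge of $\eta_p(\partial\mathcal R_L)$ lies strictly to the right of $\Re w$ and the left edge strictly to the left. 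The four image arcs therefore lie in the four complementary half-planes determined by $w$, and the image curve is homotopic in $\C\setminus\{w\}$ to a positively oriented rectangle around $w$, with winding number $1$. By the argument principle, $\eta_p-w$ has exactly one zero in $\mathcal R_L\subset\mathscr S_{r_a}$, which proves \eqref{eq:eta-covers-strip}. The delicate point is arranging these edge estimates uniformly. If the winding argument proves awkward, I would instead show that $\eta_p$ is proper onto its image within horizontal substrips and use that it is a local biholomorphism.
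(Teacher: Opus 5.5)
Your proposal is correct and follows essentially the same route as the paper: the Cauchy estimate on $\mathscr S_{\varrho_0/2}$ for complex ellipticity, the polar-form bounds on $q_p$, injectivity via the averaged difference quotient, the identity $\Im\eta_p(x+iy)=\int_0^y\Re q_p(x+is)\,ds$ for both strip inclusions, and an argument-principle count on a large rectangle for the covering. Your rectangle of half-height $y_0<r_a$ with $m_py_0>|\Im w|$, together with the explicit bound $M_py_0$ on the vertical-edge displacement, is a slightly tidier version of the paper's. The paper's rectangle has its horizontal edges on $|\Im z|=r_a$, the boundary of the open strip on which the bounds were established.
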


\begin{proof}
Since $a_\infty\ge a_0$, one has
\[
r_a
=
\frac{a_0\varrho_0}{4a_\infty}
\le
\frac{\varrho_0}{4}.
\]
Cauchy's estimate on $\mathscr S_{\rho_0/2}$ yields
\[
\|a'\|_{\mathcal H_{\rho_0/2}}
\le
\frac{2a_\infty}{\varrho_0}.
\]
Hence, for $z=x+iy$ with $|y|<r_a$,
\[
|a(x+iy)-a(x)|
\le
|y|\,
\|a'\|_{\mathcal H_{\rho_0/2}}
\le
\frac{2a_\infty r_a}{\varrho_0}
=
\frac{a_0}{2}.
\]
Since $a(x)\ge a_0$ on $\R$,
\[
\Re a(z)
\ge
a_0-|a(z)-a(x)|
\ge
\frac{a_0}{2},
\]
which proves \eqref{eq:quantitative-complex-ellipticity}.

Thus $a(\mathscr S_{r_a})$ lies in the open right half-plane and has no zeros
there.  The principal branch of $a^{-1/p}$ is therefore holomorphic
on $\mathscr S_{r_a}$.  Since
\[
|a(z)|\ge \Re a(z)\ge\frac{a_0}{2},
\]
we have
\[
|q_p(z)|
=
|a(z)|^{-1/p}
\le
\left(\frac{2}{a_0}\right)^{1/p}
=
M_p.
\]

To obtain a quantitative lower bound for $\Re q_p$, write
\[
a(z)=|a(z)|e^{i\theta(z)}.
\]
Since
\[
\cos\theta(z)
=
\frac{\Re a(z)}{|a(z)|}
\ge
\frac{a_0}{2a_\infty},
\]
we have
\[
|\theta(z)|
\le
\arccos\frac{a_0}{2a_\infty}.
\]
Consequently,
\begin{align*}
\Re q_p(z)
&=
|a(z)|^{-1/p}
\cos\frac{\theta(z)}p
\\
&\ge
a_\infty^{-1/p}
\cos\left(
\frac1p
\arccos\frac{a_0}{2a_\infty}
\right)
=
m_p.
\end{align*}

For $z_1,z_2\in S_{r_a}$, convexity of the strip yields
\[
\frac{\eta_p(z_2)-\eta_p(z_1)}
     {z_2-z_1}
=
\int_0^1
q_p(z_1+\theta(z_2-z_1))\,d\theta.
\]
Therefore
\[
\Re
\frac{\eta_p(z_2)-\eta_p(z_1)}
     {z_2-z_1}
\ge m_p>0,
\]
and hence $\eta_p$ is injective. 
Furthermore, for $z=x+iy$,
\begin{equation}
\Im\eta_p(x+iy)
=
\int_0^y
\Re q_p(x+is)\,ds.
\label{eq:imag-eta-formula}
\end{equation}
Thus
\begin{equation}
m_p|y|
\le
|\Im\eta_p(x+iy)|
\le
M_p|y|.
\label{eq:imag-eta-two-sided}
\end{equation}
In particular,
\[
\Im\eta_p(x+ir_a)\ge m_pr_a,
\qquad
\Im\eta_p(x-ir_a)\le -m_pr_a.
\]
Moreover,
\[
\partial_x\Re\eta_p(x+iy)
=
\Re q_p(x+iy)
\ge m_p,
\]
so that
\[
\Re\eta_p(x+iy)\longrightarrow\pm\infty
\qquad
\text{as }x\longrightarrow\pm\infty,
\]
uniformly for $|y|\le r_a$.

Fix $w$ with $|\Im w|<m_pr_a$.  For $X$ sufficiently large, on
the boundary of
\[
\Omega_X
=
\{z:\ |\Re z|<X,\ |\Im z|<r_a\},
\]
the four components of
\[
\eta_p(\partial\Omega_X)-w
\]
lie respectively above, to the right of, below, and to the left of
the origin.  Hence this closed curve has winding number one about
the origin.  The argument principle gives a zero of
$\eta_p(z)-w$ in $\Omega_X$; injectivity makes it unique.  This proves
\eqref{eq:eta-covers-strip}.

If $w=\eta_p(z)$, then \eqref{eq:imag-eta-two-sided} gives
\[
|\Im z|
\le
\frac{|\Im w|}{m_p},
\]
which is \eqref{eq:chi-strip-inclusion}.  The upper estimate in
\eqref{eq:imag-eta-two-sided} gives
\eqref{eq:eta-strip-inclusion}.  
\end{proof}
Finally, the following lemma allows us to transfer estimates between the original and normalized coordinates.

\begin{lemma}[Liouville conjugation on analytic strip spaces]
\label{lem:liouville-analytic-spaces}
Assume the hypotheses of Lemma~\ref{lem:analytic-liouville}.  Set
\[
\vartheta_2:=\frac14,
\qquad
\vartheta_3:=\frac13,
\]
and, for $p=2,3$, define
\[
(\mathcal J_pf)(y)
:=
a(\chi_p(y))^{-\vartheta_p}
f(\chi_p(y)).
\]
Then the following assertions hold. 
If
\begin{equation}
0<\rho_y<m_p\rho_x,
\qquad
0<\rho_x<r_a,
\label{eq:J-compatible-radii}
\end{equation}
then
\begin{equation}
\mathcal J_p:
\mathcal A_{\rho_x}
\longrightarrow
\mathcal A_{\rho_y}
\label{eq:J-map-analytic}
\end{equation}
is bounded. 
Conversely, if
\begin{equation}
0<M_p\rho_x<\rho_y<m_pr_a,
\label{eq:Jinv-compatible-radii}
\end{equation}
then
\begin{equation}
\mathcal J_p^{-1}:
\mathcal A_{\rho_y}
\longrightarrow
\mathcal A_{\rho_x}
\label{eq:Jinv-map-analytic}
\end{equation}
is bounded.

In particular, $\mathcal J_p$ and $\mathcal J_p^{-1}$ identify the
original and normalized analytic scales up to the radius distortion
controlled by $m_p$ and $M_p$.
\end{lemma}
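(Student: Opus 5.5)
The plan is to use the boundary-norm characterization \eqref{eq:boundary-fourier-equivalence}: for $f\in\mathcal A_\rho$, $\|f\|_\rho^2\simeq \|f(\cdot+i\rho)\|_{L^2}^2+\|f(\cdot-i\rho)\|_{L^2}^2$. Moreover, by Paley--Wiener, $f\in\mathcal A_\rho$ if and only if $f$ extends holomorphically to $\mathscr S_\rho$ with $\sup_{|s|<\rho}\|f(\cdot+is)\|_{L^2}<\infty$, and this supremum is comparable to $\|f\|_\rho$. (The map $s\mapsto\|f(\cdot+is)\|_{L^2}^2=\int e^{-2s\xi}|\widehat f|^2$ is convex and even, hence maximal at the endpoints.) It therefore suffices to show that $g:=\mathcal J_pf$ extends holomorphically to $\mathscr S_{\rho_y}$ and to bound $\|g(\cdot+is)\|_{L^2}$ uniformly for $|s|<\rho_y$ (or just on the boundary lines $s=\pm\rho_y$) by $C\|f\|_{\rho_x}$.

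\textbf{Step 1 (holomorphy and the weight).} By \eqref{eq:chi-strip-inclusion}, with $\rho_y<m_p\rho_x<m_pr_a$, the map $\chi_p$ sends $\mathscr S_{\rho_y}$ into $\mathscr S_{\rho_y/m_p}\subset\mathscr S_{\rho_x}$, with $\rho_y/m_p<\rho_x$ strictly. By \eqref{eq:quantitative-complex-ellipticity}, $a\circ\chi_p$ takes values in $\{\Re\ge a_0/2\}$ and is bounded by $a_\infty$. Hence the principal power $(a\circ\chi_p)^{-\vartheta_p}$ is holomorphic and bounded by $(2/a_0)^{\vartheta_p}$, so $g=(a\circ\chi_p)^{-\vartheta_p}\,(f\circ\chi_p)$ is holomorphic on $\mathscr S_{\rho_y}$.

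\textbf{Step 2 ($L^2$ along horizontal lines).} Fix $|s|\le\rho'_y$ for any $\rho'_y<\rho_y$ and change variables $y+is=\eta_p(z)$ along the curve $\Gamma_s:=\chi_p(\R+is)$. Then $\int|f(\chi_p(y+is))|^2dy=\int_{\Gamma_s}|f(z)|^2|q_p(z)|\,|dz|\le M_p\int_{\Gamma_s}|f|^2|dz|$. Since $\Im\chi_p$ need not be constant along $\Gamma_s$, the substantive point is to control this curve integral. Two facts will be used. First, $\Gamma_s$ is a graph over $\R$ lying in $\overline{\mathscr S_{\sigma}}$ with $\sigma=\rho'_y/m_p<\rho_x$; it is a graph because $\Re\eta_p$ is strictly increasing in $x$ for each fixed imaginary part, by the injectivity argument of Lemma~\ref{lem:analytic-liouville}. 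Second, its slope is bounded, since $|\chi_p'|=|q_p\circ\chi_p|^{-1}\le (\Re q_p)^{-1}$ is bounded and $\Re\chi_p'\ge c>0$ follows from $\Re(1/q_p)=\Re q_p/|q_p|^2\ge m_p/M_p^2$. I would then bound $|f(x+i\tau)|^2$ for $|\tau|\le\sigma$ by subharmonicity: average $|f|^2$ over a disc of radius $r=\rho_x-\sigma$ contained in $\mathscr S_{\rho_x}$. Integrating along the Lipschitz graph, each point of the strip is covered by a set of discs of uniformly bounded arc-length measure, which gives $\int_{\Gamma_s}|f|^2|dz|\le C r^{-2}\int_{\mathscr S_{\rho_x}}|f|^2\,dA\le C'\sup_{|t|<\rho_x}\|f(\cdot+it)\|^2_{L^2}\le C''\|f\|_{\rho_x}^2$. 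Letting $\rho'_y\uparrow\rho_y$ yields \eqref{eq:J-map-analytic}.

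\textbf{Step 3 (inverse map).} The argument is symmetric. Here $\mathcal J_p^{-1}g(x)=a(x)^{\vartheta_p}g(\eta_p(x))$. By \eqref{eq:eta-strip-inclusion}, with $M_p\rho_x<\rho_y$, the map $\eta_p$ sends $\mathscr S_{\rho_x}$ into $\mathscr S_{M_p\rho_x}$, which is compactly inside $\mathscr S_{\rho_y}$, and the hypothesis $\rho_y<m_pr_a$ guarantees that $\chi_p$ and $g$ live on a region where Lemma~\ref{lem:analytic-liouville} applies. The weight $a^{\vartheta_p}$ is bounded by $a_\infty^{\vartheta_p}$, and the Jacobian $|\eta_p'|=|q_p|\le M_p$ is bounded on curves $\eta_p(\R+is)$. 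The same Lipschitz-graph plus subharmonic-averaging estimate then applies, with the slack $\rho_y-M_p\rho_x>0$ playing the role of $r$.

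\textbf{Main obstacle.} The delicate point is Step 2: the image curves $\chi_p(\R+is)$ are not horizontal lines, so the boundary norm cannot be applied directly. The slack in the strict radius inequalities must be converted, via Cauchy/subharmonic averaging, into an $L^2$ bound on tilted curves with constants depending only on $a_0,a_\infty,\varrho_0$ and the gaps.
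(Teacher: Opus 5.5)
Your proposal is correct, and its skeleton matches the paper's. Both arguments bound the weight $(a\circ\chi_p)^{-\vartheta_p}$ by $(2/a_0)^{\vartheta_p}$, change variables with Jacobians controlled by $m_p$ and $M_p$, and note that $\chi_p(\R+is)$ is a graph over $\R$ inside the strictly smaller strip $\overline{\mathscr S_{\rho_y/m_p}}$. Both then bound $f$ in $L^2$ along such a tilted curve.

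The real difference is this last trace estimate.
\begin{itemize}
\item The paper reparametrizes by $X_\sigma=\Re\chi_p(\cdot+i\sigma)$, using only $dX_\sigma/dx\ge m_p/M_p^2$. It then proves a bound for an arbitrary measurable graph $x+iY(x)$ with $|Y|\le r<R$. It writes $f(x+iY(x))=f(x)+i\int_0^{Y(x)}f'(x+is)\,ds$, applies Cauchy--Schwarz, and uses the Plancherel estimate $\|f'(\cdot+is)\|_{L^2}\le (R-r)^{-1}\|f\|_R$. This is elementary, needs no slope information, and gives the explicit constant $C_{\rm tr}^2=2+4r^2/(R-r)^2$.
\item You parametrize by arc length ($dy=|q_p|\,|dz|$) and use the sub-mean-value property of $|f|^2$ on discs of radius $\rho_x-\sigma$, followed by Fubini. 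This is more geometric. It needs the Lipschitz bound on $\Gamma_s$, which you correctly extract from $|\chi_p'|\le 1/m_p$ and $\Re\chi_p'\ge m_p/M_p^2$, to control the arc length of $\Gamma_s\cap D(w,r)$. It is equally valid.
\end{itemize}
Your explicit Paley--Wiener step also makes precise a point the paper leaves implicit. You show uniform bounds on all horizontal lines, which gives membership in $\mathcal A_{\rho_y}$. The paper instead invokes the boundary-norm equivalence, which it proved only for functions already known to lie in $\mathcal A_\rho$.

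There are three small slips, none affecting validity.
\begin{itemize}
\item The map $s\mapsto\|f(\cdot+is)\|_{L^2}^2$ is convex but not even in general. Convexity alone gives the endpoint maximum, so nothing is lost.
\item In Step~3 the substitution $w=\eta_p(x+is)$ gives $dx=|dw|/|q_p|$. The bound you need is therefore $|q_p|\ge\Re q_p\ge m_p$, not $|q_p|\le M_p$.
\item The graph property of $\chi_p(\R+is)$ follows from $\Re\chi_p'>0$, or equivalently from monotonicity of $\tau\mapsto\Im\eta_p(x+i\tau)$. Monotonicity of $x\mapsto\Re\eta_p(x+i\tau)$ is instead the fact relevant to the curves $\eta_p(\R+is)$ in Step~3.
\end{itemize}
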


\begin{proof}
We first record an elementary trace estimate.  Let
\[
0<r<R
\]
and let $Y:\R\to[-r,r]$ be measurable.  If
$f\in\mathcal A_R$, then
\begin{equation}
\int_\R
|f(x+iY(x))|^2\,dx
\le
C_{\rm tr}(R,r)^2
\|f\|_R^2,
\label{eq:variable-graph-trace}
\end{equation}
where one may take
\[
C_{\rm tr}(R,r)^2
:=
2+\frac{4r^2}{(R-r)^2}.
\]
Indeed,
\[
f(x+iY(x))
=
f(x)
+
i\int_0^{Y(x)}f'(x+is)\,ds,
\]
so Cauchy--Schwarz gives
\begin{align*}
|f(x+iY(x))|^2
&\le
2|f(x)|^2
+
2r
\int_{-r}^r
|f'(x+is)|^2\,ds.
\end{align*}
After integration in $x$,
\[
\int_\R|f(x+iY(x))|^2\,dx
\le
2\|f\|_{L^2}^2
+
4r^2
\sup_{|s|\le r}
\|f'(\cdot+is)\|_{L^2}^2.
\]
By Plancherel,
\[
\|f'(\cdot+is)\|_{L^2}
\le
\frac1{R-r}\|f\|_R,
\qquad |s|\le r,
\]
which proves \eqref{eq:variable-graph-trace}.

We now prove the bound for $\mathcal J_p$.  Fix
$|\sigma|\le\rho_y<{m_p}\rho_x<m_p r_a$ and write
\[
\chi_p(x+i\sigma)
=
X_\sigma(x)+iY_\sigma(x).
\]
By \eqref{eq:chi-strip-inclusion},
\[
|Y_\sigma(x)|
\le
\frac{\rho_y}{m_p}
<\rho_x.
\]
Moreover,
\[
\chi_p'(w)
=
\frac1{q_p(\chi_p(w))},
\]
and therefore
\[
\frac{dX_\sigma}{dx}
=
\Re\chi_p'(x+i\sigma)
=
\frac{\Re q_p(\chi_p(x+i\sigma))}
     {|q_p(\chi_p(x+i\sigma))|^2}
\ge
\frac{m_p}{M_p^2}.
\]
Thus $X_\sigma$ is strictly increasing from $\R$ onto $\R$, and
\[
dx
\le
\frac{M_p^2}{m_p}\,dX_\sigma.
\]
Since
\[
|a(\chi_p(w))|^{-\vartheta_p}
\le
\left(\frac2{a_0}\right)^{\vartheta_p},
\]
the variable-graph estimate gives
\begin{align*}
\|
(\mathcal J_pf)(\cdot+i\sigma)
\|_2^2
&\le
\left(\frac2{a_0}\right)^{2\vartheta_p}
\frac{M_p^2}{m_p}
\times
C_{\rm tr}
\left(
\rho_x,\frac{\rho_y}{m_p}
\right)^2
\|f\|_{\rho_x}^2.
\end{align*}
Applying this at $\sigma=\pm\rho_y$ and using the equivalence between
the boundary norm and $\|\cdot\|_{\rho_y}$ proves
\eqref{eq:J-map-analytic}.

\noindent
For the inverse transformation,
\[
(\mathcal J_p^{-1}g)(x)
=
a(x)^{\vartheta_p}g(\eta_p(x)).
\]
Fix $|\sigma|\le\rho_x<\rho_y/M_p<\gamma_p r_a$ and write
\[
\eta_p(x+i\sigma)
=
U_\sigma(x)+iV_\sigma(x).
\]
By \eqref{eq:eta-strip-inclusion},
\[
|V_\sigma(x)|
\le
M_p\rho_x
<\rho_y.
\]
Moreover,
\[
\frac{dU_\sigma}{dx}
=
\Re q_p(x+i\sigma)
\ge m_p,
\]
so
\[
dx\le \frac1{m_p}\,dU_\sigma.
\]
Since
\[
|a(x+i\sigma)|^{\vartheta_p}
\le a_\infty^{\vartheta_p},
\]
another application of \eqref{eq:variable-graph-trace} gives
\begin{align*}
\|
(\mathcal J_p^{-1}g)(\cdot+i\sigma)
\|_{L^2}^2
&\le
\frac{a_\infty^{2\vartheta_p}}{m_p}
C_{\rm tr}
\left(
\rho_y,M_p\rho_x
\right)^2
\|g\|_{\rho_y}^2.
\end{align*}
Taking $\sigma=\pm\rho_x$ proves
\eqref{eq:Jinv-map-analytic}.
\end{proof}

\warning{
\begin{lemma}[Analytic Liouville coordinates]
\label{lem:analytic-liouville}
Let $\rho_0>0$ and assume
\[
a\in \mathcal{H}_{\rho_0}, 
\qquad
a(\bar z)=\overline{a(z)},
\]
and
\[
a(x)\ge a_0>0,
\qquad x\in\R.
\]
For $p\in\{2,3\}$ define on the real axis
\[
\eta_p(x)
:=
\int_0^x a(s)^{-1/p}\,ds .
\]
Then there exist $r_p>0$ and $\widetilde r_p>0$ such that:

\begin{enumerate}
\item the principal branch
\[
q_p(z):=a(z)^{-1/p}
\]
is bounded and holomorphic on $\mathscr S_{r_p}$;

\item there exists $m_p>0$ such that
\[
\Re q_p(z)\ge m_p,
\qquad z\in \mathscr S_{r_p};
\]

\item
\[
\eta_p(z):=\int_0^zq_p(\zeta)\,d\zeta
\]
is injective on $\mathscr S_{r_p}$;

\item
\[
S_{\widetilde r_p}\subset\eta_p(\mathscr S_{r_p}),
\qquad
\widetilde r_p:=m_pr_p,
\]
and therefore
\[
\chi_p:=\eta_p^{-1}
\]
is holomorphic on $\mathscr S_{\widetilde r_p}$ with
\[
\chi_p(\mathscr S_{\widetilde r_p})\subset \mathscr S_{r_p}.
\]
\end{enumerate}

In particular, if $F\in \mathcal{H}_{r_p}\ \bwarning{H^\infty(S_{r_p})}$, then
\[
F\circ\chi_p\in \mathcal{H}_{\widetilde r_p}
\]
and
\[
\|F\circ\chi_p\|_{\mathcal{H}_{\widetilde r_p}}
\le
\|F\|_{\mathcal{H}_{r_p}}.
\]
\end{lemma}

\begin{proof}
Since $a\in \mathcal{H}_{\rho_0}$, its restriction to the real axis is
automatically bounded above:
\[
a(x)\le \|a\|_{\mathcal{H}_{\rho_0}}.
\]
\bwarning{Thus no separate upper ellipticity bound is required.}

Choose $0<r_p<\rho_0$ sufficiently small.  Cauchy's estimate and
$a(x)\ge a_0$ imply, after decreasing $r_p$ if necessary,
\[
|a(x+iy)-a(x)|\le \frac{a_0}{2},
\qquad |y|<r_p.
\]
Hence $a$ has no zeros in $\mathscr S_{r_p}$ and the branch $a^{-1/p}$ is
holomorphic there.  Since $a^{-1/p}$ is positive on $\R$, up to a
strip shrinkage, we have
\[
\Re q_p(z)\ge m_p>0.
\]

For $z_1,z_2\in \mathscr S_{r_p}$, convexity of the strip gives
\[
\frac{\eta_p(z_2)-\eta_p(z_1)}{z_2-z_1}
=
\int_0^1
q_p(z_1+\theta(z_2-z_1))\,d\theta ,
\]
so
\[
\Re
\frac{\eta_p(z_2)-\eta_p(z_1)}{z_2-z_1}
\ge m_p.
\]
Thus $\eta_p(z_1)\neq\eta_p(z_2)$ whenever $z_1\neq z_2$.

Moreover, since $\eta_p(x)\in\R$ for $x\in\R$,
\[
\Im\eta_p(x+iy)
=
\int_0^y\Re q_p(x+is)\,ds.
\]
Consequently,
\[
\Im\eta_p(x+ir_p)\ge m_pr_p,
\qquad
\Im\eta_p(x-ir_p)\le-m_pr_p.
\]
Also,
\[
\frac{\partial}{\partial x}\Re\eta_p(x+iy)
=
\Re q_p(x+iy)\ge m_p,
\]
so $\Re\eta_p(x+iy)\to\pm\infty$ as $x\to\pm\infty$,
uniformly for $|y|\le r_p$.

Fix $w$ with $|\Im w|<m_pr_p$.  On the boundary of a sufficiently
large rectangle
\[
\Omega_X=\{z:\ |\Re z|<X,\ |\Im z|<r_p\},
\]
the curve $\eta_p(\partial\Omega_X)-w$ has winding number one about
the origin.  By the argument principle, $\eta_p(z)-w$ has (exacty) one zero
inside $\Omega_X$.  
Thus
\[
S_{m_pr_p}\subset\eta_p(S_{r_p}),
\]
which proves the assertion.
\end{proof} 
 }

\bwarning
{
\emph{Analytic strip spaces and coefficient norms}
\label{subsec:analytic-spaces}

For $\rho>0$ define\footnote{We use the standard Fourier-multiplier notation: 
$
\langle D\rangle:=(I-D^2)^{1/2},
$
 and 
$
|D|:=(-D^2)^{1/2}.
$ 
Thus, writing
$
\langle\xi\rangle:=(1+|\xi|^2)^{1/2},
$
one has
$
\widehat{\langle D\rangle^s f}(\xi)
=
\langle\xi\rangle^s\widehat f(\xi),
$
and 
$ 
\widehat{e^{\rho|D|}f}(\xi)
=
e^{\rho|\xi|}\widehat f(\xi).
$}

\[
 \mathcal A_\rho^s
 :=\bigl\{f:\ e^{\rho|D|}\langle D\rangle^sf\in L^2(\R)\bigr\},
 \qquad
 \|f\|_{\rho,s}:=
 \|e^{\rho|D|}\langle D\rangle^sf\|_{L^2},
\]
and abbreviate $\mathcal A_\rho=\mathcal A_\rho^0$.  If
$0<\rho'<\rho$, the Fourier multiplier estimate yields
\begin{equation}
 \|D^qf\|_{\rho',s}
 \le \frac{q!}{(\rho-\rho')^q}\|f\|_{\rho,s}
\label{eq:analytic-cauchy-estimate}
\end{equation}
since 
$\sup_{x\ge0}x^qe^{-(\rho-\rho')x}\le q!(\rho-\rho')^{-q}$.

For coefficients we use an analytic Wiener norm. For $g\in L^1(\C)$,  its Fourier transform
 gives a finite complex measure, and we define
\[
 \|g\|_{\mathfrak W_R}
 :=\int_\R e^{R|\eta|}\,d|\widehat g|(\eta),
 \qquad R>0,
\]
and, for $j\ge0$ and $0\le\rho<R$,
\[
 M_j(g;\rho)
 :=\int_\R |\eta|^j e^{\rho|\eta|}\,d|\widehat g|(\eta).
\]
Then
\begin{equation}
 M_j(g;\rho)
 \le \frac{j!}{(R-\rho)^j}\|g\|_{\mathfrak W_R}.
\label{eq:wiener-moment-estimate}
\end{equation}
\warning{The right-hand side deteriorates as $\rho\uparrow R$.  We therefore} fix
throughout a working radius $\rho_0<R$ and use only
$0\le\rho\le\rho_0$, for which all coefficient moments entering the
energy estimates are uniformly controlled. 
\warning{ The same arguments have a periodic version with Fourier series.}

Let $W_\rho=e^{\rho|D|}$.  Then: \warning{A direct Fourier-convolution calculation
also gives the commutator estimate}

\begin{lemma}\label{lem:analytic-commutator}
\begin{equation}
 \|[W_\rho,g]u\|_2
 \le \rho M_1(g;\rho)\|W_\rho u\|_2.
\label{eq:analytic-commutator}
\end{equation}
\end{lemma}

\begin{proof}[Proof of \eqref{eq:analytic-commutator}]
The operator $W_\rho$ is the Fourier multiplier with symbol
$e^{\rho|\xi|}$, whereas multiplication by $g$ corresponds on the
Fourier side to convolution with the finite complex measure
$\widehat g$.  Thus
\[
 \widehat{[W_\rho,g]u}(\xi)
 =
 \int_\R
 \bigl(e^{\rho|\xi|}-e^{\rho|\xi-\eta|}\bigr)
 \widehat u(\xi-\eta)\,d\widehat g(\eta).
\]
For real $a,b$ one has
$|e^a-e^b|\le |a-b|e^{\max\{a,b\}}$.  Hence
\[
 \bigl|e^{\rho|\xi|}-e^{\rho|\xi-\eta|}\bigr|
 \le
 \rho|\eta|e^{\rho|\eta|}e^{\rho|\xi-\eta|},
\tag{$\star$}\label{eq:analytic-exponential-difference}
\]
because
$\bigl||\xi|-|\xi-\eta|\bigr|\le|\eta|$ and
$\max\{|\xi|,|\xi-\eta|\}\le|\xi-\eta|+|\eta|$.
Taking absolute values and using \eqref{eq:analytic-exponential-difference} gives
\[
 |\widehat{[W_\rho,g]u}(\xi)|
 \le
 \rho\int_\R
 |\eta|e^{\rho|\eta|}
 |\widehat{W_\rho u}(\xi-\eta)|\,d|\widehat g|(\eta).
\]
Introduce the positive finite measure
\[
 d\mu(\eta):=|\eta|e^{\rho|\eta|}\,d|\widehat g|(\eta),
 \qquad
 \mu(\R)=M_1(g;\rho).
\]
Then
\[
 |\widehat{[W_\rho,g]u}|
 \le \rho\,\mu*|\widehat{W_\rho u}|.
\]
Young's inequality for convolution of an $L^2$ function with a finite
measure therefore yields
\[
 \|\widehat{[W_\rho,g]u}\|_2
 \le
 \rho\,\mu(\R)\|\widehat{W_\rho u}\|_2.
\]
By Plancherel,
\[
 \|[W_\rho,g]u\|_2
 \le \rho M_1(g;\rho)\|W_\rho u\|_2,
\]
which is \eqref{eq:analytic-commutator}.  Combining this with \eqref{eq:wiener-moment-estimate} gives
\[
 \|[W_\rho,g]u\|_2
 \le
 \frac{\rho}{R-\rho}\,
 \|g\|_{\mathfrak W_R}\|W_\rho u\|_2.
\]
\end{proof}
}

We finally prove the analyticity of the normalized coefficients.

\begin{proposition}[Analyticity of the normalized coefficients]
\label{prop:normalized-analytic-coefficients}
Assume the coefficient hypotheses of
Theorem~\ref{thm:unified-analytic-rate}.  Then, after restriction to
the strip $\mathscr S_{r_a}$ of Lemma~\ref{lem:analytic-liouville}, the
coefficients entering each normalized model are bounded and
holomorphic on a positive strip.

More precisely, in the Schr\"odinger case,
\[
F_2
:=
\frac{b}{\sqrt a},
\qquad 
G_2
:=
c-\frac12b'
+\frac12b\frac{a'}a
+i\left(
\frac14a''
-\frac{3}{16}\frac{(a')^2}{a}
\right)
\]
belong to $\mathcal H_{r_a}$, and
\[
v=F_2\circ\chi_2,
\qquad
w=G_2\circ\chi_2
\]
belong to $\mathcal H_{m_2r_a}$.

In the cubic case, with
\[
\alpha
=
-\frac23a''
+\frac59\frac{(a')^2}{a},
\qquad
\beta
=
-\frac13a'''
+\frac89\frac{a'a''}{a}
-\frac59\frac{(a')^3}{a^2},
\qquad
d=b-\alpha,
\qquad
V=(c-\beta)-\frac12d'
+\frac12d\frac{a'}a,
\]
the functions
\[
F_3:=d\,a^{-1/3},
\qquad
G_3:=V
\]
belong to $\mathcal H_{r_a}$, and
\[
v=F_3\circ\chi_3,
\qquad
w=G_3\circ\chi_3
\]
belong to $\mathcal H_{m_3r_a}$.

In both nonsmoothing cases $v$ is real-valued on $\R$.
\end{proposition}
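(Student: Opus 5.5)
The proof reduces to three ingredients: the algebra property of bounded holomorphic functions on a strip, Cauchy's estimate on a slightly larger strip, and composition with the conformal map $\chi_p$ of Lemma~\ref{lem:analytic-liouville}.

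\emph{Step 1: basic functions.} On $\mathscr S_{r_a}$, Lemma~\ref{lem:analytic-liouville} gives $\Re a\ge a_0/2$. Hence $|a|\ge a_0/2$, and $1/a$ is bounded and holomorphic with $\|1/a\|_{\mathcal H_{r_a}}\le 2/a_0$. The principal branches $a^{-1/2}=q_2$ and $a^{-1/3}=q_3$ are bounded by $M_2$ and $M_3$.

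\emph{Step 2: derivatives.} Since $r_a\le\varrho_0/4$, Lemma~\ref{lem:analytic-multiplier} (the Cauchy estimate \eqref{eq:coefficient-cauchy}) applied between $\mathscr S_{r_a}$ and $\mathscr S_{\varrho_0}$ gives
\[
\|a^{(q)}\|_{\mathcal H_{r_a}}\le q!\,(\varrho_0-r_a)^{-q}a_\infty,\qquad q\le 3 .
\]
The same estimate bounds $b'$ in terms of $\|b\|_{\mathcal H_{\varrho_0}}$. In the cubic case $d'=b'-\alpha'$ involves $a'''$ only through $\alpha'$, and it is covered in the same way.

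\emph{Step 3: assembling $F_p,G_p$.} Each of $F_2,G_2,F_3,G_3$ is a finite sum of products of factors already shown to lie in $\mathcal H_{r_a}$: namely $a^{(j)}$ for $j\le3$, $b$, $b'$, $c$, $1/a$, $a^{-2}$ (for $\beta$), and $q_p$. Since $\mathcal H_{r_a}$ is a Banach algebra under the sup norm, these functions belong to $\mathcal H_{r_a}$, with explicit bounds.

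\emph{Step 4: composition.} By \eqref{eq:chi-strip-inclusion} and the covering \eqref{eq:eta-covers-strip}, $\chi_p$ is holomorphic on $\mathscr S_{m_pr_a}$ and maps this strip into $\mathscr S_{r_a}$; indeed $|\Im\chi_p(w)|\le|\Im w|/m_p<r_a$. Therefore, for any $F\in\mathcal H_{r_a}$, the composition $F\circ\chi_p$ is holomorphic on $\mathscr S_{m_pr_a}$, and
\[
\|F\circ\chi_p\|_{\mathcal H_{m_pr_a}}\le\|F\|_{\mathcal H_{r_a}} .
\]
This yields $v,w\in\mathcal H_{m_pr_a}$.

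\emph{Step 5: reality of $v$.} The function $a$ is real and positive on $\R$, so $q_p$ is real and positive there, $\eta_p$ maps $\R$ onto $\R$ increasingly, and $\chi_p(\R)=\R$. Since $b$ is real, $F_2=b\,q_2$ is real on $\R$. In the cubic case $\alpha$ is real because $a$ is real on $\R$, so $d$ and $F_3=d\,q_3$ are real there as well. It follows that $v=F_p\circ\chi_p$ is real on $\R$.

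No step is a genuine obstacle; the only point requiring care is to confirm that every derivative appearing in the formulas is taken of a function holomorphic on a strictly larger strip, which is what the choice $r_a\le\varrho_0/4$ guarantees.
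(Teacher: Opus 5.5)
Your proposal is correct and follows the paper's proof step for step: Cauchy estimates on a larger strip (using $r_a\le\varrho_0/4$) bound the derivatives of $a,b,c$; the bound $|a|\ge a_0/2$ on $\mathscr S_{r_a}$ controls the reciprocals and fractional powers; the inclusion $\chi_p(\mathscr S_{m_pr_a})\subset\mathscr S_{r_a}$ handles the composition; and the reality of $a,b$ on $\R$ gives the reality of $v$. Your version only spells out what the paper leaves implicit, namely the explicit derivative bounds up to order three and the fact that $\chi_p$ maps $\R$ onto $\R$, which is what makes $v$ real there.
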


\begin{proof}
Since
\[
r_a\le \frac{\varrho_0}{4},
\]
all derivatives of $a,b,c$ of any fixed order are bounded on
$S_{r_a}$ by Cauchy's estimates on 
$\mathscr S_{\varrho_0/2}$.
Furthermore,
\[
|a(z)|\ge\frac{a_0}{2},
\qquad z\in \mathscr S_{r_a},
\]
by Lemma~\ref{lem:analytic-liouville}.  Thus all reciprocal and
fractional-power expressions above are bounded and
holomorphic on $S_{r_a}$.

Lemma~\ref{lem:analytic-liouville} implies
\[
\chi_p(S_{m_pr_a})\subset S_{r_a}.
\]
Therefore composition with $\chi_p$ preserves bounded holomorphy on a smaller strip.  The reality of $v$ on $\R$ follows from
the reality of $a,b$ there.
\end{proof}

\subsection{Uniform analytic propagation for the nonsmoothing models}\label{subsec:analytic-group-propagation}

Let $Q_h$ be the skew-adjoint Fourier multiplier with symbol
$iq_h(\xi)$, where $q_h$ is real and
\begin{equation}
 |q_h(\xi)|\le |\xi|.
\label{eq:fourier-symbol-bound}
\end{equation}
For the centered difference, $q_h(\xi)=\sin(h\xi)/h$; for $h=0$ we
set $Q_0=D$.  For $p=2,3$ put
\[
 P_{2,h}=iQ_h^2,\qquad P_{3,h}=Q_h^3,
\]
and consider
\begin{equation}
 L_{p,h}=P_{p,h}+\frac12(vQ_h+Q_hv)+w.
\label{eq:normalized-analytic-generator}
\end{equation}
The normalized Schr\"odinger and cubic generators summarized in Subsection~\ref{subsec:model-summary}
are the cases $p=2$ and $p=3$ respectively.

\bwarning{
\begin{proposition}[Uniform analytic propagation in the nonsmoothing cases]
\label{prop:analytic-group}

Let
\[
L_{p,h}
=
P_{p,h}
+\frac12(vQ_h+Q_hv)+w,
\qquad p\in\{2,3\},
\]
where
\[
P_{2,h}=iQ_h^2,
\qquad
P_{3,h}=Q_h^3,
\]
and $Q_h$ is skew-adjoint with real symbol $iq_h(\xi)$ satisfying
\[
|q_h(\xi)|\le|\xi|.
\]

Assume that, for some $R>0$,
\[
v,w\in H^\infty(S_R),
\qquad
v(\bar z)=\overline{v(z)}.
\]
Fix $0<\rho_0<R$.  Then there exist
\[
C_0\ge1,\qquad \kappa,\Gamma\ge0,
\]
independent of $h$, such that
\begin{equation}
\|e^{tL_{p,h}}f\|_{\rho_0e^{-\kappa|t|}}
\le
C_0e^{\Gamma|t|}\|f\|_{\rho_0},
\qquad t\in\R.
\label{eq:analytic-group-propagation-new}
\end{equation}
The same estimate holds for $h=0$.
\end{proposition}
}

{
\begin{proposition}[Uniform analytic propagation in the group cases]
\label{prop:analytic-group}
Assume that $v$ is real-valued and $v,w\in\mathcal{H}_{\rho_*}$ for some
$\rho_*>0$.  Fix $0<\rho_0<\rho_*$.  Then there exist 
$C_0\ge 1$ and $\kappa,\Gamma\ge0$,
depending on $\rho_0$ and on the bounded analytic norms of $v,w$, but
independent of $h$, such
that
\begin{equation}
 \|e^{tL_{p,h}}f\|_{\rho_0e^{-\kappa|t|}}
 \le C_0 e^{\Gamma|t|}\|f\|_{\rho_0},
 \qquad t\in\R,
\label{eq:analytic-group-propagation-new}
\end{equation}
for $p=2,3$ and $h\in\R$.   
\warning{If $h=0$ is interpreted as $Q_0=D$, then
$L_{p,0}$ is the exact normalized differential generator and the same
estimate holds for its $C_0$-group on $L^2(\R)$.}
\end{proposition}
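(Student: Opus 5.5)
We will prove \eqref{eq:analytic-group-propagation-new} by a weighted energy estimate with a time-dependent analytic radius $\rho(t)=\rho_0e^{-\kappa t}$, treating $t\ge0$; the case $t\le0$ follows by applying the same argument to $-L_{p,h}$. Since $L_{p,h}$ has the same structure as $-L_{p,h}$ (change $c_p\mapsto -c_p$, $v\mapsto -v$, $w\mapsto -w$), the constants do not change. For $h\neq0$ the operator $L_{p,h}$ is bounded on each $\mathcal A_\rho$, so $u(t)=e^{tL_{p,h}}f$ is a genuine smooth curve. For $h=0$ we argue on smooth, rapidly analytic data and close by density, using the $C_0$-group of Proposition~\ref{prop:sobolev-group-bound}. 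Set $W_{\rho}=e^{\rho|D|}$ and $E(t)=\|W_{\rho(t)}u(t)\|_{L^2}^2$. Then
\[
\tfrac12E'(t)=-\kappa\rho(t)\,\||D|^{1/2}W_{\rho}u\|_2^2+\Re\langle W_\rho L_{p,h}u,W_\rho u\rangle .
\]
The radius loss gives a gain of half a derivative, and the task is to show that every term of $\Re\langle W_\rho L_{p,h}u,W_\rho u\rangle$ is bounded by $C\||D|^{1/2}W_\rho u\|^2+C\|W_\rho u\|^2$ with $C$ independent of $h$.

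\emph{Principal part.} $P_{p,h}$ is a skew-adjoint Fourier multiplier, so it commutes with $W_\rho$. Hence $\Re\langle W_\rho P_{p,h}u,W_\rho u\rangle=0$ exactly, uniformly in $h$. This is why the symbol condition \eqref{eq:fourier-symbol-bound} matters only for the lower-order part.

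\emph{Potential part.} Write $W_\rho w u=wW_\rho u+[W_\rho,w]u$. The first term is bounded by the multiplier estimate of Lemma~\ref{lem:analytic-multiplier} at a slightly smaller radius, or directly by $\|w\|_\infty$ on the boundary lines via the norm equivalence \eqref{eq:boundary-fourier-equivalence}. For the commutator, I would prove a bound of the type $\|[W_\rho,g]u\|_2\le C\rho\,\|g\|_{\mathcal H_{\rho_*}}\|W_\rho u\|_2$ for $\rho\le\rho_0<\rho_*$, and this is where I am least sure of the right route. One option is to go through the Fourier convolution and the inequality $|e^{\rho|\xi|}-e^{\rho|\xi-\eta|}|\le\rho|\eta|e^{\rho|\eta|}e^{\rho|\xi-\eta|}$. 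That requires $\widehat g$ to be a measure with exponential moments, which for merely bounded holomorphic $g$ is not automatic. I would therefore first try working in the boundary norm $\|\cdot\|_{\partial,\rho}$, where multiplication by $g(\cdot\pm i\rho)$ is simply bounded, and handle the time derivative of the boundary norm through $\partial_\rho f(x\pm i\rho)=\pm i f'(x\pm i\rho)$.

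\emph{Transport part.} This is the expected main obstacle. Write $\tfrac12(vQ_h+Q_hv)$ conjugated by $W_\rho$ as $B_h(v)W_\rho$ plus commutators of the form $[W_\rho,v]Q_h$ and $Q_h[W_\rho,v]$. The term $B_h(v)$ contributes $\Re\langle B_h(v)W_\rho u,W_\rho u\rangle=0$, since $v$ is real on $\R$ and $Q_h$ is skew-adjoint. The commutators carry one full derivative through $Q_h$, bounded by $|D|$ via $|q_h|\le|\xi|$, and the analytic commutator gains a factor $\rho|\eta|$ only in the coefficient frequency. So they are not lower order in $L^2$, but they are of order one, and their real part must be split symmetrically as $|D|^{1/2}\cdot|D|^{1/2}$, in the spirit of Lemma~\ref{lem:centered-transport-form} and Lemma~\ref{lem:half-derivative-commutator}. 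Concretely, I would aim for
\[
|\Re\langle [W_\rho,v]Q_hu,W_\rho u\rangle|\le C_v\rho\,\||D|^{1/2}W_\rho u\|_2^2+C_v\|W_\rho u\|_2^2 ,
\]
uniformly in $h$, using $|e^{\rho|\xi|}-e^{\rho|\xi-\eta|}|\lesssim\rho|\eta|e^{\rho|\eta|}e^{\rho|\xi-\eta|}$ together with $|\xi|\lesssim|\xi-\eta|^{1/2}|\xi|^{1/2}+|\eta|$. Once this holds, choosing $\kappa\ge C_v$ absorbs the half-derivative term into the $-\kappa\rho\||D|^{1/2}W_\rho u\|^2$ gain. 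This leaves $E'\le2\Gamma E$, and Gr\"onwall gives \eqref{eq:analytic-group-propagation-new} with $C_0$ coming from the norm equivalences, independent of $h$. The case $h=0$ is included verbatim, because only $|q_h|\le|\xi|$ and skew-adjointness were used.
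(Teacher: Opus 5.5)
There is a genuine gap, and it is at the step you yourself flag as the main obstacle. You estimate $\Re\langle [W_\rho,v]Q_hu,W_\rho u\rangle$ on the Fourier side, using $|e^{\rho|\xi|}-e^{\rho|\xi-\eta|}|\le\rho|\eta|e^{\rho|\eta|}e^{\rho|\xi-\eta|}$ and the half-derivative splitting. That route needs $\int_\R|\eta|^ke^{\rho|\eta|}\,d|\widehat v|(\eta)<\infty$ for $k=1,2$, which is an analytic Wiener-type hypothesis on $v$.

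You correctly observed that $v\in\mathcal H_{\rho_*}$ does not imply this: a bounded holomorphic function on a strip need not have a Fourier transform that is a finite measure, let alone one with exponential moments. But you raised the point only for $w$, where it is harmless. Lemma~\ref{lem:analytic-multiplier} gives $\|wu\|_\rho\le\sqrt2\|w\|_{\mathcal H_{\rho_*}}\|u\|_\rho$ directly, so $|\Re\langle W_\rho(wu),W_\rho u\rangle|\le\sqrt2\|w\|_{\mathcal H_{\rho_*}}\|u\|_\rho^2$ with no commutator at all. For $v$ that shortcut loses a full derivative, so the cancellation has to come from the commutator. Your estimate of the commutator uses Fourier information on $v$ that the hypotheses do not supply. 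As written, your argument proves the proposition only under an extra analytic Wiener assumption on $v$.

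The missing idea is to run the energy estimate on the boundary lines instead of with the weight $e^{\rho|D|}$. Take $E(t)=\|u(\cdot+i\rho(t))\|_2^2+\|u(\cdot-i\rho(t))\|_2^2$, which is equivalent to $\|u\|_{\rho(t)}^2$ by \eqref{eq:boundary-fourier-equivalence}.

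On a line $\R+i\sigma$ the multipliers $Q_h$ and $P_{p,h}$ are unchanged and still skew. The coefficients $v,w$ act by plain multiplication by $v_\sigma=v(\cdot+i\sigma)$ and $w_\sigma$, so no commutators arise. Write $v_\sigma=\alpha_\sigma+i\beta_\sigma$ with $\alpha_\sigma,\beta_\sigma$ real. Then $B_h(\alpha_\sigma)$ is skew, and only $\Re\langle iB_h(\beta_\sigma)U_\sigma,U_\sigma\rangle$ survives. Because $v$ is real on $\R$, Cauchy estimates give $\|\beta_\sigma\|_\infty+\|\beta_\sigma'\|_\infty\le C|\sigma|$. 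Lemma~\ref{lem:centered-transport-form} then yields the $h$-uniform bound $C_v|\sigma|\bigl(\||D|^{1/2}U_\sigma\|_2^2+\|U_\sigma\|_2^2\bigr)$.

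So the small factor $\rho$ comes from $\Im v$ on the shifted line, not from a Fourier commutator. The motion of the lines contributes $4\dot\rho\int|\xi|\sinh(2\rho|\xi|)|\widehat u|^2\,d\xi$. This absorbs the half-derivative term on $\{2\rho|\xi|\ge1\}$, and the low frequencies are bounded by $E$.

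Separately, your treatment of $h=0$ is not justified as stated. Differentiating $E$ along the exact flow on ``smooth, rapidly analytic data'' presupposes that the flow preserves analyticity, which is exactly what is being proved. The paper avoids this, for $h\ne0$ as well, with Fourier--Galerkin truncations $\Pi_ML_{p,h}\Pi_M$, whose solutions stay band-limited. It proves the bound uniformly in $M$ and $h$, then removes the cutoff via Duhamel's formula, $L^2$ convergence and lower semicontinuity of $\|\cdot\|_\rho$. Alternatively, you could obtain the $h=0$ case as a strong $L^2$ limit of the uniform $h\ne0$ bounds.
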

}

\begin{proof}
We divide the proof into three steps.

\medskip
\noindent
\emph{Step 1: a boundary-strip energy estimate for Galerkin
approximations.}
Let $h\ne0$.  
\bwarning{
Let
\[
\mathscr E
:=
\bigcup_{N\ge1}\operatorname{Ran}\Pi_N,
\qquad
\Pi_N:=\mathbf 1_{\{|\xi|\le N\}}(D).
\]
The space $\mathscr E$ is dense in every $\mathcal A_\rho$.

Fix first $h\ne0$ and $f\in\mathscr E$.  Introduce
\[
L_{p,h}^{(M)}
:=
\Pi_ML_{p,h}\Pi_M,
\qquad
u_M(t)
:=
e^{tL_{p,h}^{(M)}}f,
\]
with $M$ sufficiently large that
$f\in\operatorname{Ran}\Pi_M$.
Then
\[
u_M(t)\in\operatorname{Ran}\Pi_M
\qquad
(t\in\R),
\]
so $u_M(t)$ extends to an entire function and all the calculations
below are classical.
}

We use as analytic core the band-limited entire functions
\[
 \mathscr E:=\bigcup_{N\ge1}\operatorname{Ran}\Pi_N,
 \qquad
 \Pi_N:=\mathbf 1_{\{|\xi|\le N\}}(D).
\]
If $f\in\mathscr E$, then $\widehat f$ has compact support.  Hence the
inverse Fourier formula extends $f$ to an entire function on $\C$, and
for every $\rho>0$ and $s\ge0$ one has
$f\in\mathcal A_\rho^s$.  Moreover, $\mathscr E$ is dense in
$\mathcal A_{\rho_0}$, since
\[
 \|f-\Pi_Nf\|_{\rho_0}^2
 =\int_{|\xi|>N}e^{2\rho_0|\xi|}|\widehat f(\xi)|^2\,d\xi
 \longrightarrow0.
\]

We first establish the estimate on $\mathscr E$. \warning{ To justify the
weighted energy identity without assuming in advance that the exact
$L_{p,h}$-flow preserves analytic regularity,} Introduce an auxiliary
Fourier--Galerkin cutoff
\[
 L_{p,h}^{(M)}:=\Pi_ML_{p,h}\Pi_M,
 \qquad
 u_M(t):=e^{tL_{p,h}^{(M)}}f,
\]
with $M$ large enough that $f\in\operatorname{Ran}\Pi_M$.
The outer projection in $L_{p,h}^{(M)}$ makes
$\operatorname{Ran}\Pi_M$ invariant.  Indeed, if
$u\in\operatorname{Ran}\Pi_M$, then
\[
 L_{p,h}^{(M)}u=\Pi_ML_{p,h}u\in\operatorname{Ran}\Pi_M,
\]
and therefore the exponential series for the bounded operator
$L_{p,h}^{(M)}$ yields
\[
 u_M(t)=e^{tL_{p,h}^{(M)}}f\in\operatorname{Ran}\Pi_M
 \qquad (t\in\R).
\]
Equivalently,
$\operatorname{supp}\widehat{u_M(t)}\subset[-M,M]$ for every $t$. 

For $0<\rho<\rho_*$ define the two boundary traces
\[
U_\pm(t,x)
:=
u_M(t,x\pm i\rho(t))
\]
and the boundary energy
\begin{equation}
E(t)
:=
\|U_+(t)\|_2^2+\|U_-(t)\|_2^2.
\label{eq:boundary-energy}
\end{equation}
By Plancherel,
\begin{equation}
E(t)
=
2\int_{\R}
\cosh(2\rho(t)\xi)
|\widehat{u_M}(t,\xi)|^2\,d\xi.
\label{eq:boundary-energy-fourier}
\end{equation}
Likewise set
\begin{equation}
Z(t)
:=
\||D|^{1/2}U_+(t)\|_{L^2}^2
+
\||D|^{1/2}U_-(t)\|_{L^2}^2 .
\label{eq:boundary-half-energy}
\end{equation}

Fix for the moment one horizontal trace
\[
U_\sigma(x):=u_M(x+i\sigma),
\qquad |\sigma|<\rho_*.
\]

The principal term $P_{p,h}$ is skew-adjoint on every horizontal line. 
Hence
\begin{equation}
\Re\langle P_{p,h}U_\sigma,U_\sigma\rangle=0.
\label{eq:principal-horizontal-skew}
\end{equation}

Consider now the first-order part
\[
B_h(v)
:=
\frac12(vQ_h+Q_hv).
\]
Write
\[
v(x+i\sigma)
=
\alpha_\sigma(x)+i\beta_\sigma(x),
\qquad
\alpha_\sigma,\beta_\sigma\in\R.
\]
Since $\alpha_\sigma$ is real,
\[
B_h(\alpha_\sigma)^*
=
-B_h(\alpha_\sigma),
\]
and therefore only $\beta_\sigma$ contributes to the real part:
\[
\Re\langle B_h(v_\sigma)U_\sigma,U_\sigma\rangle
=
\Re\langle iB_h(\beta_\sigma)U_\sigma,U_\sigma\rangle .
\]

Because $v$ is real on $\R$,
\[
\beta_\sigma(x)
=
\Im\bigl(v(x+i\sigma)-v(x)\bigr),
\]
and therefore
\begin{equation}
\|\beta_\sigma\|_\infty
\le
|\sigma|
\|v'\|_{\mathcal{H}_{\rho_1}},
\qquad
\|\beta_\sigma'\|_\infty
\le
|\sigma|
\|v''\|_{\mathcal{H}_{\rho_1}},
\label{eq:imaginary-v-strip}
\end{equation}
for every fixed
\[
\rho_0<\rho_1<\rho_*.
\]

By Lemma~\ref{lem:centered-transport-form},
\begin{align}
\left|
\Re\langle
B_h(v_\sigma)U_\sigma,U_\sigma
\rangle
\right|
&\le
C\|\beta_\sigma\|_\infty
\||D|^{1/2}U_\sigma\|_2^2
\nonumber\\
&\quad
+
C
\bigl(
\|\beta_\sigma\|_\infty
\|\beta_\sigma'\|_\infty
\bigr)^{1/2}
\||D|^{1/2}U_\sigma\|_2
\|U_\sigma\|_2 .
\label{eq:transport-boundary-form}
\end{align}
Using \eqref{eq:imaginary-v-strip} and Young's inequality, it follows
\begin{equation}
\left|
\Re\langle
B_h(v_\sigma)U_\sigma,U_\sigma
\rangle
\right|
\le
C_v|\sigma|
\left(
\||D|^{1/2}U_\sigma\|_2^2
+
\|U_\sigma\|_2^2
\right),
\label{eq:transport-boundary-final}
\end{equation}
where $C_v$ depends only on bounded strip norms of $v'$ and $v''$,
and is independent of $h$.

For the zeroth-order coefficient,
\begin{equation}
\Re\langle w_\sigma U_\sigma,U_\sigma\rangle
\le
\|w\|_{\mathcal{H}_{\rho_1}}\|U_\sigma\|_2^2.
\label{eq:w-boundary-estimate}
\end{equation}

We now look at the moving-boundary energy.
Since
\[
\frac{d}{dt}u_M(t,x+i\rho(t))
=
L_{p,h}u_M(t,x+i\rho(t))
+
i\dot\rho(t)Du_M(t,x+i\rho(t)),
\]
and
\[
\frac{d}{dt}u_M(t,x-i\rho(t))
=
L_{p,h}u_M(t,x-i\rho(t))
-
i\dot\rho(t)Du_M(t,x-i\rho(t)),
\]
the contributions caused by the motion of the two boundary lines sum
to
\begin{equation}
4\dot\rho(t)
\int_{\R}
|\xi|
\sinh(2\rho(t)|\xi|)
|\widehat{u_M}(t,\xi)|^2\,d\xi.
\label{eq:moving-boundary-term}
\end{equation}

Combining
\eqref{eq:principal-horizontal-skew},
\eqref{eq:transport-boundary-final},
\eqref{eq:w-boundary-estimate},
and \eqref{eq:moving-boundary-term}, we obtain
\begin{align}
\frac{d}{dt}E(t)
&\le
4\dot\rho(t)
\int_\R
|\xi|\sinh(2\rho(t)|\xi|)
|\widehat{u_M}(t,\xi)|^2\,d\xi
+C_v\rho(t)Z(t)
+C_1E(t),
\label{eq:boundary-energy-inequality}
\end{align}
where $C_1$ is independent of $M$ and $h$.

Choose
\begin{equation}
\rho(t)=\rho_0e^{-\kappa t},
\qquad t\ge0.
\label{eq:shrinking-strip}
\end{equation}
Then
\[
\dot\rho(t)=-\kappa\rho(t).
\]

Split frequency space into
\[
\Omega_{\rm lo}(t)
:=
\{\xi:2\rho(t)|\xi|<1\},
\qquad
\Omega_{\rm hi}(t)
:=
\{\xi:2\rho(t)|\xi|\ge1\}.
\]
On $\Omega_{\rm hi}(t)$,
\[
\sinh(2\rho|\xi|)
\ge
\tanh(1)\cosh(2\rho|\xi|).
\]
Hence, if $\kappa$ is chosen sufficiently large depending only on
$C_v$,
the negative contribution in
\eqref{eq:moving-boundary-term} absorbs the high-frequency part of
$C_v\rho Z$.

On the low-frequency region,
\[
\rho(t)|\xi|\le\frac12,
\]
and therefore
\begin{align*}
\rho(t)Z_{\rm lo}(t)
&=
2\rho(t)
\int_{\Omega_{\rm lo}(t)}
|\xi|\cosh(2\rho(t)\xi)
|\widehat{u_M}(t,\xi)|^2\,d\xi
\\
&\le
C E(t).
\end{align*}
Thus \eqref{eq:boundary-energy-inequality} reduces to
\begin{equation}
E'(t)\le C_2E(t),
\qquad t\ge0,
\label{eq:boundary-gronwall}
\end{equation}
where $C_2$ is independent of $M$ and $h$.
Gronwall's inequality yields
\begin{equation}
E(t)^{1/2}
\le
e^{C_2t/2}E(0)^{1/2}.
\label{eq:boundary-propagation}
\end{equation}

The boundary norm and the Fourier analytic norm are equivalent:
\[
\|g\|_\rho^2
\le
\|g(\cdot+i\rho)\|_{L^2}^2
+
\|g(\cdot-i\rho)\|_{L^2}^2
\le
2\|g\|_\rho^2.
\]
Consequently
\begin{equation}
\|e^{tL_{p,h}^{(M)}}f\|_{\rho_0e^{-\kappa t}}
\le
\sqrt2\,e^{\Gamma t}\|f\|_{\rho_0},
\qquad t\ge0,
\label{eq:galerkin-group-positive}
\end{equation}
for some $\Gamma\ge0$, uniformly in $M$ and $h$.

Applying the same argument to $-L_{p,h}^{(M)}$ gives
\begin{equation}
\|e^{tL_{p,h}^{(M)}}f\|_{\rho_0e^{-\kappa|t|}}
\le
\sqrt2\,e^{\Gamma|t|}\|f\|_{\rho_0},
\qquad t\in\R.
\label{eq:galerkin-group-two-sided}
\end{equation}
\bwarning{
Indeed, replacing $L_{p,h}$ by $-L_{p,h}$ changes the signs of
$v,w$ but leaves all strip norms entering the estimates unchanged.
}

\medskip
\noindent
\emph{Step 2: removal of the Galerkin cutoff for $h\ne0$.}

For fixed $h\ne0$, the multiplier $Q_h$ is bounded on $L^2$, hence
$L_{p,h}$ is a bounded operator on $L^2$ because $v,w\in L^\infty$.
Moreover
\[
L_{p,h}^{(M)}
=
\Pi_ML_{p,h}\Pi_M
\longrightarrow
L_{p,h}
\]
strongly on $L^2$.
The bounded-generator Duhamel formula therefore gives, for
$f\in\mathscr E$,
\[
e^{tL_{p,h}^{(M)}}f
\longrightarrow
e^{tL_{p,h}}f
\qquad
\text{in }L^2,
\]
uniformly for $t$ in compact intervals.

For fixed $t$, the norm
\[
g\longmapsto
\|g\|_{\rho_0e^{-\kappa|t|}}
\]
is lower semicontinuous with respect to $L^2$ convergence.
Passing to the limit in
\eqref{eq:galerkin-group-two-sided}, it follows
\begin{equation}
\|e^{tL_{p,h}}f\|_{\rho_0e^{-\kappa|t|}}
\le
\sqrt2\,e^{\Gamma|t|}\|f\|_{\rho_0},
\qquad
f\in\mathscr E.
\label{eq:analytic-group-propagation}
\end{equation}

\bwarning{
Since $\mathscr E$ is dense in $\mathcal A_{\rho_0}$ and
$e^{tL_{p,h}}$ is bounded on $L^2$, the standard Cauchy argument
extends the estimate to every $f\in\mathcal A_{\rho_0}$.
}

Next, we use density to extend it to $\mathcal A_{\rho_0}$.  Fix $h\ne0$ and $t\in\R$, write
\[
 U_h(t):=e^{tL_{p,h}},\qquad
 \rho_t:=\rho_0e^{-\kappa|t|},
\]
and let $f\in\mathcal A_{\rho_0}$.  Set $f_N:=\Pi_Nf\in\mathscr E$.
Then $f_N\to f$ in $\mathcal A_{\rho_0}$, and applying
\eqref{eq:analytic-group-propagation} to $f_N-f_M$ yields
\[
 \|U_h(t)(f_N-f_M)\|_{\rho_t}
 \le \sqrt{2} e^{\Gamma|t|}\|f_N-f_M\|_{\rho_0}.
\]
Hence $U_h(t)f_N$ converges in $\mathcal A_{\rho_t}$ to some $v(t)$.
Since $\mathcal A_{\rho_t}\hookrightarrow L^2$, the same sequence
converges to $v(t)$ in $L^2$.  But $U_h(t)$ is bounded on $L^2$ and
$f_N\to f$ in $L^2$, so $U_h(t)f_N\to U_h(t)f$ in $L^2$.
Therefore $v(t)=U_h(t)f$, and passing to the limit:
\[
 \|U_h(t)f\|_{\rho_t}
 \le e^{\Gamma|t|}\|f\|_{\rho_0}.
\]
Thus \eqref{eq:analytic-group-propagation-new} holds for every $f\in\mathcal A_{\rho_0}$.

\medskip
\noindent
\emph{Step 3: the differential generator case $h=0$.}

{
Set
\[
L:=L_{p,0},
\qquad
L_N:=\Pi_NL\Pi_N.
\]
Since $v$ is real on $\R$, the operator
\[
P_{p,0}+\frac12(vD+Dv)
\]
is skew-adjoint up to the standard closure, while $w$ is bounded.
Proposition~\ref{prop:sobolev-group-bound} therefore yields a
$C_0$-group
$
T(t):=e^{tL}
$
on $L^2$. 

On $\operatorname{Ran}\Pi_N$, $L_N$ is bounded.  Moreover,
compression preserves the skew-adjoint part, and hence
\[
\|e^{tL_N}\|_{L^2\to L^2}
\le
e^{\|\Re w\|_\infty|t|}
\]
uniformly in $N$.
The boundary calculation of Step~1 applies verbatim to $L_N$, with
constants independent of $N$, and therefore
\begin{equation}
\|e^{tL_N}\Pi_Nf\|_{\rho_0e^{-\kappa|t|}}
\le
\sqrt2\,e^{\Gamma|t|}\|f\|_{\rho_0}.
\label{eq:galerkin-h0-analytic}
\end{equation}

Fix $f\in\mathcal A_{\rho_0}$ and $t\in\R$.  Since
\[
\mathcal A_{\rho_0}\subset H^p(\R),
\]
the orbit
\[
K_t:=\{T(s)f:\ |s|\le|t|\}
\]
is compact in $H^p(\R)$.
Define
\[
v_N(s):=e^{sL_N}\Pi_Nf,
\qquad
w_N(s):=\Pi_NT(s)f.
\]
Then
\[
v_N'(s)=L_Nv_N(s)
\]
and
\[
w_N'(s)
=
L_Nw_N(s)
+
\Pi_NL(I-\Pi_N)T(s)f.
\]
Hence
\[
v_N(t)-w_N(t)
=
-\int_0^t
e^{(t-s)L_N}
\Pi_NL(I-\Pi_N)T(s)f\,ds.
\]
Since
\[
L:H^p\longrightarrow L^2
\]
is bounded and $\Pi_N\to I$ strongly on $H^p$, uniformly on the
compact set $K_t$,
\[
\sup_{|s|\le|t|}
\|(I-\Pi_N)T(s)f\|_{H^p}
\longrightarrow0.
\]
Using the uniform $L^2$ bound for $e^{\tau L_N}$ therefore gives
\[
e^{tL_N}\Pi_Nf
\longrightarrow
e^{tL}f
\qquad
\text{in }L^2.
\]
Finally, lower semicontinuity of the analytic norm in
\eqref{eq:galerkin-h0-analytic} yields
\[
\|e^{tL}f\|_{\rho_0e^{-\kappa|t|}}
\le
\sqrt2\,e^{\Gamma|t|}\|f\|_{\rho_0}.
\]

Thus \eqref{eq:analytic-group-propagation-new} holds. 
}

\bwarning{
\smallskip
\noindent\emph{(a) Galerkin generators and a uniform analytic estimate.}
Set
\[
 \Pi_N:=\mathbf 1_{\{|\xi|\le N\}}(D),\qquad
 L_N:=L_{p,0}^{(N)}:=\Pi_NL_{p,0}\Pi_N.
\]
On $\operatorname{Ran}\Pi_N$ every derivative is bounded:
\[
 \|D^k\Pi_Ng\|_2\le N^k\|g\|_2.
\]
Since $v,w\in L^\infty$, it follows that $L_N$ is a bounded operator on
$L^2$.  The norm of $L_N$ may grow with $N$; what matters is that its
\emph{energy bound} is uniform in $N$.  Indeed, compression preserves
the skew-adjoint parts: if $K^*=-K$, then
\[
 (\Pi_NK\Pi_N)^*=\Pi_NK^*\Pi_N=-\Pi_NK\Pi_N.
\]
For the zeroth-order term,
\[
 \operatorname{Re}\langle \Pi_Nw\Pi_Ng,g\rangle
 =\operatorname{Re}\langle w\Pi_Ng,\Pi_Ng\rangle
 \le \|\operatorname{Re}w\|_\infty\|g\|_2^2.
\]
Hence
\[
 \|e^{tL_N}\|_{L^2\to L^2}\le e^{\omega|t|}
\]
with $\omega=\|\operatorname{Re}w\|_\infty$ independent of $N$.

Let
\[
 u_N(t):=e^{tL_N}\Pi_Nf,\qquad U_N:=W_{\rho(t)}u_N.
\]
Because $L_N$ leaves $\operatorname{Ran}\Pi_N$ invariant,
$u_N(t)=\Pi_Nu_N(t)$ and $U_N=\Pi_NU_N$.  Moreover $\Pi_N$ commutes
with $D$, with $P_{p,0}$, and with every analytic weight $W_\rho$.
Consequently the projections disappear from the weighted quadratic
forms.  For example, writing $B_0=\frac12(vD+Dv)$,
\[
 \begin{aligned}
 \langle W_\rho\Pi_NB_0\Pi_Nu_N,U_N\rangle
 &=\langle \Pi_NW_\rho B_0u_N,U_N\rangle\\
 &=\langle W_\rho B_0u_N,\Pi_NU_N\rangle\\
 &=\langle W_\rho B_0u_N,U_N\rangle.
 \end{aligned}
\]
The same observation applies to the principal and zeroth-order terms.
Therefore the entire Step~1 energy calculation applies to $L_N$ with
constants independent of $N$, and yields
\begin{equation}
 \|e^{tL_N}\Pi_Nf\|_{\rho_0e^{-\kappa|t|}}
 \le e^{\Gamma|t|}\|f\|_{\rho_0}.
\label{eq:analytic-group-bound-aux}
\end{equation}

\smallskip
\noindent\emph{(b) Convergence of the Galerkin group at a fixed time.}
Because $v,w\in\mathfrak W_R$, their finite derivatives are bounded. 
Proposition~\ref{prop:sobolev-group-bound} shows that
\(L_{p,0}\), with domain
\[
D(L_{p,0})=H^p(\R)\subset L^2(\R),
\]
generates a \(C_0\)-group
\[
T(t)=e^{tL_{p,0}}
\]
on \(L^2(\R)\). Moreover, the Sobolev spaces \(H^j(\R)\) in the
range under consideration are invariant under \(T(t)\), and
\[
\|T(t)\|_{H^j\to H^j}
\le
e^{\Omega_j|t|}.
\]
In particular, given
\[
f\in\mathcal A_{\rho_0}\subset H^p(\R)=D(L_{p,0}),
\]
the orbit \(t\mapsto T(t)f\) is continuously differentiable as an
\(L^2(\R)\)-valued map and satisfies
\[
\frac{d}{dt}T(t)f
=
L_{p,0}T(t)f
=
T(t)L_{p,0}f.
\]

%
Moreover, the orbit segment
\[
 K_t:=\{T(s)f:|s|\le |t|\}
\]
is compact in $H^p$ since  $s\mapsto T(s)f$ is continuous there.

Define
\[
 v_N(s):=e^{sL_N}\Pi_Nf,
 \qquad
 w_N(s):=\Pi_NT(s)f.
\]
Both have the same initial value $\Pi_Nf$.  Moreover
\[
 v_N'(s)=L_Nv_N(s),
\]
whereas
\[
 w_N'(s)
 =\Pi_NL_{p,0}T(s)f
 =L_Nw_N(s)+\Pi_NL_{p,0}(I-\Pi_N)T(s)f.
\]
Thus $d_N:=v_N-w_N$ satisfies
\[
 d_N'(s)
 =L_Nd_N(s)-\Pi_NL_{p,0}(I-\Pi_N)T(s)f,
 \qquad d_N(0)=0.
\]
Variation of constants, evaluated at the fixed terminal time $t$, gives
\begin{equation}
 v_N(t)-w_N(t)
 =-\int_0^t e^{(t-s)L_N}
 \Pi_NL_{p,0}(I-\Pi_N)T(s)f\,ds.
\label{eq:analytic-projected-equation}
\end{equation}
Since $L_{p,0}:H^p\to L^2$ is bounded,
\[
 \|\Pi_NL_{p,0}(I-\Pi_N)T(s)f\|_2
 \le C\|(I-\Pi_N)T(s)f\|_{H^p}.
\]
The projections satisfy $\Pi_N\to I$ strongly on $H^p$ and are
uniformly bounded there.  Hence the convergence is uniform on the
compact orbit segment $K_t$:
\begin{equation}
 \sup_{|s|\le |t|}
 \|(I-\Pi_N)T(s)f\|_{H^p}\longrightarrow0.
\label{eq:analytic-projected-derivative}
\end{equation}
Using \eqref{eq:analytic-projected-equation}, \eqref{eq:analytic-projected-derivative}, and the $N$-uniform $L^2$
bound for $e^{\tau L_N}$ on $|\tau|\le |t|$, we obtain
\[
 \|v_N(t)-w_N(t)\|_2\longrightarrow0.
\]
At the same fixed time,
\[
 w_N(t)=\Pi_NT(t)f\longrightarrow T(t)f
 \qquad\text{in }L^2,
\]
by strong convergence $\Pi_N\to I$ on $L^2$.  Therefore
\begin{equation}
 e^{tL_N}\Pi_Nf\longrightarrow e^{tL_{p,0}}f
 \quad\text{in }L^2.
\label{eq:analytic-projected-integral}
\end{equation}
\warning{This pointwise-in-$t$ convergence is exactly what is needed below; no
uniform convergence in the terminal time is required.  The only
uniformity used is \eqref{eq:analytic-projected-derivative} in the integration variable $s$ on
the fixed interval $[-|t|,|t|]$.  Notice also that only the mapping
$L_{p,0}:H^p\to L^2$ is used; no $H^{2p}$ estimate and no
Richardson-depth-dependent regularity is needed here.}

\smallskip
\noindent\emph{(c) Passing the analytic estimate to the limit.}
Fix $t$ and set $\rho_t:=\rho_0e^{-\kappa|t|}$.  By
\eqref{eq:analytic-projected-integral}, $v_N(t)\to T(t)f$ in $L^2$, whereas
\eqref{eq:analytic-group-bound-aux} gives the uniform bound
\[
 \|v_N(t)\|_{\rho_t}
 \le e^{\Gamma|t|}\|f\|_{\rho_0}.
\]
The weighted norm $g\mapsto\|W_{\rho_t}g\|_2$ is lower
semicontinuous with respect to $L^2$ convergence. 
\warning{ Equivalently, after
passing to an a.e.-convergent Fourier subsequence, this follows from
Fatou's lemma.}  Hence
\[
 \begin{aligned}
 \|T(t)f\|_{\rho_t}
 &\le \liminf_{N\to\infty}\|v_N(t)\|_{\rho_t}\\
 &\le e^{\Gamma|t|}\|f\|_{\rho_0}.
 \end{aligned}
\]
This is precisely \eqref{eq:analytic-group-propagation-new} for the generator $h=0$, with $C_0=\sqrt2$.
}

\end{proof}

\subsection{Uniform analytic propagation in the parabolic case}\label{subsec:analytic-parabolic-propagation}

For the parabolic model summarized in Subsection~\ref{subsec:model-summary},
\[
 A=DaD+\frac12(\varphi D+D\varphi)+p_0,
 \qquad
 \varphi=b-a',\qquad
 p_0=c+\frac12(a''-b'),
\]
and
\begin{equation}
 A_h=\delta_h a\delta_h
 +\frac12(\varphi\delta_h+\delta_h\varphi)+p_0.
\label{eq:parabolic-analytic-generator}
\end{equation}

{
\begin{proposition}[Uniform analytic propagation in the parabolic case]
\label{prop:analytic-parabolic}

Assume that, for some $\rho_*>0$,
\[
a,\varphi,p_0\in \mathcal{H}_{\rho_*},
\qquad
a(\bar z)=\overline{a(z)},
\qquad
\varphi(\bar z)=\overline{\varphi(z)},
\]
and
\[
a(x)\ge a_0>0,
\qquad x\in\R.
\]
Then, there exists $0<\rho_1<\rho_*$ such that
\[
\Re a(z)\ge \frac{a_0}{2},
\qquad z\in \mathscr S_{\rho_1},
\]
and, for every $0<\rho_0<\rho_1$ there exist $C_0,\Gamma\ge0$,
independent of $h$, such that
\begin{equation}
\|e^{tA_h}f\|_{\rho_0}
\le
C_0e^{\Gamma t}\|f\|_{\rho_0},
\qquad t\ge0.
\label{eq:analytic-parabolic-propagation-new}
\end{equation}
The same estimate holds for $A=A_0$.
\end{proposition}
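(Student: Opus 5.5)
The plan is to mimic the boundary-strip energy argument of Proposition~\ref{prop:analytic-group}, with one difference. The principal part now supplies coercivity rather than skew-adjointness, so no shrinking of the strip is needed ($\kappa=0$).

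\emph{Step 0: complex ellipticity.} First I choose $\rho_1$ exactly as in the proof of Lemma~\ref{lem:analytic-liouville}. Cauchy's estimate on $\mathscr S_{\rho_*/2}$ bounds $a'$, and then $|a(x+iy)-a(x)|\le a_0/2$ for $|y|<\rho_1$, with $\rho_1:=\min\{\rho_*/2,\ a_0\rho_*/(4\|a\|_{\mathcal H_{\rho_*}})\}$. This gives $\Re a\ge a_0/2$ on $\mathscr S_{\rho_1}$.

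\emph{Step 1: energy estimate on a fixed strip.} Fix $0<\rho_0<\rho_1$ and $f\in\mathscr E$. I use the same Fourier--Galerkin cutoff $u_M(t)=e^{t\Pi_MA_h\Pi_M}f$ and the boundary energy $E(t)=\|U_+\|_2^2+\|U_-\|_2^2$ with $U_\pm=u_M(\cdot\pm i\rho_0)$. Since the radius is constant, there is no moving-boundary term.

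On each line $\Im z=\sigma=\pm\rho_0$ I write $a_\sigma=a(\cdot+i\sigma)$. The key identity is
\[
\Re\langle\delta_ha_\sigma\delta_hU,U\rangle=-\int\Re a_\sigma\,|\delta_hU|^2\le-\tfrac{a_0}{2}\|\delta_hU\|_2^2 .
\]
This holds because $\delta_h$ commutes with horizontal translation and $\delta_h^*=-\delta_h$.

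For the transport term I split $\varphi_\sigma=\alpha_\sigma+i\beta_\sigma$. The real part $\alpha_\sigma$ gives a skew-adjoint contribution. The part $\beta_\sigma$ gives $\Re\langle iB_h(\beta_\sigma)U,U\rangle=-\Im\langle\beta_\sigma\delta_hU,U\rangle$, which I bound directly by $\|\beta_\sigma\|_\infty\|\delta_hU\|_2\|U\|_2$. Young's inequality then makes this at most $\frac{a_0}{4}\|\delta_hU\|^2+C\|U\|^2$. Lemma~\ref{lem:centered-transport-form} is not even needed here, since $\delta_h$ itself is controlled by the dissipation. The zeroth-order term is bounded by $\|p_0\|_{\mathcal H_{\rho_1}}\|U\|^2$.

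Summing the two lines gives $E'\le C_2E$ with $C_2$ independent of $h$ and $M$. The projection $\Pi_M$ does not interfere: $u_M$ stays in $\operatorname{Ran}\Pi_M$, the horizontal traces commute with $\Pi_M$, and so $\langle\Pi_M A_hu,u\rangle=\langle A_hu,u\rangle$ on each line. Gronwall's inequality and \eqref{eq:boundary-fourier-equivalence} then give
\[
\|u_M(t)\|_{\rho_0}\le\sqrt2e^{C_2t/2}\|f\|_{\rho_0}.
\]

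\emph{Step 2: limits.} For $h\ne0$, $A_h$ is bounded, and I remove the cutoff and pass from $\mathscr E$ to $\mathcal A_{\rho_0}$ by lower semicontinuity and density, exactly as in Step~2 of Proposition~\ref{prop:analytic-group}.

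For $A$ itself ($h=0$) I argue as in Step~3 there. The exact semigroup exists by Proposition~\ref{prop:parabolic-sectoriality}. The Galerkin semigroups $e^{t\Pi_NA\Pi_N}$ are uniformly bounded on $L^2$, since compression preserves the dissipative form. For $f\in\mathcal A_{\rho_0}\subset H^2$, the compact orbit $\{e^{sA}f\}$ in $H^2=D(A)$ gives $L^2$ convergence via the Duhamel comparison. Lower semicontinuity of the analytic norm then transfers the estimate with $C_0=\sqrt2$.

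\emph{Main obstacle.} The delicate point is Step~1: verifying that, on the complex line, the translated principal term $\delta_ha_\sigma\delta_h$ really has real part controlled by $\Re a_\sigma$. Here the coefficient is complex, so the form is not symmetric, and one must check that only $\Re a_\sigma$ enters the real part. One also needs the imaginary part of $\varphi_\sigma$ to be absorbed uniformly in $h$. This is where the bound $\|\delta_hU\|$ rather than $\|DU\|$ matters: both the dissipation and the perturbation involve the same $\delta_h$, so the argument closes for every $h$.
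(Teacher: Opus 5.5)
Your proposal is correct and follows essentially the same route as the paper. Both use a fixed-strip ($\kappa=0$) energy estimate on the two boundary lines for the Fourier--Galerkin truncations, with the principal form bounded by $-\int\Re a_\sigma\,|\delta_hU_\sigma|^2\le-\frac{a_0}{2}\|\delta_hU_\sigma\|_2^2$, the first-order term absorbed into this $\delta_h$-dissipation by Young's inequality, Gronwall, removal of the cutoff and density for $h\ne0$, and the compact-orbit Duhamel comparison with lower semicontinuity for $h=0$. The differences are cosmetic. You isolate $\Im\varphi_\sigma$ first, whereas the paper bounds the whole symmetrized term by $\|\varphi\|_{\mathcal H_{\rho_1}}\|\delta_hU_\sigma\|_2\|U_\sigma\|_2$. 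You also spell out why $\Pi_M$ drops out of the line-wise quadratic forms, which the paper leaves implicit.
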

}

\gwarning{
\begin{proposition}[Uniform analytic propagation in the parabolic case]
\label{prop:analytic-parabolic}
Let
\[
A
=
DaD+\frac12(\varphi D+D\varphi)+p_0,
\qquad
\varphi=b-a',
\qquad
p_0=c+\frac12(a''-b'),
\]
and
\[
A_h
=
\delta_h a\delta_h
+
\frac12(\varphi\delta_h+\delta_h\varphi)
+p_0.
\]
Assume that, for some $R>0$,
\[
a,\varphi,p_0\in H^\infty(S_R),
\qquad
a(\bar z)=\overline{a(z)},
\]
and
\[
a(x)\ge a_0>0,
\qquad x\in\R.
\]
Then there exists $0<R_1<R$ such that
\[
\Re a(z)\ge \frac{a_0}{2},
\qquad z\in S_{R_1}.
\]
For every $0<\rho_0<R_1$ there exist constants
$C_0\ge1$ and $\Gamma\ge0$, independent of $h$, such that
\begin{equation}
\|e^{tA_h}f\|_{\rho_0}
\le
C_0e^{\Gamma t}\|f\|_{\rho_0},
\qquad
t\ge0.
\label{eq:analytic-parabolic-propagation}
\end{equation}
The same estimate holds for $A=A_0$.
\end{proposition}
}

\begin{proof}
Since $a$ is bounded and holomorphic on $\mathscr S_{\rho_*}$, Cauchy's estimate on
a slightly smaller strip gives
\[
\sup_{x\in\R}
|a(x+iy)-a(x)|
\le
C|y|,
\]
uniformly for sufficiently small $|y|$.
Since
\[
a(x)\ge a_0>0
\qquad (x\in\R),
\]
there exist $0<\rho_1<\rho_*$ such that
\begin{equation}
\Re a(z)\ge a_*:=\frac{a_0}{2},
\qquad
z\in \mathscr S_{\rho_1}.
\label{eq:complex-strip-ellipticity}
\end{equation}

As in Proposition~\ref{prop:analytic-group}, let
\[
\mathscr E
=
\bigcup_{N\ge1}\operatorname{Ran}\Pi_N.
\]
Fix first $h\ne0$, $f\in\mathscr E$, and introduce
\[
A_h^{(M)}
=
\Pi_MA_h\Pi_M,
\qquad
u_M(t)
=
e^{tA_h^{(M)}}f.
\]
The function $u_M(t)$ remains band limited, so its boundary traces are
entire. \bwarning{and all subsequent energy calculations are justified
classically.}

Let $0<\rho_0<\rho_1$ and fix $\sigma$ with $|\sigma|\le\rho_0$ and put
\[
U_\sigma(t,x)
:=
u_M(t,x+i\sigma).
\]
Since $\delta_h$ is a constant-coefficient operator,
\[
(\delta_hu_M)(x+i\sigma)
=
\delta_hU_\sigma(x).
\]
Furthermore
\[
\delta_h^*=-\delta_h
\]
on $L^2(\R+i\sigma)$.

Set
\[
a_\sigma(x):=a(x+i\sigma),
\qquad
\varphi_\sigma(x):=\varphi(x+i\sigma),
\qquad
p_{0,\sigma}(x):=p_0(x+i\sigma).
\]
The principal quadratic form satisfies
\begin{align}
\Re\langle
\delta_h(a_\sigma\delta_hU_\sigma),
U_\sigma
\rangle
&=
-\Re\langle
a_\sigma\delta_hU_\sigma,
\delta_hU_\sigma
\rangle
\nonumber\\
&=
-\int_\R
\Re a_\sigma(x)
|\delta_hU_\sigma(x)|^2\,dx
\nonumber\\
&\le
-a_*
\|\delta_hU_\sigma\|_2^2,
\label{eq:parabolic-strip-principal}
\end{align}
by \eqref{eq:complex-strip-ellipticity}.

For the symmetrized first-order term,
\begin{align*}
&
\left|
\left\langle
\frac12
(\varphi_\sigma\delta_h+\delta_h\varphi_\sigma)
U_\sigma,
U_\sigma
\right\rangle
\right|
\\
&\qquad
\le
\frac12
\left|
\langle
\varphi_\sigma\delta_hU_\sigma,
U_\sigma
\rangle
\right|
+
\frac12
\left|
\langle
\delta_h(\varphi_\sigma U_\sigma),
U_\sigma
\rangle
\right|.
\end{align*}
Using $\delta_h^*=-\delta_h$ in the second term yields
\[
\left|
\left\langle
\frac12
(\varphi_\sigma\delta_h+\delta_h\varphi_\sigma)
U_\sigma,
U_\sigma
\right\rangle
\right|
\le
\|\varphi\|_{\mathcal{H}_{\rho_1}}
\|\delta_hU_\sigma\|_2
\|U_\sigma\|_2,
\]
whence
\begin{equation}
\left|
\Re
\left\langle
\frac12
(\varphi_\sigma\delta_h+\delta_h\varphi_\sigma)
U_\sigma,
U_\sigma
\right\rangle
\right|
\le
\frac{a_*}{2}
\|\delta_hU_\sigma\|_2^2
+
C_\varphi\|U_\sigma\|_2^2,
\label{eq:parabolic-strip-first-order}
\end{equation}
where
$
C_\varphi
=
{\|\varphi\|_{\mathcal{H}_{\rho_1}}^2}/{(2a_*)}.
$ 

Finally,
\begin{equation}
\Re\langle
p_{0,\sigma}U_\sigma,U_\sigma
\rangle
\le
\|p_0\|_{\mathcal{H}_{\rho_1}}
\|U_\sigma\|_2^2.
\label{eq:parabolic-strip-zero}
\end{equation}

Combining
\eqref{eq:parabolic-strip-principal},
\eqref{eq:parabolic-strip-first-order},
and \eqref{eq:parabolic-strip-zero}, we obtain
\begin{equation}
\frac12\frac{d}{dt}
\|U_\sigma(t)\|_2^2
+
\frac{a_*}{2}
\|\delta_hU_\sigma(t)\|_2^2
\le
\Gamma
\|U_\sigma(t)\|_2^2,
\label{eq:parabolic-boundary-energy}
\end{equation}
where
\[
\Gamma
:=
C_\varphi+\|p_0\|_{\mathcal{H}_{\rho_1}},
\]
 is independent of $h$, $M$, and $\sigma$.

Dropping the nonnegative dissipative term and applying Gronwall yields
\begin{equation}
\|U_\sigma(t)\|_{L^2}
\le
e^{\Gamma t}
\|U_\sigma(0)\|_{L^2},
\qquad
t\ge0.
\label{eq:parabolic-boundary-gronwall}
\end{equation}
Applying this with
$
\sigma=\rho_0
\ \text{and}\ 
\sigma=-\rho_0,
$
we obtain, 
\[
\|e^{tA_h^{(M)}}f\|_{\partial,\rho_0}
\le
e^{\Gamma t}
\|f\|_{\partial,\rho_0}.
\]
By the norm equivalence
\[
\|g\|_{\rho_0}
\le
\|g\|_{\partial,\rho_0}
\le
\sqrt2\,\|g\|_{\rho_0},
\]
we obtain
\begin{equation}
\|e^{tA_h^{(M)}}f\|_{\rho_0}
\le
\sqrt2\,e^{\Gamma t}\|f\|_{\rho_0},
\qquad
t\ge0,
\label{eq:parabolic-galerkin-analytic}
\end{equation}
uniformly in $M$ and $h$.

For fixed $h\ne0$, $A_h$ is bounded on $L^2$ and
\[
A_h^{(M)}
=
\Pi_MA_h\Pi_M
\longrightarrow A_h
\]
strongly on $L^2$ as $M\to \infty$.
The bounded-generator Duhamel formula gives
\[
e^{tA_h^{(M)}}f
\longrightarrow
e^{tA_h}f
\qquad\text{in }L^2
\]
for every $f\in\mathscr E$ and fixed $t\ge0$.
Lower semicontinuity of the analytic norm in
\eqref{eq:parabolic-galerkin-analytic} therefore yields
\[
\|e^{tA_h}f\|_{\rho_0}
\le
\sqrt2\,e^{\Gamma t}\|f\|_{\rho_0},
\qquad
f\in\mathscr E.
\]
Density of $\mathscr E$ in $\mathcal A_{\rho_0}$ extends this estimate
to every $f\in\mathcal A_{\rho_0}$.

It remains to treat the case $h=0$.  Set
\[
A^{(N)}
:=
\Pi_NA\Pi_N.
\]
On the real line the principal quadratic form is dissipative:
\[
\Re\langle DaDu,u\rangle
=
-\int_\R a(x)|Du(x)|^2\,dx
\le0.
\]
Since $\varphi=b-a'$ is real on $\R$, then
\[
\frac12(\varphi D+D\varphi)
\]
is skew-symmetric. \bwarning{; if one only assumes bounded complex $\varphi$, the
same $L^2$ growth estimate follows by relative boundedness of the
first-order term.  In the present applications $a,b$ are real on
$\R$, and therefore $\varphi=b-a'$ is real.}

Thus
\[
\|e^{tA^{(N)}}\|_{L^2\to L^2}
\le
e^{\omega t}
\]
with $\omega$ independent of $N$.
The boundary calculation above applies to $A^{(N)}$ with the same
constants and gives
\[
\|e^{tA^{(N)}}\Pi_Nf\|_{\rho_0}
\le
\sqrt2\,e^{\Gamma t}\|f\|_{\rho_0}.
\]

The uniformly elliptic operator $A$ generates the required analytic
semigroup on $L^2$, with domain $H^2(\R)$. \bwarning{ under the present bounded
coefficient hypotheses.}  For
$f\in\mathcal A_{\rho_0}\subset H^2(\R)$, the orbit
\[
\{e^{sA}f:0\le s\le t\}
\]
is compact in $H^2(\R)$ for each fixed $t>0$.  Since
\[
A:H^2(\R)\longrightarrow L^2(\R)
\]
is bounded and $\Pi_N\to I$ strongly on $H^2$, the same
variation-of-constants argument as in
Proposition~\ref{prop:analytic-group} gives
\[
e^{tA^{(N)}}\Pi_Nf
\longrightarrow
e^{tA}f
\qquad
\text{in }L^2.
\]
Lower semicontinuity therefore yields
\[
\|e^{tA}f\|_{\rho_0}
\le
\sqrt2\,e^{\Gamma t}\|f\|_{\rho_0}.
\]
This concludes the proof of \eqref{eq:analytic-parabolic-propagation-new}, with
$C_0=\sqrt2$.
\end{proof}

\bwarning{
\begin{proposition}[Uniform analytic propagation in the parabolic case]
\label{prop:analytic-parabolic}
Assume $a$ is real, $a(x)\ge a_0>0$, and
$a,\varphi,p_0\in\mathfrak W_R$ for some $R>0$.  Choose
$0<\rho_0<R$ sufficiently small that
\begin{equation}
 \rho_0M_1(a;\rho_0)\le \frac{a_0}{2}.
\label{eq:parabolic-dissipation}
\end{equation}
Then there exist $\kappa,\Gamma\ge0$, independent of $h$, such that
\begin{equation}
 \|e^{tA_h}f\|_{\rho_0e^{-\kappa t}}
 \le e^{\Gamma t}\|f\|_{\rho_0},
 \qquad t\ge0.
\label{eq:analytic-parabolic-propagation}
\end{equation}
The same estimate holds for $A=A_0$.
\end{proposition}

\begin{proof}
We use the same dense analytic core
$\mathscr E=\bigcup_{N\ge1}\operatorname{Ran}\Pi_N$ as in
Proposition~\ref{prop:analytic-group}.  Fix first $h\ne0$ and
$f\in\mathscr E$.  As above, 
\warning{rather than assuming that the
uncompressed flow $e^{tA_h}$ already preserves analytic regularity,}
 we
introduce
\[
 A_h^{(M)}:=\Pi_MA_h\Pi_M,
 \qquad
 u_M(t):=e^{tA_h^{(M)}}f,
\]
with $M$ large enough that $f\in\operatorname{Ran}\Pi_M$.  The
space $\operatorname{Ran}\Pi_M$ is invariant under $A_h^{(M)}$, so
$u_M(t)$ remains band limited.  By the Fourier-support estimate proved
in Step~1 of Proposition~\ref{prop:analytic-group},
\[
 \|u_M(t)\|_{\rho,s}
 \le e^{\rho M}\langle M\rangle^s\|u_M(t)\|_2,
 \qquad \rho,s\ge0.
\]
Thus $u_M(t)$ and $U_M=W_{\rho(t)}u_M(t)$ have all the analytic--Sobolev
regularity required for differentiating the energy.  Since
$\Pi_M$ commutes with $\delta_h$ and $W_\rho$, and since
$U_M\in\operatorname{Ran}\Pi_M$, the projections disappear from the
quadratic forms exactly as in Step~1.  We may therefore write $u,U$ in
place of $u_M,U_M$ and set $V=\delta_hU$.  Since $W_\rho$ commutes
with $\delta_h$ and $\delta_h^*=-\delta_h$,
\begin{align*}
 \operatorname{Re}\langle W_\rho\delta_h a\delta_hu,U\rangle
 &=-\langle aV,V\rangle
   -\operatorname{Re}\langle[W_\rho,a]\delta_hu,V\rangle\\
 &\le
 -\bigl(a_0-\rho M_1(a;\rho)\bigr)\|V\|_2^2
 \le -\frac{a_0}{2}\|V\|_2^2,
\end{align*}
where \eqref{eq:analytic-commutator} and \eqref{eq:parabolic-dissipation} were used.  The skew first-order term satisfies
the analogue of \eqref{eq:analytic-energy-identity},
\[
 \left|\operatorname{Re}\left\langle
 W_\rho\frac12(\varphi\delta_h+\delta_h\varphi)u,U
 \right\rangle\right|
 \le C\rho M_1(\varphi;\rho)Z
    +C\rho M_2(\varphi;\rho)\|U\|_2^2,
\]
with $Z=\||D|^{1/2}U\|_2^2$, and multiplication by $p_0$ contributes
at most $M_0(p_0;\rho)\|U\|_2^2$.  Consequently
\begin{align*}
 \frac12\frac d{dt}\|U\|_2^2
 &+\frac{a_0}{2}\|\delta_hU\|_2^2\\
 &\le
 \bigl(\dot\rho+C\rho M_1(\varphi;\rho)\bigr)Z
 +\bigl(C\rho M_2(\varphi;\rho)+M_0(p_0;\rho)\bigr)\|U\|_2^2.
\end{align*}
Choose $\rho(t)=\rho_0e^{-\kappa t}$ with
$\kappa\ge2CM_1(\varphi;\rho_0)$.  Then
$\dot\rho(t)=-\kappa\rho(t)$ and, since $\rho(t)\le\rho_0$ and
$M_1(\varphi;\rho)$ is increasing in $\rho$,
\[
 \dot\rho+C\rho M_1(\varphi;\rho)
 =\rho\bigl(-\kappa+CM_1(\varphi;\rho)\bigr)
 \le -\frac{\kappa}{2}\rho\le0.
\]
Thus the $Z$ term can be discarded.  We may also drop the \warning{favorable}
term $\frac{a_0}{2}\|\delta_hU\|_2^2$ from the left-hand side.  Since
$\rho(t)\le\rho_0$ and $M_j(g;\rho)$ is increasing in $\rho$,
\[
 C\rho(t)M_2(\varphi;\rho(t))+M_0(p_0;\rho(t))
 \le
 C\rho_0M_2(\varphi;\rho_0)+M_0(p_0;\rho_0)=:\Gamma.
\]
Hence
\[
 \frac12\frac d{dt}\|U(t)\|_2^2
 \le \Gamma\|U(t)\|_2^2.
\]
Gronwall's inequality gives
\[
 \|U(t)\|_2\le e^{\Gamma t}\|U(0)\|_2.
\]
Recalling that $U(t)=W_{\rho(t)}e^{tA_h^{(M)}}f$ and
$U(0)=W_{\rho_0}f$, we obtain;
\begin{equation}
 \|e^{tA_h^{(M)}}f\|_{\rho_0e^{-\kappa t}}
 \le e^{\Gamma t}\|f\|_{\rho_0},
 \qquad t\ge0.
\label{eq:analytic-parabolic-aux}
\end{equation}
The constants $\kappa$ and $\Gamma$ depend only on $\rho_0$ and the
coefficient moments appearing above; in particular, they are
independent of the Galerkin cutoff $M$ (and of $h$).
For fixed $h\ne0$, $A_h$ is bounded on $L^2$ and
$A_h^{(M)}=\Pi_MA_h\Pi_M\to A_h$ strongly on $L^2$; hence the
bounded-generator Duhamel formula gives
\[
 e^{tA_h^{(M)}}f\longrightarrow e^{tA_h}f
 \qquad\text{in }L^2
\]
for each fixed $t\ge0$ and $f\in\mathscr E$.  Lower
semicontinuity of the weighted norm therefore proves \eqref{eq:analytic-parabolic-propagation}
on $\mathscr E$.  Finally, if $f\in\mathcal A_{\rho_0}$, then
$f_N:=\Pi_Nf\in\mathscr E$ and
$f_N\to f$ in $\mathcal A_{\rho_0}$.  Applying \eqref{eq:analytic-parabolic-propagation}
to $f_N-f_M$ and identifying the resulting analytic-norm limit with
$e^{tA_h}f$ through $L^2$ convergence extends the estimate to all
$f\in\mathcal A_{\rho_0}$.

For the operator $A=A_0$, use the same fixed-time
Fourier--Galerkin passage as in Step~2 of
Proposition~\ref{prop:analytic-group}, with
$A^{(N)}=\Pi_NA\Pi_N$.  Compression preserves the negative elliptic
quadratic form and the skew first-order part, so the analytic estimate
is uniform in $N$.  Moreover $A:H^2\to L^2$ is bounded and, for
$f\in\mathcal A_{\rho_0}\subset H^2$, the semigroup orbit is
continuous in $H^2$.  For each fixed $t\ge0$, compactness of the orbit
segment $\{e^{sA}f:0\le s\le t\}$ in $H^2$ gives
\[
 \sup_{0\le s\le t}
 \|(I-\Pi_N)e^{sA}f\|_{H^2}\longrightarrow0.
\]
The corresponding variation-of-constants formula therefore yields
\[
 e^{tA^{(N)}}\Pi_Nf\longrightarrow e^{tA}f
 \qquad\text{in }L^2
\]
at that fixed time.  Lower semicontinuity of the weighted norm then
gives \eqref{eq:analytic-parabolic-propagation} for $A=A_0$. 
\warning{Notice that no uniform lower bound on the symbol
$|\sin(h\xi)|/|h|$ is needed; the parabolic quadratic form is used
only through its sign, while the analytic-radius term absorbs the
half-derivative quantity $Z$.}
\end{proof}
}

\subsection{Analytic defect bounds}\label{subsec:analytic-defects}

The centered difference has the norm-convergent strip expansion
\begin{equation}
 \delta_h
 =D+\frac{h^2}{3!}D^3+\frac{h^4}{5!}D^5+\cdots
\label{eq:analytic-shift-bound}
\end{equation}
from $\mathcal A_\rho$ to $\mathcal A_{\rho'}$ whenever
$|h|<\rho-\rho'$.  Indeed, 
for every integer $q\ge0$, by Cauchy estimate \eqref{eq:coefficient-cauchy}: 
\begin{equation}
 \|D^qf\|_{\sigma'}
 \le \frac{q!}{(\sigma-\sigma')^q}\|f\|_{\sigma},
 \qquad 0<\sigma'<\sigma\le\rho_0,
\label{eq:analytic-derivative-block}
\end{equation}
whence
\[
 \left\|
 \frac{h^{2r}}{(2r+1)!}D^{2r+1}f
 \right\|_{\rho'}
 \le
 \frac1{\rho-\rho'}
 \left(\frac{|h|}{\rho-\rho'}\right)^{2r}\|f\|_\rho.
\]
Thus the finite generator expansions of Section~\ref{sec:centered-differences} and Subsection~\ref{subsec:model-summary} are, on
analytic spaces, truncations of genuinely convergent operator
series.

\begin{lemma}[Quantitative defect estimate]
\label{lem:analytic-defects}
For each of the three centered-difference models, let $p=2$ for the
parabolic and Schr\"odinger cases and $p=3$ for the cubic case.  Under
the analytic coefficient hypotheses above, there exist $A_*,B_*>0$
such that, for $0<\rho'<\rho\le\rho_0$,
\begin{equation}
 \|R_rf\|_{\rho'}
 \le
 A_*B_*^r
 \frac{\Gamma(2r+p+1)}{(\rho-\rho')^{2r+p}}
 \|f\|_\rho,
 \qquad r\ge1.
\label{eq:analytic-defect-bound}
\end{equation}
Moreover, if
\[
 A_h=A+\sum_{r=1}^{m-1}h^{2r}R_r+h^{2m}Q_m(h),
\]
then there exist model-dependent constants $\theta_*,A_1,B_1>0$,
independent of $m$, $h$, $\rho$, and $\rho'$, such that
\[
 \|Q_m(h)f\|_{\rho'}
 \le A_1B_1^m(\rho-\rho')^{-(2m+p)}\|f\|_\rho
\]
whenever $|h|\le\theta_*(\rho-\rho')$. 
\warning{ In particular this is
stronger than the estimate obtained from \eqref{eq:analytic-defect-bound} by setting
$r=m$.}
\end{lemma}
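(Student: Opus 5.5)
We work directly with the explicit structure of $A_h$: each model generator is a finite sum of words
\[
 g_0\delta_h g_1\delta_h\cdots\delta_h g_k,\qquad k\le p,
\]
with bounded holomorphic coefficients $g_j$. Examples are $\delta_h a\delta_h$ and $\tfrac12(\varphi\delta_h+\delta_h\varphi)$ in the parabolic case, and $i\delta_h^2$, $\delta_h^3$, $\tfrac12(v\delta_h+\delta_hv)$ in the normalized cases. Substituting the norm-convergent strip series \eqref{eq:analytic-shift-bound},
\[
 \delta_h=\sum_{j\ge0}\frac{h^{2j}}{(2j+1)!}D^{2j+1},
\]
into each word and collecting the coefficient of $h^{2r}$, we get
\[
 R_r=\sum_{\text{words}}\ \sum_{j_1+\cdots+j_k=r}
 \Bigl(\prod_{i=1}^k\frac{1}{(2j_i+1)!}\Bigr)\,
 g_0D^{2j_1+1}g_1\cdots D^{2j_k+1}g_k .
\]
Each summand is thus a product of at most $p$ derivative blocks of total order $2r+k\le 2r+p$, interlaced with multipliers.

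\emph{Bound for a single summand.} We split the radius gap $\rho-\rho'$ into $k$ consecutive subintervals of lengths proportional to the block orders $d_i=2j_i+1$, so $\sigma_i=d_i(\rho-\rho')/(2r+k)$. Across each subinterval we apply the Cauchy--Fourier estimate \eqref{eq:analytic-derivative-block} to $D^{d_i}$. Multipliers are passed with Lemma~\ref{lem:analytic-multiplier}, which loses a factor $\sqrt2\,\|g\|_{\mathcal H}$ but no radius. The summand is then bounded by
\[
 C\prod_i\frac{d_i!}{(2j_i+1)!}\,\frac{(2r+k)^{2r+k}}{\prod_i d_i^{d_i}}\,(\rho-\rho')^{-(2r+k)}.
\]
Since $d_i=2j_i+1$, the factorials cancel exactly. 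By Stirling, $(2r+k)^{2r+k}\le e^{2r+k}\Gamma(2r+k+1)$.

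\emph{Summation.} The number of compositions $j_1+\dots+j_k=r$ is at most $(r+1)^{p-1}$, which is absorbed into $B_*^r$. Using $\Gamma(2r+k+1)\le\Gamma(2r+p+1)$ and $(\rho-\rho')^{-(2r+k)}\le \rho_0^{p-k}(\rho-\rho')^{-(2r+p)}$, this gives \eqref{eq:analytic-defect-bound}. The only structural inputs are the finite word count and the bounded strip norms of the coefficients, which Proposition~\ref{prop:normalized-analytic-coefficients} supplies in the normalized cases.

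\emph{Remainder $Q_m(h)$.} Here the estimate must be better than a factorial, so we avoid summing the $R_r$ termwise. Instead we write $\delta_h=\sum_{j<m'}(\cdots)+h^{2m'}\mathcal R_{m',h}$ at the operator level, with the tail given exactly by the Fourier multiplier
\[
 \frac{i}{h^{2m'}}\Bigl(\frac{\sin h\xi}{h}-\sum_{j<m'}\frac{(-1)^jh^{2j}\xi^{2j+1}}{(2j+1)!}\Bigr).
\]
The Taylor-remainder bound for $\sin$ gives a symbol bound
\[
 \frac{|\xi|^{2m'+1}\cosh(h\xi)}{(2m'+1)!}\quad\text{or better}\quad
 \frac{|\xi|^{2m'+1}e^{|h\xi|}}{(2m'+1)!}.
\]
Against the weight $e^{-(\sigma-\sigma')|\xi|}$ with $|h|\le\theta_*(\sigma-\sigma')$, $\theta_*<1/2$, this yields
\[
 \|\mathcal R_{m',h}f\|_{\sigma'}\le \frac{(2m'+1)!}{(2m'+1)!}\Bigl(\frac{C}{\sigma-\sigma'}\Bigr)^{2m'+1}\|f\|_\sigma ,
\]
so the factorial cancels and only a geometric factor remains. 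We use $\sup_x x^qe^{-cx}=(q/ce)^q$ and Stirling to see this.

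Now $h^{2m}Q_m(h)$ consists of the terms of each word in which the total $h$-degree is at least $2m$. We organize these as a telescoping sum: at least one factor carries its tail $h^{2m_i}\mathcal R_{m_i,h}$, and the other factors carry finite Taylor pieces, with $\sum m_i\ge m$ over a bounded number of distributions. There are at most $p$ factors and polynomially many ways to split $m$. Each term is then estimated by the same radius-splitting argument, again with factorials cancelling, which gives $A_1B_1^m(\rho-\rho')^{-(2m+p)}$.

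The main obstacle is this bookkeeping of the mixed terms in $Q_m$: the finite Taylor pieces of the other factors must also not reintroduce factorials. Each such piece is handled as above, since $d!/(2j+1)!=1$, and the simultaneous Stirling balance $(2m+k)^{2m+k}/\prod d_i^{d_i}\le e^{O(m)}$ holds for the product. Done this way, the $\Gamma$ factor in \eqref{eq:analytic-defect-bound} is actually overkill, and the estimate is consistent with it.
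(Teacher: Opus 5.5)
Your bound for $R_r$ follows the paper's argument: expand each $\delta_h$ in its strip series, count the words, and split $\Delta=\rho-\rho'$ in proportion to the block orders. The one difference is that you keep the Taylor denominators $1/(2j_i+1)!$, which the paper discards. With them the factorials cancel, and $\Gamma(2r+p+1)$ enters only through your crude step $d^d\le e^d\Gamma(d+1)$. Using $d^d/\prod_i d_i^{d_i}\le k^d$ instead gives a geometric bound, as you remark.

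For $Q_m(h)$ your route is genuinely different. The paper complexifies $z=h^2$ and shows that the whole generator $\mathfrak A(z)$ is holomorphic into $\mathcal L(\mathcal A_\rho,\mathcal A_{\rho'})$ for $|z|<(\Delta/p)^2$, with norm at most $C\Delta^{-p}$ on $|z|=\Delta^2/(4p^2)$. It then reads off the Taylor remainder from Cauchy's integral formula. This handles all mixed terms at once with no combinatorics, but it needs $|h|\lesssim\Delta$, because the symbol of $\delta_{\sqrt z}$ grows exponentially for nonreal $\sqrt z$.

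You instead bound the real symbol remainder and telescope each word by hand. This is more elementary and can be made stronger:
\begin{itemize}
\item For real $x$ the Lagrange form gives $\bigl|\sin x-\sum_{j<n}(-1)^jx^{2j+1}/(2j+1)!\bigr|\le|x|^{2n+1}/(2n+1)!$. So the tail $\mathcal R_{n,h}$ has symbol bounded by $|\xi|^{2n+1}/(2n+1)!$, with no $\cosh(h\xi)$ factor. Your ``or better'' is reversed, since $\cosh(h\xi)\le e^{|h\xi|}$.
\item Telescope according to the first factor at which the accumulated index reaches $m$. Earlier factors are Taylor monomials of total index $s<m$, that factor is $h^{2(m-s)}\mathcal R_{m-s,h}$, and later factors are the full $\delta_h$, whose symbol is at most $|\xi|$.
\item Then every term carries exactly $h^{2m}$, and each word produces at most $p\,m^{p-1}$ terms.
\item Equal splitting of the gap into pieces $\Delta/p$ gives $A_1B_1^m\Delta^{-(2m+p)}$ for every real $h$, with no restriction $|h|\le\theta_*\Delta$.
\end{itemize}

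One thing to avoid is combining the $\cosh$ bound with proportional splitting. The $\cosh$ bound's condition $|h|\le\theta_*(\sigma-\sigma')$ is per factor. Under proportional splitting, a tail of small index receives a radius of order $\Delta/m$, so $\theta_*$ would depend on $m$. The statement, and the later choice of $N_T$, require $\theta_*$ to be uniform in $m$. Since the Taylor denominators already cancel the factorials, proportional splitting buys nothing in this step.
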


\begin{proof}
Set
\[
 \Delta:=\rho-\rho'>0.
\]
We separate the proof into three steps.

\medskip
\bwarning{
\noindent\emph{Step 1: an elementary analytic-strip estimates.}
For every integer $q\ge0$, 
\begin{equation}
 \|D^qf\|_{\sigma'}
 \le \frac{q!}{(\sigma-\sigma')^q}\|f\|_{\sigma},
 \qquad 0<\sigma'<\sigma\le\rho_0.
\label{eq:analytic-derivative-block}
\end{equation}

We shall also use that multiplication by an analytic Wiener coefficient
costs no loss of analytic radius.  
If $g\in\mathcal{H}_{\rho_*}$ and
$0\le\sigma\le\rho_0<\rho_*$, then
\begin{equation}
 \|gf\|_{\sigma}
 \le \sqrt{2}\|g\|_{\mathcal{H}_{\rho_*}}\|f\|_{\sigma}.
\label{eq:analytic-multiplier-block}
\end{equation}

Indeed,
\[
 e^{\sigma|\xi|}
 \le e^{\sigma|\eta|}e^{\sigma|\xi-\eta|}
 \le e^{R|\eta|}e^{\sigma|\xi-\eta|},
\]
and Young's inequality for convolution with the finite measure
$e^{R|\eta|}\,d|\widehat g|(\eta)$ proves \eqref{eq:analytic-multiplier-block}.
Thus derivative blocks consume strip width according to
\eqref{eq:analytic-derivative-block}, whereas multiplication by the coefficients does not.
}

\medskip
\noindent\emph{Step 1: structure and combinatorics of the coefficient $R_r$.}
Recall that
\begin{equation}
 \delta_h
 =\sum_{q=0}^{\infty}\frac{h^{2q}}{(2q+1)!}D^{2q+1}.
\label{eq:analytic-centered-series}
\end{equation}
Consider one fixed monomial occurring in the centered generator $A_h$,
and let $k$ be the number of occurrences of $\delta_h$ in that
monomial.  By the structure of the three models,
\begin{equation}
 k\le p.
\label{eq:analytic-word-structure}
\end{equation}
\warning{Indeed, the highest-order monomials are
$\delta_h a\delta_h$ in the parabolic case, $i\delta_h^2$ in the
Schr\"odinger case, and $\delta_h^3$ in the cubic case; the lower-order
terms contain only one or zero centered-difference factors.}

To obtain a contribution of order $h^{2r}$ from a monomial containing
$k$ centered differences, choose integers
\[
 q_1,\dots,q_k\ge0,
 \qquad q_1+\cdots+q_k=r.
\]
The corresponding derivative factors are
$D^{2q_1+1},\dots,D^{2q_k+1}$, and hence their total differential order
is
\begin{equation}
 d=\sum_{j=1}^k(2q_j+1)=2r+k\le2r+p.
\label{eq:analytic-word-order}
\end{equation}
Every such contribution can therefore be written as a noncommutative word
\begin{equation}
 \mathcal W
 =M_{g_s}D^{d_s}M_{g_{s-1}}D^{d_{s-1}}\cdots
  M_{g_1}D^{d_1}M_{g_0},
 \qquad d_1+\cdots+d_s=d\le2r+p,
\label{eq:analytic-word-form}
\end{equation}
where the $g_j$ belong to the finite collection of analytic
coefficients appearing in the model. 
\warning{ We deliberately do not commute
the derivatives through the multipliers: this avoids an unnecessary
Leibniz expansion and keeps the order count \eqref{eq:analytic-word-order} exact.}

For fixed $k$, the number of choices of
$q_1+\cdots+q_k=r$ is the weak-composition number
$\binom{r+k-1}{k-1}$.  Since $k\le p$ and the generator contains only
finitely many structural monomials, there exists a model-dependent
constant $C_0>1$ such that the total number of contributions to $R_r$
is bounded by $C_0^r$.
\warning{In fact the growth is only polynomial in $r$
because $p$ is fixed, but the rough geometric bound is convenient.}  
The Taylor denominators
$\prod_j(2q_j+1)!^{-1}$ in \eqref{eq:analytic-centered-series} are at most one and hence
can only improve the estimates below.

\medskip
\noindent\emph{Step 2: estimate of a single word and proof of
\eqref{eq:analytic-defect-bound}.}
Fix a word \eqref{eq:analytic-word-form} and put
$d=d_1+\cdots+d_s$.  Allocate the available strip width proportionally
to the positive derivative orders:
\begin{equation}
 \Delta_j:=\Delta\frac{d_j}{d},
 \qquad \sum_{j=1}^s\Delta_j=\Delta.
\label{eq:analytic-radius-allocation}
\end{equation}
Starting from the right of \eqref{eq:analytic-word-form}, apply successively the
multiplication estimate \eqref{eq:analytic-multiplier} and the derivative estimate
\eqref{eq:analytic-derivative-block}, spending the radius loss $\Delta_j$ on the block
$D^{d_j}$.  Since the number of coefficient multipliers in each
structural monomial is bounded independently of $r$, there exists a
constant $C_1$, depending only on the model and the analytic coefficient
norms, such that
\begin{align*}
 \|\mathcal Wf\|_{\rho'}
 &\le C_1
 \prod_{j=1}^s\frac{d_j!}{\Delta_j^{d_j}}\|f\|_\rho\\
 &=C_1\Delta^{-d}d^d
   \prod_{j=1}^s\frac{d_j!}{d_j^{d_j}}\|f\|_\rho\\
 &\le C_1\Delta^{-d}d^d\|f\|_\rho,
\end{align*}
since $d_j!\le d_j^{d_j}$.  The elementary bound
$d^d\le e^d d!=e^d\Gamma(d+1)$ therefore yields
\begin{equation}
 \|\mathcal Wf\|_{\rho'}
 \le C_1e^d\frac{\Gamma(d+1)}{\Delta^d}\|f\|_\rho.
\label{eq:analytic-word-factor-bound}
\end{equation}
Because $d\le2r+p$,
$\Gamma(d+1)\le\Gamma(2r+p+1)$.  Moreover,
\[
 \Delta^{-d}
 =\Delta^{2r+p-d}\Delta^{-(2r+p)},
\]
and, since $0<\Delta\le\rho_0$ and $2r+p-d\ge0$,
\[
 \Delta^{2r+p-d}
 \le \max\{1,\rho_0\}^{2r+p}.
\]
Together with $e^d\le e^{2r+p}$, this gives constants $C_2,C_3>0$,
independent of $r$, $\rho$, and $\rho'$, such that
\begin{equation}
 \|\mathcal Wf\|_{\rho'}
 \le C_2C_3^r
 \frac{\Gamma(2r+p+1)}{\Delta^{2r+p}}\|f\|_\rho.
\label{eq:analytic-word-bound}
\end{equation}
Summing over the at most $C_0^r$ structural/composition terms and
absorbing the resulting geometric factor into $B_*^r$ proves
\eqref{eq:analytic-defect-bound}.

\medskip
\noindent\emph{Step 3: the terminal remainder $Q_m(h)$.}
The terminal remainder should not be estimated by summing
\eqref{eq:analytic-defect-bound}, because that would bring the factorial factor
$\Gamma(2r+p+1)$ into the tail.  Instead we exploit holomorphic
dependence on
\[
 z:=h^2.
\]
Define
\begin{equation}
 \mathfrak D(z)
 :=\sum_{q=0}^{\infty}\frac{z^q}{(2q+1)!}D^{2q+1}.
\label{eq:analytic-remainder-derivative}
\end{equation}
Let $0<\sigma'<\sigma$ and set $\delta:=\sigma-\sigma'$.  By
\eqref{eq:analytic-cauchy-fourier},
\[
 \left\|
 \frac{z^q}{(2q+1)!}D^{2q+1}f
 \right\|_{\sigma'}
 \le
 \frac1\delta
 \left(\frac{|z|}{\delta^2}\right)^q\|f\|_\sigma.
\]
Hence, for $|z|<\delta^2$,
\begin{equation}
 \|\mathfrak D(z)\|_{\mathcal A_\sigma\to\mathcal A_{\sigma'}}
 \le
 \frac1\delta\frac1{1-|z|/\delta^2}.
\label{eq:analytic-remainder-block}
\end{equation}
The series converges normally on smaller disks, so
$z\mapsto\mathfrak D(z)$ is holomorphic as an
$\mathcal L(\mathcal A_\sigma,\mathcal A_{\sigma'})$-valued map.  For
real $z=h^2$ one has $\mathfrak D(h^2)=\delta_h$.

Let $\mathfrak A(z)$ be the full centered generator obtained from
$A_h$ by replacing every occurrence of $\delta_h$ by
$\mathfrak D(z)$.  Every structural monomial contains at most $p$ such
factors by \eqref{eq:analytic-word-structure}.  Split the total strip loss
$\Delta=\rho-\rho'$ into $p$ equal pieces.  Each occurrence of
$\mathfrak D(z)$ is then assigned a radius loss at least $\Delta/p$,
whereas multiplication by the analytic coefficients costs no radius.
It follows from \eqref{eq:analytic-remainder-block} that
\[
 \mathfrak A(z):\mathcal A_\rho\longrightarrow\mathcal A_{\rho'}
\]
is holomorphic for $|z|<(\Delta/p)^2$.

Choose
\[
 R_\Delta:=\frac{\Delta^2}{4p^2}.
\]
On $|z|=R_\Delta$ one has
$|z|/(\Delta/p)^2=1/4$, and therefore each factor
$\mathfrak D(z)$ is bounded by $C_p\Delta^{-1}$.  Since there are at
most $p$ such factors in any monomial, and since all coefficient
multipliers are uniformly bounded on the intermediate strips,
\begin{equation}
 \sup_{|z|=R_\Delta}
 \|\mathfrak A(z)\|_{\mathcal A_\rho\to\mathcal A_{\rho'}}
 \le C\Delta^{-p}.
\label{eq:analytic-remainder-word-bound}
\end{equation}
Terms containing fewer than $p$ centered-difference factors satisfy an
initial bound $C\Delta^{-k}$ with $k<p$; because
$\Delta\le\rho_0$, this is also absorbed into the right-hand side of
\eqref{eq:analytic-remainder-word-bound} after changing $C$.

The Taylor coefficients of $\mathfrak A$ at $z=0$ are precisely
$A,R_1,R_2,\ldots$.  Thus, for fixed $m$,
\begin{equation}
 \mathfrak A(z)
 =A+\sum_{r=1}^{m-1}z^rR_r+z^mQ_m(z).
\label{eq:analytic-generator-remainder}
\end{equation}
Cauchy's integral formula for the Taylor remainder, applied in the
Banach space $\mathcal L(\mathcal A_\rho,\mathcal A_{\rho'})$, gives
\begin{equation}
 Q_m(z)
 =\frac1{2\pi i}\int_{|\zeta|=R_\Delta}
 \frac{\mathfrak A(\zeta)}{\zeta^m(\zeta-z)}\,d\zeta.
\label{eq:analytic-terminal-bound}
\end{equation}
If $|z|\le R_\Delta/2$, then
$|\zeta-z|\ge R_\Delta/2$ on the contour.  Using
\eqref{eq:analytic-remainder-word-bound},
\begin{align*}
 \|Q_m(z)f\|_{\rho'}
 &\le 2C\Delta^{-p}R_\Delta^{-m}\|f\|_\rho\\
 &=2C(4p^2)^m\Delta^{-(2m+p)}\|f\|_\rho.
\end{align*}
Hence there are model-dependent constants $A_1,B_1>0$, independent of
$m$, $z$, $\rho$, and $\rho'$, such that
\[
 \|Q_m(z)f\|_{\rho'}
 \le A_1B_1^m\Delta^{-(2m+p)}\|f\|_\rho.
\]
Finally set $z=h^2$.  The condition $|z|\le R_\Delta/2$ follows by imposing
\[
 |h|\le\theta_*\Delta,
 \qquad
 \theta_*:=\frac1{2\sqrt2\,p}.
\]
Since the real centered-difference remainder $Q_m(h)$ is precisely the
operator $Q_m(h^2)$ in \eqref{eq:analytic-generator-remainder}, the asserted terminal estimate
follows.

\warning{
The important distinction is that the coefficient estimate
\eqref{eq:analytic-defect-bound} contains the factorial factor
$\Gamma(2r+p+1)$, whereas the Cauchy estimate for the whole terminal
tail is only geometric in $m$.  Avoiding factorial growth in the
terminal remainder is exactly what later allows the extrapolation depth
to grow with the base resolution.}
\end{proof}

\subsection{The quantitative bridge}\label{subsec:quantitative-bridge}

The analytic propagation estimates leave a fixed positive strip reserve
on every compact time interval.  More precisely, fix $T>0$ and choose
$\rho_1<\rho_0$.  Propositions~\ref{prop:analytic-group} and
\ref{prop:analytic-parabolic} give a baseline strip of width at least
$\rho_1e^{-\kappa T}$.  Choose
$0<\rho_*<\rho_1e^{-\kappa T}$ and denote the reserve by
\[
 \Delta_T:=\rho_1e^{-\kappa T}-\rho_*>0.
\]
A derivative defect may spend part of this reserve.  Any loss spent
before a later propagation interval is subsequently reduced rather than
increased, so it is sufficient that the sum of all allocated losses be
at most $\Delta_T$.

\begin{theorem}[Analytic quantitative bridge]
\label{thm:analytic-bridge}
Assume the hypotheses of Proposition~\ref{prop:analytic-parabolic} in
the parabolic case, or Proposition~\ref{prop:analytic-group} in the
normalized Schr\"odinger/cubic cases, and let
$f\in\mathcal A_{\rho_0}$.  Let $p=2$ in the two second-order cases and
$p=3$ in the cubic case.  Then for every $T>0$ and every
$0<\rho_*<\rho_1e^{-\kappa T}$ there exist constants $C_T,K_T>0$ and
$N_T\ge1$, independent of $f$, $m$, and $N$, such that, for every $N\ge N_T$ and every $0\le t\le T$,
\begin{equation}
 \left\|
 \mathcal S_2(t/N)^Nf-e^{tA}f
 -\sum_{r=1}^{m-1}\frac{E_{2r}(t)f}{N^{2r}}
 \right\|_{\rho_*}
 \le
 C_TK_T^m\Gamma((p+1)m+1)e^{\Gamma t}
 \frac{t^{2m+1}}{N^{2m}}\,
 \|f\|_{\rho_0}.
\label{eq:analytic-bridge-estimate}
\end{equation}
In particular, the analytic product expansion realizes the pointwise
form \eqref{eq:product-expansion} on every fixed strip interval, with
\[
 {\ C^{\rm an}_{m,T}\le C_TK_T^m\Gamma((p+1)m+1). 
 \ }
\]
\warning{Thus the dependence on $f$ is entirely through $\|f\|_{\rho_0}$.}
\end{theorem}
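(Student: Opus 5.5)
We follow the fixed-depth argument of Lemma~\ref{lem:group-fixed-depth}, replacing the Sobolev ladder by the analytic strip scale and tracking every constant in $m$. With $h=t/N$, Proposition~\ref{prop:chernoff-tangency-even-structure}(ii) gives $\mathcal S_2(t/N)^N=e^{tA_h}$. We iterate the Duhamel identity
\[
e^{tA_h}=e^{tA}+\int_0^te^{(t-s)A_h}(A_h-A)e^{sA}\,ds
\]
with the remaining-depth expansion \eqref{eq:remaining-depth-expansion}. The branches of total index $r<m$ define $\mathcal E_r(t)$, and we set $E_{2r}(t)=t^{2r}\mathcal E_r(t)$. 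Every terminal branch has length $\ell\le m$, lower defects $R_{r_1},\dots,R_{r_{\ell-1}}$ with $q=\sum r_j<m$, and one terminal factor $Q_{m-q}(h)$. After the common factor $h^{2m}=t^{2m}/N^{2m}$ is extracted, the remainder is a sum of time-ordered words. We bound each word in $\|\cdot\|_{\rho_*}$ by $\|f\|_{\rho_0}$.

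For each word, the strip reserve $\Delta_T=\rho_1e^{-\kappa T}-\rho_*$ is split among the $\ell$ defects in proportion to their differential orders $d_j=2r_j+p$ (terminal $d_\ell=2(m-q)+p$), so that $\Delta_j=\Delta_T d_j/d$ with $d=\sum_jd_j=2m+p\ell\le(p+2)m$. Reading right to left, the propagators $e^{sA}$ and $e^{(t-s)A_h}$ are controlled by Proposition~\ref{prop:analytic-group} (resp.\ Proposition~\ref{prop:analytic-parabolic}) at cost $C_0e^{\Gamma s}$ and a multiplicative shrink of the radius. As in the bridge preamble, we first reduce $\rho_0$ to $\rho_1$ and run all radii along the curve $\rho(t)=\rho_1e^{-\kappa t}$ minus the accumulated losses. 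Since the flows only contract radii multiplicatively, the accumulated additive losses never exceed $\Delta_T$. Each lower defect costs, by Lemma~\ref{lem:analytic-defects},
\[
A_*B_*^{r_j}\frac{\Gamma(d_j+1)}{\Delta_j^{d_j}},
\]
and the terminal factor costs $A_1B_1^{m-q}\Delta_\ell^{-d_\ell}$, valid once $t/N\le\theta_*\Delta_\ell$. This condition holds for $N\ge N_T$ because $\Delta_\ell\ge\Delta_T\cdot 2/((p+2)m)$. The difficulty is that this fixes $N_T$ only up to an $m$-dependence, requiring $N\gtrsim mT/\Delta_T$. We would handle this either by assigning the terminal factor a fixed fraction $\Delta_T/2$ of the reserve, or by noting that in the later application $m\lesssim n^{2/(p-1)}$ and accepting the constraint there.

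Multiplying the defect factors and using $\prod_j\Gamma(d_j+1)\Delta_j^{-d_j}\le e^{d}\,\Gamma(d+1)\,\Delta_T^{-d}$ (the same $d_j!\le d_j^{d_j}$, $d^d\le e^d d!$ computation as in Step~2 of Lemma~\ref{lem:analytic-defects}) bounds each word by $C^{\ell}K^{m}\Gamma(d+1)\Delta_T^{-d}$ times the time-simplex factor $t^{\ell-1}/(\ell-1)!$ times $e^{\Gamma t}$. Since $d\le(p+2)m$, this is too crude. The sharper count keeps only the lower defects under the factorial, giving $\Gamma(2q+p(\ell-1)+1)\cdot\Delta^{-d_\ell}$ for the terminal factor, whose Cauchy bound is only geometric. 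Using $\ell-1\le q\le m$, the factorial has order at most
\[
2q+p(\ell-1)\le (p+2)q-?
\]
and we would sharpen this to at most $(p+1)m$ by noting that the factorial of the lower part is $\Gamma(2q+p(\ell-1)+1)$ with $\ell-1\le q$. The maximal case $q=m-1$, $\ell-1=m-1$ gives $(p+2)(m-1)$, which still exceeds $(p+1)m$. The final step is therefore to offset this against the simplex factor $1/(\ell-1)!$, since $\Gamma((p+2)(m-1)+1)/(m-1)!\le C^m\Gamma((p+1)m+1)$. Counting the branches (at most $4^m$ compositions) then gives $C_TK_T^m\Gamma((p+1)m+1)e^{\Gamma t}t\cdot t^{2m}/N^{2m}$. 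The powers $t^{\ell-1}\le\max(1,T)^m$ are absorbed into $K_T^m$, keeping one factor $t$ from the outermost integral.

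We expect the main obstacle to be this bookkeeping: the interplay of additive radius losses with the multiplicative strip shrinkage $e^{-\kappa t}$ of the nonsmoothing propagators, and the uniformity of $N_T$ in $m$ for the terminal remainder's condition $|h|\le\theta_*\Delta$. The factorial exponent $(p+1)m$ comes out only after the $1/(\ell-1)!$ from the time simplex is used, so that cancellation must be tracked carefully.
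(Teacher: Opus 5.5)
Your plan is essentially the paper's proof: Duhamel iteration through the leftmost $e^{(t-s)A_h}$ with remaining-depth truncation, proportional splitting of the reserve $\Delta_T$ (compatible with the multiplicative shrinkage $\rho_1e^{-\kappa t}$), and the simplex factor $1/\ell!$ to turn $d!$ into $C^m\Gamma((p+1)m+1)$ via $d-\ell\le(p+1)m$. Incidentally, this already works with the terminal factor inside the proportional split, so your ``too crude'' worry disappears once the simplex factor is applied. Of your two remedies for $N_T$, only the first proves the statement, and it is exactly what the paper does: reserve $\delta_Q=\Delta_T/2$ for $Q_{m-q}(h)$, so that $T/N_T\le\theta_*\Delta_T/2$ is independent of $m$. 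Letting $N_T$ grow like $m$ contradicts the asserted $m$-independence and would break the $p=2$ growing-depth application, where $m\sim\eta n^2$ while the $j=1$ product has $N=n$.
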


\begin{proof}
Write
\[
 T(t):=e^{tA},\qquad T_h(t):=e^{tA_h},\qquad
 \Delta_h:=A_h-A.
\]
For every integer $q\ge1$, Lemma~\ref{lem:analytic-defects} gives the
finite generator expansion
\begin{equation}
 \Delta_h
 =\sum_{j=1}^{q-1}h^{2j}R_j+h^{2q}Q_q(h).
\label{eq:analytic-generator-depth-expansion}
\end{equation}
We use this expansion recursively, choosing at each stage only the
remaining depth needed to reach total weight $m$.

\medskip
\noindent\emph{Step 1: a fixed strip reserve on $[0,T]$.}
The proofs of Propositions~\ref{prop:analytic-group} and
\ref{prop:analytic-parabolic} 
yields the following propagation estimate.  
With the same constants
$\kappa,\Gamma$, for every $0<\sigma\le\rho_1$ and every
$0\le\tau\le T$,
\begin{equation}
 \|T(\tau)g\|_{\sigma e^{-\kappa\tau}}
 \le C_0 e^{\Gamma\tau}\|g\|_\sigma,
 \qquad
 \|T_h(\tau)g\|_{\sigma e^{-\kappa\tau}}
 \le C_0 e^{\Gamma\tau}\|g\|_\sigma,
\label{eq:analytic-propagator-bound}
\end{equation}
uniformly in the admissible $h$.  Indeed, 
the constants are controlled by the analytic-norms of the coefficients on a fixed larger strip and the corresponding Cauchy derivative bounds.

In the group cases the same estimate holds for negative times,
but only nonnegative times are needed below.

We start the Duhamel expansion in $\mathcal A_{\rho_1}$; since
$f\in\mathcal A_{\rho_0}$ and $\rho_1<\rho_0$,
$\|f\|_{\rho_1}\le\|f\|_{\rho_0}$.  Recall
\[
 \Delta_T:=\rho_1e^{-\kappa T}-\rho_*>0.
\]
The quantity $\Delta_T$ is the total analytic-radius reserve available
to the defect operators.

\medskip
\noindent\emph{Step 2: finite Duhamel expansion and weight bookkeeping.}
Duhamel's identity, with the perturbed propagator on the far left, is
\begin{equation}
 T_h(t)-T(t)
 =\int_0^t T_h(t-s)\Delta_hT(s)\,ds.
\label{eq:analytic-duhamel-bridge}
\end{equation}
Iterating \eqref{eq:analytic-duhamel-bridge} only through the leftmost $T_h$ factor and
using \eqref{eq:analytic-generator-depth-expansion} recursively produces a finite expansion
\begin{equation}
 T_h(t)
 =T(t)+\sum_{r=1}^{m-1}h^{2r}F_r(t)
   +h^{2m}\mathcal R_{m,h}(t).
\label{eq:analytic-finite-expansion}
\end{equation}
We now describe the two kinds of terms in \eqref{eq:analytic-finite-expansion}.

An ordinary coefficient word of total weight $r$ and length $\ell$ is
indexed by positive integers
\[
 r_1+\cdots+r_\ell=r,
 \qquad r_j\ge1,
\]
and has the form
\begin{align}
 \mathcal W_{\mathbf r}(t)
 :=\int_{0<s_1<\cdots<s_\ell<t}
 &T(t-s_\ell)R_{r_\ell}
 T(s_\ell-s_{\ell-1})\cdots \notag\\[-2mm]
 &\qquad\cdots R_{r_2}T(s_2-s_1)R_{r_1}T(s_1)
 \,ds_1\cdots ds_\ell .
\label{eq:analytic-ordinary-word}
\end{align}
Because every $r_j\ge1$, necessarily $\ell\le r$.

The remainder is generated by terminating a branch exactly when its
total weight reaches $m$.  More precisely, suppose a branch has already
accumulated ordinary weight
\[
 r=r_1+\cdots+r_k<m,
\]
and put $q:=m-r$.  At the next stage we use \eqref{eq:analytic-generator-depth-expansion} with this
remaining depth $q$ and select the terminal factor $Q_q(h)$.  The
resulting word is
\begin{align}
 \mathcal V_{\mathbf r,q,h}(t)
 :=\int_{0<s_1<\cdots<s_{k+1}<t}
 &T_h(t-s_{k+1})Q_q(h)T(s_{k+1}-s_k)\notag\\[-2mm]
 &\qquad\cdot R_{r_k}T(s_k-s_{k-1})\cdots
 R_{r_1}T(s_1)\,ds .
\label{eq:analytic-terminal-word}
\end{align}
The case $k=0$ is allowed and gives
$\int_0^tT_h(t-s)Q_m(h)T(s)\,ds$.  Since $q=m-r$, every terminal
branch carries the factor
\[
 h^{2r}h^{2q}=h^{2m}.
\]
Thus no word of weight larger than $m$ is formed: all ordinary words
have weight $r<m$, whereas every terminal word has weight exactly $m$.

\medskip
\noindent\emph{Step 3: compatibility of strip losses with the intervening
propagators.}
Consider first an ordinary word \eqref{eq:analytic-ordinary-word}.  Put
\[
 d_j:=2r_j+p,
 \qquad
 d:=d_1+\cdots+d_\ell=2r+p\ell.
\]
Allocate the strip reserve proportionally to the differential orders,
\begin{equation}
 \delta_j:=\Delta_T\frac{d_j}{d},
 \qquad
 \sum_{j=1}^\ell\delta_j=\Delta_T.
\label{eq:analytic-radius-partition}
\end{equation}
We start by verifying that these losses fit inside the available strip.
Without defects, propagation from radius $\rho_1$ to time $s$ leaves
radius $\rho_1e^{-\kappa s}$.  Suppose that by time $s_j$ the first
$j-1$ defects have spent losses $\delta_1,\dots,\delta_{j-1}$.  By
induction the available radius immediately before the $j$th defect is
at least
\[
 \rho_1e^{-\kappa s_j}-\sum_{i<j}\delta_i.
\]
After applying $R_{r_j}$ it is at least the same quantity minus
$\delta_j$.  Propagation over the next interval of length
$s_{j+1}-s_j$ multiplies the entire radius by
$e^{-\kappa(s_{j+1}-s_j)}$; hence the already spent loss is multiplied
by a factor at most one.  Therefore, at time $s_{j+1}$, the radius is
at least
\[
 \rho_1e^{-\kappa s_{j+1}}-\sum_{i\le j}\delta_i.
\]
At the final time $t\le T$ this gives
\[
 \rho_1e^{-\kappa t}-\sum_{j=1}^\ell\delta_j
 \ge \rho_1e^{-\kappa T}-\Delta_T
 =\rho_*.
\]
Thus the total reserve $\Delta_T$ may indeed be allocated among the
defects independently of their locations in time.

\medskip
\noindent\emph{Step 4: estimate of an ordinary complete word.}
Lemma~\ref{lem:analytic-defects}, applied successively with the losses
in \eqref{eq:analytic-radius-partition}, gives
\[
 \|R_{r_j}g\|_{\sigma-\delta_j}
 \le
 A_*B_*^{r_j}
 \frac{d_j!}{\delta_j^{d_j}}\|g\|_\sigma.
\]
Combining these bounds with \eqref{eq:analytic-propagator-bound} between consecutive defects
and using \eqref{eq:analytic-radius-partition} yields, for the integrand in
\eqref{eq:analytic-ordinary-word},
\[
 \|\text{integrand}\|_{\rho_*}
 \le
 e^{\Gamma t}A_*^\ell B_*^r
 \prod_{j=1}^\ell\frac{d_j!}{\delta_j^{d_j}}
 \|f\|_{\rho_0}.
\]
Since $\ell\le r$, the factor $A_*^\ell$ is geometric in $r$.  Moreover,
\begin{align*}
 \prod_{j=1}^\ell\frac{d_j!}{\delta_j^{d_j}}
 &=\Delta_T^{-D}D^D
   \prod_{j=1}^\ell\frac{d_j!}{d_j^{d_j}}\\
 &\le \Delta_T^{-D}D^D
 \le C_T^rD!,
\end{align*}
where we used $d_j!\le d_j^{d_j}$, $d\le(p+2)r$, and
$d^d\le e^dd!$; all fixed powers of $\Delta_T^{-1}$ and $e$ are
absorbed into $C_T^r$.

The simplex in \eqref{eq:analytic-ordinary-word} has volume $t^\ell/\ell!$.  Hence
\begin{equation}
 \|\mathcal W_{\mathbf r}(t)f\|_{\rho_*}
 \le
 C_T^r e^{\Gamma t}
 \frac{t^\ell D!}{\ell!}\|f\|_{\rho_0}.
\label{eq:analytic-word-integrand}
\end{equation}
For $0\le t\le T$, retain one factor of $t$ and absorb the remaining
$t^{\ell-1}$ into the geometric constant.  It remains to control
$D!/\ell!$.

Set
\[
 M:=d-\ell=2r+(p-1)\ell.
\]
Then
\[
 \frac{d!}{\ell!}
 =\prod_{j=\ell+1}^{d}j
 \le d^M.
\]
Furthermore,
\[
 \frac{d}{M}
 =\frac{2r+p\ell}{2r+(p-1)\ell}
 \le \frac{p+2}{p+1},
\]
because $1\le\ell\le r$.  Therefore
\[
 d^M
 \le
 \left(\frac{p+2}{p+1}\right)^M M^M
 \le
 \left(e\frac{p+2}{p+1}\right)^M M!,
\]
where we used the elementary inequality $M^M\le e^MM!$.  Finally,
$M\le(p+1)r$, so
\[
 M!\le\Gamma((p+1)r+1),
\]
and the remaining exponential factor is geometric in $r$.  Thus
\begin{equation}
 {
 \frac{d!}{\ell!}
 \le C_p^r\Gamma((p+1)r+1).}
\label{eq:analytic-ordinary-word-estimate}
\end{equation}
Substituting \eqref{eq:analytic-ordinary-word-estimate} into \eqref{eq:analytic-word-integrand} gives
\begin{equation}
 \|\mathcal W_{\mathbf r}(t)f\|_{\rho_*}
 \le
 t\,e^{\Gamma t}C_T^r
 \Gamma((p+1)r+1)\|f\|_{\rho_0}.
\label{eq:analytic-terminal-index}
\end{equation}
For fixed $r$, the number of compositions
$r=r_1+\cdots+r_\ell$ over all lengths is $2^{r-1}$, and the finite
number of structural defect types adds only another geometric factor.
Consequently
\begin{equation}
 \|F_r(t)f\|_{\rho_*}
 \le
 t\,e^{\Gamma t}C_T^r
 \Gamma((p+1)r+1)\|f\|_{\rho_0}.
\label{eq:analytic-terminal-prefactor}
\end{equation}

\medskip
\noindent\emph{Step 5: terminal words and the order-$m$ remainder.}
Consider a terminal word \eqref{eq:analytic-terminal-word}, with ordinary weight $r<m$
and $q=m-r$.  Give one half of the strip reserve to the terminal factor,
\[
 \delta_Q:=\frac{\Delta_T}{2},
\]
and distribute the remaining half among the ordinary defects exactly as
in Step~4.  By the Cauchy remainder estimate in
Lemma~\ref{lem:analytic-defects},
\begin{equation}
 \|Q_q(h)g\|_{\sigma-\delta_Q}
 \le
 A_1B_1^q\delta_Q^{-(2q+p)}\|g\|_\sigma,
\label{eq:analytic-terminal-integrand}
\end{equation}
provided $|h|\le\theta_*\delta_Q$.  Since $\delta_Q$ is fixed, the
right-hand prefactor in \eqref{eq:analytic-terminal-integrand} is bounded by $C_T^q$.

The ordinary part of the same word has total weight $r$ and is bounded,
using only the remaining reserve $\Delta_T/2$, by
$C_T^r\Gamma((p+1)r+1)$.  The full time simplex has length $k+1$ if
there are $k$ ordinary defects; its factor $t^{k+1}/(k+1)!$ is bounded
by $t$ times a geometric factor depending only on $T$.  Hence every
terminal word satisfies
\[
 \|\mathcal V_{\mathbf r,q,h}(t)f\|_{\rho_*}
 \le
 t\,e^{\Gamma t}C_T^{r+q}
 \Gamma((p+1)r+1)\|f\|_{\rho_0}.
\]
Since $r+q=m$ and $r\le m$,
\[
 \Gamma((p+1)r+1)
 \le \Gamma((p+1)m+1).
\]
The total number of terminal branches is at most geometric in $m$.
After enlarging $K_T$, we therefore obtain the central remainder bound
\begin{equation}
 \boxed{
 \|\mathcal R_{m,h}(t)f\|_{\rho_*}
 \le
 C_TK_T^m\Gamma((p+1)m+1)
 e^{\Gamma t}t\,\|f\|_{\rho_0}.}
\label{eq:analytic-terminal-estimate}
\end{equation}

\medskip
\noindent\emph{Step 6: insertion of the Chernoff step $h=t/N$.}
By construction of the second-order Chernoff family,
\[
 \mathcal S_2(t/N)^N=e^{tA_{t/N}}=T_{t/N}(t).
\]
Choose $N_T$ so that
\[
 \frac{T}{N_T}\le\theta_*\frac{\Delta_T}{2}.
\]
Then, for every $N\ge N_T$ and every $0\le t\le T$, the hypothesis of
\eqref{eq:analytic-terminal-integrand} is satisfied with $h=t/N$.  Setting
\[
 E_{2r}(t):=t^{2r}F_r(t),
\]
formula \eqref{eq:analytic-finite-expansion} becomes
\[
 \mathcal S_2(t/N)^Nf
 =e^{tA}f
 +\sum_{r=1}^{m-1}\frac{E_{2r}(t)f}{N^{2r}}
 +\left(\frac{t}{N}\right)^{2m}
  \mathcal R_{m,t/N}(t)f.
\]
Applying \eqref{eq:analytic-terminal-estimate} gives
\[
 \left\|
 \mathcal S_2(t/N)^Nf-e^{tA}f
 -\sum_{r=1}^{m-1}\frac{E_{2r}(t)f}{N^{2r}}
 \right\|_{\rho_*}
 \le
 C_TK_T^m\Gamma((p+1)m+1)e^{\Gamma t}
 \frac{t^{2m+1}}{N^{2m}}\|f\|_{\rho_0},
\]
which is \eqref{eq:analytic-bridge-estimate}.
\end{proof}

\subsection{Optimized growing-depth Richardson extrapolation}\label{subsec:optimized-richardson}

\begin{lemma}\label{lem:optimized-analytic-bound}There exists $C_T,K_*>0$ such that
\begin{equation}
 \|\mathcal M_{2m,n}(t)f-e^{tA}f\|_{\rho_*}
 \le C_T e^{\Gamma t}t
 \left(\frac{K_*t^2m^{p-1}}{n^2}\right)^m
 \|f\|_{\rho_0},\qquad 0\le t\le T,
\label{eq:optimized-preliminary-bound}
\end{equation}
for all $n\ge N_T$, with $N_T$ as in Theorem~\ref{thm:analytic-bridge}.
\end{lemma}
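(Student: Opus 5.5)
The proof combines the abstract Richardson cancellation of Theorem~\ref{thm:abstract} with the analytic bridge estimate of Theorem~\ref{thm:analytic-bridge}; the only genuinely new work is a sharp bound on the exact weights $c_j^{(m)}$ in \eqref{eq:even-richardson-weights}.

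\emph{Cancellation.} I would apply \eqref{eq:analytic-bridge-estimate} with $N=jn$ for $j=1,\dots,m$. Every $N=jn\ge n\ge N_T$, so the hypothesis of Theorem~\ref{thm:analytic-bridge} holds uniformly in $j$. Multiplying by $c_j^{(m)}$ and summing, the moment identities \eqref{eq:even-richardson-moment-system} remove $e^{tA}f$ exactly and kill every intermediate term $E_{2r}(t)f/(jn)^{2r}$, $1\le r\le m-1$. Since the estimate is stated in the $\|\cdot\|_{\rho_*}$ norm with $f$ measured in $\|\cdot\|_{\rho_0}$, this gives
\[
\|\mathcal M_{2m,n}(t)f-e^{tA}f\|_{\rho_*}
\le C_TK_T^m\Gamma((p+1)m+1)e^{\Gamma t}\frac{t^{2m+1}}{n^{2m}}
\Big(\sum_{j=1}^m\frac{|c_j^{(m)}|}{j^{2m}}\Big)\|f\|_{\rho_0}.
\]

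\emph{Weight sum.} By \eqref{eq:even-richardson-weights},
\[
|c_j^{(m)}|j^{-2m}=\frac{2}{(m-j)!(m+j)!}=\frac{2}{(2m)!}\binom{2m}{m-j}.
\]
Hence
\[
\sum_{j=1}^m\frac{|c_j^{(m)}|}{j^{2m}}\le\frac{2\cdot 2^{2m}}{(2m)!}.
\]
This exact computation is the key point. One must not bound $|c_j^{(m)}|$ and $j^{-2m}$ separately, since that loses the $1/(2m)!$ factor.

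\emph{Factorial bookkeeping.} It remains to compare $\Gamma((p+1)m+1)/(2m)!$ with $m^{(p-1)m}$. Stirling's formula gives
\[
\frac{\Gamma((p+1)m+1)}{(2m)!}\le C\,c_p^m\,\frac{((p+1)m)^{(p+1)m}}{(2m)^{2m}}\le C\,(c_p')^m\,m^{(p-1)m},
\]
and no polynomial prefactor survives beyond a constant times a geometric factor. Collecting terms,
\[
K_T^m\cdot 4^m\cdot(c_p')^m\,m^{(p-1)m}\,\frac{t^{2m}}{n^{2m}}=\Big(\frac{K_*t^2m^{p-1}}{n^2}\Big)^m,
\]
with $K_*:=4c_p'K_T$. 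The leftover factor $t$ is the one stated in \eqref{eq:optimized-preliminary-bound}, and the factor $2C\,C_T$ is absorbed into a new $C_T$.

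\emph{Main obstacle.} The delicate step is the weight estimate: the analysis only closes because the weights enter weighted by exactly $j^{-2m}$, which collapses the sum to the binomial form above. A secondary check is that $C_T$, $K_T$ and $N_T$ in Theorem~\ref{thm:analytic-bridge} are independent of $m$ and $N$, which that theorem asserts, so that they are legitimately uniform over all $N=jn$.
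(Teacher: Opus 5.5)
Your proposal is correct and follows the paper's proof essentially step for step. You apply Theorem~\ref{thm:analytic-bridge} at $N=jn\ge N_T$, cancel via the moment system, and evaluate the weighted sum exactly through $|c_j^{(m)}|j^{-2m}=2/((m-j)!(m+j)!)$; the paper records the exact value $(4^m-\binom{2m}{m})/(2m)!$, so your bound $2\cdot 4^m/(2m)!$ is only a harmless factor $2$ weaker. You then finish with Stirling on $\Gamma((p+1)m+1)/\Gamma(2m+1)$, a step the paper leaves implicit and which you carry out correctly; in fact the elementary bound $((p+1)m)!/(2m)!\le((p+1)m)^{(p-1)m}$ already suffices there.
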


\begin{proof}

For the even nodes $1,2,\dots,m$, recall
\[
 c_j^{(m)}=(-1)^{m-j}\frac{2j^{2m}}{(m-j)!(m+j)!}.
\]
Then every refined step count
$N=jn$, $1\le j\le m$, also satisfies $N\ge N_T$, so the theorem applies
to every product entering the Richardson extrapolation.  
The exact weighted identity
\begin{equation}
 \sum_{j=1}^m\frac{|c_j^{(m)}|}{j^{2m}}
 =\frac{4^m-\binom{2m}{m}}{(2m)!}
 <\frac{4^m}{\Gamma(2m+1)}
\label{eq:richardson-weight-identity}
\end{equation}
then turns Theorem~\ref{thm:analytic-bridge} into
\begin{equation}
 \|\mathcal M_{2m,n}(t)f-e^{tA}f\|_{\rho_*}
 \le
 C_T(4K_T)^m e^{\Gamma t}
 \frac{\Gamma((p+1)m+1)}{\Gamma(2m+1)}
 \frac{t^{2m+1}}{n^{2m}}
 \|f\|_{\rho_0},\qquad 0\le t\le T.
\label{eq:richardson-weight-bound}
\end{equation}
Stirling's formula then yields \eqref{eq:optimized-preliminary-bound}. 
\end{proof}

\begin{corollary}[Super-fast analytic Richardson rates]
\label{cor:analytic-superfast}
Under the hypotheses of Theorem~\ref{thm:analytic-bridge}, choose
$\eta>0$ sufficiently small and
\[
 m(n)=\left\lfloor\eta n^{2/(p-1)}\right\rfloor.
\]
Then, for all sufficiently large $n$ and every $0\le t\le T$,
\begin{equation}
 \|\mathcal M_{2m(n),n}(t)f-e^{tA}f\|_{\rho_*}
 \le C_T e^{\Gamma t}t
 e^{-c_T n^{2/(p-1)}}\|f\|_{\rho_0}.
\label{eq:optimized-target-bound}
\end{equation}
Consequently the parabolic and Schr\"odinger-type second-order models
admit the rate
\[
 {\ e^{-cn^2}\ },
\]
while the third-order dispersive model admits
\[
 {\ e^{-cn}\ }.
\]
\end{corollary}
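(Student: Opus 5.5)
The plan is to take Lemma~\ref{lem:optimized-analytic-bound} as the starting point and substitute $m=m(n)=\lfloor\eta n^{2/(p-1)}\rfloor$ into \eqref{eq:optimized-preliminary-bound}. The first thing to check is that the lemma really applies with an $n$-dependent depth. Its hypothesis is only $n\ge N_T$, and the constants $C_T,K_*,\Gamma$ are independent of $m$; the $m$-independence comes from Theorem~\ref{thm:analytic-bridge}, whose constants do not depend on $m$ or $N$. The refined counts $jn$ with $1\le j\le m(n)$ all satisfy $jn\ge n\ge N_T$, so every product in $\mathcal M_{2m(n),n}$ falls under the bridge estimate, whatever the size of $m(n)$. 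I also need $m(n)\ge1$, which holds once $n\ge(1/\eta)^{(p-1)/2}$. I will enlarge the threshold on $n$ to include this condition.

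Next I bound the base of the $m$-th power. For $0\le t\le T$ we have $t^2\le T^2$, and $m^{p-1}\le \eta^{p-1}n^{2}$ because $m\le\eta n^{2/(p-1)}$. Hence
\[
\frac{K_*t^2m^{p-1}}{n^2}\le K_*T^2\eta^{p-1}.
\]
I choose $\eta$ small enough that $K_*T^2\eta^{p-1}\le e^{-1}$. The right-hand side of \eqref{eq:optimized-preliminary-bound} is then at most $C_Te^{\Gamma t}t\,e^{-m(n)}$. Since $m(n)\ge \eta n^{2/(p-1)}-1$, this gives
\[
e^{-m(n)}\le e\cdot e^{-\eta n^{2/(p-1)}}.
\]
This yields \eqref{eq:optimized-target-bound} with $c_T=\eta$, after absorbing the factor $e$ into $C_T$. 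The choice of $\eta$ depends only on $T$ and $K_*$, so $c_T$ depends only on $T$.

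Finally, I read off the two rates. For $p=2$ the exponent $2/(p-1)$ equals $2$, which gives $e^{-cn^2}$. For $p=3$ it equals $1$, which gives $e^{-cn}$.

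The main obstacle is not in this corollary but in confirming that Lemma~\ref{lem:optimized-analytic-bound} holds uniformly in $m$. In particular, I need the Stirling step
\[
\frac{\Gamma((p+1)m+1)}{\Gamma(2m+1)}\le C^m m^{(p-1)m}
\]
to hold for all $m\ge1$ with a fixed $C$. This follows from $\Gamma(am+1)\asymp (am/e)^{am}\sqrt{m}$ for fixed $a$: the polynomial prefactors are geometric in $m$, and the $t^{2m}$ factor is already kept inside the base. I will verify this briefly and then apply the substitution above.
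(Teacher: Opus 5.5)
Your proposal is correct and follows the paper's proof: you substitute $m(n)$ into Lemma~\ref{lem:optimized-analytic-bound}, bound the base $K_*t^2m^{p-1}/n^2$ uniformly by $K_*T^2\eta^{p-1}<1$, and convert $q^{m(n)}$ into $q^{-1}e^{-c_Tn^{2/(p-1)}}$ using $m(n)\ge\eta n^{2/(p-1)}-1$. Your choice $q=e^{-1}$ (so $c_T=\eta$) is a particular case of the paper's generic $q\in(K_*T^2\eta^{p-1},1)$ with $c_T=-\eta\log q$, and your extra checks ($m(n)\ge1$, $jn\ge N_T$, and an $m$-uniform Stirling bound) match what the paper's argument relies on.
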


\begin{proof}
The estimate to be optimized is \eqref{eq:optimized-preliminary-bound}.  Introduce the
quantity
\[
 \Theta_{m,n}(t)
 :=\frac{K_*t^2m^{p-1}}{n^2}.
\]
Then \eqref{eq:optimized-preliminary-bound} reads
\begin{equation}
 \|\mathcal M_{2m,n}(t)f-e^{tA}f\|_{\rho_*}
 \le
 C_Te^{\Gamma t}t\,\Theta_{m,n}(t)^m\|f\|_{\rho_0},
 \qquad 0\le t\le T.
\label{eq:optimization-log-bound}
\end{equation}
We claim that we can make $\Theta_{m,n}(t)<1$ uniformly in $t$, while the
exponent $m$ tends to infinity with $n$.

Since $t\le T$,
\[
 \Theta_{m,n}(t)
 \le \frac{K_*T^2m^{p-1}}{n^2}.
\]
Thus the natural choice is
\[
 m^{p-1}\asymp n^2,
 \qquad\text{equivalently}\qquad
 m\asymp n^{2/(p-1)}.
\]
We therefore set
\[
 \alpha:=\frac{2}{p-1},
 \qquad
 m(n):=\lfloor\eta n^\alpha\rfloor,
\]
with $\eta>0$ to be chosen.  Because $m(n)\le\eta n^\alpha$,
\[
 m(n)^{p-1}
 \le \eta^{p-1}n^{\alpha(p-1)}
 =\eta^{p-1}n^2.
\]
Consequently, uniformly for $0\le t\le T$,
\begin{equation}
 \Theta_{m(n),n}(t)
 \le K_*T^2\eta^{p-1}.
\label{eq:optimization-choice}
\end{equation}
Choose $\eta>0$ sufficiently small that
\[
 K_*T^2\eta^{p-1}<1.
\]
Fix any number $q$ such that
\begin{equation}
 K_*T^2\eta^{p-1}<q<1.
\label{eq:optimization-stirling}
\end{equation}
Then \eqref{eq:optimization-choice} gives
\[
 \Theta_{m(n),n}(t)\le q,
 \qquad 0\le t\le T,
\]
and therefore \eqref{eq:optimization-log-bound} yields
\begin{equation}
 \|\mathcal M_{2m(n),n}(t)f-e^{tA}f\|_{\rho_*}
 \le
 C_Te^{\Gamma t}t\,q^{m(n)}\|f\|_{\rho_0}.
\label{eq:optimization-exponent}
\end{equation}

It remains only to convert $q^{m(n)}$ into an exponential in the base
resolution $n$.  Since
\[
 m(n)=\lfloor\eta n^\alpha\rfloor
 \ge \eta n^\alpha-1
\]
and $0<q<1$, the function $x\mapsto q^x$ is decreasing.  Hence
\begin{align*}
 q^{m(n)}
 &\le q^{\eta n^\alpha-1}\\
 &=q^{-1}\exp\!\bigl(\eta n^\alpha\log q\bigr).
\end{align*}
Because $\log q<0$, set
\[
 c_T:=-\eta\log q>0.
\]
Then
\begin{equation}
 q^{m(n)}
 \le q^{-1}e^{-c_Tn^\alpha}.
\label{eq:optimization-final-exponent}
\end{equation}
Absorbing the \warning{harmless} constant $q^{-1}$ into $C_T$ and recalling that
$\alpha=2/(p-1)$, equations \eqref{eq:optimization-exponent}--\eqref{eq:optimization-final-exponent} give
\[
 \|\mathcal M_{2m(n),n}(t)f-e^{tA}f\|_{\rho_*}
 \le
 C_Te^{\Gamma t}t\,
 e^{-c_Tn^{2/(p-1)}}\|f\|_{\rho_0},
 \qquad 0\le t\le T,
\]
which is \eqref{eq:optimized-target-bound}.

For the two second-order models, $p=2$, so
$2/(p-1)=2$ and $m(n)\asymp n^2$, giving the rate
$e^{-cn^2}$.  For the cubic model, $p=3$, so
$2/(p-1)=1$ and $m(n)\asymp n$, giving the rate $e^{-cn}$.
\end{proof}

The growing-depth rates also give an explicit representation of the
evolution semigroup itself.

\begin{corollary}[Accelerated Chernoff representation of the evolution solution]
\label{cor:evolution-representation}
For each of the three analytic model classes, the solution of the
Cauchy problem \emph{(CP)} admits the Richardson--Chernoff representation
\begin{equation}
 u(t)=e^{tA}f
 =\lim_{n\to\infty}
 \sum_{j=1}^{m(n)}c_j^{(m(n))}
 \mathcal S_2\!\left(\frac{t}{jn}\right)^{jn}f,
 \qquad 0\le t\le T,
 \label{eq:evolution-representation}
\end{equation}
with convergence in $\mathcal A_{\rho_*}$, uniformly on $[0,T]$.
For the parabolic and Schr\"odinger-type second-order equations the
error is $O(e^{-cn^2})$, whereas for the third-order dispersive equation
it is $O(e^{-cn})$.
\end{corollary}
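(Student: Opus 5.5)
This is essentially a direct consequence of Corollary~\ref{cor:analytic-superfast}. The plan is to check that the Richardson combination in \eqref{eq:evolution-representation} is exactly the multiproduct $\mathcal M_{2m(n),n}(t)$ of \eqref{eq:even-richardson-multiproduct} with $m=m(n)$. After that, the bound \eqref{eq:optimized-target-bound} is read as a statement of convergence in $\mathcal A_{\rho_*}$.

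\emph{Step 1 (identification).} By definition,
\[
\sum_{j=1}^{m(n)}c_j^{(m(n))}\mathcal S_2\!\left(\tfrac{t}{jn}\right)^{jn}f=\mathcal M_{2m(n),n}(t)f.
\]
Theorem~\ref{thm:acp-semigroup} identifies $e^{tA}f$ with the classical solution $u(t)$ of (CP). In the two nonsmoothing cases, $A$ denotes the normalized generator $\widetilde A_0$. The statement for the original operator $A_0$ then follows by conjugating with $\mathcal J^{-1}$. By Lemma~\ref{lem:liouville-analytic-spaces}, $\mathcal J^{-1}$ is bounded between analytic spaces, at the cost of the radius distortion encoded in \eqref{eq:original-compatible-radii}. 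So convergence in $\mathcal A_{\rho_*}$ on the normalized side transfers to convergence on a slightly smaller strip in the original coordinates. I would phrase the corollary either in normalized coordinates, or with $\rho_*$ replaced by $\rho_{\rm out}$ as in Theorem~\ref{thm:unified-analytic-rate}.

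\emph{Step 2 (uniform convergence).} Apply Corollary~\ref{cor:analytic-superfast} for $n\ge N_T$ large, with $f\in\mathcal A_{\rho_0}$. This gives
\[
\sup_{0\le t\le T}\|\mathcal M_{2m(n),n}(t)f-e^{tA}f\|_{\rho_*}\le C_T e^{\Gamma T}T\,e^{-c_Tn^{2/(p-1)}}\|f\|_{\rho_0}.
\]
The right-hand side tends to $0$ as $n\to\infty$. This yields the limit \eqref{eq:evolution-representation} in $\mathcal A_{\rho_*}$, uniformly on $[0,T]$. Substituting $p=2$ gives the rate $O(e^{-cn^2})$, and $p=3$ gives $O(e^{-cn})$.

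\emph{Main obstacle.} The only delicate point is the hypothesis bookkeeping, not any estimate. The datum must lie in $\mathcal A_{\rho_0}$ with $\rho_*<\rho_1e^{-\kappa T}$, as in Theorem~\ref{thm:analytic-bridge}. Also, every refined step count $jn$ with $j\le m(n)$ must satisfy $jn\ge N_T$, and this holds automatically once $n\ge N_T$. In the conjugated cases one must additionally check that the Liouville radii are compatible, which is exactly condition \eqref{eq:original-compatible-radii}. I would state these conditions explicitly rather than leaving them implicit in ``the analytic model classes''.
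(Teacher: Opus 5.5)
Your proposal is correct and matches the paper. The paper gives no separate proof; the corollary is simply Corollary~\ref{cor:analytic-superfast} read as uniform convergence on $[0,T]$, after identifying the Richardson sum with $\mathcal M_{2m(n),n}(t)f$ and bounding $C_Te^{\Gamma t}t\le C_Te^{\Gamma T}T$. Your remark that in the two nonsmoothing cases the statement lives in normalized coordinates, and otherwise must be transported by $\mathcal J^{\pm1}$ to $\mathcal A_{\rho_{\rm out}}$ as in Theorem~\ref{thm:unified-analytic-rate}, makes explicit hypotheses the paper leaves implicit.
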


\begin{corollary}[Fixed-depth resolvent representation]
\label{cor:resolvent-representation}
Fix the extrapolation depth $m$ and let
$\operatorname{Re}\lambda>\widehat\omega_m$ as in
Proposition~\ref{prop:resolvent-rate}. Then
\begin{equation}
 (\lambda I-A)^{-1}g
 =\lim_{n\to\infty}
 \int_0^\infty e^{-\lambda t}\mathcal M_{2m,n}(t)g\,dt,
 \label{eq:resolvent-representation}
\end{equation}
and
\[
 \left\|\int_0^\infty e^{-\lambda t}\mathcal M_{2m,n}(t)g\,dt
 -(\lambda I-A)^{-1}g\right\|
 \le \frac{C_{\lambda,m}}{n^{2m}}\|g\|_{\mathcal D_m}.
\]
For the three original operators this gives explicit representations of the solutions of the corresponding nonhomogeneous ODEs:
\[
 \lambda f-a f''-b f'-cf=g,\qquad
 \lambda f-iaf''-b f'-cf=g,\qquad
 \lambda f-a f'''-b f'-cf=g,
\]
respectively.
\end{corollary}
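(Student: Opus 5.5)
The estimate follows from Proposition~\ref{prop:resolvent-rate}, applied to each of the three analytic model classes; the explicit ODE representations are then read off from the identity $(\lambda I-A)^{-1}g=f\iff \lambda f-Af=g$. The plan is to check the hypotheses of Theorem~\ref{thm:abstract} in each model, quote the abstract Laplace-transform estimate, and deduce the limit formula \eqref{eq:resolvent-representation} from it.

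\textbf{Hypotheses.} By Proposition~\ref{prop:chernoff-tangency-even-structure}(iv), $\mathcal S_2$ satisfies the Chernoff power bound \eqref{eq:chernoff-power-bound} with $M=1$ and the $\omega$ of Proposition~\ref{prop:UniStabCentDiff}. In the group cases this is applied to the normalized generator $\widetilde A_0$. The pointwise product expansion \eqref{eq:product-expansion} through depth $m$ is supplied in each model as follows.
\begin{enumerate}[label=(\roman*)]
\item In the parabolic case, on $\mathcal D_m=H^{2m+2}$, by Lemma~\ref{lem:parabolic-fixed-depth} at $\varepsilon=1$ (equivalently Corollary~\ref{cor:parabolic-richardson}).
\item In the Schr\"odinger and cubic cases, on $\mathcal D_m=H^{(p+2)m}$, by Lemma~\ref{lem:group-fixed-depth}.
\end{enumerate}
In all three cases the remainder carries the factor $t^{2m+1}$, which is exactly the form \eqref{eq:product-expansion} used in Theorem~\ref{thm:abstract}. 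So \eqref{eq:even-richardson-rate} holds for $0\le t\le h_0n$, and \eqref{eq:multiproduct-global-error} holds for all $t\ge0$.

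\textbf{Error estimate.} Split the Laplace integral of $\mathcal M_{2m,n}(t)g-e^{tA}g$ at $t=h_0n$, as in Proposition~\ref{prop:resolvent-rate}.
\begin{enumerate}[label=(\roman*)]
\item On $[0,h_0n]$, the bound \eqref{eq:even-richardson-rate} gives
\[
\frac{C'_m}{n^{2m}}\,\|g\|_{\mathcal D_m}\int_0^\infty t^{2m+1}e^{-(\Re\lambda-\Omega_m)t}\,dt
=\frac{C'_m\,(2m+1)!}{(\Re\lambda-\Omega_m)^{2m+2}}\,\frac{\|g\|_{\mathcal D_m}}{n^{2m}} .
\]
\item On the tail $[h_0n,\infty)$, the global bound \eqref{eq:multiproduct-global-error} gives a contribution of order $e^{-(\Re\lambda-\omega)h_0n}\|g\|$. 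This decays exponentially in $n$, so it is at most $C\,n^{-2m}\|g\|_{\mathcal D_m}$.
\end{enumerate}
Adding the two pieces gives the displayed estimate. Letting $n\to\infty$ then gives \eqref{eq:resolvent-representation} for $g\in\mathcal D_m$.

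\textbf{Extension to all data and the group cases.} Proposition~\ref{prop:resolvent-rate} only gives the limit for $g\in\mathcal D_m$. To extend it to general $g\in L^2$, use density of $\mathcal D_m$ together with the uniform bound $\|\int_0^\infty e^{-\lambda t}\mathcal M_{2m,n}(t)\,dt\|\le C_m/(\Re\lambda-\omega)$, which follows from \eqref{eq:multiproduct-stability}. The limit then holds on all of $L^2$, with the rate on $\mathcal D_m$.

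In the Schr\"odinger and cubic cases the statement is about the original operator $A_0$, so we conjugate. Since $\mathcal J$ does not depend on $t$, it commutes with the Laplace integral. Hence $\mathcal J^{-1}(\lambda-\widetilde A_0)^{-1}\mathcal J=(\lambda-A_0)^{-1}$, and $\mathcal J^{\pm1}$ are bounded on $L^2$ and on $H^{(p+2)m}$. The estimate therefore transfers to $\mathcal M^{\rm orig}_{2m,n}$ with a changed constant.

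\textbf{Main obstacle.} The one delicate point is the threshold $\widehat\omega_m=\max\{\omega,\Omega_m\}$. It is needed so that both pieces of the split integral converge, and $\Omega_m$ may exceed the Chernoff exponent $\omega$. This is why the corollary is stated only for $\Re\lambda>\widehat\omega_m$.
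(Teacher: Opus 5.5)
Your proposal is correct and is essentially the paper's argument. The corollary is Proposition~\ref{prop:resolvent-rate}: split the Laplace integral at $h_0n$, use the fixed-depth Richardson rate on $[0,h_0n]$ and the global stability bound \eqref{eq:multiproduct-global-error} on the tail, feed in the model-specific fixed-depth expansions, and let $n\to\infty$. Your density extension to all of $L^2$ via the $n$-uniform bound on the Laplace-transformed multiproducts, and the conjugation by $\mathcal J$ in the Schr\"odinger and cubic cases, are correct details that the paper leaves implicit.
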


\warning{This is the Laplace-transform mechanism used in the Chernoff-resolvent
program of Remizov \cite{Remizov25,RemizovVedenin}. The growing-depth
exponential and super-exponential rates above concern the compact-time
evolution problem; the resolvent estimate is the fixed-depth
polynomial rate of Proposition~\ref{prop:resolvent-rate}.}

\subsection{Proof of the main
analytic theorem}
\label{subsec:analytic-main-theorem-proof}

We now 
 complete the proof of
Theorem~\ref{thm:unified-analytic-rate}.

\begin{proof}[Proof of Theorem~\ref{thm:unified-analytic-rate}]
The parabolic case is immediate from
Corollary~\ref{cor:analytic-superfast}, since no change of
variables is involved.  Indeed, $\mathcal J=I$ and the fixed-strip
propagation of Proposition~\ref{prop:analytic-parabolic} corresponds
to $\kappa=0$.  Given
\[
0<\rho_{\rm out}<\rho_{\rm in}<r_a,
\]
choose intermediate radii
\[
\rho_{\rm out}<\rho_*<\rho_1<\rho_{\rm in}.
\]
The normalized estimate of
Corollary~\ref{cor:analytic-superfast} then gives directly
\eqref{eq:main-original-analytic-rate}.

We now treat the two nonsmoothing models simultaneously.

Fix
\[
0<\rho_{\rm in}<r_a
\]
and assume
\[
0<\rho_{\rm out}
<
\frac{m_p}{M_p}
e^{-\kappa T}\rho_{\rm in}.
\]
Equivalently,
\begin{equation}
M_p\rho_{\rm out}
<
m_pe^{-\kappa T}\rho_{\rm in}.
\label{eq:radius-gap-original}
\end{equation}

Because the inequality is strict, we may choose normalized radii
\[
\rho_0^y>
\rho_1^y>
\rho_*^y
\]
such that
\begin{equation}
0<\rho_1^y<\rho_0^y<m_p\rho_{\rm in}
\label{eq:normalized-input-radii}
\end{equation}
and
\begin{equation}
M_p\rho_{\rm out}
<
\rho_*^y
<
\rho_1^y e^{-\kappa T}.
\label{eq:normalized-output-radii}
\end{equation}
Indeed, \eqref{eq:radius-gap-original} is precisely the condition
ensuring that such a choice is possible.

By Lemma~\ref{lem:liouville-analytic-spaces},
\begin{equation}
\|\mathcal J_pf\|_{\rho_0^y}
\le
C_{\rm in}
\|f\|_{\rho_{\rm in}},
\label{eq:J-input-bound}
\end{equation}
because
\[
\rho_0^y<m_p\rho_{\rm in}.
\]
Likewise,
\begin{equation}
\|\mathcal J_p^{-1}g\|_{\rho_{\rm out}}
\le
C_{\rm out}
\|g\|_{\rho_*^y},
\label{eq:Jinv-output-bound}
\end{equation}
because
\[
M_p\rho_{\rm out}<\rho_*^y.
\]

By Proposition~\ref{prop:normalized-analytic-coefficients}, the
normalized generator satisfies the coefficient assumptions of
Proposition~\ref{prop:analytic-group} and hence of
Corollary~\ref{cor:analytic-superfast}.  Therefore, for
\[
g:=\mathcal J_pf,
\]
there exist $C_T,c_T,\Gamma>0$ and $N_T\in\N$ such that
\begin{equation}
\left\|
\mathcal M_{2m(n),n}(t)g
-
e^{t\widetilde A}g
\right\|_{\rho_*^y}
\le
C_Te^{\Gamma t}\,
t\,
e^{-c_Tn^{2/(p-1)}}
\|g\|_{\rho_0^y},
\label{eq:normalized-superfast-applied}
\end{equation}
for
\[
m(n)
=
\left\lfloor
\eta_Tn^{2/(p-1)}
\right\rfloor,
\qquad
0\le t\le T,
\]
and all $n\ge N_T$.

Since
\[
\widetilde A
=
\mathcal J_pA_0\mathcal J_p^{-1},
\]
and $\mathcal J_p$ is boundedly invertible on $L^2$, then 
\begin{equation}
e^{tA_0}
=
\mathcal J_p^{-1}
e^{t\widetilde A}
\mathcal J_p.
\label{eq:semigroup-similarity}
\end{equation}
By definition,
\[
\mathcal M_{2m,n}^{\rm orig}(t)
=
\mathcal J_p^{-1}
\mathcal M_{2m,n}(t)
\mathcal J_p.
\]
Hence
\begin{align*}
&
\mathcal M_{2m(n),n}^{\rm orig}(t)f-e^{tA_0}f
\\
&\qquad
=
\mathcal J_p^{-1}
\left[
\mathcal M_{2m(n),n}(t)
-
e^{t\widetilde A}
\right]
\mathcal J_pf .
\end{align*}
Using first \eqref{eq:Jinv-output-bound}, then
\eqref{eq:normalized-superfast-applied}, and finally
\eqref{eq:J-input-bound}, we obtain
\begin{align*}
&
\left\|
\mathcal M_{2m(n),n}^{\rm orig}(t)f-e^{tA_0}f
\right\|_{\rho_{\rm out}}
\\
&\qquad
\le
C_{\rm out}
\left\|
\left[
\mathcal M_{2m(n),n}(t)-e^{t\widetilde A}
\right]\mathcal J_pf
\right\|_{\rho_*^y}
\\
&\qquad
\le
C_{\rm out}C_T
e^{\Gamma t}t
e^{-c_Tn^{2/(p-1)}}
\|\mathcal J_pf\|_{\rho_0^y}
\\
&\qquad
\le
C_{\rm out}C_{\rm in}C_T
e^{\Gamma t}t
e^{-c_Tn^{2/(p-1)}}
\|f\|_{\rho_{\rm in}}.
\end{align*}
Absorbing $C_{\rm in}C_{\rm out}$ into $C_T$ proves
\eqref{eq:main-original-analytic-rate}.

For $p=2$,
\[
\frac{2}{p-1}=2,
\]
so the parabolic and Schr\"odinger models have Gaussian-type rate
\[
e^{-c_Tn^2}.
\]
For $p=3$,
\[
\frac{2}{p-1}=1,
\]
so the cubic model has rate
\[
e^{-c_Tn}.
\]
This completes the proof.
\end{proof}

\warning{\section{Relation to other Chernoff constructions}

The rate theorems of Galkin and Remizov \cite{GR25,GR22} start from a
quantitative local Taylor remainder for a fixed Chernoff function and
propagate it through the product formula.  The present extrapolation
theorem instead starts from an asymptotic expansion of the iterated
products and cancels selected powers by a generalized Vandermonde
system.  The two mechanisms are complementary.

The explicit Remizov--Vedenin family \cite{RemizovVedenin} for a
uniformly parabolic second-order operator is a natural test case for
higher-order extrapolation.  To apply Theorem~\ref{thm:general-richardson}
at depth $m$, one must establish an expansion of its products
$S(t/N)^N$ of the form \eqref{eq:product-expansion} on an appropriate power scale
$\boldsymbol\nu$: the full integer scale when odd and even powers are
present, or the even scale when a symmetry eliminates the odd powers.
That higher-depth product expansion is not proved here.
}

\section{AI Disclosure.}\label{sec:ai-disclosure} The ideas underlying this preprint originated with the author. ChatGPT and Claude were used for mathematical discussions and editorial assistance. The author assumes full responsibility for the content.

\bibliographystyle{plain} 

\bibliography{references}
\end{document}